\documentclass{amsart}
\usepackage{graphicx}
\usepackage{amsmath,amssymb}
\usepackage{amsthm}
\usepackage{color}
\usepackage{eufrak}
\usepackage[title]{appendix}
\usepackage{tikz}

\vfuzz2pt 
\hfuzz2pt 

\newtheorem{thm}{Theorem}[section]
\newtheorem{cor}[thm]{Corollary}
\newtheorem{lem}[thm]{Lemma}
\newtheorem{prop}[thm]{Proposition}
\newtheorem{alg}[thm]{Algorithm}
\theoremstyle{definition}
\newtheorem{defn}[thm]{Definition}
\theoremstyle{remark}
\newtheorem{rem}[thm]{Remark}
\theoremstyle{example}

\theoremstyle{conjecture}

\numberwithin{equation}{section}


\newcommand{\del}{\delta}

\newcommand{\B}{{\mathbb B}}

\newcommand{\HH}{{\mathbb H}}
\newcommand{\R}{{\mathbb R}}
\newcommand{\SSS}{{\mathbb S}}
\newcommand{\C}{{\mathbb C}}

\newcommand{\N}{{\mathbb N}}

\newcommand{\Z}{{\mathbb Z}}

\newcommand{\im}{{\rm Im}\,}

\newcommand{\calB}{{\mathcal B}}
\newcommand{\calC}{{\mathcal C}}
\newcommand{\calD}{{\mathcal D}}

\newcommand{\calG}{{\mathcal G}}
\newcommand{\calI}{{\mathcal I}}
\newcommand{\calJ}{{\mathcal J}}

\newcommand{\calL}{{\mathcal L}}

\newcommand{\calQ}{{\mathcal Q}}
\newcommand{\calS}{{\mathcal S}}
\newcommand{\calT}{{\mathcal T}}
\newcommand{\calU}{{\mathcal U}}

\newcommand{\frakC}{{\mathfrak C}}
\newcommand{\frakc}{{\mathfrak c}}
\newcommand{\frakS}{{\mathfrak S}}
\newcommand{\sfC}{{\mathsf C}}

\newcommand{\vardbtilde}[1]{\tilde{\raisebox{0pt}[0.85\height]{$\tilde{#1}$}}}

\makeatletter
\newcommand{\doublewidetilde}[1]{{%
  \mathpalette\double@widetilde{#1}%
}}
\newcommand{\double@widetilde}[2]{%
  \sbox\z@{$\m@th#1\widetilde{#2}$}%
  \ht\z@=.9\ht\z@
  \widetilde{\box\z@}%
}
\makeatother


\def\one{\mbox{1\hspace{-4.25pt}\fontsize{12}{14.4}\selectfont\textrm{1}}}

\begin{document}

\title[Sparse domination]{Sparse domination of Singular Radon transform}%

\author{Bingyang Hu}

\address{Bingyang Hu: Department of Mathematics, University of Wisconsin, Madison, WI 53706-1388, USA.}%
\email{bhu32@wisc.edu}

\date{\today}%

\thanks{$^{1}$ Supported in part NSF grant DMS 1600458 and NSF grant 1500162.}


\maketitle


\begin{abstract}
The purpose of this paper is to study the sparse bound of the operator of the form $f \mapsto \psi(x) \int f(\gamma_t(x))K(t)dt$, where $\gamma_t(x)$ is a $C^\infty$ function defined on a neighborhood of the origin in $(x, t) \in \R^n \times \R^k$, satisfying  $\gamma_0(x) \equiv x$, $\psi$ is a $C^\infty$ cut-off function supported on a small neighborhood of $0 \in \R^n$ and $K$ is a Calder\'on-Zygmund kernel suppported on a small neighborhood of $0 \in \R^k$.  Christ, Nagel, Stein and Wainger gave conditions on $\gamma$ under which $T: L^p \mapsto L^p (1<p<\infty)$ is bounded. Under the these same conditions, we prove sparse bounds for $T$, which strengthens their result.  As a corollary, we derive weighted norm estimates for such operators. 
\end{abstract}

\tableofcontents

\section{Introduction}

\bigskip

The purpose of this paper is to prove a sparse domination theorem for the singular Radon transforms. The study of the $L^p$ theory of singular Radon transforms culminated in the work of Christ, Nagel, Stein and Wainger \cite{CNSW}. In this paper, we strength their results to a sparse domination estimate, which roughly says that the behavior of the singular Radon transforms can be captured by a ``sparse collection of cubes". As a corollary of our sparse domination theorems, we obtain weighted estimates for such operators where the weights belong to nonisotropic Muckenhoupt $A_p$ classes (see, Corollary \ref{weightedineq}).  

In general, sparse bounds can be interpreted as a finer quantification of the boundedness of singular integral operators such as Calder\'on-Zygmund operators. These arguments have delivered the most powerful known proof \cite{AL} of the $A_2$ conjecture and later, they are further studied by several authors to establish various interesting results in Calder\'on-Zygmund theory (see, e.g, \cite{CDO, LM, AL2} and references therein). 

In this paper, we study the operators of the form\footnote{We refer the reader Section 1.1 and Section 5 for a more detailed definition of such operators.} 
$$
Tf(x)=\psi(x) \int f(\gamma_t(x)) K(t)dt
$$
where $\gamma$ is a $C^\infty$ mapping from a neighborhood of the origin in $(x, t) \in \R^n \times \R^k$ to $\R^n$, with $\gamma_0(x) \equiv x$ and satisfying the curvature condition of Christ, Nagel, Stein and Wainger (see, Definition \ref{Hormanderdefn}), $\psi(x)$ is a $C^\infty_0(\R^n)$ cut-off function, supported near $0 \in \R^n$ and $K(t)$ is a standard Calder\'on-Zygmund kernel. Such operators are known as singular Radon transforms. 

Sparse domination for singular Radon transforms and related operators is a recent subject. This began with the work of Lacey \cite{LM2} on spherical maximal functions and then was furthered by Oberlin\cite{oberlin} by considering convolution operators induced by general compactly supported measures with Fourier decay. The first attempt where the underlying dilation structure is nonisotroptic was due to Cladek and Ou \cite{CO}, where they studied the sparse bound of the Hilbert transform along the monomial curve. More precisely, given $\gamma: \R \rightarrow \R^n$ a monomial curve, that is
$$
\gamma(t):= \left( |t|^{\alpha_1}, \dots, |t|^{\alpha_n} \right), \quad t \in \R
$$
for real numbers $0<\alpha_1<\dots<\alpha_n<\infty$, the \emph{Hilbert transform along the monomial curve} is defined as 
\begin{equation} \label{190226eq01}
H_\gamma f(x):= p.v. \int_{\R} f(x-\gamma(t)) \frac{dt}{t}, \quad x \in \R^n. 
\end{equation}
Note that the dilation structure in this case is given by the so-called \emph{$\gamma$-cube}, which is a hyperrectangle in $\R^n$ with side parallel to the coordinate axes whose side lengths $(\ell_1, \dots, \ell_n)$ satisfy the relation $\ell_1^{\frac{1}{\alpha_1}}=\dots \ell_n^{\frac{1}{\alpha_n}}$. A second example we shall keep in mind is the sparse bound of the \emph{lacunary spherical maximal function} on the Heisenberg group $\HH^n$. This example can be viewed as an nonisotropic analog of Lacey \cite{LM2} and was studied by Bagchi, Hait, Roncal and Thangavelu \cite{BHRT}. They studied the operators of the form
\begin{equation} \label{190303eq01}
M_{\textrm{lac}} f(z, t)=\sup_{z \in \Z} \left|A_{\del^j} f(z, t)\right|, \quad \del>0, \  (z, t) \in \HH^n=\C^n \times \R, 
\end{equation}
where for $r>0$ and a given function $f$ on $\HH^n$, 
$$
A_r f(z, t):= \int_{|w|=r} f \left(z-w, t-\frac{1}{2} \im z \cdot \overline{w} \right) d\mu_r(w).
$$
Here $\mu_r$ is the normalised surface measure on the sphere $S_r:=\left\{ (z, 0): |z|=r \right\}$ in $\HH^n$. The dilation structure underlying these operators is given by the one on the Heisenberg group, namely, for each $\del>0$ and $z:=[z', t] \in \mathbb H^n=\C^n \times \R$, $\del \circ z=\del \circ [z', t]:=[\del z', \del^2 t]$ (see, e.g., \cite[Page 541]{Stein}). 

Our article furthers this line of research by considering the singular Radon transforms, where, in general, there are no dilation structures and group structures and appears to be the first attempt which adapts  delicate techniques from sub-Riemannian geometry (as introduced by Nagel, Stein and Wainger \cite{NSW}) to study the sparse bound of Radon-type transforms. 

The outline of this paper is as follows. In Section 2, we recall the general theory of dyadic systems in the space of homogeous type, which is due to Hyt\"onen and Kairema, and we also prove a version of Whitney decomposition under their setting. In Section 3, we study the underling spaces of homogeneous type associated to the singular Radon transform. Section 4 is devoted a lemma of modulus of continuity by using a quantitative scaling map technique, due to Street \cite{BS}. In Section 5, we present a quantitative version of the set up of our theorem, and then we state our main theorem. Section 6--7 are devoted to the proof our main theorem (see, Theorem \ref{MainTheorem}), and finally, as an application of our main result, we also get a version of weighted norm estimates for singular Radon transforms (see, Corollary \ref{weightedineq}). \\
\textbf{Acknowledgements} I would like to thank Brian Street. He suggested me this nice project and explained to me lots of the surrounding theories, in particular the theory of the scaling maps from his book \cite{NAO}, which plays a significant role in overcoming the main difficulities in this project, arosed by the generality of the singular Radon transfrom. I would also like to thank Tess Anderson, who explained the dyadic system on the spaces of homogeneous type to me in detail. I thank both of them for their 
invaluable help.

\subsection{Statement of the main result}

\bigskip

We suppose we are given a  $\gamma$ be a $C^\infty$ mapping 
$$
(x, t) \mapsto \gamma(x, t):=\gamma_t(x), \quad (x, t) \in \R^n \times \R^k
$$
from a neighborhood of the origin in $\R^n \times \R^k$ to $\R^n$, satisfying $\gamma_0(x) \equiv x$. Let further,  $\psi(x)$ be a $C_0^\infty(\R^n)$ cut-off function, supported near $0 \in \R^n$ and $K(t)$ be a standard Calder\'on-Zygmund kernel on $\R^k$ supported for $t$ near $0$. This means 
\begin{enumerate}
\item [(a)] $K(t)$ is a distribution with compact support near the origin in $\R^k$ and it coincides with a $C^\infty$ function away from $0 \in \R^k$;
\item [(b)] (Differential inequalities) For each multi-index $\alpha \in \N^k$, there is a constant $C_\alpha>0$ so that for $t \neq 0$, we have
$$
\left| \partial^\alpha K(t) \right| \le C_\alpha |t|^{-|\alpha|-k}; 
$$ 
\item [(c)] (Cancellation condition) Given any $C^\infty$ function $\phi$ supported in the unit ball of $\R^k$ with $C^1$-norm bounded by $1$, and any $R>0$, 
$$
\sup_{R>0, \|\phi\|_{C^1} \le 1} \int_{\R^k} K(t) \phi(Rt)dt<\infty. 
$$
\end{enumerate}
For any Schwarz funciton $f$, we define the \emph{singular Radon transform} of $f$ as
\begin{equation} \label{eq01}
Tf(x):= \psi(x) \int f(\gamma_t(x))K(t)dt. 
\end{equation}
It is well-known that if $\gamma$ is \emph{curved to finite order} at $0$ (see, Definition \ref{Hormanderdefn}), then for any $1<p<\infty$, $T$ can be extended to all $L^p$ functions with a bounded $L^p$ norm (see, e.g., \cite{CNSW}). 

The $L^p$ boundedness of singular Radon transform is a non-trivial result and the original approach by Christ, Nagel, Stein and Wainger relies on a careful study of the nilpotent Lie groups and a lifting procedure due to Rothschid and Stein \cite{RS}. This allows one to study the $L^p$ boundedness problem on a high dimensional Euclidean spaces which is very ``close" to a  stratified Lie group. Later, Stein and Street were able to attack this problem directly without applying the lifting argument , and this improvement allows them to study the $L^p$ boundedness of the multi-parameter case (see, e.g., \cite{SS1, SS, BS}). We proceed by using Stein and Street's approach and we do not apply the Rothschid and Stein's lifting procedure. 

To state our main result, we need the notion of positive sparse form, which consists of two main ingredients. The first ingredient is the region where the associated single scale operators (with respect to $T$) satisfy certain $L^p$ improving estimates. More precisely, we assume the Calder\'on-Zygmund kernel $K$ is supported in $B^k(a) \subsetneq \R^k$,  for some small $a>0$ (depend on $\gamma$) to be choosen later. Then for any $\chi \in C^\infty_0( B^k(a))$, we define the single scale operator\footnote{In Section 5, we will work with a slightly more general single scale operator (see, \eqref{190310eq01}).}
$$
\calT_\chi (f)(x):= \psi(x) \int f(\gamma_t(x)) \chi(t)dt, 
$$
where $f$ is measurable. It turns out that the collection of single scale operators satisfies some certain $L^p$ improving property, that is, for any $r \ge 1$, there exists $s>r$, such that
\begin{equation} \label{190310eq02}
\sup_{\chi \in \calB} \|\calT_\chi f\|_{L^s} \le C_{r, s, \calB} \|f\|_{L^r},
\end{equation}
where $\calB \subseteq C^\infty_0(B^k(a))$ is a bounded set\footnote{In Section 5, we will construct a bounded set $\calB$ from the kernel $K$. However, in general, the constant $C_{r, s, \calB}$ is independent of the choice of $K$.  We refer the reader Theorem \ref{apdthm003} for a more precise statement of this result.} and the constant $C_{r, s, \calB}$ only depends on $r, s$ and $\calB$. To this end, we define
$$
\Sigma:= \textrm{the interior of} \ \left\{ \left(\frac{1}{r}, \frac{1}{s} \right), (r,s) \ \textrm{satisfies \eqref{190310eq02}} \right\}.
$$
The second ingredient is the underlying dyadic structure. Indeed, given a curve $\gamma$ which is curved to finite order at $0$, there is a small neighborhood\footnote{In Section 5, we will quantify how small this neighborhood is.}  near $0 \in \R^n$, which can be viewed as a space of homogeneous type (see, Definition \ref{defnSHT}), and therefore, we can give a dyadic grid structure $\calG$ (see, Section 2 and 5), that is, informally, a collection of  dyadic cubes,  associated to such a neighborhood.  

\begin{defn} \label{sparsedefn01}
Given a dyadic grid $\calG$, a \emph{$\sigma$-sparse family} $\calS \subset \calG$ associated with $\sigma \in (0, 1)$ is a collection of dyadic cubes for which there exists a collection of sets $\{E(Q): Q \in \calS\}$ such that the sets $E(Q)$ are pairwise disjoint, $E(Q) \subset Q$ and $\sigma |Q| \le |E(Q)|$. Here, $| \cdot |$ denotes the Lebesgue measure\footnote{For general space of homogeneous type, the associated measure can be any non-negative doubling Borel measure. In this paper, it suffices for us to consider the case when such a measure is Lebesuge. We will come back to this point in Section 3.} on $\R^n$. 
\end{defn}

\begin{rem}
Let $S$ be a $\sigma$-sparse family. For every $Q \in \calS$, one may choose
$$
E(Q)=Q \big\backslash \bigcup_{P \in \calS, P \subsetneq Q} P.
$$
Then Definition \ref{sparsedefn01} is equivalent to the following:  a collection of dyadic cubes $\calS \subset \calG$ is said to be a $\sigma$-sparse family, if for every $Q \in \calS$, 
$$
\left| \bigcup_{P \in \calS, P \subsetneq Q} P \right| \le (1-\sigma) |Q|.
$$

\end{rem}

\begin{defn} \label{sparseform}
Let $\calG$ be a dyadic grid and $\calS \subset \calG$ be a sparse family. Then  the \emph{$( r, s)$-sparse form} $\Lambda_{\calS, r, s}$ is defined as
$$
\Lambda_{\calS, r, s} (f, g):=\sum_{S \in \calS}|S| \langle f \rangle_{S, r} \langle g \rangle_{S, s}
$$
for $1 \le r, s<\infty$, where $\langle f \rangle_{Q, r}:=\left(\frac{1}{|Q|} \int_Q |f|^r\right)^{\frac{1}{r}}$ and in particular,  $r=1$, we write $\langle f \rangle_Q$, which is short for $\langle f \rangle_{Q, 1}$. We sometimes abbreivate this as $\Lambda_{r, s}$ when the sparse collection $\calS$ is clear from the context. 
\end{defn}

Our main theorem is

\begin{thm}
Supposed the above assumptions hold (which are made precise in Section 5). Then for any $0<\sigma<1$ and a pair $(r,s)$ satisfying $\left(\frac{1}{r}, \frac{1}{s} \right) \in \Sigma$,  there is a constant $C_{\sigma, r, s, T}$ such that for any compactly supported bounded functions $f_1$ and $f_2$ on $\R^n$, there exists a $\sigma$-sparse family $\calS \subset \calG$, such that
$$
|\langle Tf_1, f_2 \rangle| \le C_{\sigma, r, s, T} \Lambda_{\calS, r, s'} (f_1, f_2), 
$$
where $s'$ is the conjugate of $s$. 
\end{thm}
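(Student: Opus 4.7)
The plan is to adapt the recursive stopping-time strategy for sparse domination---developed in the translation-invariant setting by Lacey and refined in subsequent work of Oberlin, Cladek--Ou and Bagchi--Hait--Roncal--Thangavelu---to the non-translation-invariant Radon context. The dyadic grid of Section 2 plays the role of Euclidean dyadic cubes, while Street's quantitative scaling maps from Section 4 play the role of Euclidean dilations.

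First I would decompose the kernel into single scales: write $K = \sum_{j \ge 0} K^{(j)}$ with $K^{(j)}(t) = K(t) \eta(2^j t)$ for a suitable annular cut-off $\eta$. After rescaling in $t$, each $K^{(j)}$ belongs, uniformly in $j$, to a bounded subset $\calB \subset C_0^\infty(B^k(a))$, so the single-scale operators $T^{(j)} f(x) = \psi(x) \int f(\gamma_t(x)) K^{(j)}(t) dt$ fall under the $L^p$-improving hypothesis \eqref{190310eq02}. The natural region of influence of $T^{(j)}$ in the $x$-variable is not a Euclidean ball but a Carnot--Carath\'eodory ball in the sub-Riemannian geometry generated by $\gamma$; here the scaling map of Section 4 is used to pull such a ball back to a fixed reference ball, so that \eqref{190310eq02} applies with constants uniform in $j$ and in the location of the ball.

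Next I would run the standard recursion. Fix a cube $Q_0 \in \calG$ containing the supports of $f_1$, $f_2$ and a neighborhood of $0$, put $Q_0 \in \calS$, and for each $Q \in \calS$ already selected let $\calS(Q)$ be the collection of maximal dyadic subcubes $P \subsetneq Q$ on which
\[
\langle f_1 \rangle_{P, r} > \Lambda \langle f_1 \rangle_{Q, r} \quad \text{or} \quad \langle f_2 \rangle_{P, s'} > \Lambda \langle f_2 \rangle_{Q, s'}.
\]
Choosing $\Lambda = \Lambda(\sigma)$ large (using the weak $(r,r)$ and $(s', s')$ estimates for the dyadic maximal function on $(\calG, |\cdot|)$) yields $\sum_{P \in \calS(Q)} |P| \le (1 - \sigma) |Q|$, so iterating produces a $\sigma$-sparse family. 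The theorem will follow once one establishes, for every $Q \in \calS$ with $E_Q := Q \setminus \bigcup_{P \in \calS(Q)} P$, the single-tile inequality
\[
\bigl| \bigl\langle T(f_1 \mathbf{1}_Q), f_2 \mathbf{1}_{E_Q} \bigr\rangle \bigr| \lesssim |Q| \langle f_1 \rangle_{Q, r} \langle f_2 \rangle_{Q, s'}.
\]
I would prove this by splitting $T = \sum_j T^{(j)}$ and treating each scale separately: scales $j$ comparable to the scale of $Q$ are handled by transplanting $Q$ to a reference ball via the scaling map, applying \eqref{190310eq02} with H\"older of exponents $(r, s')$, and absorbing the extra averages using the stopping-time thresholds; much smaller scales (large $j$) are controlled by combining the cancellation condition (c) on $K$ with the modulus-of-continuity lemma of Section 4 to produce a geometric decay in $j$ that sums.

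The main obstacle I anticipate is uniformity in this last step. Without a group or dilation structure on $\R^n$, one has no translation-invariant Littlewood--Paley tool for the off-diagonal sum, so the $L^p$-improving bound \eqref{190310eq02} must be applied at every scale and every location with the \emph{same} constant. This is exactly what Street's scaling maps and the H\"ormander-type curvature condition of Christ--Nagel--Stein--Wainger purchase, by producing an approximate diffeomorphism that identifies the $\gamma$-ball of scale $2^{-j}$ near any base-point with a fixed unit ball. Making this reduction interact cleanly with the Hyt\"onen--Kairema dyadic grid and the Whitney decomposition of Section 2---so that the annular supports of the $K^{(j)}$, the effective supports of the $T^{(j)}$, and the stopping cubes $P \in \calS(Q)$ all align geometrically---is, I expect, the substantive technical work.
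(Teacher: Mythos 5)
Your scaffolding (single-scale decomposition of $K$, a stopping-time recursion over the Hyt\"onen--Kairema grid, and uniform $L^r\to L^s$ improvement transported by Street's scaling maps) is in the same spirit as the paper, but the step in which you actually sum over scales has a genuine gap. You claim that for scales $\del^j\ll \ell(Q)$ the terms $\langle T^{(j)}(f_1\one_Q), f_2\one_{E_Q}\rangle$ acquire geometric decay in $j$ ``by combining the cancellation condition (c) on $K$ with the modulus-of-continuity lemma.'' For merely bounded measurable $f_1,f_2$ this is not true: the mean-zero of the kernel piece only produces smallness when it is played against smoothness or cancellation of the functions, and $f_1\one_Q$, $f_2\one_{E_Q}$ have neither. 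The sum over the small scales is a genuinely singular object; it converges by almost-orthogonality between scales, not termwise, which is precisely what the Christ--Nagel--Stein--Wainger $L^p$ theorem (Theorem \ref{1116thmLpbdd}, Corollary \ref{1116corLpbdd}) encodes --- and your outline never invokes it. Consequently the single-tile inequality $|\langle T(f_1\one_Q), f_2\one_{E_Q}\rangle|\lesssim |Q|\,\langle f_1\rangle_{Q,r}\langle f_2\rangle_{Q,s'}$ cannot be closed with the two tools you list. (A secondary point: since $T(f_1\one_Q)$ spills into an enlargement $\kappa'Q$ rather than $Q$, pairing only against $f_2\one_{E_Q}$ with $E_Q\subset Q$ does not exhaust $\langle Tf_1,f_2\rangle$; the paper keeps the enlarged averages $\langle f_2\rangle_{\kappa'Q,s'}$ in Proposition \ref{MainTheorem2} and removes them at the end via the three-lattice theorem, Proposition \ref{TLT}.)

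What the paper does instead is to manufacture the cancellation your argument needs: it defines the exceptional set by the maximal functions $M_r f_1$, $M_{s'}f_2$, takes its Whitney decomposition (Lemma \ref{Whitney}), and performs Calder\'on--Zygmund decompositions of \emph{both} $f_1$ and $f_2$ adapted to the Whitney cubes. The good parts are handled by the full CNSW $L^p$ bound; the localized pieces $T_Q(f_1\one_Q)$ feed the recursion that builds the sparse family; the term where the Whitney cubes of $f_2$ are larger than the operating scale ($J_1$) is controlled by a finite-overlap counting argument plus the single-scale $L^r\to L^s$ improvement; and the critical term where both Whitney cubes are smaller than the scale ($J_2$) is controlled by pairing the mean-zero of $b_{2,Q'}$ with the modulus-of-continuity Lemma \ref{moduluscont}, applied through the local transition maps $\theta_j$, which yields decay $\del^{k_2\eta}$ in the \emph{relative} depth $k_2$ of the Whitney cube below the scale $\del^j$ --- not decay in $j$ itself. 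So the continuity lemma is indeed the right tool, but it must be aimed at the mean-zero bad part of $f_2$ produced by the CZ/Whitney machinery; without that (or the CNSW bound for the non-cancellative part), the small-scale sum in your plan does not converge, and this is the substantive work your proposal leaves unaddressed.
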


The proof of this theorem consists of two parts. The first part is devoted to constructing an appropriate space of homogeneous type adapted to $T$. The key ingredient to this construction is a quantitative scaling technique\footnote{The scaling map we are using in the current paper can be viewed as a particular case of the quantitative theorem of Frobenius, which was carefully studied in \cite{BS}.} due to Street in \cite{BS}. This powerful tool allows us to make all estimates uniformly at all ``scales" and avoids the original lifting argument in \cite{CNSW}, which would result in a weaker version of $L^p$ improving bound for the single scale operators. In general, the geometry underlying a Radon type transform is a sub-Riemannian geometry. This geometry gives rise to ``dyadic scales" needed to deveolop sparse domination. 

The second part is to find an approatiate approach to apply the sparse domination principle to the dyadic systems constructed in the previous step. Main difficulities under the current situation are that this space of homogeneous type lacks both a dilation structure and a group strucure, in particular, we are not able to identify\footnote{Here by identifying two different ``balls", we are referring to the following easy facts: in $\R^n$, one has $B(a, r)=a+B(0, r)$ for any $a \in \R^n$ and $r>0$; similarly, in $\HH^n$, $B(a, r)=a \cdot B(0, r)$ for any $a \in \HH^n$, where $\cdot$ denotes the group operator in the Heisenburg group $\HH^n$. } two ``balls" with same radius but different centers. The main idea to overcome these difficulties is to observe that by sparse domination principle,  these two ``balls" are indeed not too ``far away" from each other, and it allows us to make a uniform estimate to their first common dyadic ancestor. This is also new, and it gives an example on how sub-Riemannian geometry and sparse domination theory interact with each other.  

\begin{rem}
Throughout this paper, for $a, b \in \R$, $a \lesssim b$ ($a \gtrsim b$, respectively) means there exists a positive number $C$, which is independent of $a$ and $b$, such that $a \leq Cb$ ($ a \geq Cb$, respectively). In particular, if both $a \lesssim b$ and $a \gtrsim b$ hold, then we say $a \simeq b$. Moreover, for $A, B \subseteq \R^n$, $A \Subset B$ means the closure of $A$ is a compact subset of $B$.
\end{rem}

\bigskip

\section{Space of homogeneous type}

\bigskip

In this section, we will recall some general theory of space of homegeneous type, whose geometry plays an important role in defining the sparse operator. We also prove a quantitative version of dyadic Whitney decomposition associated to the Hyt\"onen-Kairema decomposition. 

\begin{defn} \label{defnSHT}
A \emph{space of homogeneous type} is an ordered triple $(X, \rho, \mu)$, where $X$ is a set, $\rho$ is a quasimetric, that is
\begin{enumerate}
\item $\rho(x, y)=0$ if and only if $x=y$;
\item $\rho(x, y)=\rho(y, x)$ for all $x, y \in X$;
\item $\rho(x, y) \le \kappa(\rho(x, z)+\rho(y, z))$, for all $x, y, z \in X$. 
\end{enumerate}
for some constant $\kappa>0$, and the non-negative Borel measure $\mu$ is doubling, that is,
$$
0<\mu(B(0, 2r)) \le D \mu(B(0, r))<\infty, \quad \textrm{for some} \ D>0, 
$$ 
where $B(x, r):=\left\{y \in X, \rho(x, y)<r\right\}$, for $x \in X$ and $r>0$. 
\end{defn} 

We use the following notation in the sequel. For any sets $X_1, X_2 \subset X$, we write $\textrm{diam}(X_1)=\sup\limits_{x, y \in X_1}\rho(x, y)$  and $\textrm{dist}(X_1, X_2)=\inf\limits_{x \in X_1, y \in X_2} \rho(x, y)$. In particular, we abbreviate $\textrm{dist}(X_1, x)=\textrm{dist}(X_1, \{x\})$ for a set $X_1 \subset X$ and $x \in X$.

\begin{defn}
 $(X, \rho, \mu)$ is said to be \emph{$A$-uniformly perfect} if there exists a constant $A>0$, such that for each $x \in X$ and $0<r<\textrm{diam}(X)$, there is a point $y \in X$ which satisfies $A^{-1}r \le \rho(x, y) \le  r$.
\end{defn}

The following result is crucial, which we refer as the \emph{Hyt\"onen-Kairema decomposition}. 

\begin{thm}[{{\cite[Theorem 2.1]{MR3302574}}, \cite{HK}}] \label{dyadicSHT}
There exists a family of sets (\emph{we refer it as a dyadic grid in space of homogeneous type}) $\calD=\bigcup\limits_{k \in \Z} \calD_k$, called a dyadic decomposition of $X$, constants $\frakC>1, 0<\del<\frac{1}{100}, 0<\epsilon<1$, and a corresponding family of points $\{x_c(Q)\}_{Q \in \calD}$, such that
\begin{enumerate}
\item $X=\bigcup\limits_{Q \in \calD_k} Q$, for all $k \in \Z$;
\item For any $Q_1, Q_2 \in \calD$, if $Q_1 \cap Q_2 \neq \emptyset$, then $Q_1 \subseteq Q_2$ or $Q_2 \subseteq Q_1$;
\item For every $Q \in \calD_k$, there exists at least one child cube $Q_c \in \calD_{k+1}$ such that $Q_c \subseteq Q$;
\item For every $Q \in \calD_k$, there exists exactly one parent cube $\hat{Q} \in \calD_{k-1}$ such that $Q \subseteq \hat{Q}$;
\item If $Q_2$ is a child of $Q_1$ , then $\mu(Q_2) \ge \epsilon \mu(Q_1)$;
\item $B(x_c(Q), \del^k)  \subset Q \subset B(x_c(Q), \frakC\del^k)$.
\end{enumerate}
\end{thm}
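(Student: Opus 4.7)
The plan is to construct the dyadic system by a Christ-type net scheme: produce a $\del^k$-separated, $\del^k$-dense net of centers at each scale $k$, assign each finer-scale center a unique ancestor at the next coarser scale, and then build each cube as the union of small balls whose ancestor chains pass through its center.  First I would fix $\del$ small enough relative to $\kappa$ (any $\del<\frac{1}{100}$ will suffice once $\frakC$ is suitably chosen); by Zorn's lemma, for each $k\in\Z$ pick a set $Z_k=\{x_\alpha^k\}_\alpha$ that is simultaneously $\del^k$-separated and $\del^k$-dense, so that the balls $B(x_\alpha^k,\del^k/2)$ are pairwise disjoint while the balls $B(x_\alpha^k,\del^k)$ cover $X$.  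Next, define an ancestor map $\pi\colon Z_{k+1}\to Z_k$ by sending each $x_\beta^{k+1}$ to a nearest point of $Z_k$, breaking ties via a fixed well-ordering of $Z_k$; this automatically gives each element of $Z_{k+1}$ exactly one parent in $Z_k$.

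With the nets and ancestor map in hand, I would define the raw cube $Q_\alpha^k$ as the union of $B(x_\beta^{k+m},\del^{k+m}/2)$ taken over all $m\ge 0$ and all $\beta$ whose iterated ancestor at scale $k$ equals $x_\alpha^k$.  A careful application of the quasi-triangle inequality, iterated across the scales $k,k+1,k+2,\dots$, bounds the distance from $x_\alpha^k$ to any point of $Q_\alpha^k$ by a geometric series in $\kappa\del$ whose sum is $\le\frakC\del^k$ once $\del<\frac{1}{100}$, giving the outer inclusion in (6); the inner inclusion $B(x_\alpha^k,\del^k)\subset Q_\alpha^k$ follows from $\del^k$-density of $Z_k$ together with the assignment rule.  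Properties (1)--(4) then fall out of the construction: the ancestor map and $\del^k$-density produce a partition of $X$ at each level, nestedness is built in because membership in $Q_\alpha^k$ is determined by the ancestor chain, each $Q_\alpha^k$ has at least one child (take any finer-scale center mapping to $x_\alpha^k$), and uniqueness of the parent is exactly the definition of $\pi$.

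For property (5), I would feed (6) into the doubling property.  If $Q_2$ is a child of $Q_1$ at scale $k+1$, then (6) gives $B(x_c(Q_2),\del^{k+1})\subset Q_2$ and $Q_1\subset B(x_c(Q_1),\frakC\del^k)$; since the quasi-triangle inequality and the ancestor construction bound $\rho(x_c(Q_1),x_c(Q_2))$ by $O(\del^k)$, we obtain $Q_1\subset B(x_c(Q_2),C\del^k)$ for some $C=C(\kappa,\frakC)$.  Iterating the doubling bound $\mu(B(\cdot,2r))\le D\mu(B(\cdot,r))$ a finite number $N=N(\kappa,\del,D)$ of times then yields $\mu(Q_1)\le D^N\mu(B(x_c(Q_2),\del^{k+1}))\le D^N\mu(Q_2)$, so $\epsilon=D^{-N}$ works.

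The main obstacle I expect is securing (1) and (6) simultaneously: one must argue that the cubes built from ancestor chains genuinely partition $X$ at each scale (no holes, no overlaps) while remaining trapped between the two concentric balls of radii $\del^k$ and $\frakC\del^k$.  This hinges on the convergence of $\sum_m (\kappa\del)^m$, which is where the numerical constraint $\del<\frac{1}{100}$ enters, and on a delicate inductive argument across finer scales to show that the raw cubes do not ``escape'' their target ball.  Once this geometric step is in place, the remaining properties reduce to straightforward bookkeeping and one invocation of the doubling hypothesis.
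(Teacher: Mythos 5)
There is a genuine gap, and it sits exactly at the point you flag as ``the main obstacle'': property (1), the exact covering at every level, does not follow from your raw-cube definition. You define $Q_\alpha^k$ as a union of \emph{half}-radius balls $B(x_\beta^{k+m},\del^{k+m}/2)$ over descendants of $x_\alpha^k$. But a maximal $\del^{k+m}$-separated net is only $\del^{k+m}$-dense, not $\del^{k+m}/2$-dense, and the nets at different scales are chosen independently of one another; hence a point $x$ can lie, at \emph{every} scale $m$, at distance strictly between $\del^{k+m}/2$ and $\del^{k+m}$ from all of $Z_{k+m}$ (one can arrange this adversarially already on $\R$), and such an $x$ belongs to no raw cube at any level. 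The convergence of $\sum_m(\kappa\del)^m$ only controls the outer inclusion in (6); it says nothing about holes. Enlarging the balls to full radius to force coverage destroys disjointness instead. This is precisely the known difficulty: Christ's original net construction yields cubes that cover only up to an exceptional null set, and the actual content of the Hyt\"onen--Kairema theorem being quoted here (which the paper does not reprove but imports from \cite{HK}) is the additional combinatorial work --- a consistent choice of parent relation and a simultaneous construction of closed, open and half-open cubes --- needed to turn the net scheme into a genuine partition of $X$ at every level with no exceptional set. Your sketch leaves exactly that step open.

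A second, smaller but real, problem is the normalization in (6). With $Z_k$ only $\del^k$-separated, two centers may lie at distance $\del^k$, and then $B(x_\alpha^k,\del^k)$ and $B(x_{\alpha'}^k,\del^k)$ will in general overlap, so they cannot both be contained in disjoint level-$k$ cubes; moreover, under the nearest-point parent rule, points of $B(x_\alpha^k,\del^k)$ whose covering net points at finer scales are closer to a neighboring center get swept into the neighboring cube. The inner inclusion can only hold with radius $c_0\del^k$ for a small constant $c_0=c_0(\kappa)$ (equivalently, with nets separated at scale $C\del^k$), after which one recovers the statement as written by renaming $\del$ and $\frakC$. Your treatment of (2)--(5) is fine modulo these points: nestedness and unique parents are indeed built into the ancestor-chain definition, and (5) does follow from (6) plus doubling exactly as you argue.
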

We will refer to the last property as the \emph{sandwich property}. The sets $Q \in \calD$ are referred to as \emph{dyadic cubes} with center $x_c(Q)$ and sidelength $\ell(Q)=\del^k$, but we must emphasize that these are not cubes in any standard sence even if the underlying space is $\R^n$.

An interesting feature of this definition is the dilation of such a dyadic cube, which is defined as for any $\lambda>1$, 
$$
\lambda Q:=B(x_c(Q), \lambda \frakC \del^k).
$$
Note that this definition does not extend to the case when $\lambda \le 1$. We make a remark that due to this fact, the technique used under setting of space of homogeneous type is quite different from the approaches in the usual Euclidean spaces by Lacey, Cladek, Ou, etc, which depends heavily on the dilation with scale less than $1$. 

Here is a version of Whitney decomposition of space of homogeneous type based on the Hyt\"onen-Kairema decomposition.

\begin{lem} \label{Whitney}
Suppose that $(X, \rho, \mu)$ is an $A$-uniform perfect space of homogeneous type, $Y$ is a closed subset of $X$, and $\Omega=X \backslash Y$. Then $\Omega$ has a Whitney decomposition $M_{\Omega} \subseteq \calD$ with respect to $\calD$, satisfying the following conditions:
\begin{enumerate}
\item $\Omega=\bigcup\limits_{Q \in M_{\Omega}} Q$;
\item For any $\frakc'>2\kappa^2\frakC$ and for each $Q \in M_{\Omega}$, we have
$$
\left(\frac{\frakc'}{2\kappa^2 \frakC}-1\right) \textrm{diam}(Q) \le \textrm{dist}(Q, Y) \le \frac{A\frakc'}{\del} \textrm{diam}(Q).
$$
\item $Q \cap Q'=\emptyset, Q, Q' \in M_{\Omega}$;
\item For any $Q \in M_\Omega$, there exists a $x \in \Omega$, such that
$$
B(x, \del^k) \subseteq Q \subseteq B(x, \frakC \del^k)
$$
for some $k$.
\end{enumerate}
Here, the constant $\frakC$ and $\del$ are defined in Theorem \ref{dyadicSHT}. 
\end{lem}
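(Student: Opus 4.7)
The plan is to adapt the classical Whitney decomposition of $\R^n$ to the present setting by replacing the standard dyadic cubes with the Hyt\"onen-Kairema grid $\calD$ and absorbing the quasi-metric constant $\kappa$ and the uniform perfect constant $A$ into the numerics of property (2). Fix $\frakc' > 2\kappa^2 \frakC$, set $c := \frac{\frakc'}{2\kappa^2 \frakC} - 1 > 0$, and for each $x \in \Omega$ and $k \in \Z$ let $Q(x, k) \in \calD_k$ denote the unique level-$k$ dyadic cube containing $x$. Define
\begin{equation*}
k(x) := \min \left\{ k \in \Z : \textrm{dist}(Q(x, k), Y) \ge c \cdot \textrm{diam}(Q(x, k)) \right\}, \quad Q(x) := Q(x, k(x)),
\end{equation*}
and take $M_\Omega$ to be the collection of distinct cubes $Q(x)$, $x \in \Omega$.

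First, I would verify that $k(x)$ exists. As $k \to +\infty$ the cubes $Q(x, k)$ shrink to $\{x\}$: one has $\textrm{diam}(Q(x, k)) \le 2\kappa \frakC \del^k \to 0$, while a reverse quasi-triangle inequality keeps $\textrm{dist}(Q(x, k), Y)$ bounded below by a positive multiple of $\textrm{dist}(x, Y) > 0$, so the condition holds for all sufficiently large $k$. Conversely, fix any $y_0 \in Y$ (the case $Y = \emptyset$ is vacuous): then $\textrm{dist}(Q(x, k), Y) \le \rho(x, y_0)$ for every $k$, while the $A$-uniform perfect hypothesis furnishes a point in $B(x_c(Q(x, k)), \del^k) \subseteq Q(x, k)$ at distance at least $A^{-1} \del^k$ from the center, forcing $\textrm{diam}(Q(x, k)) \ge A^{-1} \del^k \to \infty$ as $k \to -\infty$; so the condition fails for all sufficiently negative $k$. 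Monotonicity---passing from $Q(x, k)$ to $Q(x, k+1)$ only increases distance to $Y$ and decreases diameter---makes the set of admissible $k$ an upward-closed half-line, so $k(x)$ is well-defined.

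Properties (1), (3), and (4) then follow with little work. Property (1) is immediate from $x \in Q(x)$; property (3) is the nested-or-disjoint dichotomy for dyadic cubes, together with the observation that a strictly comparable pair $Q(x_1) \subsetneq Q(x_2)$ in $M_\Omega$ would contradict the minimality of $k(x_1)$; property (4) is the sandwich property applied at $x_c(Q) \in Q \subseteq \Omega$. The lower bound in (2) is the defining inequality. For the upper bound, I would exploit the maximality of $Q(x)$: its parent $\hat{Q} := Q(x, k(x)-1)$ fails the condition, so $\textrm{dist}(\hat{Q}, Y) < c \cdot \textrm{diam}(\hat{Q})$. A single application of the quasi-triangle inequality then gives
\begin{equation*}
\textrm{dist}(Q(x), Y) \le \kappa \bigl( \textrm{diam}(\hat{Q}) + \textrm{dist}(\hat{Q}, Y) \bigr) < \kappa(1+c) \textrm{diam}(\hat{Q}) \le \frac{2\kappa^2 \frakC (1+c)}{\del} \del^{k(x)},
\end{equation*}
and substituting $\del^{k(x)} \le A \cdot \textrm{diam}(Q(x))$ (from uniform perfectness) together with the identity $1 + c = \frac{\frakc'}{2\kappa^2 \frakC}$ collapses the constants to exactly $\frac{A \frakc'}{\del}$, matching (2).

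I expect the main obstacle to be neither existence nor disjointness, which are standard once the dyadic structure is in hand, but rather the careful bookkeeping of the constant $\kappa$ through the quasi-triangle step and the essential use of the $A$-uniform perfect hypothesis to convert the scale parameter $\del^k$ into a genuine lower bound on $\textrm{diam}(Q(x))$. Without uniform perfectness the diameter could be arbitrarily small relative to $\del^k$, and the upper estimate in (2) would degenerate.
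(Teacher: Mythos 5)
Your argument is correct, but it builds the decomposition by a different selection rule than the paper. The paper stratifies $\Omega$ into the level sets $\Omega_k=\{x:\frakc'\del^k<\textrm{dist}(x,Y)\le \frakc'\del^{k-1}\}$, takes every level-$k$ cube meeting $\Omega_k$, proves \emph{both} bounds in (2) for all selected cubes (sandwich property, uniform perfectness, quasi-triangle inequality), and only then passes to maximal cubes to obtain disjointness. You instead run a stopping-time selection directly on the Whitney ratio $\textrm{dist}(Q,Y)\ge c\,\textrm{diam}(Q)$ with $c=\frac{\frakc'}{2\kappa^2\frakC}-1$: the lower bound in (2) is then automatic, disjointness comes from minimality of the stopping level rather than a separate maximality step, and the upper bound comes from failure of the condition at the parent plus one quasi-triangle application; your bookkeeping $\textrm{dist}(Q(x),Y)\le\kappa(1+c)\,\textrm{diam}(\hat{Q})$, $\textrm{diam}(\hat{Q})\le 2\kappa\frakC\del^{k(x)-1}$, $\del^{k(x)}\le A\,\textrm{diam}(Q(x))$ does collapse to exactly $\frac{A\frakc'}{\del}$. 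Both proofs use the same three ingredients; yours is a bit more economical (the Whitney inequality and disjointness are built into the selection), while the paper's describes the selected family explicitly in terms of the pointwise distance $\textrm{dist}(x,Y)$, which is sometimes convenient downstream.

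Two small points to patch. First, to show the admissible levels are bounded below you claim $\textrm{diam}(Q(x,k))\ge A^{-1}\del^k\to\infty$ as $k\to-\infty$; uniform perfectness only applies for radii $r<\textrm{diam}(X)$, so this fails when $\textrm{diam}(X)<\infty$ --- which is precisely the situation in the paper's application, where the underlying space is a bounded neighborhood $V$. The fix is one line: once $\del^k>\textrm{diam}(X)$ the sandwich property gives $Q(x,k)\supseteq B(x_c(Q(x,k)),\del^k)=X\supseteq Y$, so $\textrm{dist}(Q(x,k),Y)=0<c\,\textrm{diam}(Q(x,k))$ and the condition fails (here, as in the paper, one implicitly assumes $Y\neq\emptyset$). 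Second, for (1) you only record $\Omega\subseteq\bigcup_{Q\in M_\Omega}Q$; the reverse inclusion needs the remark that each selected cube satisfies $\textrm{dist}(Q,Y)\ge c\,\textrm{diam}(Q)>0$, hence $Q\cap Y=\emptyset$ --- the same observation the paper makes in its Claim 2.
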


\begin{proof}
We prove the result by following the idea in \cite{SJ}. Since $X$ is an space of homogeneous type, we apply Theorem \ref{dyadicSHT} and we obtain a dyadic grid $\calD$ with respect to constants $\frakC, \del, \epsilon$ and a corresponding family of points $\{x_c(Q)\}_{Q \in \calD}$. 

We now consider level sets, defined by 
$$
\Omega_k=\left\{x \in \Omega: \frakc'\del^k<\textrm{dist}(x, Y) \le \frakc'\del^{k-1} \right\}.
$$
Obviously, $\Omega=\bigcup\limits_{k=-\infty}^\infty \Omega_k$. 

We now make the initial choice of $Q$'s, and denote the resulting colleciton by $M_0 \subseteq \calD$. Our choice is made as follows:
$$
M_0=\bigcup_k \left\{Q \in \calD_k \bigg | Q \cap \Omega_k \neq \emptyset \right\}. 
$$

\bigskip

\textit{Claim 1: }  For any $\frakc'>2\kappa^2\frakC$, we have for any $Q \in M_0$, 
$$
\left(\frac{\frakc'}{2\kappa^2 \frakC}-1\right) \textrm{diam}(Q) \le \textrm{dist}(Q, Y) \le \frac{A\frakc'}{\del} \textrm{diam}(Q).
$$

\bigskip

First, by Theorem \ref{dyadicSHT} and the assumption that $Q \in \calD_k$ for some $k \in \Z$, there exists a point $x=x_c(Q) \in \Omega$, such that
$$
B(x_c(Q), \del^k) \subseteq Q \subseteq B(x_c(Q), \frakC\del^k),
$$
which, combining with the $A$-unifrm perfectness, implies that、
\begin{equation} \label{Whitney01}
\frac{\del^k}{A} \le \textrm{diam}(Q) \le 2\kappa \frakC\del^k.
\end{equation}
On one hand, since $Q \in M_0$, there exists $x \in Q \cap \Omega_k$, and hence
$$
\textrm{dist}(Q, Y) \le \textrm{dist}(x, Y) \le \frakc' \del^{k-1} \le \frac{A\frakc'}{\del} \textrm{diam}(Q).
$$
On the other hand, for any $q \in Q$ and $y \in Y$, we have
$$
\rho(x, y) \le \kappa( \rho(x, q)+\rho(q, y)),
$$
which implies
$$
\rho(q, y) \ge \frac{\rho(x, y)}{\kappa}-\rho(x, q) \ge \frac{\textrm{dist}(x, Y)}{\kappa}-\textrm{diam}(Q) \ge \frac{\frakc'\del^k}{\kappa}-2\kappa \frakC\del^k. 
$$
Taking the infimum with respect $q$ and $y$ in the above inequality, we have
$$
\textrm{dist}(Q, Y) \ge \frac{\frakc'\del^k}{\kappa}-2\kappa \frakC\del^k \ge \left(\frac{\frakc'}{2\kappa^2 \frakC}-1\right) \textrm{diam}(Q).
$$

\bigskip

\textit{Claim 2: $\Omega=\bigcup\limits_{Q \in M_0} Q$.} 

\bigskip

Indeed, for any $x \in \Omega$, there exists some $k \in \Z$, such that $x \in \Omega_k$. Moreover, since $X=\bigcup\limits_{Q \in \calD_k} Q$, it follows that $x \in Q \in \calD_k$. Thus, $\Omega \subseteq \bigcup\limits_{Q \in M_0} Q$. On the other hand, for any $Q \in M_0$, by Claim 1, the choice of $\frakc'$ ensures that $\textrm{dist}(Q, Y)>0$, which implies that $Q \in \Omega$. 

It is also clear that for each cube in the collection of $M_0$, (4) is satisfied. However, the last requirement (3) may not be satisfied. To finish the proof, we need to furnish our choice of the dyadic cubes from $Q_0$, by eliminating $Q$'s which were really unnecessary. We require the following observation. Suppose $Q \in \calD_k$ and $Q' \in \calD_{k'}$ with $Q \cap Q' \neq \emptyset$, then one of two must be contained in the other, since both of these cubes are taken from $\calD$. Start now with any $Q \in M_0$, and consider the unique maximal parent contains it. We let $M_\Omega$ denote the collection of maximal $Q$'s in $M_0$, which clearly satisfies all the conditions listed above. 
\end{proof}

The following propetries are important in the theory of dyadic calculas. The first one is the Lebesgue differentiation theorem. 

\begin{prop}[{\cite[Corollary 2.5]{MR3302574}}] \label{LDT}
Given a space of homogeneous type $(X, \rho, \mu)$ and a dyadic grid $\calD$ that satisfies the hypotheses of Theorem \ref{dyadicSHT}, then for $\mu$-almost every $x \in X$, if $\{Q_k\}_{k=1}^\infty$ is the sequence of dyadic cubes in $\calD$ such that $\bigcap\limits_{k=1}^\infty Q_k=\{x\}$, then for $f$ measurable, 
$$
\lim_{k \to \infty} \frac{1}{\mu(Q_k)} \int_{Q_k} |f(y)-f(x)|d\mu(y)=0.
$$
\end{prop}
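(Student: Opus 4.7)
The approach is the classical three-ingredient proof of the Lebesgue differentiation theorem --- shrinking diameters, weak-$(1,1)$ boundedness of a dyadic maximal function, and approximation by a convenient dense class --- specialized to the Hyt\"onen-Kairema grid $\calD$. I will implicitly assume $f \in L^1_{\mathrm{loc}}(\mu)$, since otherwise the averages need not be defined.

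First, I would observe that the sandwich property (item (6) of Theorem \ref{dyadicSHT}) forces $\mathrm{diam}(Q_k) \le 2\kappa\frakC\delta^{m_k}$ when $Q_k \in \calD_{m_k}$, and since $\bigcap_k Q_k = \{x\}$ rules out $m_k$ staying bounded, one has $\mathrm{diam}(Q_k) \to 0$. Second, I would introduce the dyadic maximal operator
$$
M_\calD f(x) := \sup_{x \in Q \in \calD} \frac{1}{\mu(Q)} \int_Q |f|\,d\mu
$$
and establish its weak-$(1,1)$ bound: by the nesting property (item (2)), the super-level set $\{M_\calD f > \lambda\}$ is the disjoint union of the maximal dyadic cubes on which the average of $|f|$ exceeds $\lambda$, and summing gives $\mu(\{M_\calD f > \lambda\}) \le \|f\|_{L^1}/\lambda$. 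Third, I would identify a dense subclass of $L^1(\mu)$ suited to the dyadic structure, namely finite linear combinations of indicator functions of dyadic cubes. For such an indicator $\chi_P$ with $P \in \calD$, nesting together with $\mathrm{diam}(Q_k) \to 0$ implies that for all $k$ sufficiently large either $Q_k \subseteq P$ or $Q_k \cap P = \emptyset$, so $\frac{1}{\mu(Q_k)} \int_{Q_k} \chi_P\, d\mu$ equals $\chi_P(x)$ for all such $k$.

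Fourth, I would close by the standard splitting argument: localize to a cube $Q_0 \in \calD$, write $f\chi_{Q_0} = \phi + h$ with $\phi$ in the dense class and $\|h\|_{L^1}<\epsilon$, and estimate, via the triangle inequality $|f(y)-f(x)| \le |\phi(y)-\phi(x)|+|h(y)|+|h(x)|$,
$$
\limsup_k \frac{1}{\mu(Q_k)} \int_{Q_k}|f(y) - f(x)|\, d\mu(y) \le M_\calD h(x) + |h(x)|
$$
for $x \in Q_0$, since the $\phi$-contribution vanishes by Step 3. Weak-$(1,1)$ of $M_\calD$ and the Chebyshev inequality applied to $|h|$ yield that the set where this $\limsup$ exceeds $2\lambda$ has measure at most $2\epsilon/\lambda$; sending $\epsilon \to 0$, then $\lambda \to 0$, and finally exhausting $X$ by dyadic cubes $Q_0$ (possible by item (1) of Theorem \ref{dyadicSHT}) yields the almost-everywhere conclusion.

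The main obstacle is the density step. In an abstract space of homogeneous type there is no a priori supply of continuous compactly supported functions, so the classical approximation route is unavailable. The remedy is to use indicator functions of dyadic cubes as the dense class, which in turn requires writing open sets as countable disjoint unions of dyadic cubes and then approximating Borel sets by open ones via outer regularity of the doubling measure $\mu$. The Whitney-type decomposition just established in Lemma \ref{Whitney} provides precisely the needed dyadic decomposition of open sets, making the argument self-contained within the framework of this section.
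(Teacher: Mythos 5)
The paper does not prove Proposition \ref{LDT} at all: it is imported by citation from \cite{MR3302574}, so there is no internal argument to measure you against. What you propose is the standard three-ingredient proof (shrinking diameters, weak-$(1,1)$ bound for the dyadic maximal operator, approximation from a dense class), and in substance it is correct: the weak-$(1,1)$ estimate via maximal dyadic cubes works once one checks that maximal cubes exist (averages over ancestors tend to $0$ when $\mu(X)=\infty$, and when $X$ is bounded the coarse generations reduce to the single cube $X$), and your Step 4 closes correctly because Step 3 in fact shows that $\phi$ is \emph{eventually constant equal to} $\phi(x)$ on $Q_k$, so the $\phi$-term vanishes identically for large $k$.

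Two points should be tightened. First, the assertion that $\bigcap_k Q_k=\{x\}$ ``rules out $m_k$ staying bounded'' is not by itself an argument in a noncompact space of homogeneous type; the clean formulation is to take $Q_k$ to be the generation-$k$ cube containing $x$, whence $\textrm{diam}(Q_k)\le 2\kappa\frakC\delta^k\to 0$ directly from the sandwich property of Theorem \ref{dyadicSHT}, which is what the statement intends. Second, in the density step you do not need (and should not invoke) Lemma \ref{Whitney}: its hypotheses ($A$-uniform perfectness, a closed nonempty complement) are not assumed in Proposition \ref{LDT}, and they are unnecessary, since for any open $\Omega$ the maximal dyadic cubes contained in $\Omega$ already give a countable pairwise disjoint cover (every $x\in\Omega$ lies in some small cube of its chain inside $\Omega$ because diameters shrink). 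The place where care is genuinely needed is the outer regularity of $\mu$ that you use to pass from Borel sets to open sets: in a quasi-metric space balls need not be open, so one should either note that doubling quasi-metric spaces are separable and metrizable (Mac\'ias--Segovia), with $\mu$ finite on balls, so the usual regularity theory applies, or simply observe that Lipschitz functions of $\rho$ are plentiful, so the classical dense class of continuous functions is in fact available --- your stated worry on this point is overstated. In the application made in this paper, $\rho$ is a continuous Carnot--Carath\'eodory metric and $\mu$ is Lebesgue measure, so these regularity issues disappear entirely.
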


We will use this result to conclude that later in the Calder\'on-Zygmund decomposition, the ``good'' function is bounded almost everywhere. 

The second important property is the Calder\'on-Zygmund decomposition. Recall the dyadic Hardy-Littlewood maximal operator associated to the given dyadic grid $\calD$ is defined as
$$
M^{\calD} f(x)=\sup_{x \in Q, Q \in \calD} \frac{1}{\mu(Q)} \int_{Q} |f(x)|d\mu(x), \quad x \in X. 
$$
We need the following result. 

\begin{prop}[{\cite[Theorem 2.8]{MR3302574}}] \label{CZdeco}
Given a space of homogeneous type $(X, \rho, \mu)$ such that $\mu(X)<\infty$, and a dyadic grid $\calD$ on it, suppose $f$ is a function such that $\int_X |f(x)| d\mu(x)<\infty$. Then for any $\lambda>\frac{1}{\mu(X)} \int_X |f(x)| d\mu(x)<\infty$, there exists a family $\{Q_j\} \subset \calD$ and functions $b$ and $g$ such that
\begin{enumerate}
\item $f=b+g$;
\item $g=f \one_{\{x: M^{\calD} f(x) \le \lambda \}}+\sum\limits_j \langle f \rangle _{Q_j}$;
\item for $\mu$-a.e. $x \in X$, $|g(x)| \le C_X \lambda$;
\item $b=\sum\limits_j b_j$, where $b_j=(f-\langle f\rangle_{Q_j}) \one_{Q_j}$;
\item $\textrm{supp}(b_j) \subset Q_j$ and $\int_{Q_j} b_j(x)d\mu(x)=0$.
\end{enumerate}
\end{prop}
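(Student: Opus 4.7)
The plan is to carry out the classical Calderón--Zygmund stopping-time argument, adapted to the Hytönen--Kairema grid $\calD$. Set
$$
E_\lambda := \{x \in X : M^{\calD} f(x) > \lambda\},
$$
and for each $x \in E_\lambda$ let $Q(x) \in \calD$ be the \emph{maximal} dyadic cube containing $x$ with $\langle |f| \rangle_{Q(x)} > \lambda$. Such a maximal cube exists: by property (1) of Theorem \ref{dyadicSHT} each level $\calD_k$ partitions $X$, and combined with $\mu(X)<\infty$ and the hypothesis $\lambda > \frac{1}{\mu(X)} \int_X |f|\,d\mu$, averages along any upward ancestry chain are eventually forced below $\lambda$. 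Enumerate the distinct cubes so produced as $\{Q_j\}$; they are pairwise disjoint by maximality together with property (2) of Theorem \ref{dyadicSHT}, and $\bigcup_j Q_j = E_\lambda$. Then define $b_j := (f - \langle f \rangle_{Q_j})\one_{Q_j}$, $b := \sum_j b_j$, and $g := f - b$.

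Properties (1), (4), (5) are immediate from this definition. For (2), note that on each $Q_j$ one has $g = \langle f \rangle_{Q_j}$, while on $X \setminus \bigcup_j Q_j = X \setminus E_\lambda$ one has $g = f$, so $g = f\one_{\{M^{\calD} f \le \lambda\}} + \sum_j \langle f \rangle_{Q_j} \one_{Q_j}$, which is exactly the formula claimed.

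The key quantitative point is (3). On $X \setminus E_\lambda$, every dyadic cube $Q \ni x$ satisfies $\langle |f| \rangle_Q \le M^{\calD} f(x) \le \lambda$; shrinking $Q$ down to $\{x\}$ and applying the Lebesgue differentiation theorem (Proposition \ref{LDT}) yields $|g(x)| = |f(x)| \le \lambda$ for $\mu$-a.e.\ $x$ in this region. On each stopping cube $Q_j$, maximality gives that the parent $\hat{Q}_j$ (which exists by Theorem \ref{dyadicSHT}(4)) satisfies $\langle |f| \rangle_{\hat{Q}_j} \le \lambda$, so
$$
|g|\big|_{Q_j} = |\langle f \rangle_{Q_j}| \le \langle |f| \rangle_{Q_j} \le \frac{\mu(\hat{Q}_j)}{\mu(Q_j)} \langle |f| \rangle_{\hat{Q}_j} \le \frac{\lambda}{\epsilon}
$$
by Theorem \ref{dyadicSHT}(5). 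Combining the two regions gives $\|g\|_{L^\infty} \le \lambda/\epsilon$, so (3) holds with $C_X = 1/\epsilon$.

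The main obstacle I anticipate is the verification that the stopping family is well defined in this abstract setting, since the Hytönen--Kairema grid need not have a single ``root'' cube. This is where the hypotheses $\mu(X)<\infty$ and $\lambda > \frac{1}{\mu(X)} \int_X |f|\,d\mu$ are essential: they ensure that along the ancestry chain $Q \subset \hat{Q} \subset \hat{\hat{Q}} \subset \cdots$ one must eventually reach an ancestor with average below $\lambda$, so the maximality step in the selection procedure is meaningful. Beyond this point the argument is entirely parallel to the Euclidean Calderón--Zygmund decomposition.
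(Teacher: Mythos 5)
Your proposal is correct: the paper does not prove this proposition but simply cites it from \cite{MR3302574}, and your stopping-time argument (maximal dyadic cubes with $\langle |f|\rangle_Q>\lambda$, which exist because $\mu(X)<\infty$ and $\lambda>\frac{1}{\mu(X)}\int_X|f|\,d\mu$ force the coarse-scale averages below $\lambda$) is exactly the standard proof given in that reference. The two key quantitative points — $|f|\le\lambda$ a.e.\ off the stopping cubes via Proposition \ref{LDT}, and $|\langle f\rangle_{Q_j}|\le \lambda/\epsilon$ via the parent cube and Theorem \ref{dyadicSHT}(5) — are handled correctly, yielding $C_X=1/\epsilon$.
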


The third important property is the three Lattice Theorem in the setting of space of homogeneous type, which was proved by Hyt\"onen and Kairema.

\begin{prop}[{\cite[Theorem 4.1]{HK}}] \label{TLT}
Suppose $96 \kappa^6 \del \le 1$. Then there exists a finite collection of dyadic grids $\calD^t, t=1, 2, \dots, K_0$, such that for any ball $B=B(x, r) \subset X$, there exists a dyadic cube $Q \in \calD^t$, such that
$$
B \subseteq Q \quad \textrm{and} \quad \ell(Q) \le \widetilde{\frakC}r.
$$
Here,  $\widetilde{\frakC}$ is an absolute constant which only depends on $\kappa$ and $\del$. Moreover, the constants $\frakC_t, \del_t$ and $\varepsilon_t$ constructed in Theorem \ref{dyadicSHT} can be taken to be the same, that is, $\frakC_1=\frakC_2=\dots=\frakC_{K_0}$, $\del_1=\del_2=\dots=\del_{K_0}$ and $\varepsilon_1=\varepsilon_2=\dots=\varepsilon_{K_0}$. 
\end{prop}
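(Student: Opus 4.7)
The plan is to follow the Hyt\"onen--Kairema adjacent-dyadic-systems construction, building the grids $\calD^t$ simultaneously by varying the reference center points used in the proof of Theorem \ref{dyadicSHT}. Recall that each dyadic grid there is produced from a nested sequence of maximal $\del^k$-separated nets $\{x_c^{k,\alpha}\}_\alpha$ together with a rule assigning each scale-$k$ center to a parent at scale $k-1$. Different admissible choices of these nets and of parent assignments yield different dyadic grids, all sharing the same structural constants $\frakC, \del, \varepsilon$; this is exactly what will make the uniformity assertion $\frakC_1 = \cdots = \frakC_{K_0}$, $\del_1 = \cdots = \del_{K_0}$, $\varepsilon_1 = \cdots = \varepsilon_{K_0}$ automatic, so that the bulk of the work lies elsewhere.

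The heart of the argument is a covering lemma: I want to choose a finite family $\calD^1, \dots, \calD^{K_0}$ so that for every $x \in X$ and every $k \in \Z$, at least one $\calD^t$ contains a scale-$k$ center $y$ with $\rho(x, y) \le c\,\del^k$, where $c > 0$ is a small constant depending only on $\kappa$ and $\frakC$. Granting this, given any ball $B = B(x, r)$ I choose the unique integer $k$ with $\del^{k+1} < r \le \del^k$ and apply the covering property at scale $k - m$, where $m$ is an absolute integer depending on $\kappa$, $\frakC$, and $\del$. Writing $Q$ for the scale-$(k-m)$ cube of the selected $\calD^t$ centered at the resulting $y$, the sandwich property gives $B(y, \del^{k-m}) \subseteq Q \subseteq B(y, \frakC \del^{k-m})$, while the quasi-triangle inequality together with $\rho(x, y) \le c\,\del^{k-m}$ and $r \le \del^k = \del^m \del^{k-m}$ shows $\rho(y,z) \le \kappa(c\,\del^{k-m} + r)$ whenever $z \in B(x,r)$. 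Taking $m$ large enough that $\del^{-m}(\frakC - \kappa c) > \kappa$ then forces $B(x,r) \subseteq Q$, and the side-length bound $\ell(Q) = \del^{k-m} \le \del^{-(m+1)} r$ yields $\widetilde{\frakC} = \del^{-(m+1)}$.

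To prove the covering lemma itself, I would adapt the familiar Euclidean trick in which three grids are obtained by translating a single one by $0$ and $\pm \tfrac{1}{3}$ times the dyadic side length. A space of homogeneous type supports no such translation, so instead one enumerates the finitely many admissible ``local configurations'' near each scale: the doubling condition bounds the number of scale-$k$ centers of any one grid that can lie in a fixed ball of radius comparable to $\del^{k-1}$, and a pigeonhole over these configurations produces $K_0 = K_0(\kappa, \del, D)$. The hypothesis $96 \kappa^6 \del \le 1$ enters precisely here: it guarantees enough room to iterate the quasi-triangle inequality across a bounded number of scales without the $\kappa$-losses swamping the $\del^k$-separation of the nets, and in particular it allows the chain of constants $c$, $m$, and $\widetilde{\frakC}$ above to be chosen compatibly.

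The main obstacle I anticipate is exactly this combinatorial covering step. In $\R^n$ one leverages the group structure to get away with three translated grids, whereas in the present setting one must substitute a doubling-based counting argument and keep careful track of how the final constants $K_0$ and $\widetilde{\frakC}$ depend on $\kappa$, $\del$, and $D$. Once the covering lemma is in hand, Proposition \ref{TLT} follows as a fairly routine application of the sandwich property from Theorem \ref{dyadicSHT}.
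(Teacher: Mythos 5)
First, note that the paper does not prove this proposition at all: it is quoted verbatim from Hyt\"onen--Kairema \cite[Theorem 4.1]{HK}, so what you are really attempting is a reproof of that external theorem. Your reduction of the proposition to the covering lemma is the easy half, and it is essentially right, though the displayed inequality is garbled: to force $B(x,r)\subseteq Q$ you must land inside the \emph{inner} ball of the sandwich property, i.e.\ you need $\kappa\bigl(c\,\del^{k-m}+r\bigr)\le \del^{k-m}$, equivalently $\kappa(c+\del^{m})\le 1$ (so in particular $c<1/\kappa$); your condition $\del^{-m}(\frakC-\kappa c)>\kappa$ only places $B(x,r)$ inside $B(y,\frakC\del^{k-m})$, which contains $Q$ rather than being contained in it. This is fixable, and the side-length bound $\ell(Q)=\del^{k-m}\le\del^{-(m+1)}r$ is fine.

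The genuine gap is the covering lemma itself, which is the entire content of the Hyt\"onen--Kairema theorem and which your sketch does not actually establish. The difficulty is not a per-scale count: for a fixed $k$, doubling indeed bounds how many $\del^{k}$-separated centers fit in a ball of radius comparable to $\del^{k-1}$, and a pigeonhole shows that finitely many $\del^{k}$-separated families can jointly form a $c\,\del^{k}$-net. But each $\calD^{t}$ must be a single dyadic grid valid at \emph{all} scales simultaneously: its generation-$k$ centers must be $\del^{k}$-separated, compatible with a parent--child structure across scales (so that the nesting, the sandwich property, and the measure-ratio bound $\varepsilon$ of Theorem \ref{dyadicSHT} all hold with uniform constants), and at the same time, for every $x$ and every $k$, some grid in the family must have a generation-$k$ center within $c\,\del^{k}$ of $x$ with $c<1/\kappa$. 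A pigeonhole over ``local configurations'' performed scale by scale produces local choices depending on $(x,k)$; it does not explain how to glue these into finitely many \emph{global} center systems with consistent cross-scale nesting, nor why the same finite index set $t=1,\dots,K_{0}$ works at every scale. This gluing is exactly where Hyt\"onen--Kairema do their real work (new centers at scale $k$ are selected from the scale-$(k+1)$ reference net near the old centers, with a careful labelling argument bounding the number of systems), and it is also where the hypothesis $96\kappa^{6}\del\le 1$ is genuinely used --- not merely, as you suggest, to ``iterate the quasi-triangle inequality'', but to guarantee that the perturbed centers still generate honest dyadic systems with the stated constants. As written, your argument assumes the conclusion of that construction rather than providing it; either carry out the selection argument in detail or, as the paper does, cite \cite{HK}.
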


\bigskip

\section{Sub-Riemannian geometry}

\bigskip

In this section, we first study the underlying space of homogeneous type inherited in the singular Radon transform \eqref{eq01} under an appropriate ``curvature condition", and therefore, by the results in Section 2, we can construct dyadic systems on it. In the second part, shall state and prove a result describing the uniformness of a class of dyadic decompositions induced by this inherited geometry. 

This section can be viewed as the first part of our construction for the space of homogeneous type used in the sparse domination principle.

\medskip

\subsection{The space of homogeneous type induced by Sub-Riemannian geometry}

\medskip

Recall that $\gamma$ is a $C^\infty$ mapping
$$
(x, t) \mapsto \gamma(x, t)=\gamma_t(x)
$$
from a neighborhood of the point $(0, 0) \in \R^n \times \R^k$ to $\R^n$, satisfying $\gamma(x, 0)=x$. Let us consider the $C^\infty$ vector field
\begin{equation} \label{correctCZ1}
W(t, x):=\frac{\partial}{\partial \varepsilon} \bigg |_{\varepsilon=1} \gamma_{\varepsilon t} \circ \gamma_t^{-1}(x),
\end{equation}
with its formal Taylor expansion
\begin{equation} \label{correctCZ}
W(t) \sim \sum_\alpha t^\alpha X_\alpha.
\end{equation}
Here $\{X_\alpha: 0 \neq \alpha \in \N^k\}$ is a unique collection of $C^\infty$ vector fields, all defined in some common neighborhood $U$ of $0 \in \R^n$; while \eqref{correctCZ} means that for each $N \in \N$, there exists  some constant $C_N>0$, such that for all $x \in U$, 
\begin{equation} \label{190320eq01}
\left|\exp \left(W(t, x) \right) x- \exp \left( \sum_{0<|\alpha|<N} t^\alpha X_\alpha/ \alpha! \right)x \right| \le C_N |t|^N,
\end{equation}
when $t$ is sufficiently small.

\begin{defn} \label{Hormanderdefn01}
We say a collection of vector fields $\frakS$ satisfies \emph{H\"ormander's condition} at $0$ if the Lie algebra generated by the vector fields in $\frakS$ spans the tangent space to $\R^n$ at $0$.  Moreover, if the length of the commutators in a spanning set is at most $m$, we say that $\frakS$ is of \emph{type $m$}. 
\end{defn}

\begin{defn} \label{Hormanderdefn}
We say that $\gamma$ is \emph{curved to finite order} at $0$ if the collection of vector fields $X_\alpha$ defined in \eqref{correctCZ} satisfies the H\"ormander's condition at $0$.
\end{defn}

\begin{rem}
An alternative way to define a curve $\gamma$ which is curved to finite order at $0$ is to consider those vector fields given by the Taylor expansion of $\gamma$ itself. More precisely, we may write
$$
\gamma(t, x) \sim \exp\left( \sum t^\alpha \widehat{X_\alpha}/ \alpha! \right) x
$$
in the sense of \eqref{190320eq01}. \cite[Theorem 9.1]{BS} asserts that the collection $\left\{X_\alpha: 0 \neq \alpha \in  \N^k \right\}$ satisfies the H\"ormander's condition at $0$ if and only if the collection $\left\{\widehat{X_\alpha}: 0 \neq \alpha \in  \N^k \right\}$ does. 
\end{rem}

This assumption turns out to be very important in defining the correct geometry, as well as proving the $L^p (1<p<\infty)$ boundedness of the operator \eqref{eq01}. Here, we list some consequences of H\"ormander's condition we need in the sequel. 

\medskip

\textit{1. Curved to finite order is equivalent to the following curvature condition $(\calC_J)$. }

\medskip

Christ, Nagel, Stein and Wainger \cite{CNSW} gave several conditions which are equivalent to $\gamma$ being curved to finite order at $0$. Among all these conditions, we are interested in the one which was denoted as $(\calC_J)$ by them. To define this condition, we first consider the iterates of the mapping $t \mapsto \gamma(x, t)$, namely, for any $1 \le j \le n$, define $\Gamma^1(x, t)=\gamma(x, t)$ and
$$
\Gamma^j(x, t_1, \cdots, t_j)=\gamma( \Gamma^{j-1}(x, t_1, \dots, t_{j-1}), t_j).
$$
Among these iterates, we single out the $n$-th iterate, 
$$
\Gamma(x, \tau)=\Gamma^n(x, \tau)
$$
for $\tau \in \R^{kn}$. The domain of the map $\tau \mapsto \Gamma(x, \tau)$ is a small neighborhood of $0 \in \R^{nk}$; its range is contained in a small neighborhood of $x \in \R^n$. 

Write $\tau=(\tau_1, \tau_2, \dots, \tau_{kn})$, where the coordinates belong to $\R$ and are ordered in any fixed manner. To each $n$-tuple $\xi=(\xi_1, \dots \xi_n)$ of elements of $\{1, 2, \dots, kn\}$ is associated the Jacobian determinant
$$
J_{\xi}(x, \tau)=\det \left( \frac{\partial \Gamma(x, \tau)}{\partial( \tau_{\xi_1}, \dots,  \tau_{\xi_n})} \right)
$$
of the $n \times n$ submatrix of the differential of $\Gamma$ with respect to $\tau$. 

\begin{defn} \label{defnCJ}
We say $\gamma$ satisfies \emph{curvature condition $(\calC_J)$} at $0$, if there exists an $n$-tuple $\xi$ and a multi-index $\beta$ such that 
\begin{equation} \label{eq03}
\partial^\beta_\tau J_\xi(0, \tau) \big |_{\tau=0} \neq 0,
\end{equation}
where $\partial^\beta_\tau$ represents an arbitrary partial derivative with respect to the full variable $\tau \in \R^{kn}$, not merely $(\tau_{\xi_1}, \dots, \tau_{\xi_n})$. 
\end{defn}

\cite[Theorem 8.8]{CNSW} states that $\gamma$ is curved to finite order at $0$ if and only if $\gamma$ satisfies the curvature condition $(\calC_J)$ (see, also \cite[Theorem 9.1]{BS}). Therefore, we can take an $n$-tuple $\xi_\gamma$ and a multi-index $\beta_\gamma$ associated to $\gamma$, such that
\begin{equation} \label{190305eq01}
c_\gamma:=\left| \partial^{\beta_\gamma}_\tau J_{\xi_\gamma} (0, \tau) \big |_{\tau=0} \right|>0. 
\end{equation} 

\medskip

\textit{2. Space of homogeneous type and Carnot-Carath\'edory metric.}

\medskip

Nagel, Stein, and Wainger \cite{NSW} showed that Carnot-Carath\'eodory metrics induce a space of homogeneous type. We recall some of their results here. Let $0 \in \Omega' \Subset\Omega \subset \R^n$, where both $\Omega$ and $\Omega'$ are connected open sets, and suppose $Y_1, \dots, Y_q$ are $C^\infty$ real vector fields defined on a neighborhood of $\overline{\Omega}$. We suppose that each vector field $Y_j$ has associated a formal degree $d_j \ge 1$, where $d_j$ is an integer. Henceforth, we shall write $(Y, d)$, which is short for the list $(Y_1, d_1), \dots, (Y_q, d_q)$. We now make the following hypotheses:

\begin{enumerate}
\item[(a).] For each $j$ and $k$ we can write 
$$
[Y_j, Y_k]=\sum_{d_l \le d_j+d_k} c_{j, k}^l(x) Y_l
$$
where $c^l_{j, k}\in C^\infty(\Omega)$. 

\item[(b).] For each $x \in \overline{\Omega'}$, the vectors $Y_1(x), \dots Y_q(x)$ span $\R^n$. 
\end{enumerate}

Noting that in the above setting, we do not require any linearly indepedence assumption. 

\begin{defn} \label{defn01}
Let $\del>0$ and $C(\del)$ denote the class of absolutely continuous mappings $\varphi: [0, 1] \mapsto \Omega'$ which satisfy the differential equation\footnote{This means $ \varphi(t)=\varphi(0)+\int\limits_0^t \left(\sum\limits_{j=1}^q a_j(s) \del^{d_j} Y_j(\varphi(s)) \right) ds$.}
$$
\varphi'(t)=\sum_{j=1}^q a_j(t) \del^{d_j} Y_j(\varphi(t))
$$
with
$$
|a_j(t)|<1, \quad a \in L^\infty([0, 1]). 
$$
Then for $x, y \in \Omega'$, define the \emph{Carnot-Carath\'eodory metric} as 
$$
\rho(x, y)=\inf\left\{\del>0 | \  \exists \varphi \in C(\del) \ \textrm{with} \ \varphi(0)=x, \varphi(1)=y\right\}.
$$
\end{defn}

By \cite[Proposition 1.1]{NSW}, $\rho$ is a metric (in particular, this suggests that we can take $\kappa=1$, where $\kappa$ is the constant associated to the quasimetric $\rho$ in Definition \ref{defnSHT}), and it is continuous in the sense that $\rho: \Omega' \times \Omega' \mapsto [0, \infty)$ is continuous. For $x \in \Omega'$ and $\del>0$, we can define the \emph{Carnot-Carath\'eodory ball} centered at $x$ with radius $\del$ on $\Omega'$ by 
$$
B_{(Y, d)}(x, \del):=\left\{y \in \Omega' | \rho(x, y)<\del\right\}.
$$

The main results in \cite{NSW} give the following properties of $B_{(Y, d)}(x, \del)$. There exists a $\del_0>0$, such that for each $x \in \Omega'$ and each $\del$ with $0<\del \le \del_0$, the set $B_{(Y, d)}(x, \del) \subset \Omega$ satisfies :
\begin{enumerate}
\item [(1).] $B_{(Y, d)}(x, \del)$ is open, and if $0<\del \le \del_0, B_{(Y, d)}(x, \del)=\bigcup\limits_{s<\del} B_{(Y, d)}(x, s)$;
\item [(2).] $\bigcap\limits_{s>0} B_{(Y, d)}(x, s)=\{x\}$;
\item [(3).] For any $x_1, x_2 \in \widetilde{\Omega}$ and $\del_1 \le \del_2$, if 
$$
B_{(Y, d)}(x_1, \del_1) \cap B_{(Y, d)}(x_2, \del_2) \neq \emptyset,
$$
then $B_{(Y, d)}(x_1, \del_1) \subset B_{(Y, d)}(x_2, 3\del_2)$;
\item [(4).] For every compact set $\widetilde{\Omega} \Subset \Omega'$ and $\del \le \del_0$, there are constant $C_{\widetilde{\Omega},1}$ and $C_{\widetilde{\Omega},2}$ so that for all $x \in \widetilde{\Omega}$,
\begin{equation} \label{1231eq01}
0<C_{\widetilde{\Omega},1} \le \frac{\textrm{Vol}(B_{(Y, d)}(x, \del))}{\Lambda(x, \del)} \le C_{\widetilde{\Omega},2}<\infty, 
\end{equation}
where 
$$
\Lambda(x, \del)=\sum_I |\lambda_I(x)| \del^{d(I)}.
$$
Here $I=(i_1, \dots, i_n), 1 \le i_j \le q$ is a $n$-tuple of integers. For such an $I$, set
$$
\lambda_I(x)=\det(Y_{i_1}, \dots, Y_{i_n})(x)
$$
and 
$$
d(I)=d_{i_1}+\dots+d_{i_n}
$$
and $\textrm{Vol}(A)$ denotes the induced Lebesgue volume on the leaf generated by the $Y_j$’s, passing through the point $x_0$;  
\item [(5).] As a consequence of (4), we have for every compact set $\widetilde{\Omega} \Subset \Omega'$, there is a constant $C_{\widetilde{\Omega}}$ so that if $x \in \widetilde{\Omega}$ and if $\del<\frac{\del_0}{2}$,
\begin{equation} \label{doublingineq}
\textrm{Vol}(B_{(Y, d)}(x, 2\del)) \le C_{\widetilde{\Omega}} \textrm{Vol}(B_{(Y, d)}(x, \del)).
\end{equation}
\end{enumerate}

Finally, it will be convenient to assume that the ball $B_{(Y, d)}(x, \del)$ lies ``inside" of $\Omega$ in the following sense. 

\begin{defn}
Given $x \in \Omega$ and $\Omega' \Subset \Omega$ as above. We say the list of vector fields $Y$ satisfies $\sfC(x, \Omega')$ if for every $a=(a_1, \dots, a_q) \in \left( L^\infty([0, 1]) \right)^q$, with
$$
\|a\|_{L^\infty([0, 1])}=\left\| \left( \sum_{j=1}^q |a_j|^2 \right)^{\frac{1}{2}} \right\|_{L^\infty([0, 1])}<1, 
$$
there exists a solution $\varphi: [0, 1] \to \Omega'$ of the ODE
$$
\varphi'(t)=\sum_{j=1}^q a_j(t)Y_j(\varphi(t)), \quad \varphi(0)=x.
$$
\end{defn}
Note, by Gronwall's inequality, when this solution exists, it is unique. Similarly, we say $(Y, d)$ satisfies $\sfC(x, \sigma, \Omega')$ if $\sigma Y$ satisfies $\sfC(x_0, \Omega')$, where $\sigma Y$ is the list $(\sigma^{d_1}Y_1, \dots, \sigma^{d_q}Y_q)$ for $\sigma>0$. 

\medskip

Recall that  we wish to construct a space of homogeneous type from a given $\gamma_t(x)$ which is curved to finite order at $0$. To do this, we first note that by \eqref{190305eq01}, we can take $V \Subset U$ be a sufficiently small, open and path-connected neighborhood of $0 \in \R^n$, so that
$$
\left| \partial^{\beta_\gamma}_\tau J_{\xi_\gamma} (x, \tau) \big |_{\tau=0} \right | \ge \frac{c_\gamma}{2},
$$
uniformly in $x \in \overline{V}$. Thus, by \cite[Theorem 9.1]{BS}, we can take some $m_0$, such that the collection $\{X_\alpha\}$ defined in \eqref{correctCZ} is of type $m_0$ on $\overline{V}$

Take $\Omega=U$ and $\Omega'=V$ in the above general setting. Next, to define the Carnot-Carath\'eodory metric on $V$, we need to construct a finite collection of vector fields, which satisfy the assumptions (a) and (b), and the idea is to choose such a finite collection from $\{X_\alpha\}$ and their commutators. 

It turns out that one can do this better: not only we can pick a subset from $\{X_\alpha\}$ and  their commutators, which satisfies the assumption $(a)$ and $(b)$, but also it will satisfy some other nice properties. Such a construction was introduced in \cite{BS}, where Street used this idea to study the $L^2$ boundedness of the multi-parameter singular Radon transform. 

We recall some basic definitions first. 

\begin{defn}
Let $\calL=(\calL_1, \dots, \calL_q)$ be a list of, possibly non-commuting, operators, we use \emph{ordered multi-index} notation to define $\calL^\alpha$, where $\alpha$ is a list of numbers $1, \dots, q$. $|\alpha|$ will denote the length of the list. For instance, if $\alpha=(1, 2, 2, 3, 1)$, then $|\alpha|=5$ and $\calL^\alpha=\calL_1\calL_2\calL_2\calL_3\calL_1$. Moreover, if $\calL_1, \dots, \calL_q$ are vector fields, then $\calL^\alpha$ is an $|\alpha|$ order partial differential operator.
\end{defn}

\begin{defn}
Consider the formal Taylor series \eqref{correctCZ} of $W$. We assign each $X_\alpha$ a formal degree $|\alpha|$ and then define
$$
\frakS(W):=\left\{ (X_\alpha, |\alpha|): |\alpha|>0 \right\}
$$
and let $\calL(\frakS(W))$ be the smallest set such that:
\begin{enumerate}
\item [$\bullet$] $\frakS(W) \subset \calL(\frakS(W))$;

\item [$\bullet$] If $(X_1, d_1), (X_2, d_2) \in  \calL(\frakS(W))$, then $([X_1, X_2], d_1+d_2) \in  \calL(\frakS(W))$. 
\end{enumerate}
\end{defn}
 
We now introduce the way to choose such a collection of vector fields, which is introduced in \cite[Section 6]{BS}. 

\begin{alg} \label{alg001}

Let $\gamma$ be a mapping from $\R^n \times \R^k \to \R^n$, which is curved to finite order at $0 \in \R^n$. 

\medskip

\textit{Step I:} Recall that the collection $\frakS(W)$, which is given by \eqref{correctCZ}, is of type $m_0$ on $\overline{V}$.  Therefore, we can take a finite list of vector fields 
$$
\{(X_1, d_1), \dots, (X_r, d_r)\} \subseteq \frakS(W), 
$$ 
which is also of type\footnote{The choice of such a list is not unique, as long as it is of type $m_0$.} $m_0$ . Note that this list only depends on $\gamma$;

\medskip

\textit{Step II:} Enumerate the list of all commutators of $\{(X_1, d_1), \dots, (X_r, d_r)\}$ up to order $m_0$ along with their formal degrees, and we label them as
$$
(X_1, d_1), \dots, (X_L, d_L);
$$
so that $X_1, \dots, X_L$ span the tangent space at each point in $V$. Note that 
$$
\{(X_1, d_1), \dots, (X_L, d_L) \} \subseteq \calL(\frakS(W)); 
$$ 

\medskip

\textit{Step III:} Let $(X_1, d_1), \dots, (X_q, d_q)$ be an enumeration of all the vector fields that belongs $\calL(\frakS(W))$, and such that their formal degrees are less or equal to $\max\limits_{1 \le l \le L} d_l$. Note that there are only a finite number of such vector fields.         
\end{alg}

By \cite[Proposition 17.3]{BS},  the list $(X, d):=\{(X_i, d_i)\}_{1 \le i \le q}$ constructed in the above algorithm satisfies the following properties: 

\medskip

(1). There exists a $0<\del_0<1$ and some compact set $\widetilde{V} \Subset V$ such that
\begin{equation} \label{190404eq01}
 (X, d) \ \textrm{satisfies} \ \sfC(x, \del_0, V), \ \forall x \in \widetilde{V}.
\end{equation}

\medskip

(2). $(X, d)$ satisfies the assumptions (a) and (b) above Definition \ref{defn01}, and hence we can construct a Carnot-Carath\'eodory metric $\rho$ associated to $\gamma$;

\medskip

To be self-contained, we include the proof. It is easy to see that assumption (b) is satisfied. To verify assumption (a), that is, for $1 \le i, j \le q$,  
\begin{equation} \label{1116eq16}
[X_j, X_k]=\sum_{d_l \le d_j+d_k} c_{j, k}^l X_l, \quad c_{j, k}^l \in C^\infty(U), 
\end{equation}
 we note that, if $d_j+d_k \le \max\limits_{1 \le l \le L} d_l$, then $([X_j, X_k], d_j+d_k)$ is already in the list $(X, d)$ by definition. On the other hand, if $d_j+d_k>\max\limits_{1 \le l \le L} d_l$, we use the fact that $[X_j, X_k]=\sum\limits_{l=1}^L c_{j, k}^l X_l, c_{j, k}^l \in C^\infty$, since $X_1, \dots, X_L$ span the tangent space at each point; 

\bigskip

(3). $(X, d)$ controls $\gamma(x, t)$, in the following sense:\\

 Let $0 \in V''  \Subset V' \Subset \widetilde{V} \Subset  V$, with $V''$ compact and $V'$ open,  relatively compact, connected, and $a'$ sufficiently small, such that the map
\begin{equation} \label{eq1001}
\gamma(x, t): V' \times B^k(a') \rightarrow V
\end{equation}
is well defined, where $B^k(a')$ is the Euclidean ball in $\R^k$ centered at the origin with radius $a'$. Moreover, we may also assume that for every $t \in B^k(a')$, $\gamma_t$ is a diffeomorphism onto its image by the inverse function theorem, so that it make sense to write $\gamma_t^{-1}$ in the sequel. Here we write $\gamma_t(x)=\gamma(x, t)$ for $(x, t) \in V' \times B^k(a')$. 

\begin{defn} \label{controlVC}
We say $(X, d)$ \emph{controls} $\gamma$ if there exists $0<a_1 \le a'$ and $\tau>0$, which is sufficiently small,  such that for every $x_1 \in V'', \sigma \in [0, 1]$, there exists functions $c_l^{x_1, \sigma}$ on $B^k(a_1) \times B(x_1, \tau \sigma)$ satisfying 

$\bullet$ $W(\del t, x)=\sum\limits_{l=1}^q c_l^{x_1, \sigma}(t, x) \sigma^{d_l} X_l(x)$ on $B^k(a_1) \times  B(x_1, \tau \sigma)$, where $W(t, x)$ is defined in \eqref{correctCZ1};

$\bullet$ 
$$
\sup_{x_1 \in V'', \sigma \in [0, 1]} \sum_{|\alpha|+|\beta| \le N} \left\| (\sigma X)^\alpha \partial_t^\beta c_l^{x_1, \sigma} \right\|_{C^0\left(B^k(a_1) \times B(x_1, \tau \sigma)\right)}<\infty,
$$
for every $N \in \N$, where $\sigma X$ is the list $(\sigma^{d_1}X_1, \dots, \sigma^{d_q} X_q)$, $\alpha$ is an ordered multi-index and $\beta$ is a $k$-tuple multi-index. 
 
\end{defn}

\begin{rem}
Recall that in \eqref{eq01}, we make the following assumptions
\begin{enumerate}
\item [$\bullet$] The kernel $K$ to be supported near $0 \in \R^k$; 
\item [$\bullet$] The cut-off function $\psi$ is supported near $0 \in \R^n$. 
\end{enumerate}

In the sequel, we are interested in the case that\footnote{Indeed, in our precise assumptions on $T$ in Section 5, $\textrm{supp} K$ and $\textrm{supp} \psi$ will be ``smaller" and we will come back to this point later.}
\begin{enumerate}
\item [$\bullet$] $\textrm{supp} K \subseteq B^k(a')$;  
\item [$\bullet$] $\textrm{supp} \psi \subseteq V''$,
\end{enumerate}
where $a'$ and $V''$ are defined as above. 
\end{rem}

\medskip

By Algorithm \ref{alg001}, given a $C^\infty$ map $\gamma$ which is curved to finite order at $0 \in \R^n$,  we see that $(V, \rho, |\cdot|)$ locally forms a space of homogeneous type. Applying Theorem \ref{dyadicSHT}, we get a dyadic system on $V$, 
\begin{equation} \label{190311eq01}
\calD=\bigcup_{k \in \Z} \calD_k,
\end{equation} 
with constants $\frakC>1$, $0<\del<\frac{1}{100}$ and $\epsilon<1$ satisfying all the conclusions in Theorem \ref{dyadicSHT}.

Finally, in order to apply the Whitney decomposition Theorem \ref{Whitney}, we shall show that $(V, \rho, |\cdot|)$ is $3$-uniformly perfect.

\begin{prop} \label{3uniformperfect}
The space of homogeneous type $(V, \rho, |\cdot|)$ is $3$-uniformly perfect.
\end{prop}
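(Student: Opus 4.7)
The plan is to reduce $3$-uniform perfectness to an application of the intermediate value theorem to the distance function along a Carnot-Carathéodory path emanating from $x$. The CC-metric $\rho$ is realized by continuous curves $\varphi : [0,1] \to V$ that by construction stay inside $V$, and $t \mapsto \rho(x, \varphi(t))$ varies continuously starting from $0$; so as soon as we produce such a curve whose endpoint lies at distance strictly greater than $r/2$ from $x$, the IVT will immediately supply an intermediate point at distance exactly $r/2$, which lies in $[r/3, r]$.

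To execute this, I would first use that $\rho$ is a genuine metric (so $\kappa = 1$), which is the NSW statement quoted just after Definition \ref{defn01}. Setting $R := \sup_{y \in V} \rho(x,y)$, the triangle inequality gives $\textrm{diam}(V) \le 2R$, so the hypothesis $r < \textrm{diam}(V)$ forces $R > r/2$; pick any $y' \in V$ with $\rho(x, y') > r/2$ and let $s := \rho(x, y')$. Next, by the very definition of the CC-metric, for every $\eps > 0$ there is a curve $\varphi \in C(s + \eps)$ with $\varphi(0) = x$ and $\varphi(1) = y'$, whose image lies in $V$ by the mapping convention in Definition \ref{defn01}. The continuity of $\rho : V \times V \to [0,\infty)$ quoted from \cite{NSW}, together with the continuity of $\varphi$, then makes $g(t) := \rho(x, \varphi(t))$ continuous on $[0,1]$ with $g(0)=0$ and $g(1)= s > r/2$, and the intermediate value theorem produces $t^\ast \in (0,1)$ with $g(t^\ast) = r/2$. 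Setting $y := \varphi(t^\ast) \in V$ then gives $r/3 \le \rho(x, y) = r/2 \le r$, as required.

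I do not expect serious obstacles: the main inputs (triangle inequality with $\kappa = 1$, continuity of $\rho$, finiteness of $\rho$ between any two points of $V$ courtesy of H\"ormander/Chow, and the built-in requirement that CC-curves have image inside $V$) have all been recorded earlier in the section. The only mild subtlety is this last point, which is what lets us invoke continuity of $\rho$ along the entire curve without having to worry about leaving the space. As a byproduct the argument in fact shows $(V, \rho, |\cdot|)$ is $2$-uniformly perfect, but $A = 3$ is the constant that is ultimately fed into the Whitney decomposition in Lemma \ref{Whitney}; the value $3$ is convenient, though not sharp.
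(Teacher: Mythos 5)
Your argument is correct, and it shares the paper's basic skeleton — use the triangle inequality (with $\kappa=1$) to produce a point far from $x$, join it to $x$ by an admissible curve inside $V$, and extract an intermediate point — but your execution is genuinely more direct. The paper, after producing a curve from $x$ to a point $x_0$ with $\rho(x,x_0)=r_1>\mathrm{diam}(V)/3$, encloses the curve's image in an auxiliary path-connected open set $W\Subset V$, invokes Lemma \ref{metriceq} to identify the $\rho$-topology with the Euclidean one on $W$, and then argues through the non-emptiness of the annuli $W_i=\{y\in W:\ r_1/(i+1)\le\rho(x,y)<r_1/i\}$, finally locating $r$ in one of the corresponding intervals. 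You instead apply the intermediate value theorem directly to $g(t)=\rho(x,\varphi(t))$ along the Carnot--Carath\'eodory curve, using only the continuity of $\rho$ already quoted from \cite{NSW}; this dispenses with $W$, the metric-equivalence lemma and the level-set bookkeeping, and it gives the slightly stronger conclusion that $(V,\rho,|\cdot|)$ is $2$-uniformly perfect (any $A\ge 2$, in particular $A=3$, then feeds into Lemma \ref{Whitney}). The inputs you take for granted — finiteness of $s=\rho(x,y')$ and hence the existence of an admissible curve in $C(s+\eps)$ from $x$ to $y'$ staying in $V$ — are exactly the facts the paper's own proof uses when it produces its curve $\varphi\in C(r_1+\varepsilon)$, so no new hypothesis is smuggled in.
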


To prove the above result, we need the following lemma. 

\begin{lem}[{\cite[Proposition 1.1]{NSW}}] \label{metriceq}
Let $V$ and $\rho$ defined as above. Let further, $\widetilde{d}=\max\limits_{1 \le j \le q} d_j$, and $\widetilde{V} \Subset V$ be any compact set, then there exist constants $C_1$ and $C_2$, so that if $x, y \in \widetilde{V}$, 
$$
C_1|x-y| \le \rho(x, y) \le C_2 |x-y|^{\frac{1}{\widetilde{d}}}.
$$
\end{lem}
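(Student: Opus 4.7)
The plan is to prove the two inequalities separately, following the standard Nagel--Stein--Wainger argument. I will first fix a slightly larger compact set $\widetilde V \Subset V^* \Subset V$ and pick $\delta_0 \le 1$ small enough that every sub-unit curve at scale $\le \delta_0$ starting in $\widetilde V$ stays inside $V^*$; this is guaranteed by the containment condition $\sfC(x, \delta_0, V)$ recorded in \eqref{190404eq01}. It will suffice to treat the regime $\rho(x,y) \le \delta_0$ and $|x - y| \le \eta$ for a suitably small $\eta > 0$, since in the complementary regime both $|x-y|$ and $\rho(x,y)$ are squeezed between positive constants on $\widetilde V$ (finiteness of $\rho$ being a consequence of Chow--Rashevskii applied to the spanning hypothesis (b)), so the inequalities are absorbed into the constants.

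For the lower bound $C_1|x-y| \le \rho(x,y)$, my approach is to pick any $\delta$ with $\rho(x,y) < \delta \le \delta_0$ and a corresponding sub-unit curve $\varphi:[0,1] \to V^*$ with $\varphi(0)=x$, $\varphi(1)=y$, and velocity $\varphi'(t) = \sum_{j=1}^q a_j(t) \delta^{d_j} Y_j(\varphi(t))$, $|a_j(t)| < 1$. Letting $M := \max_j \sup_{V^*} |Y_j| < \infty$ and using $d_j \ge 1$, $\delta \le 1$, one gets $|\varphi'(t)| \le qM\delta$ pointwise, so $|y-x| \le qM\delta$. Letting $\delta \downarrow \rho(x,y)$ finishes this half with $C_1 := 1/(qM)$.

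For the upper bound $\rho(x,y) \le C_2|x-y|^{1/\widetilde d}$, I fix $x_0 \in \widetilde V$ and use hypothesis (b) to select indices $i_1,\dots,i_n \in \{1,\dots,q\}$ with $\det(Y_{i_1}(x_0),\dots,Y_{i_n}(x_0)) \ne 0$. Form the composed flow
$$
\Psi_{x_0}(u_1,\dots,u_n) := \exp(u_n Y_{i_n}) \circ \cdots \circ \exp(u_1 Y_{i_1})(x_0),
$$
smooth near $u=0$ with $D\Psi_{x_0}(0)$ equal to the matrix whose columns are $Y_{i_j}(x_0)$, hence invertible. The inverse function theorem then supplies $c, C > 0$ such that for $|y-x_0| < c$ there is $u = u(y)$ with $\Psi_{x_0}(u) = y$ and $|u| \le C|y-x_0|$. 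Setting $\delta := (nC|y-x_0|)^{1/\widetilde d}$ and $v_j := n u_j / \delta^{d_{i_j}}$, I verify $|v_j| \le nC|y-x_0|/\delta^{\widetilde d} = 1$, using $d_{i_j} \le \widetilde d$ and $\delta \le 1$. The piecewise curve $\varphi:[0,1] \to V^*$ defined by $a_{i_j}(t) := v_j$ for $t \in [(j-1)/n, j/n]$ and $a_l \equiv 0$ otherwise is then sub-unit at scale $\delta$, goes from $x_0$ to $y$ (the $j$-th segment flowing along $Y_{i_j}$ for time $v_j\delta^{d_{i_j}}/n = u_j$), and witnesses $\rho(x_0, y) \le \delta = (nC)^{1/\widetilde d}|y-x_0|^{1/\widetilde d}$.

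The main obstacle will be uniformity of the inverse-function-theorem constants $c, C$ and of the index tuple $(i_1,\dots,i_n)$ in $x_0 \in \widetilde V$, since all of these depend on the base point a priori. This is resolved by a compactness argument: at each $x_0$ the relevant non-vanishing determinant stays bounded below on an open neighborhood, only finitely many tuples (at most $\binom{q}{n}$) are possible, and finitely many such neighborhoods cover $\widetilde V$. Taking the worst of the finitely many constants then yields the uniform $C_2$ claimed in the lemma.
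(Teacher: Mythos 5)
The paper never proves this lemma itself---it simply cites \cite[Proposition 1.1]{NSW}---and your argument is precisely the standard Nagel--Stein--Wainger proof of that proposition: the crude velocity bound $|\varphi'|\le qM\delta$ for the lower estimate, and the composed-exponential/inverse-function-theorem construction of a sub-unit curve at scale $\delta\simeq |x-y|^{1/\widetilde d}$ for the upper estimate, made uniform in the base point by a finite-subcover compactness argument; this is correct. The only points to tidy are routine: take the supremum of $|Y_j|$ over $\overline{V}$ (competitor curves are only guaranteed to lie in $V$, not in your intermediate set $V^*$), enlarge $\delta$ by a harmless factor so that the coefficients satisfy the strict inequality $|a_j(t)|<1$ required in the definition of $C(\delta)$, and in the large-separation regime justify the uniform upper bound for $\rho$ on $\widetilde V\times\widetilde V$ by chaining the local estimate along finitely many Euclidean balls, using that $V$ is connected, rather than by pointwise finiteness alone.
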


\begin{proof} [Proof of Proposition \ref{3uniformperfect}]
Let $r_0:=\textrm{diam}(V)<\infty$. By definition, it suffices to show that for each $x \in V$, and $0<r<r_0$, there is a point $y \in V$, which satisfies 
$$
\frac{r}{3} \le \rho(x, y) \le r. 
$$
Take an $r \in (0, r_0)$. By triangle inequality, there exists some $x_0 \in V$, such that
$$
\rho(x, x_0):=r_1 \in \left(\frac{r_0}{3}, r_0\right] . 
$$
Next, we take some $\varepsilon>0$, sufficiently small, such that there exists an absolute continuous mapping $\varphi: [0, 1] \mapsto V$ satisfying
$$
\varphi \in C(r_1+\varepsilon) \ \textrm{with} \ \varphi(0)=x, \varphi_1(1)=x_0. 
$$ 
Then, clearly $\widetilde{V}:=\im(\varphi)$ is a compact subset of $V$, and hence we can take $W \Subset V$, such that $\widetilde{V} \subset W$ with $W$ open and path-connected.

Fix such a $W$. Then by Lemma \ref{metriceq}, we know that the Euclidean topology on $\overline{W}$ is the same as the topology induced by $\rho$, and hence so is the topology on $W$. Consider the level sets
$$
W_0:=\left\{y \in W: \rho(x, y)\ge  r_1 \right\}.
$$
and
$$
W_i:=\left\{y \in W: \frac{r_1}{i+1} \le \rho(x, y)< \frac{r_1}{i} \right\}, \quad i \ge 1. 
$$

We first note that $W_0$ is not empty, as $x_i \in W_0$ for $i=0$ or $1$ by the triangle inequality. It is clear that $W=\bigcup\limits_{i=0}^\infty W_i$, $W_i \cap W_j=\emptyset$ for $i \neq j$ and $\{x\}=\bigcap\limits_{i=0}^\infty W_i$. Moreover, since $W$ is path connected, we can also see that $W_i \neq \emptyset$ for $i \ge 0$. Finally, since $r$ belongs to one of the following intervals $\left[ r_1, r_0\right]$ and $\left[ \frac{r_1}{i+1}, \frac{r_1}{i} \right), i \ge 1$, the desired result follows easily from the construction. 
\end{proof}


\subsection{A uniform theorem for the dyadic decompositions}

\medskip

In the second part of this section, we prove a uniform theorem for the dyadic decompositions induced by Carnot-Carath\'edory metric. This result plays an important role in Section 5 later when we ``rescale"  the dyadic system into an appropriate one, where sparse domination principle applies.  

Recall that by Algorithm \ref{alg001}, we can construct a collection of vector fields $(X, d)$ so that we can further construct a space of homogeneous type $(V, \rho, |\cdot|)$ as in \eqref{190311eq01}. We start with the following observation. Let $w>0$ and consider the list of vector fields
$$
(w^dX, d).
$$
Then by the argument in Algorithm \ref{alg001}, it is easy to see the following facts holds:
\begin{enumerate}
\item [(1).] For any $x \in \widetilde{V}$, $(w^d X, d)$ satisfies $\calC(x, w, V)$; 

\item [(2).] $(w^dX, d)$ satisfies the assumptions $(a)$ and $(b)$ above Definition \ref{defn01}, and therefore, we can construct another Carnot-Carath\'edory metric $\rho_{w^d X} $ on $V$, and it is easy to see that 
$$
\rho_{w^dX} (x, y)=w^{-1} \rho(x, y);
$$

\medskip

\item [(3).] $(w^dX, d)$ controls $\gamma$, and the implict constants in the definition of control depends only on $w$ and how $\gamma$ is controlled by $(X, d)$. 
\end{enumerate}

Therefore, we can construct a new space of homogeneous type $(V, \rho_{w^dX}, |\cdot|)$ as before. Next we apply Theorem \ref{dyadicSHT} to the SHT $(V, \rho_{w^dX}, |\cdot|)$ and conclude that there exists a dyadic decomposition 
$$
\calD_{w^dX}=\bigcup_{k \in \Z} \calD_{w^dX, k}
$$
with constants $\frakC_{w^dX}>1, 0<\del_{w^dX}<\frac{1}{100}$ and $\epsilon_{w^dX}<1$ and a collection of centers 
$$
\{x_{c, w^dX}(Q)\}_{Q \in \calD_{w^dX}}
$$  
satisfying the conclusions in Theorem \ref{dyadicSHT}.

An interesting question is to ask what is the relationship between $(V, \rho, |\cdot|)$ and $(V, \rho_{w^dX}, |\cdot|)$, in particular, how their dyadic decompositions are related. 

\begin{thm} \label{uniformdecomp}
Under the above setting, the dyadic decomposition $\calD_{w^d X}$ can be made to satisfy the following conditions: 
\begin{enumerate}
\item [1.] $\del_{w^d X}=\del$;
\item [2.] $\epsilon_{w^d X}=\epsilon$;
\item [3.] $\frakC_{w^d X} \le \frac{\frakC}{\del}$;
\item [4.] For each $k \in \Z$, $\calD_{w^dX, k}=\calD_{k+N_w}$, where $N_\omega$ is an absolute constant, which only depends on $w$ and $\del$, that is, the cubes in the $k$-th generation of $\calD_{w^d X}$ is exactly the same as those in the $(k+N_w)$-th generation of $\calD$. In particular, we have $\{x_{c, w^d X}(Q)\}_{Q \in \calD_{w^d X, k}}=\{x_c(Q)\}_{Q \in \calD_{k+N_w}}$ for each $k \in \Z$. 
\end{enumerate}
\end{thm}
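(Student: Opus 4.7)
The central observation underlying the plan is that the Carnot--Carath\'eodory metric $\rho_{w^d X}$ associated to the scaled list $(w^d X, d)$ is simply $w^{-1}\rho$, so balls transform by $B_{w^d X}(x, r) = B(x, wr)$, while the ambient Lebesgue measure is unchanged. My plan is therefore to avoid running the Hyt\"onen--Kairema construction of Theorem \ref{dyadicSHT} from scratch on $(V,\rho_{w^d X}, |\cdot|)$, and instead to reuse the already-built family $\calD$ verbatim, merely relabeling the generations and keeping the same centers $x_{c, w^d X}(Q) := x_c(Q)$.

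Concretely, let $N_w \in \Z$ be the unique integer with
$$
\del^{N_w+1} < w \le \del^{N_w}, \qquad \text{i.e.} \quad N_w = \left\lfloor \frac{\log w}{\log \del} \right\rfloor,
$$
which depends only on $w$ and $\del$, and declare $\calD_{w^d X, k} := \calD_{k+N_w}$. Most of the axioms in Theorem \ref{dyadicSHT} are purely set-theoretic or measure-theoretic and therefore transfer for free: covering of $V$ (item 1), nesting (item 2), existence of a child (item 3), uniqueness of a parent (item 4), and the child-to-parent measure ratio (item 5) depend only on the collection of sets $\calD$ and on the Lebesgue measure, neither of which changes. In particular, $\epsilon_{w^d X} = \epsilon$ is immediate. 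The only axiom involving the quasimetric is the sandwich property (item 6), and this is exactly where the choice of $N_w$ is forced. For $Q \in \calD_{k+N_w}$, the old sandwich
$$
B(x_c(Q), \del^{k+N_w}) \subset Q \subset B(x_c(Q), \frakC \del^{k+N_w})
$$
translates under $\rho_{w^d X} = w^{-1}\rho$ into
$$
B_{w^d X}(x_c(Q), w^{-1}\del^{k+N_w}) \subset Q \subset B_{w^d X}(x_c(Q), w^{-1}\frakC \del^{k+N_w}).
$$
By the definition of $N_w$ one has $1 \le w^{-1}\del^{N_w} < \del^{-1}$, so the inner radius is at least $\del^k$ and the outer radius is at most $(\frakC/\del)\,\del^k$. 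This yields item 6 with $\del_{w^d X} = \del$ and $\frakC_{w^d X} := w^{-1}\frakC\del^{N_w} \in [\frakC, \frakC/\del)$, giving conclusions 1, 2, 3 of the theorem. Conclusion 4 is the defining identity $\calD_{w^d X, k} = \calD_{k+N_w}$ together with the fact that the centers were chosen to coincide.

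The only subtlety to address is that Theorem \ref{dyadicSHT} does not canonically prescribe a dyadic decomposition on a given SHT; it merely asserts existence. The phrase ``can be made to satisfy'' in the statement of Theorem \ref{uniformdecomp} reflects exactly this freedom: we are at liberty to produce \emph{any} family satisfying the axioms of Theorem \ref{dyadicSHT} on $(V, \rho_{w^d X}, |\cdot|)$, and the relabeled $\calD$ constructed above is such a family. The proof is therefore a short verification once the scaling identity $\rho_{w^d X} = w^{-1}\rho$ is recorded; I expect no real obstacle beyond checking that the chosen $N_w$ places $w^{-1}\del^{N_w}$ in the interval $[1, \del^{-1})$, which is precisely what the inequality $\del^{N_w+1} < w \le \del^{N_w}$ guarantees.
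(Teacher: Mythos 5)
Your proposal is correct and follows essentially the same route as the paper: choose $N_w$ as the integer with $\del^{N_w+1} < w \le \del^{N_w}$, relabel the generations of $\calD$ by this shift, keep the same centers, and verify only the sandwich property via the scaling $\rho_{w^d X} = w^{-1}\rho$ of balls, which yields $\frakC_{w^d X} \le \frakC/\del$ while $\del$ and $\epsilon$ are unchanged. The only cosmetic difference is that you treat the cases $w = \del^{M_0}$ and $\del^{M_1+1} < w < \del^{M_1}$ uniformly via the exact scaling identity, where the paper splits them and uses the two-sided ball inclusions.
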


\begin{proof}
We consider two different cases. 
\medskip

\textit{Case I: $w=\del^{M_0}$ for some $M_0 \in \Z$.}

\medskip

In this case, we see that the list $(w^dX, d)=(\del^{M_0 d}X, d)$, and hence
\begin{eqnarray*}
\{x \in V: | \rho_{\del^{M_0 d}X}(x, x_0)<r\}%
&=& B_{(\del^{M_0 d}X, d)}(x_0, r)=B_{(X, d)}(x_0,\del^{M_0}r)\\
&=&\{x \in V: | \rho(x, x_0)<\del^{M_0} r\}
\end{eqnarray*}
for $x_0 \in V$ and $r>0$. Therefore, by letting $N_w=M_0$, $\del_{w^d X}=\del$, $\epsilon_{w^d X}=\epsilon$ and $\frakC_{w^d X}=\frakC$, it is easy to see that the desired result follows.

\medskip

\textit{Case II: $\del^{M_1+1}<w<\del^{M_1}$ for some $M_1 \in \Z$.}

\medskip

Note that in this case, we have
\begin{equation} \label{uniformeq0011}
B_{(X, d)}(x_0, \del^{M_1+1}r) \subseteq B_{(w^d X, d)}(x_0, r) \subseteq  B_{(X, d)}(x_0, \del^{M_1}r) 
\end{equation}
for $x_0 \in V$ and $r>0$. Now we let $N_w=M_1$, that is, $\calD_{w^dX, k}=\calD_{k+M_1}$ for each $k \in Z$, $\del_{w^dX}=\del$ and $\epsilon_{w^dX}=\epsilon$. Finally, we claim that $\frakC_{w^dX}=\frac{\frakC}{\del}$.  To see this, we take a cube 
$$
Q \in \calD_{w^dX, k}=\calD_{k+M_1},
$$
for some $k \in \Z$, by our assumption, we have
$$
B_{(X, d)}(x_c(Q), \del^{k+M_1}) \subset Q \subset B_{(X, d)}(x_c(Q), \frakC\del^{k+M_1}).
$$
This, together with \eqref{uniformeq0011}, implies
$$
B_{(w^d X, d)}(x_c(Q), \del^k) \subset Q \subset B_{(w^d X, d)}\left(x_c(Q), \frac{\frakC}{\del} \cdot \del^k\right),
$$
where in the first inclusion, we apply the second inclusion in \eqref{uniformeq0011} with $r=\del^k$ and in the second one, we apply the first inclusion in \eqref{uniformeq0011} with $r=\frakC\del^{k-1}$. 
\end{proof}

\begin{rem}
We make some remarks for the above result. 
\begin{enumerate}
\item[(1).] We may think the above result as a uniform description for the dyadic decompositions of the collection of spaces of homogeneous type $\{(V, \rho_{w^d X}, |\cdot|)\}_{w>0}$, in the sense that once we know one dyadic decompsition of $(V, \rho_{w^d X}, |\cdot|)$ for a particular choice of $w$, then we can construct a dyadic decomposition for all $(V, \rho_{w^d X}, |\cdot|)$, $w>0$. Moreover, the parameters of these SHTs (that is, the constants in Theorem \ref{dyadicSHT}) are controlled uniformly, indepedent of the choice of $w$;

\medskip

\item [(2).] We will see from Section 5 that the second part of the construction of a proper space of homogeneous type is indeed on dealing with how to choose a proper $w$ (once such a $w$ is choosen, it will be fixed in the rest of the paper). At this moment, one may think $w$ is choosen to be a sufficiently small number. 

\medskip

\item [(3).] The same proof of Proposition \ref{3uniformperfect} yields the following easy fact: for any $w>0$, the space of homogeneous type $(V, \rho_{w^d X}, |\cdot|)$ is $3$-uniformly perfect. 

\end{enumerate}
\end{rem}

\bigskip

\section{The lemma of modulus of continuity}

\bigskip

In this section, we study a lemma of modulus of continuity, which plays an important role in the coming estimates. Before we move on, let us consider our model case \eqref{190226eq01}, that is, the Hilbert trasform along the monomial curve. Recall that 
$$
H_\gamma f(x)=p.v. \int_\R f(x-\gamma(t))\frac{dt}{t}, \quad x \in \R^n,
$$
where $\gamma(t)=(|t|^{\alpha_1},  \dots, |t|^{\alpha_n}), \quad t \in \R$ for real numbers $0<\alpha_1<\dots<\alpha_n<\infty$. Define the \emph{single scale operator} $A_\gamma$ by
$$
A_\gamma f(x):=\int_{\frac{1}{2} \le |t|<1} f(x-\gamma(t))\frac{dt}{t}. 
$$ 
An easy application of Plancherel and van der Corput's lemma yields the following result: there exists some $\eta>0$, such that
\begin{equation} \label{hilbertCV}
\|A_\gamma-\tau_y A_\gamma\|_{L^2 \mapsto L^2} \lesssim |y|^\eta, 
\end{equation}
where $\tau_y$ is the translation operator by $y \in \R^n$, namely, $\tau_yf(x):=f(x-y)$ (see, e.g., \cite[Lemma 2.1]{CO}). 

Our aim in this section is to generalize this result under the general setting of Sub-Riemannian geometry.


\subsection{The quantitative scaling maps}

\medskip

The key tool to generalize the inequality \eqref{hilbertCV} is a quantitative scaling map, which is originated in the work of Nagel, Stein and Wainger \cite{NSW}, then studied systematically by Street in \cite{BS2} and later, plays an important role in the study of the $L^p$ boundness of singular Radon transform by Stein and Street in the multi-parameter setting (see, e.g., \cite{SS1, SS, BS}). Moreover, it turns out that this result contains all the geometric properties one need to study the sparse bound of singular Radon transform. Therefore, in the first half of this section, we will recall this important result, together with its corollaries. We start with some notations. 

Let $\Omega \subset \R^n$ and $Y=\sum\limits_{i=1}^n a_i(x) \partial_{x_i}$ be a $C^\infty$ vector fields defined on $\Omega$, then we write
$$
\|Y\|_{C^j(\Omega)}:=\sum_{i=1}^n \|a_i\|_{C^j(\Omega)}, \quad j \in \N. 
$$
Next, given two integers $1 \le m \le n$, we let
$$
\calJ(m, n):=\left\{ (i_1, \dots, i_m): 1 \le i_1<i_2<\dots<i_m\le n \right\}. 
$$ 
Furthermore, suppose $A$ is an $n \times q$ matrix and suppose $1 \le \tilde{n} \le \min\{n, q\}$. For $I \in \calJ( \tilde{n}, n )$, $J \in \calJ( \tilde{n}, q)$, we let $A_{I, J}$ denote the $\tilde{n} \times \tilde{n}$ matrix given by taking the rows form $A$ which are listed in $I$ and the columns from $A$ which are listed in $J$. We define
$$
\det_{\tilde{n} \times \tilde{n}} A:=(\det A_{I, J})_{I \in \calJ(\tilde{n}, n), J \in \calJ(\tilde{n}, q)}, 
$$
so that, in particular, $\det\limits_{\tilde{n} \times \tilde{n}} A$ is a \emph{vector} (here, the order of components does not matter). $\det\limits_{\tilde{n} \times \tilde{n}} A$ comes up when one changes variables. Indeed, suppose $\Phi$ is a $C^1$ diffeomorphism from an open subset $U \subset \R^{\tilde{n}}$ mapping to an $\tilde{n}$ dimensional submanifold of $\R^n$, where this submanifold is given the induced Lebesgue measure $dx$. Then, we have\footnote{\eqref{changevab01} is known as the Cauchy-Binet formula.}
\begin{equation} \label{changevab01}
\int_{\Phi(U)} f(x)dx=\int_U f(\Phi(t)) \left| \det\limits_{\tilde{n} \times \tilde{n}} d\Phi(t) \right| dt.
\end{equation}

Let $Z_1, \dots, Z_q$ be a list of $C^\infty$ vector fields defined on an open set $\Omega \subset \R^n$, with associated formal degrees $\tilde{d}_1, \dots, \tilde{d}_q \in [1, \infty)$. Let further $\Omega' \Subset \Omega$, where $\Omega'$ is open, relatively compact in $\Omega$.  

Fix $x_0 \in \Omega$. We suppose $n=\dim \  \textrm{span} \left\{ Z_1(x_0), \dots, Z_q(x_0)\right\}$. For $J=(j_1, \dots, j_n) \in \calJ(n, q)$, let $Z_J$ denote the list of vector fields $Z_{j_1}, \dots, Z_{j_n}$ (meanwhile, we denote $(Z, \tilde{d})_J$ be the list of vector fields $Z_{j_1}, \dots, Z_{j_n}$, together with the formal degrees $\tilde{d}_{j_1}, \dots, \tilde{d}_{j_n}$). Fix $J_0 \in \calJ(n, q)$ such that $\left| \det Z_{J_0}(x_0) \right|=\left| \det_{n \times n} Z(x_0) \right|_\infty$, where we have identified $Z(x_0)$ with the $n \times q$ matrix whose columns are given by $Z_1(x_0), \dots, Z_q(x_0)$ and similarly for $Z_{J_0}(x_0)$. We assume that
\begin{enumerate}
\item [$\bullet$] $(Z, \tilde{d})$ satisfies $\calC(x_0, \Omega')$. 

\item [$\bullet$] For $1 \le i, j \le q$, 
\begin{equation} \label{190308eq01}
[Z_i, Z_j]=\sum_{l=1}^\infty c_{i, j}^l Z_l, \quad c_{i, j}^l \in C^\infty;
\end{equation}
\item [$\bullet$] For each $m$, 
\begin{equation} \label{adconst01}
\|Z_j\|_{C^m(B_{(Z, \tilde{d})}(x_0, 1))}<\infty;
\end{equation}
\item [$\bullet$] For every $m$ and every $i, j, l$, 
\begin{equation} \label{adconst02}
\sum_{|\alpha| \le m} \left\| Z^\alpha c_{i, j}^l \right\|_{C^0(B_{(Z, \tilde{d})}(x_0, 1))}<\infty; 
\end{equation}
where $\alpha$ is an ordered multi-index. 
\end{enumerate}

\begin{defn} \label{admiconstdefn}
We say that $C$ is an $m$-\emph{admissible constant} if $C$ can be chosen to depend only on upper bounds for \eqref{adconst01} and \eqref{adconst02} (for that particular choice of $m$), $m$, upper bounds for $\tilde{d}_1, \dots, \tilde{d}_q$ and an upper bound for $n$ and $q$. We write $A \lesssim_m B$ if $A \le CB$, where $C$ is an $m$-admissible constant, and we wirte $A \simeq_m B$ if $A \lesssim_m B$ and $B \lesssim_m A$. 

Finally, we say $C=C(\sigma)$ for some $\sigma>0$, is an $m$-\emph{admissible constant} if $C$ can be chosen to depend on all the parameters an $m$ admissible constant may depend on, and $C$ may also depend on $\sigma$. 
\end{defn}

Now we introduce the quantitative version of the scaling maps, which can also be thought as a quantitative version of the theorem of Frobenius. 

\begin{thm}[{\cite[Section 4]{BS2}, \cite[Theorem 11.1]{BS}}] \label{scalingmap}
There exist $2$-admissible constants $\eta_1, \zeta_1>0$, such that if the map $\Phi:B^n(\eta_1) \rightarrow B_{(Z, \tilde{d})}(x_0, 1)$ is defined by $\Phi(u)=e^{u \cdot Z_{J_0}}x_0$, we have
\begin{enumerate}
\item [$\bullet$]$\Phi: B^n(\eta_1) \rightarrow  B_{(Z, \tilde{d})}(x_0, 1)$ is injective;
\item [$\bullet$] $B_{(Z, \tilde{d})}(x_0, \zeta_1) \subset \Phi(B^n(\eta_1))$.
\end{enumerate}
Furthermore, if we let $Y_j$ be the pullback of $Z_j$ under the map $\Phi$, then we have, for $m \ge 0$, 
$$
\|Y_j\|_{C^m(B^n(\eta_1))} \lesssim_{\max\{m, 2\}} 1
$$
and
$$
\|f\|_{C^m(B^n(\eta_1))} \simeq_{\max\{m-1, 2\}} \sum_{|\alpha| \le m} \|Y^\alpha f\|_{C^0(B^n(\eta_1))}. 
$$
Finally, 
$$
\left| \det_{n \times n} Y(u) \right| \simeq_2 1, \quad \textrm{for all} \ u \in B^n(\eta_1).
$$
\end{thm}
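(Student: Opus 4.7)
The plan is to execute a quantitative Frobenius-style argument. The starting observation is that at $u=0$ one has $\Phi(0)=x_0$ and the columns of $d\Phi(0)$ are exactly $Z_{J_0,1}(x_0),\ldots,Z_{J_0,n}(x_0)$, so the maximality built into the choice of $J_0$ gives a quantitative lower bound on $|\det d\Phi(0)|$. Moreover, Cramer's rule together with the same maximality produces, for each $j\notin J_0$, a representation $Z_j(x_0)=\sum_i \alpha_{j,i}Z_{J_0,i}(x_0)$ with $|\alpha_{j,i}|\le 1$; this is what will let us compare flows along general $Z_j$ with flows along $Z_{J_0}$ alone.

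First I would prove the injectivity of $\Phi$ on some $B^n(\eta_1)$ with $\eta_1$ admissible. Differentiating the defining ODE $\frac{d}{dt}e^{tV_u}(x_0)=V_u(e^{tV_u}(x_0))$, with $V_u=\sum_i u_i Z_{J_0,i}$, in the parameter $u$ and applying Gronwall's inequality together with the $C^m$ bounds \eqref{adconst01}--\eqref{adconst02} yields $d\Phi(u)=d\Phi(0)+O(|u|)$ and higher-order analogues, where the implicit constants are $m$-admissible. Combined with the lower bound on $|\det d\Phi(0)|$, this gives a quantitative inverse function theorem producing the desired $\eta_1$. The pullback vector fields are then determined by $Y_j(u)=d\Phi(u)^{-1}Z_j(\Phi(u))$, and the asserted $C^m$ bounds follow from the $C^m$ bounds on $Z_j$ and on $d\Phi^{-1}$. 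In particular $|\det_{n\times n}Y_{J_0}(u)|\simeq_2 1$ on $B^n(\eta_1)$, and since each $Z_j$ is a bounded $C^\infty$-combination of $Z_{J_0,1},\ldots,Z_{J_0,n}$ in a neighborhood of $x_0$ (by Taylor-expanding the combination from $x_0$ using \eqref{190308eq01}), the full estimate $|\det_{n\times n}Y(u)|\simeq_2 1$ and the norm equivalence $\|f\|_{C^m}\simeq \sum_{|\alpha|\le m}\|Y^\alpha f\|_{C^0}$ both follow by expressing $\partial_{u_i}$ in terms of the $Y_{J_0,i}$ with controlled coefficients and inducting on $m$.

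The main obstacle is the containment $B_{(Z,\tilde d)}(x_0,\zeta_1)\subset\Phi(B^n(\eta_1))$, which is the quantitative surjectivity half of the statement. My approach is to show that the full Carnot--Carath\'eodory metric $\rho_{(Z,\tilde d)}$ is comparable, near $x_0$ and at small scales, to the CC metric generated by $Z_{J_0}$ alone: any absolutely continuous path $\varphi'(t)=\sum_j a_j(t)Z_j(\varphi(t))$ with $\|a\|_\infty\le 1$ can, using the local representation $Z_j=\sum_i b_j^i Z_{J_0,i}$ with admissibly bounded $b_j^i$, be rewritten as $\varphi'(t)=\sum_i\tilde a_i(t)Z_{J_0,i}(\varphi(t))$ with $\|\tilde a\|_\infty$ bounded. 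Pulling this reparameterization back through $\Phi^{-1}$ then shows that every point within CC distance $\zeta_1$ of $x_0$ lies in the image of an admissibly sized Euclidean ball. The technical heart is producing and controlling the coefficients $b_j^i$ uniformly on the relevant neighborhood: they are bounded at $x_0$ by Cramer's rule, but propagating this to a full $\Phi(B^n(\eta_1))$ requires iteratively applying the commutator identity \eqref{190308eq01} together with the assumption $\calC(x_0,\Omega')$ so that the flows stay inside the domain of control. This propagation, and the careful admissible-constant bookkeeping that must accompany it, is the step I expect to be by far the most delicate.
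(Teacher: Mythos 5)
This theorem is not proved in the paper at all; it is quoted from Street's work (\cite[Section 4]{BS2}, \cite[Theorem 11.1]{BS}), so your sketch has to be measured against that argument, and it has a genuine gap at its very first load-bearing step. You claim that the maximality in the choice of $J_0$ "gives a quantitative lower bound on $|\det d\Phi(0)|$". It does not: maximality only says $|\det Z_{J_0}(x_0)|\ge |\det Z_J(x_0)|$ for every other $J$, while $|\det_{n\times n}Z(x_0)|_\infty$ itself can be arbitrarily small, and by Definition \ref{admiconstdefn} admissible constants may depend only on \emph{upper} bounds for \eqref{adconst01}--\eqref{adconst02}, the degrees and the dimensions -- never on a nondegeneracy lower bound at $x_0$. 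This is not a technicality: in the application the $Z_j$ are the scaled fields $\bigl(\frac{8\frakC_1}{\del}\del^{\widetilde N+j}\bigr)^{d}X$, so $\det Z_{J_0}(x_0)\to 0$ as the scale shrinks, and an $\eta_1$ produced by "additive perturbation $d\Phi(u)=d\Phi(0)+O(|u|)$ plus the inverse function theorem" would degenerate with the scale, destroying exactly the uniformity (Theorem \ref{uniformdecomp}, Lemma \ref{moduluscont}, the $L^p$ improving bounds) that the whole paper rests on. The additive error $O(|u|)$, with an admissible implicit constant, can dominate the possibly tiny $\det d\Phi(0)$ no matter how small you take $|u|$. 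The actual proof avoids this by working with the pulled-back fields from the start: for $j\in J_0$ one has $Y_j(0)=e_j$ exactly, independently of how degenerate $Z_{J_0}(x_0)$ is, and one controls the coordinate matrix of the $Y_j$ (and its determinant) on an admissible ball by a Gronwall/ODE argument in the radial variable, using the Euler-type identity for $\Phi(u)=e^{u\cdot Z_{J_0}}x_0$, the commutator hypothesis \eqref{190308eq01}, and Cramer's rule applied \emph{along the flow}; this is also precisely where the hypothesis \eqref{adconst02} (derivatives of $c^l_{i,j}$ along $Z$, not Euclidean derivatives) is what makes the bounds admissible.

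The same uniformity problem undercuts your treatment of the containment $B_{(Z,\tilde d)}(x_0,\zeta_1)\subset\Phi(B^n(\eta_1))$. Writing $Z_j=\sum_i b^i_j Z_{J_0,i}$ with admissibly bounded $b^i_j$ on a whole neighborhood is not something Cramer's rule at $x_0$ plus "Taylor expanding using \eqref{190308eq01}" delivers: \eqref{190308eq01} expresses commutators in terms of \emph{all} the $Z_l$, and whether the $J_0$-minor stays comparable to the largest minor off $x_0$ is essentially equivalent to the hard part of the theorem, so this step of your plan is circular. It is also unnecessary: once one has the $C^0$ (indeed $C^m$) bounds on all the $Y_j$ on $B^n(\eta_1)$, surjectivity onto the small CC ball follows by lifting -- given $y$ with $\rho(x_0,y)<\zeta_1$, take the defining path $\varphi'=\sum_j a_j\zeta_1^{\tilde d_j}Z_j(\varphi)$, lift it through $\Phi$ (which intertwines $Z$-flows with $Y$-flows on the image) to a path from $0$ with velocity $\sum_j a_j\zeta_1^{\tilde d_j}Y_j$, and observe that for $\zeta_1$ admissibly small the lift cannot leave $B^n(\eta_1)$, so its endpoint $u$ satisfies $\Phi(u)=y$. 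In short: the architecture you want (bound the pullbacks first, deduce injectivity, determinant comparability, and the ball containment from them) is the right one, but the mechanism you propose for the first step -- an absolute lower bound on $\det d\Phi(0)$ feeding a quantitative inverse function theorem -- is not available under the admissibility constraints, and without it the rest of the sketch has nothing to stand on.
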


Note that by the definition of Carnot-Carath\'eodory metric (see Definition \ref{defn01}), it is clear that
\begin{equation} \label{1207eq01}
\rho_Y (0, u)=\rho_X (x_0, \Phi(u)), \ u \in B^n(\eta_1),
\end{equation}
where we write $\rho_X$ ($\rho_Y$ respectively) to be the Carnot-Carath\'eodory metric induced by the list $(X, d)$. 

\begin{rem}
Recently, Stovall and Street improved Theorem \ref{scalingmap} (see, \cite{MR3881835, BNAO1, BNAO2}). 
\end{rem}

Here are some corollaries we need in the sequel. 

\begin{cor}[{\cite[Theorem 4.1, Corollary 4.2]{BS2}}] \label{scalingmap01}
Let $\eta_1$, $\zeta_1$ and $\Phi$ be as in Theorem \ref{scalingmap}. Then, there exist admissible constants $0<\eta_2<\eta_1$, $0<\zeta_4<\zeta_3<\zeta_1$ such that
\begin{eqnarray*}
B_{(Z, \tilde{d})}(x_0, \zeta_4)%
&\subseteq& B_{(Z, \tilde{d})}(x_0, \zeta_3) \subseteq \Phi( B^n(\eta_2)))\\
&\subseteq& B_{(Z, \tilde{d})_{J_0}}(x_0, \zeta_1) \subseteq B_{(Z, \tilde{d})}(x_0, \zeta_1) \subseteq B_{(Z, \tilde{d})_{J_0}}(x_0, \zeta_2)\\
&\subseteq& \Phi(B^n(\eta_1)) \subseteq B_{(Z, \tilde{d})_{J_0}}(x_0, 1) \subseteq B_{(Z, \tilde{d})}(x_0, 1). 
\end{eqnarray*}
Moreover\footnote{Note that the particular value of $\eta_1$ and $\zeta_1$ in \eqref{changevab233} are not so important. Indeed,  by using \eqref{doublingineq} and scaling, $\eta_1$ and $\xi_1$ can be replaced by any other admissible constants, while the change of the implict constants in \eqref{changevab233} only depends on the change of the admissible constants.}, for all $u \in B^n(\eta_1)$, 
\begin{equation} \label{changevab233}
\left| \det_{n \times n} d\Phi(u) \right| \simeq \left| \det_{n \times n} Z(x_0) \right| \simeq\textrm{Vol} (B_{(Z, \tilde{d})}(x_0, \zeta_1)).
\end{equation}
\end{cor}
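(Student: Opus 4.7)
The plan is to deduce this corollary from Theorem \ref{scalingmap} by combining three ingredients: (i) standard ODE/exponential path arguments along the vector fields $Z_{J_0}$ to establish one-sided inclusions, (ii) the local equivalence between the CC-metrics for $(Z,\tilde{d})$ and for its spanning sublist $(Z,\tilde{d})_{J_0}$ (which gives the other direction), and (iii) the Cauchy--Binet identity together with the volume formula \eqref{1231eq01} for the Jacobian equivalence.

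For the chain of inclusions, the links of the form $B_{(Z,\tilde{d})_{J_0}}(x_0, r) \subseteq B_{(Z,\tilde{d})}(x_0, r)$ are trivial since $(Z,\tilde{d})_{J_0}$ is a sublist of $(Z,\tilde{d})$, and $B_{(Z,\tilde{d})}(x_0, \zeta_4) \subseteq B_{(Z,\tilde{d})}(x_0, \zeta_3)$ is immediate from $\zeta_4 < \zeta_3$. For $\Phi(B^n(\eta_2)) \subseteq B_{(Z,\tilde{d})_{J_0}}(x_0, \zeta_1)$ and $\Phi(B^n(\eta_1)) \subseteq B_{(Z,\tilde{d})_{J_0}}(x_0, 1)$, I would follow the straight-line path $\varphi(s) = e^{su\cdot Z_{J_0}}x_0$, $s \in [0,1]$, which satisfies $\varphi'(s) = \sum_j u_j Z_{J_0, j}(\varphi(s))$; writing $u_j = \zeta_1^{d_{J_0, j}} a_j$ with $a_j$ constant in $s$, the condition $|a_j| < 1$ of Definition \ref{defn01} reduces to $|u_j| < \zeta_1^{d_{J_0, j}}$, which is arranged by taking $\eta_2$ admissibly small relative to $\zeta_1^{\tilde{d}_{\max}}$, and analogously for $\eta_1 < 1$ in the second case. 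The two substantial inclusions $B_{(Z,\tilde{d})}(x_0, \zeta_1) \subseteq B_{(Z,\tilde{d})_{J_0}}(x_0, \zeta_2)$ and $B_{(Z,\tilde{d})_{J_0}}(x_0, \zeta_2) \subseteq \Phi(B^n(\eta_1))$ use the fact that $Z_{J_0}$ already spans the tangent space at $x_0$: in a neighborhood of admissible size, the matrix $Z_{J_0}(x)$ is invertible, so each $Z_j(x) = \sum_{k=1}^n c_{j,k}(x)\, Z_{J_0, k}(x)$ with admissibly bounded $c_{j,k}$; this lets us rewrite any $(Z,\tilde{d})$-path as a $(Z,\tilde{d})_{J_0}$-path at the cost of enlarging the radius by an admissible factor, giving $\zeta_2$. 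For the inclusion into $\Phi(B^n(\eta_1))$, pulling a $(Z,\tilde{d})_{J_0}$-path starting from $x_0$ back through $\Phi^{-1}$ yields an ODE in the $Y_{J_0}$ system; the uniform $C^m$ bounds on $Y$ from Theorem \ref{scalingmap} plus Gronwall's inequality then guarantee the Euclidean path stays inside $B^n(\eta_1)$ provided $\zeta_2$ is admissibly small. The smaller inclusion $B_{(Z,\tilde{d})}(x_0, \zeta_3) \subseteq \Phi(B^n(\eta_2))$ is then obtained by rescaling the same argument.

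For the Jacobian equivalence \eqref{changevab233}, I would use the pushforward identity $d\Phi(u)\, Y_j(u) = Z_j(\Phi(u))$, which for any $J \in \calJ(n, q)$ gives $\det d\Phi(u) \cdot \det Y_J(u) = \det Z_J(\Phi(u))$ by Cauchy--Binet. Since $|\det d\Phi(u)|$ is independent of $J$, taking the maximum over $J$ and invoking $|\det_{n \times n} Y(u)|_\infty \simeq_2 1$ from Theorem \ref{scalingmap} yields $|\det d\Phi(u)| \simeq |\det_{n \times n} Z(\Phi(u))|_\infty \simeq |\det_{n \times n} Z(\Phi(u))|$ (all norms on a fixed finite-dimensional space are equivalent). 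To pass from $\Phi(u)$ back to $x_0$, I would apply the doubling property \eqref{doublingineq} together with \eqref{1231eq01}: since both $\Phi(u)$ and $x_0$ lie in $B_{(Z,\tilde{d})}(x_0, \zeta_1)$, their $\zeta_1$-balls have comparable volumes, so $\sum_I |\lambda_I(\Phi(u))|\zeta_1^{d(I)} \simeq \sum_I |\lambda_I(x_0)|\zeta_1^{d(I)}$; since $J_0$ achieves the $\ell^\infty$-maximum among the $|\det Z_J(x_0)|$, each sum is comparable (with constants depending on the admissible $\zeta_1$) to $|\det Z_{J_0}(\cdot)|$, producing both the equivalence $|\det_{n \times n} Z(\Phi(u))| \simeq |\det_{n \times n} Z(x_0)|$ and the final volume comparison. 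The main obstacle throughout is the bookkeeping of admissibility of constants in the ODE comparison and the careful use of the doubling property; however, since $\Phi$ has been constructed in Theorem \ref{scalingmap} precisely to make the pulled-back geometry uniform, the needed uniformity is built into the hypothesis.
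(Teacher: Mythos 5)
First, a framing remark: the paper does not prove this corollary at all --- it is imported verbatim from Street \cite{BS2} (Theorem 4.1 and Corollary 4.2), so there is no internal proof to compare against, and your attempt has to stand on its own. The easy parts of your outline are fine: the sublist inclusions, the straight-line exponential path argument giving $\Phi(B^n(\eta_2)) \subseteq B_{(Z, \tilde{d})_{J_0}}(x_0, \zeta_1)$ and $\Phi(B^n(\eta_1)) \subseteq B_{(Z, \tilde{d})_{J_0}}(x_0, 1)$, and the Cauchy--Binet identity $\det d\Phi(u)\,\det Y_J(u) = \det Z_J(\Phi(u))$ combined with $\left|\det_{n \times n} Y(u)\right| \simeq_2 1$. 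The genuine gap is exactly at the step you call substantial: $B_{(Z, \tilde{d})}(x_0, \zeta_1) \subseteq B_{(Z, \tilde{d})_{J_0}}(x_0, \zeta_2)$. You justify writing $Z_j(x) = \sum_k c_{j,k}(x) Z_{J_0,k}(x)$ with admissibly bounded coefficients by invertibility of $Z_{J_0}(x)$ ``in a neighborhood of admissible size.'' But the maximality of $|\det Z_{J_0}|$ among the minors is assumed only at the single point $x_0$, and the continuity argument yields a neighborhood whose size, and whose Cramer-rule bounds, degenerate as $|\det Z_{J_0}(x_0)| \to 0$; a lower bound for $\left|\det_{n\times n} Z(x_0)\right|$ is precisely \emph{not} an admissible parameter (Definition \ref{admiconstdefn}). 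Nor can you extract the needed bound from the stated conclusions of Theorem \ref{scalingmap}: $\left|\det_{n \times n} Y(u)\right| \simeq_2 1$ controls the largest $n\times n$ minor of the pulled-back frame, not the $J_0$ minor, so Cramer's rule in the $Y$-frame is not licensed either. Showing that $|\det Z_{J_0}(\Phi(u))| \gtrsim |\det Z_{J_0}(x_0)|$ uniformly on the chart (equivalently, that the $c_{j,k}$ stay bounded there, starting from the bound $|c_{j,k}(x_0)| \le 1$ forced by maximality at the center and propagated by a Gronwall/bootstrap argument in the exponential coordinates) is the actual content of Street's quantitative Frobenius theorem; it cannot be quoted away. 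The same issue resurfaces when you pull a $(Z,\tilde d)_{J_0}$-path back through $\Phi^{-1}$, since invertibility of $d\Phi$ on all of $B^n(\eta_1)$ is itself part of what is being established.

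A secondary weakness concerns \eqref{changevab233}: you route the comparison $\left|\det_{n\times n} Z(\Phi(u))\right| \simeq \left|\det_{n\times n} Z(x_0)\right| \simeq \textrm{Vol}\left(B_{(Z,\tilde d)}(x_0,\zeta_1)\right)$ through the NSW volume formula \eqref{1231eq01}. In the abstract setting of Section 4 the list $(Z,\tilde d)$ is only assumed to satisfy \eqref{190308eq01}, \eqref{adconst01} and \eqref{adconst02}; the NSW hypotheses (in particular the degree-filtered bracket condition (a)) and, more importantly, the admissibility of the NSW constants are not given, so this appeal begs part of the question in the uniform setting. The cleaner route, consistent with the footnote attached to \eqref{changevab233}, runs in the opposite direction: once the chain of inclusions and $|\det d\Phi(u)| \simeq \left|\det_{n\times n} Z(x_0)\right|$ are in hand, the volume comparison follows from the change-of-variables formula \eqref{changevab01} applied to $\Phi$ over $B^n(\eta_2)$ and $B^n(\eta_1)$, sandwiching $\textrm{Vol}\left(B_{(Z,\tilde d)}(x_0,\zeta_1)\right)$ between two integrals of $|\det d\Phi|$.
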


\begin{cor}[{\cite[Proposition 11.2]{BS}}]
Suppose $\zeta'_2, \eta'_2>0$ are given and $\Phi$ defined in Theorem \ref{scalingmap}. Then there exists $2$-admissible constants $\eta'=\eta'(\zeta'_2)>0$, $\zeta_2'=\zeta_2'(\eta'_2)>0$ such that
$$
\Phi(B^n(\eta')) \subseteq B_{(Z, \tilde{d})}(x_0, \zeta'_2), \quad B_{(Z, \tilde{d})}(x_0, \zeta') \subseteq \Phi(B^n(\eta'_2)).
$$
\end{cor}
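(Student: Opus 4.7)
The plan is to reduce both containments to Corollary \ref{scalingmap01} via a scaling argument, with the first inclusion also admitting a direct path-based proof. The underlying observation is that for any $\sigma \in (0,1]$, the rescaled list $(\sigma^{\tilde d} Z, \tilde d) := (\sigma^{\tilde d_1} Z_1, \tilde d_1), \ldots, (\sigma^{\tilde d_q} Z_q, \tilde d_q)$ satisfies the hypotheses of Theorem \ref{scalingmap} with the same admissibility constants as $(Z, \tilde d)$ (uniformly in $\sigma$), the Carnot-Carath\'eodory metric rescales as $\rho_{\sigma^{\tilde d} Z} = \sigma^{-1} \rho_{(Z, \tilde d)}$, and the corresponding scaling map $\widetilde{\Phi}(u) := e^{u \cdot (\sigma^{\tilde d} Z)_{J_0}} x_0$ factors as $\widetilde{\Phi} = \Phi \circ D_\sigma$, where
$$
D_\sigma u := (\sigma^{\tilde d_{j_1}} u_1, \ldots, \sigma^{\tilde d_{j_n}} u_n).
$$

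For the first inclusion $\Phi(B^n(\eta')) \subseteq B_{(Z, \tilde d)}(x_0, \zeta'_2)$, the most direct route is to bound $\rho_{(Z, \tilde d)}(x_0, \Phi(u))$ via an explicit admissible path. Given $u \in B^n(\eta')$ with $\eta' \le \eta_1$, I would consider the path $\varphi(t) := e^{tu \cdot Z_{J_0}} x_0$, $t \in [0,1]$, which joins $x_0$ to $\Phi(u)$ and satisfies $\varphi'(t) = \sum_{i=1}^n u_i Z_{j_i}(\varphi(t))$. Setting $\delta := 2\max_i |u_i|^{1/\tilde d_{j_i}}$ and writing $u_i = a_i \delta^{\tilde d_{j_i}}$ yields $|a_i| \le 1/2$, so $\varphi \in C(\delta)$ in the sense of Definition \ref{defn01} and hence $\rho_{(Z, \tilde d)}(x_0, \Phi(u)) \le \delta \lesssim \eta'^{1/\tilde d_{\max}}$, where $\tilde d_{\max} := \max_i \tilde d_{j_i}$. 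Taking $\eta'$ equal to an appropriate admissible multiple of $(\zeta'_2)^{\tilde d_{\max}}$ then yields the claim.

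For the second inclusion $B_{(Z, \tilde d)}(x_0, \zeta') \subseteq \Phi(B^n(\eta'_2))$, I would apply Corollary \ref{scalingmap01} to the rescaled list $(\sigma^{\tilde d} Z, \tilde d)$ (the maximizing index $J_0$ is unchanged, since rescaling only multiplies $\det_{n \times n} Z(x_0)$ by a common positive scalar). This produces $B_{(\sigma^{\tilde d} Z, \tilde d)}(x_0, \zeta_4) \subseteq \widetilde{\Phi}(B^n(\eta_2))$, which upon unwinding the factorization $\widetilde{\Phi} = \Phi \circ D_\sigma$ and the metric scaling becomes $B_{(Z, \tilde d)}(x_0, \sigma \zeta_4) \subseteq \Phi(D_\sigma B^n(\eta_2))$. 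Since $\tilde d_{j_i} \ge 1$ and $\sigma \le 1$, one checks at once that $D_\sigma B^n(\eta_2) \subseteq B^n(\sigma \eta_2)$. Choosing $\sigma := \min\{1, \eta'_2/\eta_2\}$ then gives $B_{(Z, \tilde d)}(x_0, \sigma \zeta_4) \subseteq \Phi(B^n(\eta'_2))$, so $\zeta' := \sigma \zeta_4$ is the desired $2$-admissible constant depending on $\eta'_2$.

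The technical point deserving most care is verifying that the admissibility constants for $(\sigma^{\tilde d} Z, \tilde d)$ are controlled, uniformly in $\sigma \in (0,1]$, by those of $(Z, \tilde d)$. This is the standard uniform-in-scale behaviour of Street's scaling framework from \cite{BS2, BS}: the structural identity \eqref{190308eq01} rescales with coefficients $\sigma^{\tilde d_i + \tilde d_j - \tilde d_l} c_{i,j}^l$, which remain uniformly bounded in $\sigma \le 1$ precisely because $c_{i,j}^l = 0$ whenever $\tilde d_l > \tilde d_i + \tilde d_j$ (an assumption built into the setup, cf. the list $(X,d)$ constructed via Algorithm \ref{alg001}), while the $C^m$ bounds in \eqref{adconst01} only improve under rescaling and the condition $\calC(x_0, \Omega')$ is preserved since $\sigma \le 1$ shrinks the paths. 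Once this bookkeeping is done, both inclusions follow immediately from the two observations above.
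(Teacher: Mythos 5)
Your argument for the first inclusion is fine: the path $\varphi(t)=\Phi(tu)=e^{tu\cdot Z_{J_0}}x_0$ stays in $B_{(Z,\tilde d)}(x_0,1)\subseteq\Omega'$, the choice $\delta=2\max_i|u_i|^{1/\tilde d_{j_i}}$ makes the coefficients subunit, and $\eta'\simeq(\zeta_2')^{\tilde d_{\max}}$ (capped by $\eta_1$) is $2$-admissible since admissible constants may depend on upper bounds for the degrees. (For what it is worth, the paper gives no proof of this corollary at all -- it is quoted from \cite[Proposition 11.2]{BS} -- so the only issue is correctness of your argument.)

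The second inclusion, however, has a genuine gap. Your justification that the maximizing index $J_0$ survives the rescaling because ``rescaling only multiplies $\det_{n\times n}Z(x_0)$ by a common positive scalar'' is false when the formal degrees are not all equal: the minor attached to $J=(j_1,\dots,j_n)$ is multiplied by $\sigma^{d(J)}$ with $d(J)=\tilde d_{j_1}+\cdots+\tilde d_{j_n}$, which varies with $J$ (take $n=1$, $Z_1=\partial_x$ with $\tilde d_1=1$ and $Z_2=2\partial_x$ with $\tilde d_2=2$: the maximizer is $(2)$ for $\sigma=1$ but $(1)$ for $\sigma<1/2$). Theorem \ref{scalingmap} and Corollary \ref{scalingmap01}, as stated here, require $J_0$ to be an exact maximizer for the list they are applied to, so they do not apply to $\widetilde{\Phi}(u)=e^{u\cdot(\sigma^{\tilde d}Z)_{J_0}}x_0$; and if you pass to the new maximizer, the factorization $\widetilde{\Phi}=\Phi\circ D_\sigma$, which is the engine of your argument, is lost. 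Two repairs are available: (i) invoke Street's form of the scaling theorem, which only requires $|\det Z_{J_0}(x_0)|\ge\xi\,|\det_{n\times n}Z(x_0)|_\infty$ with constants depending on $\xi$; for your fixed $\sigma=\min\{1,\eta_2'/\eta_2\}$ one has $\xi\ge\sigma^{n(\tilde d_{\max}-1)}$, which is harmless since dependence on $\eta_2'$ is allowed; or (ii) drop the rescaling altogether: given $y\in B_{(Z,\tilde d)}(x_0,\zeta')$ with $\zeta'\le\zeta_4$, write $y=\Phi(u)$ with $u\in B^n(\eta_2)$ by Corollary \ref{scalingmap01}, use \eqref{1207eq01} to get $\rho_Y(0,u)=\rho_{(Z,\tilde d)}(x_0,y)<\zeta'$, and observe that any subunit path for $(Y,\tilde d)$ at scale $\delta\le 1$ (such paths live in $B^n(\eta_1)$) has Euclidean speed $\lesssim_2\delta$ because $\|Y_j\|_{C^0(B^n(\eta_1))}\lesssim_2 1$; hence $|u|\lesssim_2\zeta'$, and taking $\zeta'$ to be a small admissible multiple of $\eta_2'$ (and $\le\zeta_4$) finishes. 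Note also that your uniform-in-$\sigma$ bookkeeping leans on the degree restriction $c^l_{i,j}=0$ for $\tilde d_l>\tilde d_i+\tilde d_j$, which is not among the stated hypotheses surrounding \eqref{190308eq01} (it holds for the lists built by Algorithm \ref{alg001}); but this point becomes moot once $\sigma$ is a fixed constant determined by $\eta_2'$.
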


We let $ \eta_1,  \zeta_1$ and $\Phi: B^n(\eta_1) \longrightarrow B_{(Z, \tilde{d})}(x_0, 1)$ be as in Theorem \ref{scalingmap}, and $\Omega'' \Subset \Omega' \Subset \Omega$, where $\Omega'' \subset \R^n$ is open and relatively compact in $\Omega'$. Furthermore, we let $Y_1, \dots, Y_q$ be the pullbacks of $Z_1, \dots, Z_q$ as in Theorem \ref{scalingmap} and $\gamma: B^k(\rho) \times \Omega' \to \Omega$ be a $C^\infty$ function, where $\rho>0$ is some fixed positive number, and $\gamma(0, x) \equiv x$. Here $\rho>0$ is small enough that for $t \in B^k(\rho)$, $\gamma_t^{-1}$ exists. In order to work the the assumption that $(Z, \tilde{d})$ controls $\gamma_t$, we recall the following two conditions on $\gamma_t$. 

\medskip

\begin{enumerate}
\item [1.] $\calQ_1( \rho_1, \tau_1, \{\sigma_1^m\}_{m \in \N}) (\rho_1 \le \rho, \tau_1 \le \xi_1):$ For $x \in \Omega''$, define the vector field
$$
W(t, x)=\frac{d}{d\epsilon} \bigg|_{\epsilon=1} \gamma_{\epsilon t} \circ \gamma_t^{-1}(x).
$$
We suppose
\begin{enumerate}
\item [$\bullet$] $W(t, x)=\sum\limits_{l=1}^q c_l(t, x)Z_l(x)$, on $B_{(Z, \tilde{d})}(x_0, \tau_1)$, 
\item [$\bullet$] $\sum\limits_{|\alpha|+|\beta| \le m} \|Z^\alpha \partial_t^\beta c_l\|_{C^0(B^k(\rho_1) \times B_{(Z, \tilde{d})}(x_0, \tau_1))} \le \sigma_1^m$.
\item [$\bullet$] Note that we may, without the loss of generality, assume that $c_l(0, x)\equiv 0$, as we may replace $c_l(t, x)$ with $c_l(t, x)-c_l(0, x)$ for every $l$ by using the fact that $W(0, x)=0$.
\end{enumerate}

\item [2.]$\calQ_2(\rho_2, \tau_2, \{\sigma_2^m\}_{m \in \N}):$
\begin{enumerate}
\item [$\bullet$] $\gamma(B^k(\rho_2) \times B_{(Z, \tilde{d})}(x_0, \tau_2)) \subseteq B_{(Z, \tilde{d})}(x_0, \zeta_1)$;
\item [$\bullet$] If $\eta'=\eta'(\tau_2)>0$ is a $2$-admissible constant so small that 
$$
\Phi(B^n(\eta')) \subseteq B_{(Z, \tilde{d})}(x_0, \tau_2) \subseteq \Phi(B^n(\eta_1)),
$$
then if we define a new map
$$
\Theta_t(u)=\Phi^{-1} \circ \gamma_t \circ \Phi(u): B^k(\rho_2) \times B^n(\eta') \mapsto B^n(\eta_1),
$$
we have $\|\Theta\|_{C^m(B^k(\rho_2) \times B^n(\eta'))} \le \sigma_2^m$.
\end{enumerate}
\end{enumerate}

\begin{prop}[{\cite[Proposition 12.3]{BS}}] \label{controlQ1Q2}
$\calQ_1 \Leftrightarrow \calQ_2$ in the following sense: 
\begin{enumerate}
\item [$\bullet$] $\calQ(\rho_1, \tau_1, \{\sigma_1^m\}_{m \in \N}) \Longrightarrow$ there exists a $2$-admissible constant
$$
\rho_2(\rho_1, \tau_1, \sigma_1^1, k)
$$
and $m+1$-admissible constants $\sigma_2^m=\sigma_2^m(\sigma_1^{m+1}, k)$ such that
$$
\calQ_2(\rho_2, \tau_1/2, \{\sigma_2^m\}_{m \in \N})
$$
holds.

\item [$\bullet$] $\calQ_2(\rho_2, \tau_2, \{\sigma_2^m\}_{m \in \N}) \Longrightarrow$ there exists a $2$-admissible constant $\tau_1=\tau_1(\tau_2)>0$ and $m$-admissible constants 
$$
\sigma_1^m=\sigma_1^m(\sigma_2^{m+1}, k)
$$ 
such that 
$$
\calQ_1(\rho_2, \tau_1, \{\sigma_1^m\}_{m \in \N})
$$ 
holds. 
\end{enumerate}
\end{prop}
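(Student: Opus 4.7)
Both implications go through the quantitative Frobenius theorem (Theorem \ref{scalingmap}): pull back everything via $\Phi : B^n(\eta_1) \to B_{(Z,\tilde{d})}(x_0, 1)$. Under this pullback the vector fields $Z_l$ become $Y_l$ with admissibly bounded $C^m$ norms, $|\det_{n\times n} Y(u)| \simeq_2 1$, and the Euclidean and Carnot--Carath\'eodory balls become comparable on $B^n(\eta_1)$. In these coordinates, condition $\calQ_1$ becomes a pointwise decomposition of $\widetilde{W} := \Phi^* W$ in the admissibly controlled frame $\{Y_l\}$, while $\calQ_2$ becomes a uniform $C^m$ bound on the pulled-back flow $\Theta_t$. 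The equivalence then reduces to the classical correspondence between a time-dependent vector field with controlled coefficients in a good frame and the $C^m$ regularity of its flow.

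\textbf{Direction $\calQ_1 \Rightarrow \calQ_2$.} From the definition of $W$, the rescaling $\epsilon \mapsto \sigma\epsilon$ shows that $G(\sigma, y) := \gamma_{\sigma t}(y)$ satisfies the ODE
$$
\partial_\sigma G(\sigma, y) = \tfrac{1}{\sigma}\, W(\sigma t, G(\sigma, y)), \qquad G(0, y) = y,
$$
where the apparent singularity at $\sigma = 0$ is removable because $W(0, \cdot) \equiv 0$. Pulling back, $\widetilde{W}(t, u) = \sum_l c_l(t, \Phi(u)) Y_l(u)$ is admissibly bounded in $C^m$ by the hypothesis on the $c_l$, together with the admissible $C^m$ bounds for $Y_l$ and $\Phi$. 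Standard ODE regularity for $\widetilde{F}(\sigma, u, t) := \Phi^{-1}(G(\sigma, \Phi(u)))$ jointly in $(\sigma, u, t)$ then yields uniform $C^m$ bounds, and hence the desired bound on $\Theta_t(u) = \widetilde{F}(1, u, t)$, with the extra derivative absorbed into admissibility and accounting for the dependence of $\sigma_2^m$ on $\sigma_1^{m+1}$. Choosing $\rho_2$ small depending only on $\rho_1, \tau_1, \sigma_1^1, k$ guarantees via Gronwall that $\widetilde{F}(\sigma, u, t) \in B^n(\eta_1)$ throughout $\sigma \in [0,1]$, which in turn yields the inclusion $\gamma(B^k(\rho_2) \times B_{(Z,\tilde{d})}(x_0, \tau_1/2)) \subseteq B_{(Z,\tilde{d})}(x_0, \zeta_1)$ required by $\calQ_2$.

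\textbf{Direction $\calQ_2 \Rightarrow \calQ_1$.} Assume the pulled-back flow is admissibly bounded in $C^{m+1}$. Differentiating gives $C^m$ bounds on
$$
\widetilde{W}(t, u) = \partial_\epsilon\big|_{\epsilon=1} \Theta_{\epsilon t} \circ \Theta_t^{-1}(u),
$$
with one derivative lost in passing from flow to vector field. Since $|\det_{n\times n} Y(u)| \simeq_2 1$, at each $u$ one may select an invertible $n \times n$ submatrix of $Y(u)$ with admissibly controlled inverse, and solve $\widetilde{W}(t, u) = \sum_l \tilde{c}_l(t, u) Y_l(u)$ by Cramer's rule, producing coefficients $\tilde{c}_l$ with admissibly bounded $C^m$ norms. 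Setting $c_l(t, x) := \tilde{c}_l(t, \Phi^{-1}(x))$ and invoking the admissible $C^m$ bounds on $\Phi^{-1}$ pushes the decomposition forward to the original coordinates; finally, $Z^\alpha \partial_t^\beta c_l$ bounds are recovered from $Y^\alpha \partial_t^\beta \tilde{c}_l$ bounds by iteratively applying \eqref{190308eq01} together with the admissible control on the structural coefficients $c_{i,j}^l$, which is exactly what Theorem \ref{scalingmap} (via \eqref{adconst02}) provides.

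\textbf{Main obstacle.} The principal technical burden is the bookkeeping of admissibility constants through every step: ODE regularity at each order of derivative (Gronwall iterated in $m$), Cramer's rule inversion of the frame matrix $Y(u)$, and the chain rule through $\Phi$ and $\Phi^{-1}$. The one-derivative loss in both directions, from $\sigma_1^{m+1}$ to $\sigma_2^m$ in the first direction and from $\sigma_2^{m+1}$ to $\sigma_1^m$ in the second, arises respectively from the ODE integration step and from differentiating the flow to recover the vector field. One must also check that the radii $\rho_2$ and $\tau_1$ can indeed be chosen as $2$-admissible constants depending only on first-order data, which is delivered by Gronwall applied with the single bound $\sigma_1^1$ (respectively $\sigma_2^1$) to control how far the flow strays from the identity on the relevant time interval.
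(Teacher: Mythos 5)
The paper does not actually prove this proposition: it is imported verbatim from \cite[Proposition 12.3]{BS}, so there is no in-paper argument to compare yours against. On its own merits, your outline is essentially the argument of \cite{BS}: pull everything back through the scaling map $\Phi$ of Theorem \ref{scalingmap}, use the ODE $\partial_\sigma \gamma_{\sigma t}(y)=\sigma^{-1}W(\sigma t,\gamma_{\sigma t}(y))$ plus ODE regularity to pass from the coefficient bounds in $\calQ_1$ to the $C^m$ bounds on $\Theta_t$ in $\calQ_2$, and conversely differentiate the flow at $\epsilon=1$ and invert the frame $\{Y_l\}$ (using $\left|\det_{n\times n}Y(u)\right|\simeq_2 1$, with a \emph{fixed} minor $J_0$ so the coefficients stay smooth) to recover the decomposition of $W$; the naturality identity $(Z^\alpha c_l)\circ\Phi=Y^\alpha(c_l\circ\Phi)$ and the norm equivalence in Theorem \ref{scalingmap} then convert Euclidean $C^m$ bounds into the $Z^\alpha\partial_t^\beta$ bounds demanded by $\calQ_1$, and \eqref{190308eq01}/\eqref{adconst02} keep everything admissible. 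Two details need tightening. First, the one-derivative loss $\sigma_2^m=\sigma_2^m(\sigma_1^{m+1},k)$ does not come from the ODE integration step (the flow of a $C^m$ field is $C^m$ in time, data and parameters); it comes from the removable singularity itself: writing $\sigma^{-1}c_l(\sigma t,x)=\int_0^1 \nabla_t c_l(s\sigma t,x)\cdot t\,ds$ (legitimate since $c_l(0,\cdot)\equiv 0$) costs one $t$-derivative when bounding the $C^m$ norm of the rescaled field. Second, keeping the pulled-back flow inside $B^n(\eta_1)$ only places $\gamma_t(y)$ in $\Phi(B^n(\eta_1))\subseteq B_{(Z,\tilde d)}(x_0,1)$, which is weaker than the containment in $B_{(Z,\tilde d)}(x_0,\zeta_1)$ required by the first bullet of $\calQ_2$; either confine the flow to $B^n(\eta_2)$ and invoke Corollary \ref{scalingmap01}, or argue directly in Carnot--Carath\'eodory terms: since $|c_l(\sigma t,x)|\le \sigma|t|\sigma_1^1$, the curve $\sigma\mapsto\gamma_{\sigma t}(y)$ is admissible for radius $\tau_1/2$ once $\rho_2\sigma_1^1<(\tau_1/2)^{d_{\max}}$, which simultaneously exhibits the claimed dependence $\rho_2=\rho_2(\rho_1,\tau_1,\sigma_1^1,k)$ (admissible constants being allowed to depend on upper bounds for the degrees). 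Both repairs are routine; the proposal is otherwise sound and follows the same route as the cited source.
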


\begin{defn}
We say \emph{$(Z, \tilde{d})$ controls $\gamma$ at the unit scale} if either of the equivalent conditions $\calQ_1$ or $\calQ_2$ holds (for some choice of parameters). If we wish to make the point $x_0$ explicit, we wil say $(Z, \tilde{d})$ controls $\gamma$ at the unit scale near $x_0$. 
\end{defn}

Here are some basic properties for this concept.

\begin{prop}[{\cite[Propostion 12.6, Proposition 12.7]{BS}}] \label{controlcor}
\begin{enumerate}
\item[1.] If $(Z, \tilde{d})$ controls $\gamma_{t_1}^1$ and $\gamma_{t_2}^2$ at the unit scale, then $(Z, \tilde{d})$ controls $\gamma^1_{t_1} \circ \gamma^2_{t_2}$ at the unit scale;
\item [2.] If $(Z, \tilde{d})$ controls $\gamma_t$ at the unit scale, then $(Z, \tilde{d})$ controls $\gamma_t^{-1}$ at the unit scale;
\item [3.] If $(Z, \tilde{d})$ controls $\gamma_t$ at the unit scale and $c:=(c_1, \dots, c_k) \in [0, 1]^k$ is a constant, then $(Z, \tilde{d})$ controls $\gamma^c((t_1, \dots, t_k), x):=\gamma((c_1t_1, \dots, c_kt_k), x)$ at the unit scale. Moreover, the parameters in the definition of control at the unit scale may be chosen indepedent of $c$. 
\end{enumerate}
\end{prop}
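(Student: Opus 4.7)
The plan is to work entirely in the scaling-map formulation $\calQ_2$: by Proposition \ref{controlQ1Q2}, control at the unit scale is equivalent to the pullback $\Theta_t := \Phi^{-1} \circ \gamma_t \circ \Phi$ being a smooth family of self-maps of a Euclidean ball with uniform $C^m$ bounds, and in this setting composition, inversion, and $t$-rescaling are standard operations on such families. Part (3) is essentially immediate from $\calQ_2$: if $\Theta_t$ is the pullback of $\gamma_t$, then the pullback of $\gamma^c$ is $\Theta^c_t(u) := \Theta((c_1 t_1, \dots, c_k t_k), u)$; since $|c_i| \le 1$, the linear map $t \mapsto (c_1 t_1, \dots, c_k t_k)$ sends $B^k(\rho_2)$ into itself, preserving the image condition from $\calQ_2$, while a direct chain-rule estimate yields $\|\Theta^c\|_{C^m} \le \|\Theta\|_{C^m}$, so that the bounds $\sigma_2^m$ are inherited independent of $c$.

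\emph{Part 2 (inversion).} The central observation is that $\Theta_0 = \Phi^{-1} \circ \gamma_0 \circ \Phi = \mathrm{id}$. Hence $\Theta_t$ is a $C^{m+1}$-small perturbation of the identity whenever $|t|$ is less than some $2$-admissible constant (extracted from $\sigma_2^{m+1}$ together with the fact that $\Theta_t - \Theta_0$ vanishes at $t=0$), and a quantitative inverse function theorem produces $\Theta_t^{-1}$ on a slightly shrunk ball inside $B^n(\eta_1)$, with $C^m$ norms bounded by admissible expressions in $\sigma_2^{j}$ for $j \le m+1$. Since the pullback of $\gamma_t^{-1}$ under $\Phi$ is exactly $\Theta_t^{-1}$, this verifies $\calQ_2$ for $\gamma_t^{-1}$, and the desired $\calQ_1$ follows by Proposition \ref{controlQ1Q2}.

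\emph{Part 1 (composition).} Reducing both $\gamma^1_{t_1}$ and $\gamma^2_{t_2}$ to $\calQ_2$ form, one needs to verify two things for the candidate pullback $\Theta^1_{t_1} \circ \Theta^2_{t_2}$: (i) well-definedness, i.e., $\Theta^2_{t_2}(B^n(\eta'))$ is contained in the domain of $\Theta^1_{t_1}$; and (ii) uniform $C^m$ control. For (i), since $\Theta^2_0 = \mathrm{id}$ and $\|\Theta^2\|_{C^1} \le \sigma_2^1$, one may shrink the $t_2$-parameter domain so that the displacement $\Theta^2_{t_2}(u) - u$ is bounded by any prescribed small quantity; choosing this quantity so that the resulting enlarged ball still lies inside the domain on which $\Theta^1$ is controlled (taken over a common refinement of the two scaling-map pictures) settles (i). For (ii), Fa\`a di Bruno's formula expresses $\|\Theta^1 \circ \Theta^2\|_{C^m}$ as an admissible polynomial in the $C^m$ norms of the two factors, giving the required uniform bound.

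\textbf{Main obstacle.} The chief difficulty is bookkeeping admissible constants and shrinking domains through each step: every application of $\calQ_1 \Leftrightarrow \calQ_2$ shrinks the parameters $\rho_i, \tau_i, \eta'$, and one must verify that the final constants depend only on the data in the hypotheses, on $k$, and on the admissible parameters of $(Z, \tilde d)$ in the sense of Definition \ref{admiconstdefn}. In the composition part an additional subtlety appears, since the two input mappings come with \emph{a priori} distinct scaling-map domains and constants, so one must pass to a common refinement without losing admissibility; this is the technically most delicate step, but ultimately amounts to taking the smaller of the two parameter sets and observing that the resulting estimates still fall within the admissible framework.
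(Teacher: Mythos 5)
Your proposal is correct and follows essentially the same route as the paper: both verify control via the equivalent condition $\calQ_2$, identifying the pullback of the composition (resp.\ inverse, resp.\ $t$-rescaled map) with the composition (resp.\ inverse, resp.\ rescaling) of the pullbacks, bounding $C^m$ norms by the chain rule, and shrinking the parameters $\rho_2, \tau_2$ so the compositions are defined. In fact the paper only sketches this and leaves the details to the reader, so your more explicit treatment of the inverse function theorem step and the parameter bookkeeping is a faithful filling-in of the same argument rather than a different approach.
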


Here, we include the proofs for these results, which are contained in \cite{BS}.

\begin{proof}
All of these statements can be easily verified by using $\calQ_2$. Indeed, the $C^m$ norm of $\Phi^{-1} \circ \gamma_{t_1}^1 \circ \gamma_{t_2}^2 \circ \Phi= \left( \Phi^{-1} \circ \gamma_{t_1}^1 \circ \Phi \right)  \circ \left( \Phi^{-1} \circ \gamma_{t_2}^2 \circ \Phi \right)$ can clearly be bounded in terms of the $C^m$ norms of $ \Phi^{-1} \circ \gamma_{t_1}^1 \circ \Phi$ and $ \Phi^{-1} \circ \gamma_{t_2}^2 \circ \Phi$, provided one shrinks the parameters $\rho_2$ and $\tau_2$ appropriately (so that the composition is defined). Smilar proofs work for $\gamma_t^{-1}$ and $\gamma^c_t$. We leave further details to the interested readers.   
\end{proof}


\subsection{The main lemma}

\medskip

We are now ready to describe the assumptions for the lemma of modulus of continuity, which generalizes the inequality \eqref{hilbertCV} under the setting of sub-Riemannian geometry. We begin with the following definition.

\begin{defn} \label{Mgenerate}
Let $\{Z_1, \dots, Z_r\}$ be a subset of $\{Z_1, \dots, Z_q\}$. For $M \in \N$, we say $Z_1, \dots, Z_r$ \emph{$M$-generates} $Z_1, \dots, Z_q$ if each $Z_j (r+1 \le j \le q)$ can be written in the form 
$$
Z_j=\textrm{ad} (Z_{l_1}) \textrm{ad} (Z_{l_2}) \dots \textrm{ad} (Z_{l_m}) Z_{l_{m+1}}, \quad 0 \le m \le M-1, \ l \le l_k \le r.   
$$
\end{defn}

Throughout this section, we take $K_1 \Subset K_0' \Subset K_0 \Subset \Omega'' \Subset \Omega' \Subset \Omega$, where $\Omega \subset \R^n$ is open, $K_1$, $K_0$, $K_0'$ compact, $K_1 \subsetneq K_0' \subsetneq K_0$, and $\Omega'', \Omega$ open, relatively compact in $\Omega$ as above. Let
$$
Z_1, \dots, Z_q
$$
be a list of $C^\infty$ vector fields on $\Omega$ with formal degrees $\tilde{d}_1, \dots, \tilde{d}_q$, satisfying the following assumptions:
\begin{enumerate} 
\item [(a).] $n=\dim  \ \textrm{span} \left\{ Z_1(x), \dots, Z_q(x) \right\}$, for any $x \in \Omega'$;

\medskip

\item [(b).] $(Z, \tilde{d})$ satisfies all the assumptions of Theorem \ref{scalingmap}, uniformly for $x \in K_0$; 

\medskip

\item [(c).] The first $r$ vector fields $Z_1, \dots, Z_r$, $M$-generate $Z_1, \dots, Z_q$, for some $M>0$;

\medskip

\item[(d).] There exists some $\check{C}>1$, such that
$$
B_{(Z, \tilde{d})}(x_0, \check{C}) \subsetneq K_0', \ \forall x_0 \in K_1. 
$$

\end{enumerate}

Moreover, we assume that we are given a $C^\infty$ function $\check{\gamma}:  B^k(a') \times \Omega'' \mapsto \Omega'$ , satisfying the following assumptions:
\begin{enumerate}
\item [(I).] $\check{\gamma}(0, x) \equiv x, x \in \Omega''$;

\medskip

\item [(II).] $\check{\gamma}$ is controlled by $(Z, \tilde{d})$ near $x$ for every $x \in K_0$, uniformly in $x \in K_0$, that is, the parameters in $\calQ_1$ can be choosen uniformly for all $x \in K_0$;

\medskip

\item [(III).]  For each $l$, $1 \le l \le r$, there is a multi-index $\alpha$ (with $|\alpha| \le B$, where $B \in \N$ is some fixed constant which our results are allowed to depended on), such that
\begin{equation} \label{eq3455}
Z_l(x)=\frac{1}{\alpha!} \frac{\partial}{\partial t}^\alpha \bigg|_{t=0} \frac{d}{d\epsilon} \bigg|_{\epsilon=1} \check{\gamma}_{\epsilon t} \circ \check{\gamma}_t^{-1}(x).
\end{equation}
Note that (III) together with (a) and (c) above implies that $\check{\gamma}$ satisfies the curvature conditon $(\calC_J)$ (see, e.g.,  \cite[Theorem 9.1]{BS}). 

\end{enumerate}

We make a remark that in the above assumptions, we shall restrict our attention to $a>0$ small,  so that $\check{\gamma}_t^{-1}$ makes sense (since $\check{\gamma}_0(x) \equiv x$) wherever we use it. 

We make the an observation first, to complete our assumptions on the main lemma. 

\medskip

\textit{Observation I:} Let $x_0 \in  K_1$. Since $\check{\gamma}$ is controlled  by $(Z, \tilde{d})$ at the unit scale, by Proposition \ref{controlQ1Q2} (more precisely, by $\calQ_2$), we can take some $\check{C_1}>\check{C}$ sufficiently large and $0<a'<a$ sufficiently small, such that
$$
\check{\gamma}\left(\overline{B^k(a')} \times \overline{B_{(Z, \tilde{d})}\left(x_0, \check{C}\right)}\right) \subseteq B_{(Z, \tilde{d})}\left(x_0, \check{C_1}\right),
$$
where $a$ and $\check{C_1}$ only depend on $\check{C}$ and how $\check{\gamma}$ is controlled by $(Z, \tilde{d})$ (more precisely, the constants in $\calQ_1$). 

This allows us to state the last assumption on the list of vector fields $(Z, \tilde{d})$: 

\medskip

\begin{enumerate}

\item [(e).] $B_{(Z, \tilde{d})}\left(x,  10\check{C_1} \right) \subseteq K_0, \forall x \in K_1$. 

\end{enumerate}

\begin{rem}
Notice that (e) requires $Z_1, \dots, Z_q$ to be ``small". 
\end{rem}

\medskip

\textit{Observation II:} Apply the scaling map $\Phi$ defined in Theorem \ref{scalingmap} to $x_0 \in K_1$ taken above and the list
$$
\left( \left( 8 \check{C_1} \right)^{\tilde{d}}Z, \tilde{d} \right),
$$
where all the implicit constants derived from this scaling map only depend on $\check{C_1}$ and any $2$-admissible constants with respect to the list $(Z, \widetilde{d})$. In particular, there exists a $0<\zeta_0<1$, only depending on $\check{C_1}$ and any $2$-admissible constants, such that
\begin{equation} \label{0930eq01}
\Phi: B_{(Y, \tilde{d})}(0, \zeta_0) \longrightarrow B_{\left( \left(8\check{C_1} \right)^{\tilde{d}}Z, \tilde{d} \right)}(x_0, \zeta_0)=B_{(Z, \tilde{d})} \left(x_0, 8\check{C_1} \zeta_0 \right) \subset K_0
\end{equation}
is a $C^\infty$ diffeomorphism, where $(Y, \tilde{d})$ is the pullback of the list $\left( \left( 8\check{C_1} \right)^{\tilde{d}}Z, \tilde{d} \right)$ via $\Phi$. 

We are ready to state the lemma of modulus of continuity.

\begin{lem} \label{moduluscont}
Let $\chi \in C^\infty_0(B^k(a''))$, $\psi_1, \psi_2 \in C_0^\infty(\R^n)$ with compact support in the interior of $K_1$  and $f \in L^2(\R^n)$.  Let further, $\tilde{k} \ge 1$ and 
$$
\theta(t, x): B^{\tilde{k}}(1) \times B_{(Z, \tilde{d})}\left(x_0, 5\check{C_1} \zeta_0 \right) \mapsto B_{(Z, \tilde{d})} (x_0, 6 \check{C_1} \zeta_0 ), 
$$ 
with satisfying the following conditions\footnote{The domain of $\theta$ on the variable $t$ is not quite important, as we only requires $t$ to be small in our proof. In our applications later, $\widetilde{k}=q$, and $B^{\widetilde{k}}(1)$ will be replaced by  $B^q(\eta_0)$, where $\eta_0$ is some $2$-admissible constant, which comes from the proof of sparse bound later. There is no harm to pretend the domain of $\theta$ on the variable $t$ is $B^{\widetilde{k}}(1)$ at this moment. }:
\begin{enumerate}
\item [(1).] $\theta(0, x) \equiv x, \forall x \in B_{(Z, \tilde{d})}(x_0, 5\check{C_1} \zeta_0)$; 
\item [(2).] $\theta \in C^\infty (B^{\tilde{k}}(1) \times B_{(Z, \tilde{d})}(x_0, 5\check{C_1} \zeta_0))$;
\item [(3).] $\theta$ is controlled at the unit scale by the list $(Z, \tilde{d})$ near $x \in B_{(Z, \tilde{d})}\left(x_0, 4\check{C_1} \zeta_0 \right)$, uniformly in $x$; 
\item [(4).] For any $b \in B^{\tilde{k}}(1)$, the map $\theta_b (\cdot):=\theta(b, \cdot)$ has an inverse, which maps $\theta_b(B_{(Z, d)}(x_0, 5\check{C_1} \zeta_0))$ back to $B_{(Z, d)}(x_0, 5\check{C_1} \zeta_0)$. 
\end{enumerate}
Consider \footnote{Note that, under these assumptions, 
$$
\|\calL\|_{L^\infty \mapsto L^\infty}, \|\calL\|_{L^1 \mapsto L^1} \lesssim 1,
$$
and furthermore $\calL^*$ is of the same form as $\calL$ with $\check{\gamma}_t$ replaced by $\check{\gamma}_t^{-1}$. }
$$
\calL f(x)=\psi_1(x) \int f(\check{\gamma}_t(x)) \psi_2(\check{\gamma}_t(x)) \varrho(t, x) \chi(t)dt ,
$$
where $\varrho$ is a $C^\infty$ function with
$$
\varrho: B^k(a) \times \Omega' \mapsto \C. 
$$
If  $a''$ and $|b|$ are chosen sufficiently small\footnote{In our applications later, $b$ will be of the form $\del^j \tilde{b}$, where $j \ge 0$ is sufficiently large and $\tilde{b} \in B^n(1)$.}, then there exists $\widetilde{C}, \eta>0$, such that
\begin{equation} \label{1001eq01}
\left\| \calL f (\cdot)-\left( \calL f \right) \left(\theta_{b}(\cdot) \right) \right\|_{L^2(B_{(Z, \tilde{d})}(x_0, \check{C}\zeta_0)))} \le \widetilde{C} |b|^{\eta} \|f\|_{L^2(B_{(Z, \tilde{d})}(x_0, \check{C_1}\zeta_0))}.
\end{equation}
\end{lem}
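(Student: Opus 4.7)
My plan is to reduce \eqref{1001eq01} via the quantitative scaling map $\Phi$ of Theorem \ref{scalingmap} to an estimate for a normalized single-scale operator on a Euclidean ball, and then to prove the modulus of continuity in that model setting by a $TT^*$ argument that exploits the curvature condition $(\calC_J)$.

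\emph{Reduction via $\Phi$.} Apply $\Phi$ as in Observation II (associated to $x_0$ and the rescaled list $((8\check{C_1})^{\tilde d}Z,\tilde d)$), and conjugate everything: set $F:=f\circ\Phi$, $\widetilde{\calL}F:=(\calL f)\circ\Phi$, $\widetilde{\gamma}_t:=\Phi^{-1}\circ\check\gamma_t\circ\Phi$, and $\widetilde{\theta_b}:=\Phi^{-1}\circ\theta_b\circ\Phi$, with analogous notation for the cut-offs and the amplitude. By Corollary \ref{scalingmap01}, $\Phi$ sends Euclidean balls $B^n(r)$ to $\rho$-balls of comparable radius and has $|\det_{n\times n}d\Phi|\simeq 1$, so \eqref{1001eq01} is equivalent (up to admissible constants) to an analogous $L^2$ estimate for $\widetilde\calL F-(\widetilde\calL F)\circ\widetilde{\theta_b}$ on suitable Euclidean balls. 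The hypotheses transfer cleanly: by Proposition \ref{controlQ1Q2} (the characterization $\calQ_2$), $\widetilde\gamma$ and $\widetilde{\theta_b}$ are uniformly $C^m$-smooth with admissible bounds, and $\widetilde{\theta_b}(u)=u+O(|b|)$ in the Euclidean sense; by condition (III) together with \cite[Theorem 9.1]{BS}, the pulled-back $\widetilde\gamma$ still satisfies $(\calC_J)$ with admissible constants.

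\emph{Model $L^2$ modulus of continuity via $TT^*$.} Let $T_bF:=\widetilde\calL F-(\widetilde\calL F)\circ\widetilde{\theta_b}$. Using
$$
\|T_b\|_{L^2\to L^2}^2=\|T_bT_b^*\|_{L^2\to L^2},
$$
it suffices to bound the Schwartz kernel of $T_bT_b^*$, which is a sum of four terms built from the kernel of $A:=\widetilde\calL\widetilde\calL^*$ by pre- or post-composing one of its two variables with $\widetilde{\theta_b}$ or $\widetilde{\theta_b}^{-1}$. Writing
$$
K(u,v)=\widetilde{\psi_1}(u)\,\overline{\widetilde{\psi_1}(v)}\iint\chi(t)\chi(s)\,\widetilde\Psi(t,s,u,v)\,\delta\bigl(\widetilde\gamma_t(u)-\widetilde\gamma_s(v)\bigr)\,dt\,ds
$$
and representing $\delta$ as a Fourier integral, the curvature condition $(\calC_J)$ for $\widetilde\gamma$---concretely, the non-vanishing of the Jacobian in \eqref{190305eq01} pulled back through $\Phi$---lets a van der Corput / stationary phase analysis conclude that $K(u,v)$ is H\"older continuous of some admissible order $\sigma>0$ in each variable, uniformly in the admissible data. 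Since $\widetilde{\theta_b}$ is a smooth near-identity diffeomorphism with $\widetilde{\theta_b}(u)=u+O(|b|)$, each of the four kernel differences is then bounded in both $L^\infty(dv;L^1(du))$ and $L^\infty(du;L^1(dv))$ by $|b|^\sigma$, so Schur's test yields $\|T_bT_b^*\|_{L^2\to L^2}\lesssim |b|^{2\sigma}$ and hence $\|T_b\|_{L^2\to L^2}\lesssim |b|^\sigma$.

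\emph{Transfer back and main obstacle.} Because $\Phi$ is bi-Lipschitz between Euclidean and $\rho$-balls with admissible constants (Corollary \ref{scalingmap01}), the model estimate transfers to \eqref{1001eq01} with $\eta=\sigma$ and $\widetilde C$ admissible. The main obstacle is the H\"older regularity of the kernel of $\widetilde\calL\widetilde\calL^*$: one must run the classical CNSW oscillatory integral analysis uniformly in the sense of Definition \ref{admiconstdefn}, which is essential because in Section 5 the list $(X,d)$ will be rescaled by a parameter $w>0$ and the lemma must apply uniformly across that family. The smallness conditions on $a''$ and $|b|$ in the hypothesis are precisely what keeps $\widetilde\gamma_t$ and $\widetilde{\theta_b}$ inside the range of $\Phi$ and $\widetilde{\theta_b}$ invertible on the relevant ball, so that the pullback step is valid.
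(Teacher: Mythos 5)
Your reduction step (pulling everything back through the scaling map $\Phi$, transferring the hypotheses via $\calQ_2$ and Corollary \ref{scalingmap01}, and noting $|\det_{n\times n}d\Phi|\simeq 1$) is exactly what the paper does, and that part is fine. The genuine gap is in your model estimate: you claim that a \emph{single} $T_bT_b^*$ application produces a Schwartz kernel $K(u,v)$ for $\widetilde{\calL}\widetilde{\calL}^*$ that is a H\"older continuous \emph{function}, obtained by writing the delta function as a Fourier integral and invoking van der Corput through $(\calC_J)$. This fails in general: $\widetilde{\calL}$ averages over a $k$-parameter family $\{\widetilde\gamma_t(u)\}$ with $k$ typically much smaller than $n$, so the kernel of $\widetilde{\calL}\widetilde{\calL}^*$ lives on (at most) a $2k$-parameter set and is generically a singular measure, not a bounded function — already for a curve ($k=1$) in $\R^n$, $n\ge 3$, there is no density at all. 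Moreover, the finite-type condition $(\calC_J)$ only yields weak oscillatory decay (a small negative power of the frequency), nowhere near enough to produce pointwise H\"older bounds on a kernel; consequently the Schur-test step on the four kernel differences has nothing to act on. This is precisely why the paper does not stop at one $TT^*$: after globalizing with the bump functions $h_1,h_2$ it reduces to $\|\calU\calU^*R\|_{L^2\to L^2}$ and $\|\widetilde{\calU}\widetilde{\calU}^*R\|_{L^2\to L^2}$ and invokes Theorem \ref{apdthm01}/Corollary \ref{apdcor01}, whose proof iterates $(S^*S)^n$ so that the composed map $\Gamma_{\tilde\tau}=\widehat\gamma_{t_1}\circ\cdots\circ\widehat\gamma_{t_n}$ carries enough parameters for $(\calC_J)$ (via Theorem \ref{apdthm002}) to make $\tilde\tau\mapsto\Gamma_{\tilde\tau}(x_0)$ finitely nondegenerate onto $\R^n$; then Proposition \ref{prop2.7} (CNSW) gives a pushforward density only in the integrated H\"older class $L^1_\eta$, and Proposition \ref{prop2.8} converts the $O(|b|)$ displacement of $\widehat\theta_b$ into the gain $|b|^\eta$. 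Note also that only an $L^1_\eta$ bound — not pointwise H\"older continuity — is available even after iteration, and the final pass from $L^\infty\to L^\infty$ back to $L^2$ uses interpolation with the trivial $L^1$ bound, not Schur's test.

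If you want to salvage your outline, replace the single $TT^*$ plus stationary phase by this iteration scheme: reduce $\|T_b\|_{L^2\to L^2}$ to $\|(\calU\calU^*)R\|$ and $\|(\widetilde{\calU}\widetilde{\calU}^*)R\|$, verify that $\widehat\gamma_{t_2}^{-1}\circ\widehat\gamma_{t_1}$ (resp.\ its conjugate by $\widehat\theta_b$) reproduces the rescaled vector fields as in \eqref{modifycond00} so that the hypotheses of Theorem \ref{apdthm01} and Corollary \ref{apdcor01} hold, and let those results supply the $|b|^\eta$ gain; your transfer-back step through $\Phi$ then goes through unchanged.
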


\begin{rem}
It is important that $a'', \widetilde{C}$, $\eta$ and the upper bound for all suitable choices of ``$|b|$" may be chosen to depend only on certain parameters which are independent of other relevant parameters. More precisely, $a, \widetilde{C}$, $\eta$ and the upper bound for all suitable choices of ``$|b|$"  can be chosen to depend only 
\begin{enumerate}
\item[1.] on $\check{C}$;
\item[2.] on the norms of various functions used to define $\calL$;
\item[3.] on how $\theta$ is controlled\footnote{In our applications later, we will see that although ``$\theta$"  may vary from different scales $\del^j, j \ge 0$ and different centers $x_c(Q), Q \in \calG$, all the parameters in $\calQ_1$ of all these ``$\theta$"s are controlled uniformly by the collection of vector fields $X_1, \dots, X_q$. We will come back to this point in Section 5.}, namely, the constants in $\calQ_1$ with respect to $\theta$;
\item[4.] on the $C^m$ norms of the cut-off functions taken and fixed in the proof;
\item [5.] on parameters $B$ and $M$;
\item [6.] on various dimensions;
\item [7.] on the parameters in $\calQ_1$ with respect to $\gamma$;
\item [8.] on anything that $2$-admissible constants were allowed to depend. 
\end{enumerate}
The reader wishing to, should have no trouble keeping track of the various dependencies in our argument. 
\end{rem}

\begin{proof} [Proof of Lemma \ref{moduluscont}]
First, since both $\check{\gamma}$ and $\theta$ are controlled at the unit scale by $(Z, \tilde{d})$, by Proposition \ref{controlQ1Q2}, so is $ \theta_s \circ \check{\gamma}_t$. Therefore, by $\calQ_2$, we see that for $a$ and $|b|$ sufficiently small, we have
$$
 \check{\gamma} \left( B^k(a) \times B_{(Z, \tilde{d})}(x_0, \check{C}\zeta_0) \right) \subseteq B_{(Z,\tilde{d})} \left(x_0, \check{C_1}\zeta_0 \right)
$$
and 
$$   
(\theta_b \circ \check{\gamma}) \left( B^k(a) \times B_{(Z, \tilde{d})}(x_0, \check{C}\zeta_0) \right) \subseteq B_{(Z,\tilde{d})} \left(x_0, \check{C_1}\zeta_0 \right),
$$
which implies in the inequality \eqref{1001eq01} we wish to prove, only those values of $f$ on $ B_{(Z,\tilde{d})} \left(x_0, \check{C_1}\zeta_0 \right)$ contribute to the left hand side of \eqref{1001eq01}. Therefore, it suffices to consider the case $f$ is supported in $ B_{(Z,\tilde{d})} \left(x_0, \check{C_1}\zeta_0 \right)$, otherwise, we can replace $f$ by $f \one_{ B_{(Z,\tilde{d})} \left(x_0, \check{C_1}\zeta_0 \right)}$.

Now we let $f$ to be an $L^2$ function, supported in $B_{(Z,\tilde{d})} \left(x_0, \check{C_1}\zeta_0 \right)$. Write $\calL f(x)=\psi_1(x) \calI(f)(x)$, where
$$
\calI(f)(x):= \int f(\check{\gamma}_t(x)) \psi_2(\check{\gamma}_t(x)) \varrho(t, x) \chi(t)dt. 
$$
and put $\widetilde{\calI}(f)(x):=\calI(f)(\theta_{b}(x))$. Therefore, 
$$
\calL f(x)-\left( \calL f \right) \left(\theta_{b}(x) \right)=\psi_1(x) \calI(f)(x)-\psi_1(\theta_{b}(x)) \widetilde{\calI}(f)(x).
$$
Since both $\calL$ and $(\psi_1 \circ \theta_{b}) \widetilde{\calI}$ are bounded trivially on $L^2$, by triangle inequality and the fact that $\psi_1 \in C^\infty_0$, it suffices to show
\begin{equation} \label{modeq01}
\left( \int_{B_{(Z, \tilde{d})}(x_0, \check{C}\zeta_0)} \left| \calL(f)(x)-\widetilde{\calL}(f)(x) \right|^2dx \right)^{\frac{1}{2}}\lesssim |b|^\eta \|f\|_{L^2(B_{(Z, \tilde{d})}(x_0, \check{C_1} \zeta_0))}.
\end{equation}
where $\widetilde{\calL}(f):=\psi_1 \widetilde{\calI}(f)$.

By \eqref{0930eq01}, we see that for any $x \in B_{(Z, \tilde{d})}(x_0,\check{C}\zeta_0)$, we can find $u \in B_{(Y, \tilde{d})}(0, \zeta_0)$ such that $x=\Phi(u)$, and hence
\begin{eqnarray*}
&&\calL(f)(x)= \calL(f)(\Phi(u))\\
&&= \psi_1(\Phi(u)) \int f(\check{\gamma}_t \circ \Phi(u)) \psi_2(\check{\gamma}_t \circ \Phi(u))  \varrho (t, \Phi(u)) \chi(t)dt \\
&&= (\psi_1 \circ \Phi)  (u) \int (f \circ \Phi) \circ (\Phi^{-1} \circ \check{\gamma}_t \circ \Phi) (u)  (\psi_2 \circ \Phi) \circ (\Phi^{-1} \circ \check{\gamma}_t \circ \Phi) (u)  \\
&& \quad \quad \quad \quad \quad \quad \quad \quad \quad \quad \quad  \varrho (t, \Phi(u)) \chi(t)dt 
\end{eqnarray*}
and 
\begin{eqnarray*}
&& \widetilde{\calL}(f)(x)=\psi_1(x) \widetilde{\calI}(f)(x)=\psi_1(\Phi(u)) \calI(f) (\theta_{b} \circ \Phi(u))\\
&& =\psi_1(\Phi(u)) \int f(\check{\gamma}_t \circ \theta_{b} \circ \Phi(u)) \psi_2(\check{\gamma}_t \circ \theta_{b} \circ \Phi(u))  \varrho (t, \theta_{ b} \circ \Phi(u)) \chi(t)dt \\
&& = (\psi_1 \circ \Phi)(u) \int (f \circ \Phi) \circ (\Phi^{-1} \circ \check{\gamma}_t \circ \Phi) \circ (\Phi^{-1} \circ \theta_{b} \circ \Phi)(u)\\
&&  \quad \quad \quad \quad \quad \quad \quad \quad   (\psi_2 \circ \Phi) \circ (\Phi^{-1} \circ \check{\gamma}_t \circ \Phi) \circ (\Phi^{-1} \circ \theta_{b} \circ \Phi)(u) \\
&& \quad \quad \quad \quad \quad \quad \quad  \quad \quad  \quad \quad  \quad \quad  \varrho (t, \Phi \circ (\Phi^{-1} \circ \theta_{b} \circ \Phi)(u)) \chi(t)dt
\end{eqnarray*}
Put
$$
\widehat{\psi_1}:=\psi_1 \circ \Phi,  \quad  \widehat{\psi_2}:=\psi_2 \circ \Phi, 
$$
$$
g:=f \circ \Phi,  \quad \widehat{\varrho}(t, u):=\varrho(t, x)=\varrho (t, \Phi(u)), 
$$
$$
\widehat{\gamma}_t:=\Phi^{-1} \circ \check{\gamma}_t \circ \Phi, \quad \widehat{\theta}_{b}:=\Phi^{-1} \circ \theta_{b} \circ \Phi,
$$
and denote
$$
\calJ(g)(u):=\widehat{\psi_1}(u) \int g(\widehat{\gamma}_t (u)) \widehat{\psi_2}(\widehat{\gamma}_t(u)) \widehat{\varrho}(t, u) \chi(t)dt
$$
and
$$
\widetilde{\calJ}(g)(u):=\widehat{\psi_1}(u) \int g(\widehat{\gamma}_t \circ \widehat{\theta}_{b}(u)) \widehat{\psi_2}(\widehat{\gamma}_t \circ \widehat{\theta}_{b}(u)) \widehat{\varrho} (t, \widehat{\theta}_{b}(u)) \chi(t)dt.
$$
It is clear that  $g \in L^2(\R^n)$ with $\textrm{supp} g$ belonging to the interior of $B_{(Y, \tilde{d})} \left(0, \frac{\zeta_0}{8} \right)$ and 
\begin{equation} \label{20180902}
\calJ(g)(u)=\calL(f)(x) \ \textrm{and} \ \widetilde{\calJ}(g)(u)=\widetilde{\calL}(f)(x),
\end{equation}
where $x=\Phi(u)$. Recall that we wish to show there exists some $\eta>0$, 
\begin{equation} \label{modeq021}
\|\calL f-\widetilde{\calL}f\|_{L^2(B_{(Z, \tilde{d})}(x_0, \check{C}\zeta_0))} \lesssim |b|^\eta \|f\|_{L^2(B_{(Z, \tilde{d})}(x_0, \check{C_1}\zeta_0))}
\end{equation}
Changing the variable with $x=\Phi(u)$, together with the fact that
\begin{equation} \label{changevab}
\left| \det d\Phi(u) \right| \simeq \left| \det_{n \times n} Z(x_0) \right| \simeq \textrm{Vol}(B_{(Z, \tilde{d})}(x_0, \check{C_1}\zeta_0)), \quad u \in B_{(Y, \tilde{d})}(0, \zeta_0), 
\end{equation}
(see, Corollary \ref{scalingmap01}), it suffices to show there exists some $\eta>0$, such that
\begin{equation} \label{modeq02}
\|\calJ g-\widetilde{\calJ}g\|_{L^2\left(B_{(Y, \tilde{d})}\left(0, \frac{\check{C}\zeta_0}{8\check{C_1}}\right)\right)} \lesssim |b|^\eta \|g\|_{L^2\left(B_{(Y, \tilde{d})}\left(0, \frac{\zeta_0}{8}\right)\right)}.
\end{equation} 
Indeed, if \eqref{modeq02} holds, then we have 
\begin{eqnarray*}
&&\|\calL f-\widetilde{\calL}f\|_{L^2(B_{(Z, \tilde{d})}(x_0, \check{C}\zeta_0))}^2=\int_{B_{(Z, \tilde{d})}(x_0, \check{C}\zeta_0)} \left|\calL (f)(x)-\widetilde{\calL}(f)(x) \right|^2dx\\
&&=\int_{\Phi^{-1}(B_{(Z, \tilde{d})}(x_0, \check{C}\zeta_0))} \left|\calL (f)(\Phi(u))-\widetilde{\calL}(f)(\Phi(u)) \right|^2 \left| \det d\Phi(u) \right|du \\
&& \quad \quad (\textrm{change variables with} \ x=\Phi(u) \ \textrm{with applying} \  \eqref{changevab01}.) \\
&& \simeq \textrm{Vol}(B_{(Z, \tilde{d})}(x_0, \check{C_1}\zeta_0)) \cdot \int_{\Phi^{-1}(B_{(Z, \tilde{d})}(x_0, \check{C}\zeta_0))} \left|\calJ(g)(u)-\widetilde{\calJ}(g)(u) \right|^2 du\\
&& \quad \quad (\textrm{by} \ \eqref{20180902} \ \textrm{and} \ \eqref{changevab}.) \\ 
&& =  \textrm{Vol}(B_{(Z, \tilde{d})}(x_0, \check{C_1}\zeta_0)) \cdot \int_{B_{(Y, \tilde{d})}\left(0, \frac{\check{C}\zeta_0}{8\check{C_1}}\right)} \left|\calJ(g)(u)-\widetilde{\calJ}(g)(u) \right|^2 du\\
&& \lesssim |b|^{2\eta} \cdot \textrm{Vol}(B_{(Z, \tilde{d})}(x_0, \check{C_1}\zeta_0)) \cdot \int_{B_{(Y, \tilde{d})}\left(0, \frac{\zeta_0}{8}\right)} |g(u)|^2 du \\
&&  \quad \quad (\textrm{by} \ \eqref{modeq02}.)\\
&& \simeq |b|^{2\eta} \int_{B_{(Z, \tilde{d})}(x_0, \check{C_1}\zeta_0)} |f(x)|^2 dx=|b|^{2\eta} \|f\|^2_{L^2(B_{(Z, \tilde{d})}(x_0, \check{C}\zeta_0))}, \\
&& \quad \quad (\textrm{by} \ \eqref{changevab01} \ \textrm{and} \ \eqref{changevab} \ \textrm{again}.)
\end{eqnarray*}
which implies \eqref{modeq021}. Moreover, the following facts are easy to see: 
\begin{enumerate}
\item [1.]  $\widehat{\psi_1}, \widehat{\psi_2} \in C^\infty(B_{(Y, \tilde{d})}(0, \zeta_0))$. More precisely, the $C^m$-norm of $\widehat{\psi_1}$ ($\widehat{\psi_2}$, respectively) depends on the $C^m$-norm of $\psi_1$ ($\psi_2$, respectively) and any $m$-admissible constants; 
\item [2.] $\widehat{\varrho} \in C^\infty(\overline{B^k(a)} \times \overline{B_{(Y, \tilde{d})}(0, \zeta_0)})$.  More precisely, the $C^m$-norm of $\widehat{\rho}$ only depends on the $C^m$-norm of $\rho$ and any $m$-admissible constants; 
\item [3.] $\widehat{\gamma}$ satisfies the curvature condition $(\calC_J)$, uniformly with respect to some parameters. More precisely, using $\calQ_2$, we see that $\widehat{\gamma}$ is a $C^\infty$ mapping from $B^k(a_0) \times B_{(Y, \tilde{d})} \left(0, \frac{3\zeta_0}{4} \right)$ to $B_{(Y, \tilde{d})}(0, \zeta_0)$ for some $a_0>0$ which only depends on $\check{C_1}$ and any $2$-admissible constants, that is, for each $m \ge 0$, 
$$
\|\widehat{\gamma}\|_{C^m\left(B^k(a_0) \times B_{(Y, \tilde{d})} \left(0, \frac{3\zeta_0}{4} \right)\right)} \le C(m),
$$
where $C(m)$ only depends on $\check{C_1}$,  any $m$-admissible constants and also how $Z$ controls $\check{\gamma}$ at the unit scale (more precisely, $\calQ_2$). Moreover, since $\check{\gamma}$ satisfies $(\calC_J)$, an application of Theorem \ref{scalingmap} and Theorem \ref{apdthm002} yields $\widehat{\gamma}$ satisfies the uniform curvature condition
$$
(\calC_J)_{M', c', a_0, \widetilde{\zeta_1}, \{C(m)\}_{m \in \N}}
$$ 
(see Theorem \ref{apdthm002}). Here $\widetilde{\zeta_1}$ is a constant which only depends on $\check{C_1}$ and any $2$-admissible constant such that  
$$
B_{(Y, d)}\left(0, \frac{\zeta_0}{2} \right) \subseteq B^n(\widetilde{\zeta_1}) \subseteq B_{(Y, d)} \left(0, \frac{3\zeta_0}{4} \right).
$$
Moreover, $M'>0, M' \in \N$ and $c'>0$ are two constants which only depends on $a_0, r, M, \widetilde{\zeta_1}, \check{C_1}$, on $\{C(m)\}_{m \in \N}$ and on any $2$-admissible constants (see Theorem \ref{apdthm002}).

\item [4.] $\widehat{\theta}_b: B_{(Y, \tilde{d})}\left(0, \frac{5\zeta_0}{8}\right) \mapsto B_{(Y, \tilde{d})}\left(0, \frac{3\zeta_0}{4}\right)$ is $C^\infty$, that is, for each $m \ge 0$,
$$
\|\widehat{\theta}_b\|_{C^m\left(B_{(Y, \tilde{d})}\left(0, \frac{5\zeta_0}{8}\right)\right)} \le \widetilde{C}(m),
$$
where $\widetilde{C}(m)$ only depends on $\check{C_1}$, any $m$-admissible constants and also how $Z$ controls $\theta$ at the unit scale (more precisely, $\calQ_2$). 
\end{enumerate}

Now we turn to prove \eqref{modeq02} and we shall first extend this inequality to a ``global" estimation (note that both $\calJ$ and $\widetilde{\calJ}$ are only defined for $g \in L^2\left(B_{(Y, \tilde{d})}\left(0, \frac{\zeta_0}{8} \right) \right)$. For simplicity, we denote
$$
U_0:=B_{(Y, \tilde{d})}\left(0, \frac{\check{C}\zeta_0}{8\check{C_1}} \right), 
$$
$$
U_1:=B_{(Y, \tilde{d})} \left(0, \frac{\zeta_0}{8} \right), 
$$
$$
U_5:=B_{(Y, \tilde{d})} \left(0, \frac{\zeta_0}{2} \right),
$$
$$
U_6:=B_{(Y, \tilde{d})} \left(0, \frac{5\zeta_0}{8} \right),
$$
$$
U_7:=B_{(Y, \tilde{d})} \left(0, \frac{3\zeta_0}{4} \right)
$$
and
$$
U_8:=B_{(Y, \tilde{d})}(0, \zeta_0). 
$$
Moreover, we take $U_1 \Subset U_2 \Subset U_3 \Subset U_4 \subseteq U_5$, where $U_4$ is compact,  
$$
U_2:=B^n(\zeta_0'),
$$
$$
U_3:=B^n(\zeta_0''),
$$
 relatively compact in $U_4$ with satisfying $\overline{U_2} \subsetneq U_3$ and $\overline{U_3} \subsetneq U_4$. Here,  $\zeta_0''>\zeta_0'>0$ are two constants only depending on $\check{C_1}$, $\zeta_0$ and any $2$-admissible constants.

 Finally, we take a $0<\zeta_0'''<\zeta_0'$, which only depends on $\check{C_1}$, $\zeta_0$, $\zeta_0'$ and any $2$-admissible constants, such that 
$$
\overline{U_1} \subsetneq B^n(\zeta_0''') \subsetneq U_2.
$$
This allows us to construct two bump functions $h_1$ and $h_2$ satsifying $0 \le h_1, h_2 \le 1$,  $h_1 \equiv 1$ on $B^n(\zeta_0''')$, compactly supported in $U_2$ and $h_2 \equiv 1$ on $\overline{U_2}$, compactly supported in $U_3$. Note that $h_1h_2=h_1$, moreover, the $C^m$-norm of $h_1$ and $h_2$ only depends on $m$, any dimension constants, and the constants $\zeta_0', \zeta_0'', \zeta_0'''$ fixed above (therefore, they also depend on $\check{C_1}$ and any $2$-admissible constants).

\begin{figure}[ht]
\begin{tikzpicture}[scale=4.5]
\draw (0,-1.1) [->] -- (0,1.1) ;
\draw (-1.3,0) [->] -- (1.3,0) ;
\fill (1.1, 1.1) node[left] {$\R^n$}; 
\draw (0.05, -0.02) circle (0.1); 
\fill [opacity=.7, red]  (0, 0) circle [radius=.5pt]; 
\fill (0.04, -0.01) node [below]{\footnotesize{$O$}}; 
\fill (0.12, 0.05) node [right] {\footnotesize{$U_0$}};
\draw (0.09, -0.03) circle (0.2); 
\fill (0.26, 0.06) node [right] {\footnotesize{$U_1$}};
\draw (0.11, -0.01) circle (0.3); 
\fill (0.39, 0.07) node [right] {\footnotesize{$U_2$}};
\draw (0.15, 0.01) circle (0.39); 
\fill (0.52, 0.08) node [right] {\footnotesize{$U_3$}};
\draw (0.19, 0.03) circle (0.48); 
\fill (0.65, 0.09) node [right] {\footnotesize{$U_4$}};
\draw (0.24, 0.05) circle (0.56); 
\fill (0.78, 0.1) node [right] {\footnotesize{$U_5$}};
\draw (0.16, 0.05) circle (0.74); 
\fill (0.9, 0.11) node [right] {\footnotesize{$U_6$}};
\draw (0.18, 0.06) circle (0.84); 
\fill (1.01, 0.12) node [right] {\footnotesize{$U_7$}};
\draw (0.2, 0.07) circle (0.93);
\fill (1.14, 0.13) node [right] {\footnotesize{$U_8$}};
\end{tikzpicture}
\caption{}
\end{figure}

Therefore, using the notation above, it suffices to show there exists some $\eta>0$, such that
\begin{equation} \label{modeq05}
\|\calJ g-\widetilde{\calJ}(g) \|_{L^2(U_0)} \lesssim |b|^{\eta}\|g\|_{L^2(U_1)}. 
\end{equation} 
Note that
\begin{eqnarray*}
&&\|\calJ (g)- \widetilde{\calJ}(g)\|_{L^2(U_0)}^2= \int_{U_0} |\calJ (g)(u)- \widetilde{\calJ}(g)(u)|^2du\\
&&= \int_{U_0} \bigg | \widehat{\psi_1}(u) \int g(\widehat{\gamma}_t (u)) \widehat{\psi_2}(\widehat{\gamma}_t(u)) \widehat{\varrho}(t, u) \chi(t)dt \\
&& \quad \quad \quad \quad   -\widehat{\psi_1}(u) \int g(\widehat{\gamma}_t \circ \widehat{\theta}_{b}(u)) \widehat{\psi_2}(\widehat{\gamma}_t \circ \widehat{\theta}_{b}(u)) \widehat{\varrho} (t, \widehat{\theta}_{b}(u)) \chi(t)dt \bigg|^2 du \\
&&=  \int_{U_0} \bigg | \left(\widehat{\psi_1}h_1 \right)(u) \int \left(gh_1\right)(\widehat{\gamma}_t (u)) \left(\widehat{\psi_2}h_1\right)(\widehat{\gamma}_t(u)) \widehat{\varrho}(t, u) \chi(t)dt \\
&& \quad \quad \quad \quad   -\left(\widehat{\psi_1}h_1 \right) (u) \int \left(gh_1\right)(\widehat{\gamma}_t \circ \widehat{\theta}_{b}(u)) \left(\widehat{\psi_2}h_1 \right)(\widehat{\gamma}_t \circ \widehat{\theta}_{b}(u)) \widehat{\varrho} (t, \widehat{\theta}_{b}(u)) \chi(t)dt \bigg|^2 du \\
&& \le  \int_{\R^n} \bigg | \left(\widehat{\psi_1}h_1 \right)(u) \int \left(gh_1\right)(\widehat{\gamma}_t (u)) \left(\widehat{\psi_2}h_1\right)(\widehat{\gamma}_t(u)) \widehat{\varrho}(t, u) \chi(t)dt \\
&& \quad \quad \quad \quad   -\left(\widehat{\psi_1}h_1 \right) (u) \int \left(gh_1\right)(\widehat{\gamma}_t \circ \widehat{\theta}_{b}(u)) \left(\widehat{\psi_2}h_1 \right)(\widehat{\gamma}_t \circ \widehat{\theta}_{b}(u)) \widehat{\varrho} (t, \widehat{\theta}_{b}(u)) \chi(t)dt \bigg|^2 du.
\end{eqnarray*}
Now given a bounded measurable function $g$ on $\R^n$ (from now on, we denote $g$ to be a general function defined on $\R^n$, which is not required to be supported on $U_1$ anymore), we wish to study the operators
$$
\calU(g)(u):= \left(\widehat{\psi_1}h_1 \right)(u) \int \left(gh_1\right)(\widehat{\gamma}_t (u)) \left(\widehat{\psi_2}h_1\right)(\widehat{\gamma}_t(u)) \widehat{\varrho}(t, u) \chi(t)dt
$$
and
$$
\widetilde{\calU}(g)(u):=\left(\widehat{\psi_1}h_1 \right) (u) \int \left(gh_1\right)(\widehat{\gamma}_t \circ \widehat{\theta}_{b}(u)) \left(\widehat{\psi_2}h_1 \right)(\widehat{\gamma}_t \circ \widehat{\theta}_{b}(u)) \widehat{\varrho} (t, \widehat{\theta}_{b}(u)) \chi(t)dt.
$$
Therefore, to prove \eqref{modeq05}, it suffices to prove there exists some $\eta>0$, such that
\begin{equation} \label{modeq06}
\|\calU-\widetilde{\calU}\|_{L^2 \mapsto L^2} \lesssim |b|^\eta. 
\end{equation}
Indeed, if \eqref{modeq06} is true, then by the calculation above, we have
$$
\|\calJ(g)-\widetilde{\calJ}(g)\|_{L^2(U_0)} \le \|\calU(g)-\widetilde{\calU}(g)\|_{L^2 \mapsto L^2} \lesssim |b|^\eta \|g\|_{L^2}=|b|^\eta\|g\|_{L^2(U_1)},
$$
if $\textrm{supp} g \subset U_1$. 

Moreover, it is easy to see that 
$$
\|\calU\|_{L^1 \mapsto L^1}, \|\widetilde{\calU}\|_{L^1 \mapsto L^1},  \|\calU\|_{L^\infty \mapsto L^\infty}, \|\widetilde{\calU}\|_{L^\infty \mapsto L^\infty} \lesssim 1.
$$

In what follows, $\eta>0$ will be a positive number that may change from line to line.  Now we turn to prove \eqref{modeq06}, and clearly, it suffices to show $\| (\calU-\widetilde{\calU})^*(\calU-\widetilde{\calU})\|_{L^2 \mapsto L^2} \lesssim |b|^\eta$ (where we have replaced ``$2\eta$" by ``$\eta$"), which follows from
\begin{equation} \label{modeq03}
\|\calU^* (\calU-\widetilde{\calU})\|_{L^2 \mapsto L^2} \lesssim |b|^\eta
\end{equation}
and
\begin{equation} \label{modeq04}
\|\widetilde{\calU}^* (\calU-\widetilde{\calU})\|_{L^2 \mapsto L^2} \lesssim |b|^\eta.
\end{equation}

\medskip

Denote $R=\calU-\widetilde{\calU}$. We prove \eqref{modeq03} first. Clearly, it suffices to show that
$$
\|R^* \calU \calU^* R\|_{L^2 \mapsto L^2} \lesssim |b|^\eta.
$$
Since $\|R^*\|_{L^2 \mapsto L^2} \lesssim 1$, it suffices to show
$$
\|\calU \calU^* R\|_{L^2 \mapsto L^2} \lesssim |b|^\eta.
$$
This follows from an application of \cite[Theorem 14.5]{BS} with some necessary modification. We refer the readers Theorem \ref{apdthm01} in Appendix B for this modified version of \cite[Theorem 14.5]{BS} and here, we shall mention those conditions one need to check.

\medskip

First, note that since we are assuming $\chi(t)$ has small support (and we are allowed to choose how small, depending on $\check{\gamma}$), we may, without the loss of generality, assume that
\begin{equation} \label{adjointSj01} 
\left | \det \frac{\partial \widehat{\gamma}}{\partial u}(t, u) \right| \ge \frac{1}{2}, \quad u \in U_7
\end{equation}
on the support of $\chi$. This is clear since $\widehat{\gamma}(0, u) \equiv u$ and $\|\widehat{\gamma}\|_{C^m} \lesssim 1, m \in \N$. 

By a direct calculation, we note that
$$
\calU^*(g)(u)=\left(h_1^2 \overline{\widehat{\psi_2}}\right)(u) \int \left( \overline{\widehat{\psi_1}} h_1 \right) (\widehat{\gamma}_t^{-1}(u)) \left(gh_2 \right) (\widehat{\gamma}_t^{-1}(u)) \widehat{\varrho}_0(t, u) \chi(t)dt, 
$$
where $\widehat{\varrho}_0(t, u)=\overline{\widehat{\varrho}}(t, \widehat{\gamma}_t^{-1}(u)) \left| \det \frac{\partial \widehat{\gamma}}{\partial t} \left(t, \widehat{\gamma}_t^{-1}(u) \right) \right|^{-1}$. Note that $\calU^*$ is of the same form as $\calU$, with $\widehat{\gamma}_t$ replaced by $\widehat{\gamma}^{-1}_t$. 

Therefore, 
\begin{eqnarray*}
\calU\calU^*(g)(u)%
&=& \left(\widehat{\psi_1}h_1 \right)(u) \int \left(\calU^*(g)h_1\right)(\widehat{\gamma}_{t_1} (u)) \left(\widehat{\psi_2}h_1\right)(\widehat{\gamma}_{t_1}(u)) \widehat{\varrho}(t_1, u) \chi(t_1)dt_1 \\
&=& \left(\widehat{\psi_1}h_1\right)(u)  \int \int \left(h_1^4 |\widehat{\psi}_2|^2 \right) (\widehat{\gamma}_{t_1}(u)) \left( \overline{\widehat{\psi_1}}h_1 \right) \left( \widehat{\gamma}_{t_2}^{-1} \circ \widehat{\gamma}_{t_1}(u) \right) \\
&& \quad \quad \quad \quad \quad \quad \quad \quad \cdot \left(gh_2 \right) \left( \widehat{\gamma}_{t_2}^{-1} \circ \widehat{\gamma}_{t_1}(u) \right) \widehat{\varrho}_0(t_2, \widehat{\gamma}_{t_1}(u)) \widehat{\varrho}(t_1, u)  \\
&& \quad \quad \quad \quad \quad \quad \quad \quad  \quad \quad  \quad \quad \quad \quad  \quad  \quad \quad \quad  \quad \cdot \chi(t_1) \chi(t_2)dt_1dt_2 \\
&=&  \left(\widehat{\psi_1}h_1\right)(u) \int \int g \left( \widehat{\gamma}_{t_2}^{-1} \circ \widehat{\gamma}_{t_1}(u) \right) \left( \overline{\widehat{\psi_1}}h_1 \right) \left( \widehat{\gamma}_{t_2}^{-1} \circ \widehat{\gamma}_{t_1}(u) \right) \\
&& \quad \quad \quad \quad \quad \quad \quad \quad \cdot  \widehat{\varrho}_1(t_1, t_2, u) \chi(t_1)\chi(t_2)dt_1dt_2,
\end{eqnarray*}
where in the last equation, we use the fact that $h_1h_2=h_1$, and the function $\widehat{\varrho}_1(t_1, t_2, u)$ is defined as
$$
\widehat{\varrho}_1(t_1, t_2, u):=\left(h_1^4 |\widehat{\psi}_2|^2 \right) (\widehat{\gamma}_{t_1}(u)) \widehat{\varrho}_0(t_2, \widehat{\gamma}_{t_1}(u)) \widehat{\varrho}(t_1, u).
$$
Clearly, for $a''$ sufficiently small, $\widehat{\rho}_1  \in C^\infty( \overline{B^k(a'')} \times \overline{B^k(a'')} \times U_7)$. Meanwhile, we notice that both $\widehat{\psi_1}h_1$ and $\overline{\widehat{\psi_1}}h_1$ are $C^\infty_0$, both of which, have compact supports contained in $U_4$. 

\medskip

(1). \textit{Claim: The operator $\calU \calU^*$ satisfies the conditions of the operator``$S_j$" in Theorem \ref{apdthm01}.}

\medskip

To prove the claim, it suffices to check the following two conditions: 

\medskip

1.  $\widehat{\gamma}_{t_2}^{-1}  \circ \widehat{\gamma}_{t_1}$ is $C^\infty$. This is clear by our assumption. 

\medskip

2. For each $l$, $1 \le l \le r$, there is a multi-index $\alpha$, such that
\begin{equation} \label{modifycond00}
\left(\frac{1}{8\check{C_1}}\right)^{\tilde{d}_l} Y_l(u)=\frac{1}{\alpha !} \frac{\partial}{\partial t}^\alpha \bigg|_{t=(t_1, t_2)=(0, 0)} \frac{d}{d\epsilon} \bigg|_{\epsilon=1} \widehat{\gamma}_{\epsilon t_2}^{-1}  \circ \widehat{\gamma}_{\epsilon t_1} \circ \widehat{\gamma}^{-1}_{t_1} \circ \widehat{\gamma}_{t_2}(u).
\end{equation}
This follows from the pullback of our assumption \eqref{eq3455} via the scaling map $\Phi$. 

\medskip

(2). Let $\omega:=\frac{b}{|b|} \in \SSS^{n-1}$. For $\zeta \in [-1, 1]$ and $g$ a bounded measurable function on $\R^n$, we define the operator
\begin{eqnarray*}
R^{\zeta}(g)(u) %
&:=&\left(\widehat{\psi_1}h_1 \right)(u) \int \left(gh_1\right) \left(\widehat{\gamma}_t \circ \widehat{\theta}_{\zeta \omega}(u)\right) \left(\widehat{\psi_2}h_1 \right) \left(\widehat{\gamma}_t \circ \widehat{\theta}_{\zeta w}(u)\right) \widehat{\varrho} \left(t, \widehat{\theta}_{\zeta w}(u)\right) \chi(t)dt \\
&:=& \left(\widehat{\psi_1}h_1 \right)(u) \int g(\gamma_{t, \zeta}(u)) \left(\widehat{\psi_2}h_1^2\right)(\gamma_{t, \zeta}(u)) \widetilde{\varrho}(t, \zeta, u) \chi(t)dt,
\end{eqnarray*}
where we put
$$
\gamma_{t, \zeta}(u):=\widehat{\gamma}_t \circ \widehat{\theta}_{\zeta \omega}(u),
$$
and
$$
\widetilde{\varrho}(t, \zeta, u):=\widehat{\varrho} \left(t, \widehat{\theta}_{\zeta \omega}(u)\right).
$$
It is clear that $R_0:=R^0=\calU$ and $R_1:=R^{|b|}=\widetilde{\calU}$. Moreover, it is easy to see that both $\widehat{\psi_1}h_1$ and $\widehat{\psi_2}h_1^2$ are $C^\infty_0$, compactly supported in $U_4$, and $\widetilde{\rho} \in C^\infty\left(B^k(a'') \times [-|b|, |b|] \times U_6 \right)$. 

\medskip

\textit{Claim: $\gamma_{t, \zeta}$ is a $C^\infty$ function in $U_5$.}

 This is clear, since both $\widehat{\theta}_{\zeta w}$ and $\widehat{\gamma}_t$ do. Therefore, the desired claim follows from Proposition \ref{controlcor}.

\medskip

The proof for \eqref{modeq03} is then complete, and we turn to the proof of \eqref{modeq04}.  Recall that
$$
\widetilde{\calU}(g)(u):=\left(\widehat{\psi_1}h_1 \right) (u) \int \left(gh_1\right)(\widehat{\gamma}_t \circ \widehat{\theta}_{b}(u)) \left(\widehat{\psi_2}h_1 \right)(\widehat{\gamma}_t \circ \widehat{\theta}_{b}(u)) \widehat{\varrho} (t, \widehat{\theta}_{b}(u)) \chi(t)dt
$$
and we need to show that there exists some $\eta>0$, such that $\|\widetilde{\calU}^* (\calU- \widetilde{\calU})\|_{L^2 \mapsto L^2} \lesssim |b|^\eta$. By the same argument as before, it suffices to show that there exists some $\eta>0$, such that
$$
\|\widetilde{\calU}\widetilde{\calU}^*R\|_{L^2 \mapsto L^2} \lesssim |b|^\eta,
$$
for which we will use Corollary \ref{apdcor01}. 

To see this, we note that the way to deal with $R$ here is the same as part (2) above, and hence we only need to show the operator $\widetilde{\calU}\widetilde{\calU}^*$ is of the form ``$\widetilde{S}_j$" in Corollary \ref{apdcor01}.

Again, we start with calculating $\widetilde{\calU}^*$. As in \eqref{adjointSj01}, we may assume for $a$ and $|b|$ sufficiently small, 
$$
\left| \det \frac{\partial \widehat{\gamma}}{\partial u}(t, u) \right| \ge \frac{1}{2} \ \textrm{and} \ \left| \det \frac{\partial \widehat{\theta}_b}{\partial  u}(u) \right| \ge \frac{1}{2}, \ u \in U_6.
$$
Indeed, the first inequality follows from \eqref{adjointSj01}, since $U_6 \subset U_7$, and the second one follows the same proof as \eqref{adjointSj01}, with replacing role of ``$\widehat{\gamma}$" and ``$t$" there by $\theta$ and $|b|$, respectively. 

Therefore, by a direct calculation, we have
$$
\widetilde{\calU}^*(g)(u)=\left(h_1^2 \overline{\widehat{\psi_2}}\right)(u) \int \left(\overline{\widehat{\psi_1}} h_1 \right) (\widehat{\theta}_b^{-1} \circ \widehat{\gamma}_t^{-1}(u)) \left(gh_2 \right) (\widehat{\theta}_b^{-1} \circ \widehat{\gamma}_t^{-1}(u)) \widehat{\varrho}_2(t, u) \chi(t)dt,
$$
where $\widehat{\varrho}_2(t, u):=\overline{\widehat{\varrho}}(t, \widehat{\gamma}_t^{-1}(u)) \left| \det \frac{\partial \gamma}{\partial u} \left( t, \widehat{\gamma}_t^{-1}(u) \right) \right| \left| \det \frac{\partial \widehat{\theta}_b}{\partial u} \left(\widehat{\theta}_b^{-1} \circ \widehat{\gamma}_t^{-1}(u) \right) \right|$. Hence, $\widehat{\rho}_2 \in C^\infty$. Again, we note that $\widetilde{\calU}^*$ is of the same form as $\widetilde{\calU}$, with $\widehat{\gamma}_t \circ \widehat{\theta}_b$ replaced by $\widehat{\theta}_b^{-1} \circ \widehat{\gamma}_t^{-1}$. 

Therefore, 
\begin{eqnarray*}
\widetilde{\calU}\widetilde{\calU}^*(g)(u)%
&=& \left(\widehat{\psi_1}h_1 \right)(u) \int \left( \widetilde{\calU}^*(g) h_1 \right)(\widehat{\gamma}_{t_1} \circ \widehat{\theta}_{b}(u)) \left(\widehat{\psi_2}h_1 \right)(\widehat{\gamma}_{t_1} \circ \widehat{\theta}_{b}(u)) \widehat{\varrho} (t_1, \widehat{\theta}_{b}(u)) \chi(t_1)dt_1\\
&=& \left(\widehat{\psi_1}h_1 \right)(u) \int  \int \left(h_1^4 |\widehat{\psi}_2|^2 \right) (\widehat{\gamma}_{t_1} \circ \widehat{\theta}_{b}(u)) \left(\overline{\widehat{\psi}_1} h_1\right) (\widehat{\theta}_b^{-1} \circ \widehat{\gamma}_{t_2}^{-1} \circ \widehat{\gamma}_{t_1} \circ \widehat{\theta}_{b}(u) ) \\
&&  \quad \quad \quad \quad \quad \quad \quad \quad \cdot \left( gh_2 \right) (\widehat{\theta}_b^{-1} \circ \widehat{\gamma}_{t_2}^{-1} \circ \widehat{\gamma}_{t_1} \circ \widehat{\theta}_{b}(u) ) \widehat{\varrho}_2(t_2, \widehat{\gamma}_{t_1} \circ \widehat{\theta}_{b}(u)) \\
&&  \quad \quad \quad \quad \quad \quad \quad \quad  \quad \quad  \quad \quad \quad \quad  \quad \cdot \widehat{\varrho} (t_1, \widehat{\theta}_{b}(u)) \chi(t_1)\chi(t_2) dt_1dt_2\\
&=&  \left(\widehat{\psi_1}h_1 \right)(u) \int  \int  g(\widehat{\theta}_b^{-1} \circ \widehat{\gamma}_{t_2}^{-1} \circ \widehat{\gamma}_{t_1} \circ \widehat{\theta}_{b}(u) )\left(\overline{\widehat{\psi}_1} h_1\right) (\widehat{\theta}_b^{-1} \circ \widehat{\gamma}_{t_2}^{-1} \circ \widehat{\gamma}_{t_1} \circ \widehat{\theta}_{b}(u) ) \\
&&  \quad \quad \quad \quad \quad \quad \quad \quad \cdot \widehat{\varrho}_3(t_1, t_2, u) \chi(t_1) \chi(t_2)dt_1dt_2,
\end{eqnarray*}
where again, in the last equation, we use the fact that $h_1=h_1h_2$, and the function  $\widehat{\varrho}_3(t_1, t_2, u)$ is defined as
$$
\widehat{\varrho}_3(t_1, t_2, u):= \left(h_1^4 |\widehat{\psi}_2|^2 \right) (\widehat{\gamma}_{t_1} \circ \widehat{\theta}_{b}(u)) \widehat{\varrho}_2(t_2, \widehat{\gamma}_{t_1} \circ \widehat{\theta}_{b}(u))  \widehat{\varrho} (t_1, \widehat{\theta}_{b}(u)).
$$
Clearly, for $a''$ sufficiently small, we see that $\widehat{\varrho}_3 \in C^\infty (\overline{B^k(a'')} \times \overline{B^k(a'')} \times U_6)$ and both $\widehat{\psi_1}h_1$ and $\overline{\widehat{\psi_1}}h_1$ belongs to $C^\infty_0$, compactly supported in $U_4$. Finally, we note that to apply Corollary \ref{apdcor01}, the conditions need to be checked are exactly the same as those in the proof of \eqref{modeq03}. Hence, the proof of \eqref{modeq04} is complete. 
\end{proof}

\bigskip

\section{The sparse domination}

\bigskip

In this section, we first make a review of the setting of the sparse domination, which continues the construction in Section 3. Next, we study an $L^p$ improving property for the singular Radon transform, which plays an important role in our main estimate later. The key point for this section is to make all assumptions ``quantitative", so that one can make all the estimations later independent of the ``scales $\del^j$" and ``centers $x_c(Q)$". 

\medskip

\subsection{Setting revisited}

We shall first give a quantitative version of our setting. Recall that we are given a $C^\infty$ mapping $\gamma$, which maps a neighborhood of $(0, 0) \in \R^n \times \R^k$ to $\R^n$, which is assumed to satisfy the following conditions: 
\begin{enumerate}
\item [1.] $\gamma(x, 0) \equiv x$;
\item [2.] $\gamma$ is curved to finite order at $0$, namely, if we consider the vector field
$$
W(t, x)=\frac{\partial}{\partial \varepsilon} \bigg|_{\varepsilon=1} \gamma_{\varepsilon t} \circ \gamma_t^{-1}(x)
$$
with its formal Taylor expansion
$$
W(t) \sim \sum_\alpha t^\alpha X_\alpha
$$
then the collection of $C^\infty$ vector fields $\left\{X_\alpha: 0 \neq \alpha \in \N^k  \right\}$ defined on a neighborhood $U$ of $0 \in \R^n$ satisfies the H\"ormander's condition at $0$ (See Definition \ref{Hormanderdefn01}). As a consequence,  $\gamma$ satisfies the condition $(\calC_J)$ uniformly in some $V \Subset U$, where $V$ is an open, path-connected neighborhood of $0 \in \R^n$, and relatively compact in $U$. 
\end{enumerate}

Recall that under the above assumptions, we are allowed to apply Algorithm \ref{alg001}, together with Theorem \ref{dyadicSHT}, to get 
\begin{enumerate}
\item [$\bullet$] A list of $C^\infty$ vector fields $(X, d)=\{ (X_i, d_i)\}_{1 \le i \le q}$;
\item [$\bullet$] A space of homogeneous type $(V, \rho, |\cdot|)$, where $\rho$ is the Carnot-Carath\'edory metric induced by the list $(X, d)$, together with a dyadic decomposition 
$$
\calD=\bigcup_{k \in \Z} \calD_k
$$
with parameters $\frakC>1$, $0<\del<\frac{1}{100}$ and $0< \epsilon<1$. 
\end{enumerate}

\medskip

The setting is as follows. 

\medskip

\textit{Step I: Assumptions on the ``sets".}

\medskip

We start with taking and fixing a sequence of increasing neighborhoods of $0 \in \R^n$, more precisely, we take
$$
0 \in K''' \Subset  K'' \Subset K' \Subset V''\Subset V' \Subset V \Subset U.
$$
Here, we require
\begin{enumerate}
\item [(a).]  $U$ and $V$ are defined as above;
\item [(b).]  $V' \subsetneq V$ is an open, path-connected neighborhood of $0 \in \R^n$, relatively compact in $V$. Once we fix our choice of $V'$, we can also take some $a'>0$, such that
\begin{equation} \label{supportofK01}
\gamma(x, t): V' \times B^k(a') \rightarrow V,
\end{equation} 
where we may assume that $a'$ is sufficiently small, so that for every $t \in B^k(a')$, $\gamma_t$ is a diffeomorphism onto its image and it makes sense to write $\gamma_t^{-1}$;
\item [(c).] $V'' \Subset V', V'' \subsetneq V'$ is a compact set which satisfies the properties in Definition \ref{controlVC}; 
\item [(d).] $K' \subsetneq V''$ is compact and path-connected. As a conclusion of Algorithm \ref{alg001}, we assume that $\gamma$ is controlled by the list $(X, d)$ for every $x \in  K'$;
\item [(e).] $K''' \subsetneq K'' \subsetneq K'$  where both $K'''$ and $K''$ are compact and path-connected.
\end{enumerate}

\medskip

\textit{Step II: Construction of space of homogeneous type and its dyadic decomposition.}

\medskip

For some technical reasons, we need some adjustments to the space of homogeneous type $(V, \rho, |\cdot|)$, so that we can apply the lemma of modulus of continuity in the proof of sparse bound. We need some observations before we state the required adjustments. 
\medskip

\underline{\textit{Observation $II.1$.}} Since $K''' \subsetneq K''$, we can take some $\widetilde{N_0}$ sufficiently large, such that for $ x_0 \in K'''$, 
$$
B_{\left( \left(\del^{\widetilde{N_0}}\right)^d X, d\right)}\left(x_0, \frac{10\frakC}{\del} \right)=B_{(X, d)} \left( x_0, \frac{10\frakC}{\del} \cdot \del^{\widetilde{N_0}} \right) \subset K'', 
$$
where $\widetilde{N_0}$ only depends on $K'''$, $K''$ and the $C^m$-norm of $X_i, 1 \le i \le q$ (hence on $\gamma$). 

\medskip

\underline{\textit{Observation $II.2$.}} Let $x_0 \in K'''$. Since $\gamma$ is controlled by the list $(X, d)$ (and hence controlled at the unit scale) in $K''' \subset K'$, by Proposition  \ref{controlQ1Q2} (more precisely, by $\calQ_2$), we can take some $\frakC_1>3\frakC$  and $a''>0$ sufficiently small, such that
\begin{equation} \label{supportofK02}
\gamma\left(\overline{B^k(a'')} \times \overline{B_{(X, d)}\left(x_0, \frac{\frakC}{\del} \cdot \del^{\widetilde{N_0}} \right)}\right) \subseteq B_{(X, d)}\left(x_0, \frac{\frakC_1}{\del} \cdot \del^{\widetilde{N_0}} \right) \subset K'',
\end{equation}
where $a''$ and $\frakC_1$ only depend on $\frakC$, $\del$, $\widetilde{N_0}$ and how $\gamma$ is controlled by $(X, d)$ (more precisely, the constants in $\calQ_1$). Note that this step corresponds to our early Obersevation I in Section 4. 

Moreover, we wish to have for any $x_0 \in K'''$, $B_{(X, d)}  \left(x_0, \left(\frac{10\frakC_1}{\del} \right)^{10} \cdot \del^{\widetilde{N_0}} \right) \subset K'$, which, however, is not necessary true. Therefore, we take some $\widetilde{N_1}>1$ sufficiently large, such that
\begin{equation} \label{1109eq01}
B_{\left( \left(\del^{\widetilde{N_1}} \right)^d X, d \right)} \left(x_0, \left(\frac{10\frakC_1}{\del}\right)^{10} \cdot \del^{\widetilde{N_0}} \right) \subset K',
\end{equation}
where the choice of $\widetilde{N_1}$ only depends on the $C^m$-norm of $\gamma$, $\frakC_1$, $\del$, $\widetilde{N_0}$, $K'''$ and $K'$. This is clear since\footnote{The reason for us to take a large constant $\left( \frac{10\frakC_1}{\del} \right)^{10}$ in our setting is to avoid the case that the Carnot-Carath\'edory ball goes outside $V'$, as we need to use the doubling constant $C_{V'}$ (see \eqref{doublingineq}) for our later estimate (see, e.g., \eqref{1125eq10}).}
$$
B_{\left( \left(\del^{\widetilde{N_1}} \right)^d X, d \right)} \left(x_0, \left( \frac{10\frakC_1}{\del} \right)^{10} \cdot \del^{\widetilde{N_0}} \right)=B_{(X, d)} \left(x_0, \left(\frac{10\frakC_1}{\del} \right)^{10} \cdot \del^{\widetilde{N_1}+\widetilde{N_0}} \right).
$$
Denote $\widetilde{N}=\widetilde{N_0}+\widetilde{N_1}$. Therefore and in particular, we have
\begin{equation} \label{1125eq05}
\gamma\left( \overline{B^k(a'')} \times \overline{B_{\left( \left(\del^{\widetilde{N}} \right)^dX, d\right)}\left(x_0, \frac{\frakC}{\del} \right) }\right) \subseteq B_{\left( \left(\del^{\widetilde{N}} \right)^dX, d\right)}\left(x_0, \frac{\frakC_1}{\del} \right)\subset K'.
\end{equation}

\medskip

\underline{\textit{Observation $II.3$.}} Apply Theorem \ref{scalingmap} to $x_0 \in K'''$ and the list
\begin{equation} \label{correctscalingeq01}
\left( \left( \frac{8\frakC_1}{\del} \cdot \del^{\widetilde{N}} \right)^d X, d \right)
\end{equation}
to get a scaling map. Therefore, all the admissible constants derived from this scaling map only depends on $\frakC_1$, $\del$, $\widetilde{N}$ (and hence, the constant $\frakC$ the $C^m$-norm of $\gamma$, $K'''$ and $K'$) and how $\gamma$ is controlled by $(X, d)$ (more precisely, the constants in $\calQ_1$). In particular, we have the following conclusions. 

\medskip

\textit{$(a).$} There exists a $0<\widetilde{\zeta_0}<1$, only depending on $2$-admissible constants, such that for any $j \ge 0$, 
\begin{eqnarray} \label{1127eq01}
\Phi_{x_0, j}: B_{(Y, d)}(0,\widetilde{\zeta_0})%
&\longrightarrow&  B_{\left( \left( \frac{8\frakC_1}{\del} \cdot \del^{\widetilde{N}+j} \right)^d X, d \right)}(x_0, \widetilde{\zeta_0}) \\
&& =B_{\left( \left(\del^{\widetilde{N}+j} \cdot \widetilde{\zeta_0}\right)^d X, d \right)}\left(x_0, \frac{8\frakC_1}{\del} \right) \nonumber \\
&&= B_{\left(\left(\del^{\widetilde{N}+j} \right)^d X, d\right)} \left(x_0, \frac{8\frakC_1}{\del} \cdot \widetilde{\zeta_0}  \right) \subset K' \nonumber
\end{eqnarray}
is a $C^\infty$ diffeomorphism, where $\Phi_{x_0, j}$ is the scaling map defined in Theorem \ref{scalingmap} when we apply it to $x_0 \in K'''$ and the list $\left( \left( \frac{8\frakC_1}{\del} \cdot \del^{\widetilde{N}+j} \right)^d X, d \right)$, and $(Y, d)$ is the pullback of the list $\left( \left( \frac{8\frakC_1}{\del} \cdot \del^{\widetilde{N}+j} \right)^d X, d \right)$ via $\Phi_{x_0. j}$. 

We make a remark that the reason that we can choose $\widetilde{\zeta_0}$ independent of $x_0 \in K'''$ and $j \ge 0$ is that the admissible constants (See \eqref{correctscalingeq01}) are uniform for all $x_0 \in K'''$ and $j \ge 0$ by our assumption. Finally, note that this step corresponds to the Observation II in Section 4; 

\medskip

\textit{$(b).$} The second conclusion is based on the following lemma in \cite{NAO}. 

\begin{lem}[{\cite[Lemma 2.15.38]{NAO}}] \label{lem190201}
There exists a $2$-admissible constant $\widetilde{\eta_1}>0$, such that for every $0<\eta' \le \widetilde{\eta_1}$, there exists $2$-adimissible constants $0<\eta_4=\eta_4(\eta')$ and $\eta_3$ with $\eta_4<\eta_3$, such that for every $j \ge 0$ and every $f  \ge 0$ with $\textrm{supp}(f) \subset K'''$, we have (for $x \in K'$)
\begin{eqnarray*}
\int_{B^q(\eta_4)} f\left(e^{t \cdot \left(\frac{8\frakC_1}{\del} \cdot \del^{\widetilde{N}+j} \right)^d X} x \right)dt %
&\lesssim_2& \int_{B^n(\eta')} f \circ \Phi_{x, j}(u)du \\
&\lesssim_2&  \int_{B^q(\eta_3)} f\left(e^{t \cdot \left(\frac{8\frakC_1}{\del} \cdot \del^{\widetilde{N}+j} \right)^d X} x \right)dt.
\end{eqnarray*}
\end{lem}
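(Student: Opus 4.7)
The plan is to reduce both integrals to a common quantity, namely $V(x)^{-1}\int_{B_{(W^{d}X,d)}(x,c)}f(y)\,dy$, where $W:=\tfrac{8\frakC_1}{\del}\del^{\widetilde{N}+j}$ and $V(x):=\mathrm{Vol}(B_{(W^{d}X,d)}(x,\zeta_1))$, and then to compare the two resulting expressions. The crucial feature of this setup is uniformity in $(x,j)\in K'''\times\{0,1,2,\dots\}$: because the rescaled list $(W^{d}X,d)$ satisfies the hypotheses of Theorem \ref{scalingmap} with constants that are $2$-admissible (and in particular independent of the scaling parameter $W$, i.e.\ of $j$), every estimate will be uniform in $(x,j)$. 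The hypothesis $\mathrm{supp}(f)\subset K'''$ ensures that only values of $f$ well inside the image of $\Phi_{x,j}$ contribute.

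The middle integral is controlled by the scaling map directly. Corollary \ref{scalingmap01}, applied to $(W^{d}X,d)$, gives $|\det d\Phi_{x,j}(u)|\simeq_2 V(x)$ uniformly for $u\in B^{n}(\eta_1)$, and sandwiches $\Phi_{x,j}(B^{n}(\eta'))$ between two Carnot--Carath\'eodory balls of radii depending only on $\eta'$. A change of variables then yields
\[
\int_{B^{n}(\eta')} f\circ\Phi_{x,j}(u)\,du \;\simeq_2\; \frac{1}{V(x)}\int_{\Phi_{x,j}(B^{n}(\eta'))} f(y)\,dy.
\]

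The exponential-flow side I would analyze via the map $\mathcal{E}(t):=e^{t\cdot W^{d}X}x$. Since $X_1(x),\ldots,X_q(x)$ span $\R^{n}$ on $K'$ (Algorithm \ref{alg001}) and the rescaled vector fields have uniformly bounded $C^{m}$ norms, $\mathcal{E}$ is a uniformly $C^{\infty}$ submersion near $0\in\R^{q}$. The Cauchy--Binet formula \eqref{changevab01}, applied slicewise (coarea), gives
\[
\int_{B^{q}(\eta)} f(\mathcal{E}(t))\,dt \;=\; \int_{\mathcal{E}(B^{q}(\eta))} f(y)\,\omega_{\eta}(y)\,dy, \quad \omega_{\eta}(y):=\int_{\mathcal{E}^{-1}(y)\cap B^{q}(\eta)} \frac{d\mathcal{H}^{q-n}(t)}{|\det_{n\times n} d\mathcal{E}(t)|}.
\]
The key structural observation is that $\Phi_{x,j}$ is itself built from $\mathcal{E}$: if $\iota:\R^{n}\hookrightarrow\R^{q}$ denotes the embedding that places a vector in the $J_0$-coordinates and $0$ in the complement, then $\Phi_{x,j}(u)=\mathcal{E}(\iota u)$. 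Consequently the $J_0$-submatrix of $d\mathcal{E}$ at $\iota u$ equals $d\Phi_{x,j}(u)$, which gives $|\det_{n\times n} d\mathcal{E}(\iota u)|\gtrsim_2 V(x)$; the implicit function theorem applied with $2$-admissible $C^{m}$ data extends this to $|\det_{n\times n} d\mathcal{E}(t)|\simeq_2 V(x)$ throughout a uniform $\R^{q}$-neighborhood of $\iota(B^{n}(\eta'))$.

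Both inequalities then fall out. Choosing $\eta_4$ small enough (depending on $\eta'$) that $\mathcal{E}(B^{q}(\eta_4))\subset\Phi_{x,j}(B^{n}(\eta'))$ and that the fibers of $\mathcal{E}$ over this set lie in the uniform neighborhood above yields $\omega_{\eta_4}(y)\lesssim_2 1/V(x)$, and hence the first inequality via the change-of-variables identity for the middle integral. Choosing $\eta_3$ large enough (still $2$-admissible) that $\mathcal{E}(B^{q}(\eta_3))\supset\Phi_{x,j}(B^{n}(\eta'))$ and such that each fiber intersected with $B^{q}(\eta_3)$ has bounded-below $(q-n)$-volume yields $\omega_{\eta_3}(y)\gtrsim_2 1/V(x)$ on $\Phi_{x,j}(B^{n}(\eta'))$, giving the second. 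The hard part is precisely this uniform submersion analysis for $\mathcal{E}$ as $(x,j)$ varies: it is where one must invoke the full strength of Theorem \ref{scalingmap} together with the definition of $2$-admissible constants, so that all fiber and Jacobian estimates reduce to those for a single ``frozen'' model submersion.
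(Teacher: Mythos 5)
The paper does not prove this lemma at all: it is quoted verbatim from Street's book as \cite[Lemma 2.15.38]{NAO}, so there is no internal argument to compare yours against. Your reconstruction is essentially the argument underlying the cited result, and it is sound in outline: you use Corollary \ref{scalingmap01} to convert the middle integral into $V(x)^{-1}\int_{\Phi_{x,j}(B^n(\eta'))}f$, you observe (correctly, directly from the definition $\Phi(u)=e^{u\cdot Z_{J_0}}x$ in Theorem \ref{scalingmap}) that $\Phi_{x,j}(u)=\mathcal{E}(\iota u)$, and you treat $\mathcal{E}(t)=e^{t\cdot W^d X}x$ as a uniformly controlled submersion, comparing pushforward densities. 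Two remarks. First, the steps you label as routine are in fact the entire content of the cited lemma: the identification of the coarea normal Jacobian $\sqrt{\det(d\mathcal{E}\,d\mathcal{E}^{T})}$ with $\left|\det_{n\times n}d\mathcal{E}\right|$ (they agree only up to dimensional constants, via Cauchy--Binet), the uniform upper bound on $\mathcal{H}^{q-n}\left(\mathcal{E}^{-1}(y)\cap B^q(\eta_4)\right)$ and the uniform lower bound on $\mathcal{H}^{q-n}\left(\mathcal{E}^{-1}(y)\cap B^q(\eta_3)\right)$ for $y\in\Phi_{x,j}(B^n(\eta'))$ all require a quantitative implicit function theorem with $2$-admissible data; this is precisely the submersion machinery in the appendix of \cite{NAO}, so your proof is a faithful sketch rather than a shortcut. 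Second, your Jacobian comparability $\left|\det_{n\times n}d\mathcal{E}(t)\right|\simeq_2 V(x)$ is obtained more cleanly by writing $\hat{e}:=\Phi_{x,j}^{-1}\circ\mathcal{E}$ (the flow of the pullback fields $Y$ from $0$), noting $\|Y_j\|_{C^m}\lesssim_m 1$ and $\left|\det_{n\times n}Y\right|\simeq_2 1$ give $\left|\det_{n\times n}d\hat{e}(t)\right|\simeq_2 1$ for $|t|$ small, and then applying the chain rule and Cauchy--Binet to $d\mathcal{E}(t)=d\Phi_{x,j}(\hat{e}(t))\,d\hat{e}(t)$ together with $\left|\det d\Phi_{x,j}\right|\simeq_2 V(x)$; this avoids the slightly vague appeal to the implicit function theorem at that point and makes the uniformity in $(x,j)$ transparent. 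Also note that, since $f\ge 0$, monotonicity in $\eta_3$ lets you discard the dependence of $\eta_3$ on $\eta'$, matching the statement.
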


Note that it is important that all the implicit constants taken in Lemma \ref{lem190201} are independent of the choice of $j$. For simplicity, we will only consider the case $\eta'=\widetilde{\eta_1}$ in Lemma \ref{lem190201} for our further application.  

Next by Corollary \ref{scalingmap01}, we can find an $2$-admissible constant $\widetilde{\zeta^1}>0$, such that for each $j \ge 0$ and $x_0 \in K'''$, 
\begin{equation} \label{190102eq022}
B_{(Y, d)}\left(0, \widetilde{\zeta^1} \right) \subseteq B^n\left(\widetilde{\eta_1}\right),
\end{equation}
where the list $(Y, d)$ is defined the same as in \eqref{1127eq01}. 

We make a remark that the second step is crucial later when we construct the $C^\infty$ mapping ``$\theta$'' for each ``centers" $x_c(Q) $ and each scale $\del^j, j \ge 0$ in the proof of the sparse bound, so that Lemma \ref{moduluscont} can be applied. 

\medskip

To this end, we take
$$
0<\widetilde{\zeta}<\min\left\{\widetilde{\zeta_0}, \widetilde{\zeta^1}  \right\}.
$$
Now we are ready to state the required adjustments: instead of using the list $(X, d)$ to induced the Carnot-Carath\'edory metric $\rho$, we use the list
\begin{equation} \label{correctSHTeq003}
\left( \left(\del^{\widetilde{N}} \cdot \widetilde{\zeta} \right)^d X,  d \right)
\end{equation}
to induce a Carnot-Carath\'edory metric $\rho_{\left(\widetilde{\zeta} \cdot \del^{\widetilde{N}}\right)^d X}$, which\footnote{Recall that $\rho_{\left(\widetilde{\zeta} \cdot \del^{\widetilde{N}}\right)^d X}(x, y)=\left(\widetilde{\zeta} \cdot \del^{\tilde{N}} \right)^{-1} \rho(x, y)$.}, together with Theorem \ref{uniformdecomp}, gives us a new space of homogeneous type 
\begin{equation} \label{correctSHTeq001}
(V, \rho_{\left(\widetilde{\zeta} \cdot \del^{\widetilde{N}}\right)^d X}, |\cdot|),
\end{equation}
with a dyadic decomposition,
\begin{equation} \label{correctSHTeq002}
\calD_{\widetilde{\zeta} \cdot \del^{\widetilde{N}}}=\bigcup_{k \in \Z} \calD_{\widetilde{\zeta} \cdot \del^{\widetilde{N}}, k}.
\end{equation}
In particular, we have
$$
\del_{\widetilde{\zeta} \cdot \del^{\widetilde{N}}}=\del, \ \epsilon_{\widetilde{\zeta} \cdot \del^{\widetilde{N}}}=\epsilon \quad \textrm{and} \quad \frakC_{\widetilde{\zeta}\cdot \del^{\widetilde{N}}}=\frac{\frakC}{\del}. 
$$
From now on, we shall use the space of homogeneous type \eqref{correctSHTeq001} and its dyadic decomposition \eqref{correctSHTeq002}. For simplicity and henceforth, we write 
\begin{equation} \label{1116eq02}
(W, d)
\end{equation}
to denote the list \eqref{correctSHTeq003}, 
$$
(V, \rho', |\cdot|)
$$
to denote the space of homogeneous type \eqref{correctSHTeq001} and 
\begin{equation} \label{190322eq01}
\calG=\bigcup_{k \in \Z} \calG_k
\end{equation}
for its dyadic decomposition \eqref{correctSHTeq002}.

\begin{rem}
In our most applications later, we take $U=\Omega$, $V=\Omega'$, $V'=\Omega''$, $K'=K_0$, $K''=K_0'$ and $K'''=K_1$, where the sets $0 \in K_1 \Subset K_0' \Subset K_0 \Subset \Omega'' \Subset \Omega' \Subset \Omega$ are defined in Section 4. Moreover, in the above setting, we can think $\frac{\frakC}{\del}$ as $\check{C}$ and $\frac{\frakC_1}{\del}$ as $\check{C_1}$ in the context there. 
\end{rem}

\begin{rem}
Without the loss of generality, we may also assume that the integer $\widetilde{N}$ fixed in \eqref{correctSHTeq003} is so large that $\del^{\widetilde{N}} \cdot \widetilde{\zeta}<\del_0$, where $\del_0$ is defined in \eqref{190404eq01}. This is to make sure that all the balls lie inside $V'$. 
\end{rem}

\medskip

\textit{Step III: Assumptions on $T$.}

\medskip

Recall the singular Radon transform $T$ is defined by
$$
Tf(x)=\psi(x) \int f(\gamma_t(x))K(t)dt,
$$
where $\psi$ is a $C_0^\infty$ cut-off function, supposed near $0 \in \R^n$ and $K$ is a standard Calder\'on-Zygmund kernel on $\R^k$, supported for $t$ near $0$. Here, we shall make these assumptions more quantitative as follows:
\begin{enumerate}
\item [(i).] $\textrm{supp} \psi \subseteq K'''$;
\item [(ii).] $\textrm{supp} K \subseteq B^k(a)$, where $a<\min \{a', a''\}$, $a', a''$ are defined in \eqref{supportofK01} and \eqref{supportofK02}, respectively. Moreover, we also require $a$ is small enough so that Lemma \ref{moduluscont} holds, when we apply it to any $x_0 \in K'''$, the list
$$
\left( \left(\frac{1}{\widetilde{\zeta}} \right)^d W, d \right), 
$$
and all  suitable\footnote{Recall that the although all the implict constants in Lemma \ref{moduluscont} depend on how $\theta$ is controlled at the unit scale, namely, the constants in $\calQ_1$. We will see later that the ``$\theta$"s will be choosen ``uniformally" for all $x_0 \in K'''$ and all scales $\del^j, j \ge 0$,  in the sense that the constants in $\calQ_1$ for all these ``$\theta$"s are uniform for all $x_0 \in V'''$ and all scales $\del^j, j \ge 0$. Therefore, we are able to make our assumptions uniformly for all ``$\theta$"s. 

However,  the exact quantitative description on how small ``$a$" is is quite involved, and we will come back to this point in the proof of our main result. Finally, we make a remark that at this moment, there is no harm to think $``a"$ is a small number which is less than $\min\{a', a''\}$, and is independent of the choice of $x_0 \in K'''$ and the scales $\del^j, j \ge 0$.} ``$\theta$''s . 

\end{enumerate}

The setting is complete. To this end, we should mention the following important result  by Christ, Nagel, Stein and Wainger in \cite{CNSW}. 

\begin{thm}[{\cite[Theorem 11.1]{CNSW}}] \label{1116thmLpbdd}
Under the above assumptions, the operator $T$ extends a bounded operator on $L^p(\R^n)$, $1<p<\infty$.
\end{thm}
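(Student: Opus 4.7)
The plan is to decompose $K$ dyadically at the scales of $\calG$, prove $L^2$ boundedness by Cotlar--Stein almost orthogonality using the modulus of continuity lemma, and then upgrade to $L^p$ via Calder\'on--Zygmund theory on the space of homogeneous type $(V,\rho',|\cdot|)$. Concretely, I would fix a radial bump $\eta\in C_0^\infty(B^k(1))$ with $\eta\equiv 1$ on $B^k(1/2)$ and write
\[
K=\sum_{j\ge 0}K_j,\qquad K_j(t):=K(t)\bigl[\eta(\del^{-j}t)-\eta(\del^{-(j-1)}t)\bigr]\quad (j\ge 1),
\]
with $K_0$ absorbing the tail. The size and cancellation hypotheses on $K$ guarantee that each $K_j$ is $C^\infty$, supported in an annulus of width $\simeq \del^j$, with $\|\partial^\alpha K_j\|_\infty\lesssim \del^{-j(k+|\alpha|)}$ and $\bigl|\int K_j\bigr|\lesssim 1$. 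This gives $T=\sum_j T_j$ with $T_j f(x):=\psi(x)\int f(\gamma_t(x))K_j(t)\,dt$ a single-scale operator at scale $\del^j$.

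The heart of the matter is to show
\[
\|T_j T_{j'}^*\|_{L^2\to L^2}+\|T_j^* T_{j'}\|_{L^2\to L^2}\lesssim \del^{\eta|j-j'|}
\]
for some $\eta>0$ and then sum via Cotlar--Stein. Assuming WLOG $j'\ge j$, expanding $T_j T_{j'}^*$ yields an integral over variables $|s|,|t_1|\simeq\del^j$ and $|t_2|\simeq\del^{j'}$, and the cancellation of $K_{j'}$ extracts a difference of the shape $H(x)-H(\theta(x))$ where $\theta:=\gamma_{t_1}^{-1}\circ\gamma_{t_2}$ is controlled at the unit scale by the list $(W,d)$ via Proposition~\ref{controlcor}. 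Pulling back by the scaling map $\Phi_{x_c(Q),j}$ attached to an appropriate dyadic cube $Q\in\calG_k$ with $k\simeq j$ (as in Observation II.3 of Section 5) renormalises $T_j$ to a genuinely unit-scale operator uniformly in $j$ and $x_c(Q)$; Lemma~\ref{moduluscont} applied with ``$b$'' of size $\simeq \del^{j'-j}$ then yields the claimed decay.

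For $p\neq 2$, I would verify that the distributional kernel $k(x,y)$ of $T$ satisfies the standard Calder\'on--Zygmund size and H\"older bounds with respect to the Carnot--Carath\'eodory quasimetric $\rho'$: that is,
\[
|k(x,y)|\lesssim \frac{1}{|B_{\rho'}(x,\rho'(x,y))|}
\]
together with $\rho'$-H\"older regularity of some exponent $\eta>0$ in each variable. These reduce through the scaling map to routine size and smoothness estimates for the pulled-back kernel, and again use that $\gamma$ is controlled by $(W,d)$. The Calder\'on--Zygmund machinery on the space of homogeneous type $(V,\rho',|\cdot|)$---Proposition~\ref{CZdeco} applied to the grid $\calG$, together with the doubling estimate~\eqref{doublingineq}---then converts the $L^2$ bound into a weak-$(1,1)$ bound, and Marcinkiewicz interpolation together with duality gives $\|T\|_{L^p\to L^p}\lesssim 1$ for all $1<p<\infty$.

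The main obstacle is the almost orthogonality of the second paragraph. Because there is no intrinsic dilation structure and no group structure, one cannot reduce the family $\{T_j\}_j$ to translates of a single canonical model operator; the nominal scale parameter $\del^j$ also hides nontrivial geometric variation in the vector fields $X_\alpha$. The scale- and centre-uniform control provided by Street's quantitative scaling maps $\Phi_{x_c(Q),j}$, and the resulting scale-uniform modulus of continuity in Lemma~\ref{moduluscont}, are precisely what make the Cotlar--Stein estimate go through and allow one to bypass the Rothschild--Stein lifting procedure used in the original proof.
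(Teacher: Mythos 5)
First, a structural point: the paper does not prove this statement at all --- it is imported verbatim as \cite[Theorem 11.1]{CNSW}, so there is no internal argument to match; what you have written is an attempt to reprove the main theorem of Christ--Nagel--Stein--Wainger. Your $L^2$ portion has the right shape and is essentially the Stein--Street strategy that this paper alludes to (dyadic decomposition of $K$, Cotlar--Stein, centre- and scale-uniform scaling maps $\Phi$, cancellation of the finer-scale kernel played against a modulus-of-continuity estimate for the coarser scale, with $|b|\simeq \del^{j'-j}$); it is plausibly completable with the tools in Sections 4--5, but only after real work you elide: you need exact vanishing means for $j\ge 1$ (as arranged in Lemma \ref{CZdecomp123}; your $K_j$ only have $|\int K_j|\lesssim 1$, so the ``difference of the shape $H(x)-H(\theta(x))$'' does not come for free), you must verify that $\theta=\gamma_{t_1}^{-1}\circ\gamma_{t_2}$ satisfies all hypotheses (1)--(4) of Lemma \ref{moduluscont} uniformly in the scales and centres, and you must patch the purely local estimate furnished by $\Phi_{x_c(Q),j}$ over a bounded-overlap family of balls to get an operator-norm bound.

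The genuine gap is the passage from $L^2$ to $L^p$. The Schwartz kernel of $T$ --- and of each single-scale piece $T_j$ --- is not a locally integrable function: for fixed $x$ it is the pushforward of $K(t)\,dt$ under $t\mapsto\gamma_t(x)$, a measure carried by a $k$-parameter family of surfaces with $k<n$ in general. Consequently there is no pointwise size bound of the form $|k(x,y)|\lesssim |B_{(W,d)}(x,\rho'(x,y))|^{-1}$, let alone $\rho'$-H\"older regularity in each variable; singular Radon transforms are precisely \emph{not} Calder\'on--Zygmund operators with respect to any quasimetric on $\R^n$, and this is the fundamental reason their $L^p$ theory is difficult. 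So the third paragraph of your proposal fails at its first displayed inequality, and Proposition \ref{CZdeco} plus Marcinkiewicz interpolation cannot be invoked in the way you describe. The known routes to $L^p$ are genuinely different: CNSW pass through the Rothschild--Stein lifting and run Littlewood--Paley/square-function arguments in the lifted group, while Stein and Street avoid the lifting but still never use pointwise kernel bounds; if one insists on a Calder\'on--Zygmund-decomposition argument, the kernel regularity hypothesis must be replaced by $L^1$-continuity estimates for the single-scale operators tested against mean-zero bad functions --- estimates of exactly the type delivered by Lemma \ref{moduluscont}, which is how the present paper itself exploits the geometry in Sections 6--7 --- together with the uniform single-scale ($L^p$-improving) bounds. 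Until the $p\ne 2$ step is rebuilt along such lines, the proposal does not prove the theorem.
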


As a conseuqence, we have the following result. 

\begin{cor} \label{1116corLpbdd}
For any $1<p<\infty$, 
$$
\sup_{N \ge 0} \left\| \sum_{j \ge N} \calT_j^{(j)} f \right\|_{L^p} \lesssim \|f\|_{L^p}, \quad \forall f \in L^p(\R^n),
$$
where the implicit constant in the above inequality can be choosen to be the same as the one in Theorem \ref{1116thmLpbdd}.
\end{cor}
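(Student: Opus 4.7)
The plan is to realize $\sum_{j \ge N} \calT_j^{(j)}$ as itself a singular Radon transform of the form \eqref{eq01} whose Calder\'on--Zygmund data is uniform in $N$, so that Theorem \ref{1116thmLpbdd} may be invoked directly. Denoting by $\chi_j$ the smooth dyadic cutoff at scale $\delta^j$ used in the construction of $\calT_j^{(j)}$, one has
$$
\sum_{j \ge N} \calT_j^{(j)} f(x) = \psi(x) \int f(\gamma_t(x)) K_N(t)\, dt, \qquad K_N(t) := K(t)\,\eta_0(\delta^{-N} t),
$$
where $\eta_0(s) := \sum_{j \ge 0} \chi_j(s) \in C^\infty_0(\R^k)$ is a fixed bump that is identically $1$ on a neighborhood of the origin.

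The central task is then to verify that $K_N$ is itself a standard Calder\'on--Zygmund kernel in the sense of (a)--(c) from the introduction, with constants independent of $N$. Conditions (a) and (b) follow immediately: truncating $K$ against a bump supported at scale $\lesssim \delta^N$ keeps $\textrm{supp} K_N$ in a neighborhood of $0$, and derivatives of $\eta_0(\delta^{-N} \cdot)$ act at scale $\delta^{-N}$, which on the support of that factor is dominated by $|t|^{-1}$, so the differential inequalities survive with the same constants. The cancellation condition (c) is the key point, and I would argue it by splitting on the comparison of the test-function scale $R$ with $\delta^{-N}$. Given $\phi \in C^\infty_0(B^k(1))$ with $\|\phi\|_{C^1} \le 1$ and $R > 0$: if $R\delta^N \ge 1$, then $\textrm{supp}\,\phi(R\cdot) \subseteq \{|t| \le R^{-1} \le \delta^N\}$, on which $\eta_0(\delta^{-N} t) \equiv 1$, so $\int K_N(t)\phi(Rt)\,dt = \int K(t)\phi(Rt)\,dt$ is bounded by (c) for $K$. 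If $R\delta^N < 1$, write
$$
\eta_0(\delta^{-N} t)\,\phi(Rt) = \tilde\phi(\delta^{-N} t), \qquad \tilde\phi(s) := \eta_0(s)\,\phi(R\delta^N s),
$$
and note that since $R\delta^N < 1$, one has $\|\tilde\phi\|_{C^1} \lesssim \|\eta_0\|_{C^1}$ uniformly in $R$ and $N$, so an application of (c) for $K$ to $\tilde\phi/\|\tilde\phi\|_{C^1}$ at scale $R' := \delta^{-N}$ delivers the desired uniform bound.

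Once the uniform Calder\'on--Zygmund bounds for $K_N$ are established, Theorem \ref{1116thmLpbdd} applied with kernel $K_N$ (and the same $\gamma, \psi$) yields an $L^p$ estimate whose implicit constant depends only on the fixed $\gamma, \psi$ and on the CZ constants of $K_N$, and is therefore independent of $N$. Taking the supremum in $N$ gives the claim. The principal obstacle is the scale-split verification of (c) for $K_N$; once this is in hand, the corollary is a direct consequence of CNSW.
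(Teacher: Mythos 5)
Your overall strategy is the intended one: realize the tail $\sum_{j \ge N} \calT_j^{(j)}$ as a singular Radon transform whose kernel is (essentially) a smooth truncation of $K$ at scale $\delta^N$, check that these kernels satisfy (a)--(c) uniformly in $N$, and invoke Theorem \ref{1116thmLpbdd}; the paper itself gives nothing beyond the remark that the corollary ``follows from the $L^p$ boundedness of $T$,'' and your scale-split verification of the cancellation condition is the correct way to make the uniformity precise.

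The concrete flaw is the identification $\sum_{j \ge N} \chi_j^{(\delta^{-j})}(t) = K(t)\,\eta_0(\delta^{-N}t)$ with $\eta_0 := \sum_{j \ge 0}\chi_j$. In Lemma \ref{CZdecomp123} the $\chi_j$ are not cutoffs multiplying $K$; they are the normalized kernel pieces themselves (for fixed $s \neq 0$ each $\chi_j(s)$ is of unit size, so the series $\sum_j \chi_j(s)$ does not converge, let alone define a bump equal to $1$ near $0$), and the requirement $\int \chi_j = 0$ for $j \ge 1$ is obtained by redistributing the means between consecutive scales. Hence the tail agrees with a smooth truncation of $K$ only up to a correction of the form $c_N\,\delta^{-kN}\phi(\delta^{-N}t)$ with $\phi$ a fixed bump and $|c_N| \lesssim 1$. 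This is repairable in either of two ways. (i) Verify (a)--(c) for $K_N := \sum_{j \ge N}\chi_j^{(\delta^{-j})}$ directly from the properties in Lemma \ref{CZdecomp123}: the supports $|t| \lesssim \delta^j$ and uniform $C^m$ bounds give $\left|\partial^\alpha K_N(t)\right| \lesssim \sum_{\delta^j \gtrsim |t|} \delta^{-j(k+|\alpha|)} \lesssim |t|^{-k-|\alpha|}$ by geometric summation, while the mean-zero property for $j \ge 1$ gives $\left|\int \chi_j^{(\delta^{-j})}(t)\phi(Rt)\,dt\right| \lesssim \min\{R\delta^j, (R\delta^j)^{-k}\}$, which sums to a bound uniform in $R$ and $N$ (the single $j=0$ term, present only for $N=0$, is trivially $O(1)$). (ii) Keep your formula but carry the correction term, whose associated operator is a single-scale operator with an $L^1$-normalized bump and is bounded on $L^p$ with norm $O(1)$ uniformly in $N$ (by the trivial $L^1$/$L^\infty$ bounds, or by \eqref{Lpimproving}). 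With either repair the Calder\'on--Zygmund constants fed into Theorem \ref{1116thmLpbdd} are uniform in $N$ (comparable to those of $K$, which is all the corollary's loosely phrased ``same constant'' requires), and the rest of your argument — in particular the case split $R\delta^N \ge 1$ versus $R\delta^N < 1$ in checking (c) — goes through as written.
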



\subsection{The main result} 

Before we state the main result, we recall and make a remark that in the rest of this section, as well as in next section,  by saying ``\emph{admissible constants}", we mean those constants derived from the scaling map $\Phi$ defined in Theorem \ref{scalingmap}, where $\Phi$ is applied to $x_0 \in K'''$ and the list \eqref{correctscalingeq01}, that is, by \eqref{1116eq02}, the list\footnote{In our application below, we will apply $\Phi$ to lots of centers $x_c(Q) \in K'''$ and lots of scales $\del^j$, namely, the list
$$
\left( \left( \frac{8\frakC_1}{\del \widetilde{\zeta}} \cdot \del' \del^j \right)^d W, d \right),
$$
where $j \ge 0$ and $\del' \in (0, 1]$. By our assumption and \eqref{1116eq16}, it is clear this new list shares the same defining conditions (that is, the constants in Definition \ref{admiconstdefn}) as the list \eqref{1116eq05}, and therefore, so are their admissible constants. 
}
\begin{equation} \label{1116eq05}
\left( \left( \frac{8\frakC_1}{\del \widetilde{\zeta}} \right)^d W, d \right). 
\end{equation}
By our assumption in Section 5.1, these constants can be choosen uniformly for $x_0 \in K'''$. Moreover, we refer the reader the remarks after \eqref{correctscalingeq01} for an explicit description on how these admissible consntants depend on our assumptions in Section 5.1. 

To begin with, we decompose the kernel $K$ as follows. 

\begin{lem} \label{CZdecomp123}
Let $K$ be a Calder\'on-Zygmund kernel, supported in $B^k(a) \subset \R^k$. Then for any $\del \in (0, 1)$, we have
$$
K=\sum_{j \ge 0} \chi_j^{(\del^{-j})}:=\sum_{j \ge 0} \del^{-kj} \chi_j (\del^{-j} \cdot)
$$
in the sense of distribution, where $\{\chi_j\}_{j \ge 0} \subseteq C_0^\infty (B^k(a))$ is a bounded set with
$$
\int_{\R^k} \chi_j(t)dt=0, \forall j \ge 1. 
$$
\end{lem}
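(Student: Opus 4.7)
The plan is to carry out a Littlewood--Paley style dyadic decomposition of $K$ at the scales $\del^j$, $j\ge 0$, followed by a mean-zero correction at each scale governed by the cancellation hypothesis (c).

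I would begin by fixing a smooth bump $\eta\in C_0^\infty(\R^k)$ with $0\le\eta\le 1$, $\eta\equiv 1$ on a neighborhood of $\operatorname{supp}(K)$, and $\operatorname{supp}(\eta)\subseteq B^k(a)$ (shrinking the support of $K$ slightly inside $B^k(a)$ if necessary, which is harmless for the statement). Setting $\eta_j(t):=\eta(\del^{-j}t)$ and the annular cutoffs $\phi_j:=\eta_j-\eta_{j+1}$ produces a smooth partition of unity on $\operatorname{supp}(K)\setminus\{0\}$ via the telescoping identity $\sum_{j=0}^{J}\phi_j=\eta_0-\eta_{J+1}$, with $\phi_j$ supported in $B^k(\del^j a)\setminus B^k(\del^{j+1}a/2)$.

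The natural candidate pieces $\widetilde K_j:=K\phi_j$ are genuine $C^\infty$ functions, since $\phi_j$ vanishes in a neighborhood of the origin and $K$ is smooth off $0$. Rescaling, $\widetilde\chi_j(s):=\del^{jk}\widetilde K_j(\del^j s)=[\del^{jk}K(\del^j s)]\,[\eta(s)-\eta(\del^{-1}s)]$. Because the differential inequalities $|\partial^\alpha K(t)|\lesssim|t|^{-k-|\alpha|}$ are homogeneous of the correct degree, $\del^{jk}K(\del^j s)$ satisfies the same pointwise bounds in $|s|$ uniformly in $j$; multiplying by the fixed annular cutoff $\eta-\eta(\del^{-1}\cdot)$, supported away from $0$, gives uniform $C^N$ bounds, so $\{\widetilde\chi_j\}_{j\ge 0}$ is a bounded subset of $C_0^\infty(B^k(a))$. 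The means $b_j:=\int\widetilde\chi_j\,ds=\int K(t)\phi_0(\del^{-j}t)\,dt$, where $\phi_0:=\eta-\eta(\del^{-1}\cdot)$, are uniformly bounded in $j$ by the cancellation hypothesis (c) applied with the fixed test bump $\phi_0$ and $R=\del^{-j}$.

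To enforce mean zero for $j\ge 1$, I would pick a fixed $\mu\in C_0^\infty(B^k(a))$ with $\int\mu=1$ and set $\chi_j:=\widetilde\chi_j-b_j\mu$ for $j\ge 1$, so that $\int\chi_j=0$ and $\{\chi_j\}_{j\ge 1}$ remains bounded in $C_0^\infty(B^k(a))$. The coarse-scale piece $\chi_0$ is then defined to absorb the accumulated corrections in such a way that the distributional identity $K=\sum_{j\ge 0}\chi_j^{(\del^{-j})}$ holds. Verifying this identity on a test function $\psi$ reduces via telescoping to analysing, as $J\to\infty$, the boundary term $\langle K,\eta_{J+1}\psi\rangle$: splitting $\psi(t)=\psi(0)+[\psi(t)-\psi(0)]$, the first piece is handled directly by (c), while the second is of size $O(\del^{J+1})$ in $L^\infty$ on the support of $\eta_{J+1}$ and of bounded $C^1$-norm, allowing (c) and the size estimates on $K$ to control it.

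The main obstacle is the bookkeeping in the last step: because $|b_j|$ is only uniformly bounded and does not decay in $j$, the naive series of mean corrections $\sum_{j\ge 1} b_j\mu^{(\del^{-j})}$ does not converge classically, and one must organise the distributional cancellation so that the accumulated origin-concentrated mass is exactly matched by the telescoping boundary term $\langle K,\eta_{J+1}\psi\rangle$. The cancellation hypothesis (c) is exactly what makes this possible, as it controls the ``mass at the origin'' of $K$ in a way that renders the two contributions mutually annihilating in the limit.
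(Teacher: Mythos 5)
Your annular decomposition is set up correctly, and the uniform $C^m$ bounds on $\widetilde\chi_j$ do follow from the differential inequalities exactly as you say; but the mean-zero correction scheme is where the proof genuinely breaks, and your final paragraph does not repair it. Write $\eta_j:=\eta(\del^{-j}\cdot)$, $\phi_j:=\eta_j-\eta_{j+1}$, $\sigma_j:=\langle K,\eta_j\rangle$, and let $\nu_0$ denote the Dirac mass at the origin. Hypothesis (c) gives only $\sup_j|\sigma_j|<\infty$, not $\sigma_j\to0$, and $b_j=\int K\phi_j=\sigma_j-\sigma_{j+1}$. With your choice $\chi_j:=\widetilde\chi_j-b_j\mu$ for $j\ge1$, the series $\sum_{j\ge1}\chi_j^{(\del^{-j})}$ does converge distributionally (the divergent contributions $\pm\psi(0)\sigma_{J+1}$ from the boundary term $\langle K,\eta_{J+1}\psi\rangle$ and from the partial sums of corrections cancel, which is the ``mutual annihilation'' you are relying on), but the residual distribution that your $\chi_0$ would then be forced to equal is
$$
K(1-\eta_1)\;+\;\sigma_1\,\nu_0\;+\;\sum_{j\ge1}b_j\bigl(\mu^{(\del^{-j})}-\nu_0\bigr),
$$
which carries a point mass $\sigma_1$ at the origin together with corrections accumulating at every scale $\del^j$: it is not a $C_0^\infty$ function. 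A concrete counterexample to your scheme is $K=\nu_0$, which satisfies (a)--(c); then every $\widetilde\chi_j$ and every $b_j$ vanishes, so your $\chi_0$ would have to be the Dirac mass itself. The missing idea is that the ``mass of $K$ at small scales'' (quantified by the bounded, generally non-vanishing sequence $\sigma_j$) cannot be absorbed at unit scale; it must be redistributed over \emph{all} scales.

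The repair --- and this is precisely the argument of \cite[Lemma 2.2.3]{NRS} that the paper's proof invokes, with $2$ replaced by $\del^{-1}$ --- is to make the corrections telescope across adjacent scales using the tail masses $\sigma_j$ rather than the annular means $b_j$. For $j\ge1$ take the scale-$\del^j$ piece to be $K\phi_j-\sigma_j\mu^{(\del^{-j})}+\sigma_{j+1}\mu^{(\del^{-(j+1)})}$, i.e.\ $\chi_j:=\widetilde\chi_j-\sigma_j\mu+\sigma_{j+1}\del^{-k}\mu(\del^{-1}\cdot)$, with $\mu$ a fixed unit-mass bump supported in $B^k(a)$ (so $\mu(\del^{-1}\cdot)$ is supported in $B^k(a\del)$ and $\del^{-k}$ is a fixed constant). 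Each $\chi_j$ has mean $(\sigma_j-\sigma_{j+1})-\sigma_j+\sigma_{j+1}=0$, and the family is bounded in $C_0^\infty(B^k(a))$ because the $\sigma_j$ are uniformly bounded by (c). Setting $\chi_0:=K(\eta_0-\eta_1)+\sigma_1\del^{-k}\mu(\del^{-1}\cdot)$ (smooth, supported in $B^k(a)$, no mean-zero requirement), the partial sums telescope exactly to $K\eta_0-\bigl[K\eta_{J+1}-\sigma_{J+1}\mu^{(\del^{-(J+1)})}\bigr]$, and the bracket tends to $0$ distributionally by your own splitting $\psi=\psi(0)+(\psi-\psi(0))$: the $\psi(0)$ contributions cancel \emph{identically}, since both terms in the bracket have total mass $\sigma_{J+1}$, while the remainder is $O_\psi(\del^{J})$, e.g.\ by applying (c) at $R=\del^{-(J+1)}$ to the rescaled test function $\del^{-(J+1)}\eta(\cdot)\bigl[\psi(\del^{J+1}\cdot)-\psi(0)\bigr]$, whose $C^1$ norm is $O_\psi(1)$; this also covers a possible distributional part of $K$ at the origin, which a pointwise bound on $K$ alone would miss.
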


\begin{proof}
The proof of the above result is an easy modifitication by replacing the role of $2$ by $\del^{-1}$ in the proof of \cite[Lemma 2.2.3]{NRS}. 
\end{proof}

Denote $\calB:=\{\chi_j\}_{j \ge 0}$ to be a bounded set in $C^\infty_0(B^k(a))$ and for any $\chi \in \calB$, define the \emph{single scale operator} as
$$
\calT_{\chi}f(x):= \psi(x) \int f(\gamma_t(x)) \chi(t)dt. 
$$
 Then an application of \cite[Theorem 8.11]{CNSW} together with Sobolev embedding, yields the following ``baby version" of  the ``$L^p$ improving property"\footnote{Here we require a bit general and quantitative version of the $L^p$ improving property, which was studied carefully in \cite{BS3}, and we will come back to it later.} : 

\medskip

\underline{\textit{``Baby Version": }} Given $r \in [1, \infty)$, there exists $s>r$, such that 
\begin{equation} \label{Lpimproving}
\sup_{ \chi \in \calB} \left\| \calT_j f\right\|_{L^s} \lesssim \| f\|_{L^r}, \quad \forall j \ge 0, 
\end{equation}
where the implicit constant in the above inequality only depends on $r, s$, the uniform $C^m$-norms of the family $\calB \subseteq C^\infty_0(B^k(a))$ and how $\gamma$ satisfies the condition $(\calC_J)$ (more precisely, see Theorem  \ref{apdthm002}). In particular, this constant is independent of the choice of kernel $K$. 

To this end, we denote
$$
\Sigma:=\left\{ \left(\frac{1}{r}, \frac{1}{s} \right): (r, s) \ \textrm{satisfies} \ \eqref{Lpimproving} \right\} 
$$
(see, Figure 2). 

\begin{figure}[ht]
\begin{tikzpicture}[scale=5]
\draw (0,0) [->] -- (0,1.2) node [left] {$\frac1s$};
\draw (0,0) [->] -- (1.2,0) node [below] {$\frac1r$};
\fill [opacity=.1,blue] (0, 0) -- (1, 1) -- (.8, .3) -- (.7, .2) -- cycle;
\draw  (.0, 0) -- (1, 1);
\draw (1, 1) -- (.8, .3);
\draw  (.8, .3)--(.7, .2); 
\draw (0, 0)--(.7, .2); 
\draw[dashed] (1, 1)--(0, 1);
\draw [dashed] (1, 0)--(1,  1); 
\fill (0, 1) circle [radius=.4pt] node [left] {\footnotesize{$1$}};
\fill (1, 0) circle [radius=.4pt];
\fill (1, -0.01) node[below] {\footnotesize{$1$}};
\draw (.6, .4)  node [right] {$\Sigma$};
\end{tikzpicture}
\caption{}
\end{figure}

Now we are ready to state our main result in this paper, that is, the sparse domination of singular Radon transform. 

\begin{thm} \label{MainTheorem}
Suppose the assumptions in Section 5.1 hold, and let $(r, s)$ be a pair such that $\left( \frac{1}{r}, \frac{1}{s} \right)$ belongs to the interior of  $\Sigma$. Then for any compactly supported bounded functions $f_1, f_2$ on $\R^n$, and any $0<\sigma<1$, there exists a $\sigma$-sparse collection $\calS$ of some dyadic system $\widetilde{\calG}$, such that\footnote{The dyadic system $\widetilde{\calG}$ may not be the same as $\calG$, which is defined in \eqref{190322eq01}. This will be clear from Theorem \ref{MainTheorem2} below.}
\begin{equation} \label{sparseq01}
|\langle Tf_1, f_2 \rangle| \lesssim \Lambda_{\calS, r, s'}(f_1, f_2).
\end{equation}
where the implicit constant in the above inequality only depends on $r, s$ and $T$. 
\end{thm}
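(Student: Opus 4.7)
The plan is to adapt the recursive stopping-time construction for sparse domination to our non-isotropic dyadic grid, marrying the single-scale $L^p$-improving bound \eqref{Lpimproving} with the modulus-of-continuity estimate of Lemma \ref{moduluscont}. By Lemma \ref{CZdecomp123} I first write $T=\sum_{j\ge 0}\calT_{\chi_j}^{(j)}$ as a sum of single-scale pieces at scales $\del^j$. Because $\textrm{supp}\,\psi\subseteq K'''$ and $\textrm{supp}\,K\subseteq B^k(a)$, each $\calT_{\chi_j}^{(j)}f(x)$ only samples $f$ on a $\rho'$-ball of radius comparable to $\del^j$ about $x$, which localizes the piece to the scale-$j$ generation of the dyadic grid via the sandwich property of Theorem \ref{dyadicSHT}.

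Next I build the $\sigma$-sparse family $\calS$ via iterated stopping times, applied to one of the shifted grids provided by the Three Lattice Theorem (Proposition \ref{TLT}) so that $\textrm{supp}\,\psi$ lies inside a single top cube $Q_0$. Starting with $\calS_0=\{Q_0\}$, the children of $Q\in\calS$ are declared to be the maximal dyadic $Q'\subsetneq Q$ for which $\langle f_1\rangle_{Q',r}>\Gamma\langle f_1\rangle_{Q,r}$ or $\langle f_2\rangle_{Q',s'}>\Gamma\langle f_2\rangle_{Q,s'}$, where $\Gamma=\Gamma(\sigma)$ is chosen, via the weak-type bound for the dyadic maximal operator, so large that $|\bigcup Q'|\le(1-\sigma)|Q|$ as required by Definition \ref{sparsedefn01}. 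It then remains to prove the local principal estimate
$$
\sum_{Q\in\calS}\bigl|\langle T(f_1\one_{E_Q}),f_2\one_{E_Q})\rangle\bigr|\lesssim\sum_{Q\in\calS}|Q|\,\langle f_1\rangle_{Q,r}\,\langle f_2\rangle_{Q,s'},
$$
where $E_Q=Q\setminus\bigcup_{Q'\in\calS,\,Q'\subsetneq Q}Q'$, after which \eqref{sparseq01} follows by rearrangement.

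For this local estimate I split the single-scale pieces $\calT_{\chi_j}^{(j)}$ according to whether $\del^j\sim\ell(Q)$ or $\del^j\ll\ell(Q)$. In the matching-scale regime I pull back via the scaling map $\Phi_{x_c(Q),j}$ from Theorem \ref{scalingmap}, which converts \eqref{Lpimproving} into a dimensionless estimate on the Euclidean unit ball, and then read off the bound $|Q|\langle f_1\rangle_{Q,r}\langle f_2\rangle_{Q,s'}$ by H\"older together with the volume relation $|Q|\simeq|\det_{n\times n}W(x_c(Q))|$ from Corollary \ref{scalingmap01}. In the finer-scale regime $\del^j=\del^{j(Q)+m}$ with $m\ge 1$, I exploit the cancellation $\int\chi_j=0$: choosing base points in the smaller cubes and subtracting off the value of $\calT_{\chi_j}^{(j)}f_1$ at those base points, I invoke Lemma \ref{moduluscont} with $|b|\simeq\del^m$ to obtain an $L^2$-error of order $\del^{\eta m}$, which is geometrically summable in $m$.

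The principal obstacle, as the paper itself warns, is the interplay of these two pieces in the absence of a dilation or group structure: the source and target $\rho'$-balls of $\calT_{\chi_j}^{(j)}$ need not share a center even when they have comparable radii, so one cannot directly identify them in the manner available for the monomial-curve or Heisenberg settings. I plan to resolve this by passing to the first common dyadic ancestor in $\calG$, which by the sandwich property of Theorem \ref{dyadicSHT} contains both balls, and quantifying the translation between centers by applying Lemma \ref{moduluscont} to an auxiliary map $\theta$ obtained by flowing between the two centers along the vector fields $X_1,\dots,X_q$. The scale-uniform admissible-constant framework of Section 5.1, together with Theorem \ref{uniformdecomp}, is precisely what is needed to make the bounds for such a $\theta$ independent of $Q$ and $j$, and therefore to make the sum over scales and over the sparse family telescope.
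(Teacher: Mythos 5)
Your outline has the right ingredients (single-scale decomposition, the $L^p$-improving bound pulled back through the scaling maps, Lemma \ref{moduluscont} for a continuity gain, and the ``first common ancestor'' idea), but two steps would fail as stated. First, the reduction to the ``local principal estimate'' is not valid: $\langle Tf_1,f_2\rangle$ is \emph{not} recovered, by any rearrangement, from the diagonal sum $\sum_{Q\in\calS}\langle T(f_1\one_{E_Q}),f_2\one_{E_Q}\rangle$. Since $T(f_1\one_{E_Q})$ is supported in an enlargement of $Q$ (and, for pieces with $\del^j>\ell(Q)$, leaks well outside $Q$), the off-diagonal pairings $\langle T(f_1\one_{E_Q}),f_2\one_{E_{Q'}}\rangle$, $Q'\neq Q$, carry the entire difficulty and your scheme never accounts for them; nor does your matching/finer dichotomy cover the coarse regime $\del^j\gg\ell(Q)$ at all. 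This is exactly why the paper proves first a sparse bound with enlarged cubes $\kappa'S$ on the $f_2$ side (Proposition \ref{MainTheorem2}), handles the coarse scales through the recursion term $I_1$ and the direct terms $I_2,I_3$ built from an exceptional set $E=\{M_rf_1>D\langle f_1\rangle_{Q_0,r}\}\cup\{M_{s'}f_2>D\langle f_2\rangle_{\kappa'Q_0,s'}\}$ and its Whitney decomposition, and only at the end removes the enlargement via the Three Lattice Theorem (Proposition \ref{TLT}), producing the shifted grid $\widetilde{\calG}$ in the statement.

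Second, the cancellation you invoke is the wrong one. The identity $\int\chi_j=0$ lives in the $t$-integral and can only be exploited against smoothness of $t\mapsto f_1(\gamma_{\del^jt}(x))$, which is unavailable for merely bounded $f_1$; it does not license subtracting the value of $\calT^{(j)}_{\chi_j}f_1$ at a base point when the other factor in the pairing is $f_2\one_{E_Q}$, which has no mean-zero property. In the paper the gain $\del^{k_2\eta}$ in the term $J_2$ comes from the vanishing mean of the \emph{bad functions} $b_{2,Q'}=\one_{Q'}(f_2-\langle f_2\rangle_{Q'})$ produced by Calder\'on--Zygmund decompositions of both $f_1$ and $f_2$ adapted to the Whitney cubes: one writes $\langle \calT_j^{(j)}b_{1,k_1+j},b_{2,Q'}\rangle$ as a double average over $Q'$, transports the second variable by the local transition maps $\theta_j$ (Definition \ref{190202defn01}, Lemma \ref{0102lem02}), and applies Lemma \ref{moduluscont} with $|b|\simeq\del^{k_2}$; even then the summation over the sparse family is not a simple geometric series but requires the counting lemmas (Lemmas \ref{1207lem01}, \ref{1207lem02}, \ref{0104lem11}) and an interpolation between the endpoints $r=s'=1$ and $r=s$ in \eqref{neweq04}. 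Your restriction-based stopping-time decomposition never produces mean-zero pieces, so the mechanism you propose for the fine scales has no cancellation to act on; to repair the argument you would have to reintroduce a CZ decomposition of $f_1$ and $f_2$ (as the paper does), at which point you are essentially following the paper's proof rather than a genuinely alternative route.
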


As an immediate corollary of \cite[Section 6]{BFP}, we obtain weighted inequalities, holding in an appropriate intersection of Muckenhoupt and reverse H\"older weighted class. More precisely, for $1<p<\infty$ and the sparse of homogeneous type $(V, \rho', |\cdot|)$ with dyadic structure $\calG$, the Muckenhoupt type $A_p$ weights consisting of locally integrable positive functions $w$ on $V$ such that
$$
[w]_{A_p}:=\sup_{B \in \calG} \langle w \rangle_B \langle w^{1-p'} \rangle_B^{p-1}<\infty;
$$
while the reverse H\"older class $\textrm{RH}_p$ of weights contains locally integrable positive functions $w$ such that
$$
[w]_{\textrm{RH}_p}:=\sup_{B \in \calG} \langle w \rangle_B^{-1} \langle w \rangle_{B, p}<\infty. 
$$

\begin{cor} \label{weightedineq}
Suppose the assumptions in Theorem \ref{MainTheorem} hold. Then for any $r<p<s$ and weight $w \in A_{\frac{p}{r}} \cap \textrm{RH}_{\left(\frac{s}{p}\right)'}$, there holds
$$
\|T\|_{L^p(w) \rightarrow L^p(w)} \lesssim \left( [w]_{A_{\frac{p}{r}}}[w]_{\textrm{RH}_{\left(\frac{s}{p}\right)'}} \right)^\alpha, \quad \alpha:=\max\left\{ \frac{1}{p-r}, \frac{s-1}{s-p}\right\}. 
$$
\end{cor}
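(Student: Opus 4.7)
The proof plan is to combine the sparse bound of Theorem \ref{MainTheorem} with the standard passage from sparse forms to quantitative weighted norm inequalities, as developed in \cite[Section~6]{BFP}. By duality it suffices to estimate $|\langle Tf, g \rangle|$ for compactly supported bounded $f, g$ with $\|f\|_{L^p(w)} = 1$ and $\|g\|_{L^{p'}(\sigma)} = 1$, where $\sigma := w^{1-p'}$. Theorem \ref{MainTheorem} produces a $\sigma_0$-sparse family $\calS \subset \widetilde{\calG}$ (for any fixed $\sigma_0 \in (0,1)$) such that $|\langle Tf, g \rangle| \lesssim \Lambda_{\calS, r, s'}(f, g)$, so the matter reduces to establishing the bilinear sparse-form estimate
$$
\Lambda_{\calS, r, s'}(f, g) \lesssim \left( [w]_{A_{p/r}} [w]_{\textrm{RH}_{(s/p)'}} \right)^\alpha \|f\|_{L^p(w)} \|g\|_{L^{p'}(\sigma)},
$$
with $\alpha = \max\{1/(p-r),(s-1)/(s-p)\}$.

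The standard proof of this bound proceeds by inserting the factors $w^{\pm 1/p}$ and $\sigma^{\pm 1/p'}$ into the averages $\langle f \rangle_{S, r}$ and $\langle g \rangle_{S, s'}$ and applying H\"older's inequality. This dominates each average, up to the appropriate weight characteristics, by a weighted dyadic maximal function evaluated on $S$. Exploiting the disjoint sets $E(S) \subset S$ with $|E(S)| \ge \sigma_0 |S|$, one collapses $\sum_{S \in \calS} |S| \cdots$ into a single integral over $V$, and the weighted $L^p$-boundedness of the dyadic maximal operator $M^{\widetilde{\calG}}$, combined with the self-improvement of $A_p$ via reverse H\"older, then yields the asserted exponent $\alpha$.

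The main (and only) potential obstacle is that \cite{BFP} is written in the Euclidean setting, whereas our sparse form lives on the space of homogeneous type $(V, \rho', |\cdot|)$. However, all the tools invoked above---the weighted dyadic maximal inequality, the Carleson embedding theorem, and general $A_\infty$ theory---extend without change to SHTs via the dyadic machinery recalled in Section~2. Moreover, the classes $A_{p/r}$ and $\textrm{RH}_{(s/p)'}$ defined over the dyadic grid $\widetilde{\calG}$ coincide up to constants with those defined over Carnot--Carath\'eodory balls by the three lattice theorem (Proposition \ref{TLT}), so that the resulting inequality is a genuine weight-class bound and not merely a dyadic artifact. No new analytic difficulty arises beyond a careful bookkeeping of the constants.
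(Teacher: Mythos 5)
Your proposal is correct and follows essentially the same route as the paper: the paper deduces Corollary \ref{weightedineq} directly from Theorem \ref{MainTheorem} by invoking the sparse-form-to-weighted-estimate machinery of \cite[Section 6]{BFP}, which is exactly the argument (duality, weight insertion, dyadic maximal function, disjoint sets $E(S)$) you sketch, together with the observation that this machinery transfers to the space of homogeneous type via the dyadic structure of Section 2. Your additional remark on reconciling the grid $\widetilde{\calG}$ carrying the sparse family with the grid defining the weight classes via Proposition \ref{TLT} is a reasonable piece of bookkeeping consistent with how the paper handles the finitely many grids.
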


Actually, Theorem \ref{MainTheorem} follows directly form the following, slightly more general theorem.

\begin{thm} \label{MainTheorem1}
Suppose the assumptions in Section 5.1 hold. Then there exists $a>0$ such that for every $\psi_1, \psi_2 \in C_0^\infty(\R^n)$ supported in the interior of $K'''$, every $K$, a Calder\'on-Zygmund kernel supported in $C^\infty(B^k(a))$ and every $C^\infty$ function
$$
\varrho(t, x): B^k(a) \times V' \mapsto \C,
$$
the operator\footnote{From now on, the operator $T$ refers to this general expression.} 
\begin{equation} \label{refinedop}
T(f)(x)=\psi_1(x) \int f(\gamma_t(x)) \psi_2(\gamma_t(x)) \varrho(t, x)K(t)dt 
\end{equation}
satisfies the same conclusion as in Theorem \ref{MainTheorem}. 
\end{thm}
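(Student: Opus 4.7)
The plan is to combine a Calder\'on--Zygmund decomposition of $K$, an iterative stopping-time construction on the dyadic grid $\calG$ from \eqref{190322eq01}, and the modulus-of-continuity estimate Lemma \ref{moduluscont}. First I would invoke Lemma \ref{CZdecomp123} to write $K=\sum_{j\ge 0}\chi_j^{(\del^{-j})}$, so that $T=\sum_{j\ge 0}T_j$, where
\[
T_jf(x):=\psi_1(x)\!\int\! f(\gamma_t(x))\psi_2(\gamma_t(x))\varrho(t,x)\chi_j^{(\del^{-j})}(t)\,dt
\]
is a single-scale operator at scale $\del^j$. Since the bilinear form $\langle Tf_1,f_2\rangle$ is spatially localized in $K'''\subsetneq V$, one may cover $K'''$ by finitely many ``top'' cubes drawn from the finite collection of shifted dyadic systems provided by the three-lattice Proposition \ref{TLT}; this dictates the ambient dyadic system $\widetilde{\calG}$ in the statement and reduces matters to a single top cube $Q_0\in\widetilde{\calG}$ containing both supports.

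Next I would run a standard recursive stopping-time selection to build the sparse family $\calS$. Starting with $Q_0\in\calS$, for each $Q\in\calS$ set $\calE(Q)$ to be the collection of maximal dyadic cubes $Q'\subsetneq Q$ on which either $\langle f_1\rangle_{Q',r}\ge\Gamma\langle f_1\rangle_{Q,r}$ or $\langle f_2\rangle_{Q',s'}\ge\Gamma\langle f_2\rangle_{Q,s'}$, with $\Gamma=\Gamma(\sigma)$ sufficiently large. By the weak-type bounds for the $L^r$ and $L^{s'}$ dyadic maximal operators (via Proposition \ref{CZdeco}) one obtains $\bigl|\bigcup_{Q'\in\calE(Q)}Q'\bigr|\le(1-\sigma)|Q|$, so $\calS$ is $\sigma$-sparse. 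Setting $E_Q:=Q\setminus\bigcup_{Q'\in\calE(Q)}Q'$, the problem is reduced to the \emph{principal estimate}
\[
\bigl|\langle T(f_1\one_Q),\,f_2\one_{E_Q}\rangle\bigr|\lesssim |Q|\langle f_1\rangle_{Q,r}\langle f_2\rangle_{Q,s'},
\]
summed over $Q\in\calS$; the stopping condition guarantees averages against $f_1,f_2$ on $E_Q$ remain comparable to those over $Q$.

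To prove the principal estimate I would split the sum $\sum_j T_j$ according to whether $\del^j$ is much smaller than, comparable to, or much larger than the sidelength $\ell(Q)$. The comparable regime $\del^j\simeq\ell(Q)$ is handled directly by the $L^p$-improving inequality \eqref{Lpimproving}: H\"older applied after the $L^r\to L^s$ bound yields a contribution dominated by $|Q|\langle f_1\rangle_{Q,r}\langle f_2\rangle_{Q,s'}$, and there are only $O(1)$ such scales per cube. The fine regime $\del^j\ll\ell(Q)$ is absorbed into the next generation of the stopping tree using the cancellation $\int\chi_j=0$ for $j\ge 1$, which makes $T_j$ essentially annihilate the constant piece of $f_1$ on $Q$. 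The crux and the main obstacle is the coarse regime $\del^j\gg\ell(Q)$: here the lack of translation invariance prevents a clean ``restriction to $Q$'' argument, and one must compare $T_jf_1$ at distinct centers within the same large ball. Lemma \ref{moduluscont} is designed precisely for this: for each $Q$ and each admissible $j$, one selects the scaling map $\Phi_{x_c(Q),j}$ of Observation $II.3$ and a pullback $\theta$ realizing the displacement from $x_c(Q)$ to the center of the first common dyadic ancestor, obtaining a gain $|b|^\eta$ summable as a geometric series in the scale separation. The uniformity promised by Theorem \ref{uniformdecomp}, together with the crucial point from Section 5.1 that all admissible constants of Theorem \ref{scalingmap} may be chosen independently of $x_c(Q)\in K'''$ and $j\ge 0$, is exactly what permits these geometric tails to be summed uniformly and the recursion to close.
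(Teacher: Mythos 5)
Your outline names the right ingredients (the kernel decomposition of Lemma \ref{CZdecomp123}, the $L^p$-improving bound, Lemma \ref{moduluscont}, the uniformity of the scaling maps, Proposition \ref{TLT}), but the central mechanism is missing. The principal estimate you reduce to, $|\langle T(f_1\one_Q),f_2\one_{E_Q}\rangle|\lesssim|Q|\langle f_1\rangle_{Q,r}\langle f_2\rangle_{Q,s'}$, cannot be proved along the lines you sketch because neither $f_1\one_Q$ nor $f_2\one_{E_Q}$ carries any cancellation. In the coarse regime $\del^j\gg\ell(Q)$ each single scale gives at best the target bound with no decay (via the rescaled $L^p$-improving estimate as in Lemma \ref{1222lem01}), and the number of such scales is about $\log_{1/\del}(1/\ell(Q))$, which is unbounded along the stopping tree; so you need a geometric gain in the scale separation. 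Lemma \ref{moduluscont} cannot supply it in your configuration: it controls the difference $\calL f(\cdot)-(\calL f)(\theta_b(\cdot))$, and such a difference only enters the bilinear form when one of the two functions is mean zero on its cube, so that one can average over that cube and subtract. In your scheme there is no mean-zero piece anywhere, and the displacement you propose, from $x_c(Q)$ to the center of the first common dyadic ancestor, is generically comparable to that ancestor's sidelength, i.e.\ to the operator scale $\del^j$ itself; after the rescaling implicit in Lemma \ref{moduluscont} this means $|b|\simeq 1$ and yields no gain at all. In the paper the gain $\del^{k_2\eta}$ comes from the opposite configuration: the mean-zero bad piece $b_{2,Q'}$ lives on a cube of sidelength $\del^{j+k_2}$ much \emph{smaller} than the operator scale $\del^j$, and the local transition $\theta_j(\tilde t,\cdot)$ with $|\tilde t|\lesssim\del^{k_2}$ feeds the modulus-of-continuity lemma. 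Without Calder\'on--Zygmund decompositions of $f_1$ and $f_2$ producing mean-zero $b_{1,Q}$, $b_{2,Q'}$, this mechanism is unavailable and your coarse-scale sum loses a logarithm, so the sparse form does not close.

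There is a second, related gap in the reduction itself: since $T$ contains all scales $\del^j\le1$ and is nonlocal, $\langle Tf_1,f_2\rangle$ is neither equal to nor obviously dominated by $\sum_{Q\in\calS}\langle T(f_1\one_Q),f_2\one_{E_Q}\rangle$; the discarded terms $\langle T(f_1\one_{Q_0\setminus Q}),f_2\one_{E_Q}\rangle$ are precisely the coarse-scale interactions and require the same cancellation machinery. Likewise, your remark that the fine regime is ``absorbed into the next generation using $\int\chi_j=0$'' is not an argument: the recursion must be driven by localizing the fine scales into an operator $T_Q$ of the same form on each selected cube, not by the cancellation of $\chi_j$, and your average-based stopping family defines no such localized operator. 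This is exactly why the paper proceeds differently (Proposition \ref{MainTheorem2}): an exceptional set $E$ built from $M_rf_1$ and $M_{s'}f_2$ with $|E|\le(1-\sigma)|Q_0|$, a Whitney decomposition of $E$ whose distance-to-$E^c$ property is what limits the interacting scales and cubes in the intermediate term $J_1$, Calder\'on--Zygmund decompositions of both $f_1$ and $f_2$ adapted to the Whitney cubes, the mean-zero of $b_{2,Q'}$ combined with the local transitions $\theta_j$ and Lemma \ref{moduluscont} for $J_2$, recursion through $T_Q$ (fine scales only) inside each Whitney cube, and only at the very end Proposition \ref{TLT} to convert the enlarged cubes $\kappa'S$ into genuine dyadic cubes of a shifted grid.
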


\begin{proof}[Proof of Theorem \ref{MainTheorem} given Theorem \ref{MainTheorem1}]
The proof for this claim is exactly the same as the argument in \cite[Theorem 7.1]{BS}, and hence we omit it here. 
\end{proof}

Therefore and henceforth, by applying  Lemma \ref{CZdecomp123} again, we shall also pay our attention to a more general single scale operator, which we still denote as $\calT_j$, and is defined as for each $j \ge 0$,
\begin{equation} \label{190310eq01}
\calT_j(f)(x)=\psi_1(x) \int f(\gamma_t(x)) \psi_2(\gamma_t(x)) \varrho(t, x) \chi_j(t)dt.
\end{equation}
Moreover, its $i$-th dilation is defined as 
\begin{eqnarray*} 
\calT_j^{(i)}(f)(x):%
&=&\psi_1(x) \int f(\gamma_t(x)) \psi_2(\gamma_t(x)) \varrho(t, x) \chi^{(\del^{-i})}_j(t)dt\\
&=& \psi_1(x) \int f(\gamma_{\del^i t}(x)) \psi_2(\gamma_{\del^i t}(x)) \varrho(\del^i t, x) \chi_j(t)dt.
\end{eqnarray*}
In particular, we have 
\begin{equation} \label{1114eq01}
T=\sum\limits_{j \ge 0} \calT_j^{(j)}
\end{equation}
in the sense of distribution. Also that the results in Theorem \ref{1116thmLpbdd} and Corollary \ref{1116corLpbdd} still hold. Here, the first part of the above claim is exactly \cite[Theorem 7.2]{BS}, and similarily, the second one follows from the $L^p$ boundedness of $T$ easily.

\medskip

\subsection{$L^p$ improving property} 

Finally, we shall state the exact version of the $L^p$ improving property for the above general case, which is an application of Theorem \ref{apdthm003}.

Note that to apply Theorem \ref{apdthm003}, we need to to construct four bounded sets $\calB_1$, $\calB_2$, $\calB_3$ and $\calB_4$. We would like to divide the construction into several steps. 

\medskip

\textit{Step I:} Let $\Phi$ be the scaling map defined in Theorem \ref{scalingmap} applied to $x_0 \in K'''$ and the list \eqref{1116eq05}. Note that by \eqref{1116eq02}, 
$$
\eqref{1116eq05}= \left( \left( \frac{8\frakC_1}{\del \widetilde{\zeta}} \cdot \del^{\widetilde{N}} \cdot \widetilde{\zeta} \right)^d X, d \right)= \left( \left( \frac{8\frakC_1}{\del} \cdot \del^{\widetilde{N}} \right)^d X, d \right)=\eqref{correctscalingeq01}. 
$$
By \eqref{1127eq01}, we have 
\begin{eqnarray*}
\Phi: B_{(Y, d)}(0, \widetilde{\zeta_0}) \longrightarrow%
&&  B_{ \left( \left( \frac{8\frakC_1}{\del} \cdot \del^{\widetilde{N}} \right)^d X, d \right)} (x_0, \widetilde{\zeta_0}) \\
&& =B_{ \left( \left( \frac{8\frakC_1}{\del \widetilde{\zeta}} \right)^d W, d \right)}  (x_0, \widetilde{\zeta_0}) \\
&&= B_{(W, d)} \left(x_0, \frac{8\frakC_1 \widetilde{\zeta_0}}{\del \widetilde{\zeta}} \right) \subset K'
\end{eqnarray*}
is a $C^\infty$ diffeomorphism, where $(Y, d)$ is the pullback of the list $ \left( \left( \frac{8\frakC_1}{\del \widetilde{\zeta}} \right)^d W, d \right)$ via $\Phi$. Now for the choosen $x_0 \in K'''$, consider the Carnot-Carath\'edory ball
$$
B_{(W, d)}\left(x_0, \frac{\frakC}{\del} \right), 
$$
which clearly is contained in $B_{(W, d)} \left(x_0, \frac{8\frakC_1 \widetilde{\zeta_0}}{\del \widetilde{\zeta}} \right)$ and hence also in  $K'$. Next for any $j \ge 0$ and $x \in B_{(W, d)}\left(x_0, \frac{\frakC}{\del} \right)$, we consider the quantity
$$
\calT_j(f)(x),
$$
where $f$ is some measurable function defined on $\R^n$ with\footnote{Here we only care about the value of $\calT_j(f)$ at those $x  \in B_{(W, d)}\left(x_0, \frac{\frakC}{\del} \right)$, therefore,  by \eqref{1125eq05}, we may assume $f$ is compactly supported in $B_{(W, d)}\left(x_0, \frac{\frakC_1}{\del} \right)$.}
$$
\textrm{supp}(f) \subseteq B_{(W, d)}\left(x_0, \frac{\frakC_1}{\del} \right) \subset B_{(W, d)}\left(x_0, \frac{\frakC_1 \widetilde{\zeta_0}}{\del \widetilde{\zeta}} \right).
$$ 

\medskip

\textit{Step II:} Using the scaling map $\Phi$, there exists some $u \in B_{(Y, d)}\left(0, \frac{\frakC\widetilde{\zeta}}{8\frakC_1} \right)$, such that $x=\Phi(u)$, and therefore, we can write
\begin{eqnarray} \label{1127eq06}
&&\calT_j(f)(x)= \calT_j(f)(\Phi(u)) \nonumber \\
&&=\psi_1(\Phi(u))\int \left(f \circ \Phi \right) \circ \left(\Phi^{-1} \circ \gamma_t \circ \Phi\right)(u) \nonumber \\
&& \quad \quad \quad \quad \quad \quad \quad \quad  \left(\psi_2 \circ \Phi \right) \circ \left(\Phi^{-1} \circ \gamma_t \circ \Phi\right)(u) \rho (t, \Phi(u)) \chi_j(t)dt \nonumber \\
&&= \widehat{\psi_1}(u) \int \widehat{f} (\widehat{\gamma}_t(u)) \widehat{\psi_2}(\widehat{\gamma}_t(u)) \widehat{\rho}(t, u)\chi_j(t)dt,
\end{eqnarray}
where 
$$
\widehat{\psi_1}(u):=\psi_1 \circ \Phi(u), \ \widehat{\psi_2}(u):=\psi_2 \circ \Phi(u), 
$$
$$
\widehat{f}(u):=f \circ \Phi(u),
$$
$$
\widehat{\gamma}_t(u):=\Phi^{-1} \circ \gamma_t \circ \Phi(u)
$$
and
$$
\widehat{\rho}(t, u):=\rho(t, \Phi(u))=\rho(t, x)
$$
as usual. Here, as in the proof of Lemma \ref{moduluscont}, we have \footnote{Note that  we can state these conditions in a more quantitative way based on the proof of  Lemma \ref{moduluscont}.  However, we also need this quantitative decription later in Step IV when we define the bounded sets $\calB_1, \calB_2, \calB_3$ and $\calB_4$. Thus, we postpone such a description there.}
\begin{enumerate}
\item [1.] $\widehat{\psi_1}, \widehat{\psi_2} \in C^\infty (B_{(Y, d)}(0, \widetilde{\zeta_0}))$;
\item [2.] $\widehat{f}$ is a measurable function defined on $B_{(Y, d)}(0, \widetilde{\zeta_0})$, compactly supported in $B_{(Y, d)}\left(0, \frac{\widetilde{\zeta_0}}{8} \right)$;
\item [3.] $\widehat{\gamma}_t: B_{(Y, d)}\left(0, \frac{3 \widetilde{\zeta_0}}{4} \right) \longrightarrow B_{(Y, d)}(0, \widetilde{\zeta_0})$ is a $C^\infty$ mapping, when $|t| \le a$, with satisfying the curvature condition $(\calC^u_J)$ with respect to some parameters \footnote{These parameters will be specified in Step IV below.} (see Theorem \ref{apdthm002}). This is due to the fact that $\gamma$ satisfies the curvature condition $(\calC_J)$;
\item [4.] $\widehat{\rho} \in C^\infty (\overline{B^k(a)} \times \overline{B_{(Y, d)}(0, \widetilde{\zeta_0})})$. 
\end{enumerate}

\medskip

\textit{Step III:} Our next goal is to define some operator ``$\calS_j$'' globally via \eqref{1127eq06}. Note that the quantity \eqref{1127eq06} is only defined for those $u \in  B_{(Y, d)}\left(0, \frac{3 \widetilde{\zeta_0}}{4} \right)$. This can be done by taking some cut-off function, as what we did in the proof Lemma \ref{moduluscont}. More precisely, by Corollary \ref{scalingmap01}, we can take two 2-admissible constants $\widetilde{\zeta'_0}>\widetilde{\zeta''_0}>0$, (in particular, these constants can be choosen to be indepedent of $x_0 \in K'''$ and $\calT_j$) such that
$$
B_{(Y, d)} \left(0, \frac{\widetilde{\zeta_0}}{8} \right) \subset B^n(\widetilde{\zeta''_0}) \subset  B^n(\widetilde{\zeta'_0}) \subset B_{(Y, d)} \left(0, \frac{\widetilde{\zeta_0}}{2} \right).  
$$
Let $\widetilde{h_1}$ be a bump function satisfying $0 \le h_1 \le 1$ and $h_1 \equiv 1$ on $B^n(\widetilde{\zeta''_0})$, compactly supported in $B^n(\widetilde{\zeta'_0})$. Here,  the $C^m$-norm of $\widetilde{h_1}$  only depends on $m$, any dimension constants, and the constants $\widetilde{\zeta_0'}, \widetilde{\zeta_0''}$ fixed above (therefore, also depends on $\frakC$, $\del$ and any $2$-admissible constants). 

Using these cut-off functions, for each $j$, we define
\begin{equation} \label{1128eq01}
\calS_j(g)(u):=(\widehat{\psi_1}h_1)(u) \int g(\widehat{\gamma}_t(u)) (\widehat{\psi_2}h_1)(\widehat{\gamma}_t(u)) \widehat{\rho}(t, u) \chi_j(t)dt, 
\end{equation}
where $g$ is any measurable function defined on $\R^n$. Clearly, $\widehat{\calT_j}(g)$ is well-defined for any $u \in \R^n$. 

\medskip

\textit{Step IV:} The next step is to construst some bounded sets of some function spaces, quantitatively, from the collection of operators $\{\calS_j\}_{j \ge 0}$. 

\begin{enumerate}
\item [(a).] The first bounded set can be derived Lemma \ref{CZdecomp123}. More precisely, since $\{\chi_j\}_{j \ge 0} \subseteq C^\infty_0( B^k(a))$, we have for each $m \in \N$, 
$$
\sup_{j \ge 0} \|\chi_j\|_{C^m_0( B^k(a))} \le C^m_{\{\chi_j\}},
$$
where $C_{\{\chi_j\}}^m$ is some absoulte constant, independent of $j$.

Define
\begin{eqnarray*}
\calB_1%
&:=&\bigg\{ \chi \in C^\infty_0(\R^k): \textrm{supp}(\chi) \subseteq \overline{B^k(a)}, \\
&& \quad \quad \quad \quad  \quad \|\chi\|_{C^m_0(\R^k)} \le C^m_{\{\chi_j\}}, m \ge 0 \bigg\} \subset C_0^\infty(\R^n). 
\end{eqnarray*}

\medskip

\item [(b).] The second bounded set is constructed from the functions $\widehat{\psi_1}h_1$ and $\widehat{\psi_2}h_1$. Since both $\widehat{\psi_1}h_1$ and $\widehat{\psi_2}h_1$ are smooth, compactly support in $B^n(\widetilde{\zeta'_0})$, we have for each $m \ge 0$, 
$$
\sup_{i=1, 2} \| \widehat{\psi_i} h_1 \|_{C^m(B^n(\widetilde{\zeta'_0}))} \le C_{\widehat{\psi_1}, \widehat{\psi_2}}^m,
$$
where $C^m_{\widehat{\psi_1}, \widehat{\psi_2}}$ only depends on the $C^m$-norm of $\psi_1$ and $\psi_2$, on the $2$-admissible constants $\widetilde{\zeta_0'}$ and $\widetilde{\zeta_0''}$, on the $C^m$-norm of $h_1$, and on any $m$-admissible constants.

Define
\begin{eqnarray*}
\calB_2%
&:=&\bigg\{ \psi \in  C^\infty_0(\R^n): \textrm{supp}(\psi) \subseteq \overline{B^n(\widetilde{\zeta'_0})}, \\
&&\quad \quad \quad \|\psi\|_{C_0^m(\R^n)} \le C_{\widehat{\psi_1}, \widehat{\psi_2}}^m, m \ge 0 \bigg\} \subset  C^\infty_0(\R^n). 
\end{eqnarray*}

\medskip

\item [(c).] The third bounded set can be constructed from $\widehat{\rho}$. First, by applying Corollary \ref{scalingmap01} again, we are able to take a $2$-admissible constant $\widetilde{\zeta_0'''}>0$,  (in particular, these constants can be choosen to be indepedent of $x_0 \in K'''$ and $\calT_j$) such that\footnote{Note that only those $u \in B^n( \widetilde{\zeta''_0})$ make a non-trivial contribution to $\calS_j$. Therefore, we only need to consider the restriction of $\widehat{\rho}$, as well as $\widehat{\gamma}$ below, on a smaller Euclidean ball $B^n(\widetilde{\zeta_0'''})$. }
$$
B_{(Y, d)} \left(0, \frac{\widetilde{\zeta_0}}{2} \right) \subset B^n(\widetilde{\zeta_0'''}) \subset B_{(Y, d)} \left(0, \frac{3\widetilde{\zeta_0}}{4} \right).
$$

 Now, for each $m \ge 0$, we have
$$
\|\widehat{\rho}\|_{C^m \left(\overline{B^k(a)} \times \overline{B^n(\widetilde{\zeta_0'''}) } \right)} \le C_{\widehat{\rho}}^m,
$$
where the constant $C_{\widehat{\rho}}^m$ only depends on the $C^m$-norm of $\rho$ and any $m$-admissible constants.

Define
\begin{eqnarray*}
\calB_3%
&:=& \bigg\{\widetilde{\rho} \in C^\infty\left(\overline{B^k(a)} \times \overline{B^n(\widetilde{\zeta_0'''}) } \right): \\
&& \quad \quad \quad \quad  \quad \quad  \|\widetilde{\rho}\|_{C^m  \left(\overline{B^k(a)} \times \overline{B^n(\widetilde{\zeta_0'''}) } \right)} \le C^m_{\widehat{\rho}}, m \ge 0 \bigg\}\\
& \subset& C^\infty\left(\overline{B^k(a)} \times \overline{B^n(\widetilde{\zeta_0'''})} \right).
\end{eqnarray*}

\medskip 

\item [(d).] The last bounded set is defined according to the condition $(\calQ_2)$ and the curvature condition $(\calC_J)$ (see Definition \ref{defnCJ}). First, recall that $\widehat{\gamma}_t$ is pullback of $\gamma_t$ via $\Phi$, where $\gamma_t$ is controlled by the list \eqref{1116eq05} at the unit scale, uniformly for $x \in K'$.  Therefore by $(\calQ_2)$, we have for each $m \ge 0$, 
$$
\|\widehat{\gamma} \|_{C^m \left( B^k(a) \times B^n(\widetilde{\zeta_0'''}) \right)} \le \sigma_m^2,
$$
where $\sigma_m^2$ is the constant defined in $(\calQ_2)$. Moreover, since $\gamma$ is assumed to satisfy $(\calC_J)$  in $K' \subset V$, using, for example, Theorem \ref{scalingmap} and Theorem \ref{apdthm002}, we can find some $\check{M} \ge 0$ and $\check{c}>0$, such that $\widehat{\gamma}$ satisfies the curvature condition $(\calC_J)_{\check{M}, \check{c}, a, \widetilde{\zeta_0'''}, \{\sigma_m^2\}_{m \in \N}}$ (see Theorem \ref{apdthm002}). 

Define
\begin{eqnarray*}
\calB_4%
&:=& \bigg\{ \widetilde{\gamma} \in C^\infty  \left( \overline{B^k(a)} \times \overline{B^n(\widetilde{\zeta_0'''})} \right): \widetilde{\gamma}(0, 0) \equiv 0 \ \textrm{and} \\
&&  \quad \quad \quad \quad  \quad \quad \widetilde{\gamma} \ \textrm{satisfies} \ (\calC_J)_{\check{M}, \check{c}, a, \widetilde{\zeta_0'''}, \{\sigma_m^2\}_{m \in \N}} \bigg\}\\
&\subset&  C^\infty  \left( \overline{B^k(a)} \times \overline{B^n(\widetilde{\zeta_0'''})} \right).
\end{eqnarray*}
\end{enumerate}

We now finish the quantitative construction of the four bounded sets. We make a remark that although the scaling map depends on the base point $x_0 \in K'''$, these four bounded sets $\calB_1, \calB_2, \calB_3$ and $\calB_4$ (more precisely, the parameters used to define them) can be constructed indepedent of the choice of $x_0 \in K'''$. 

\medskip

Applying Theorem \ref{apdthm003}, we have the following quantitative $L^p$ improving property. 

\medskip

\underline{\textit{$L^p$ improving property:}} Let $\chi \in \calB_1$, $\breve{\psi_1}, \breve{\psi_2} \in \calB_2$, $\widetilde{\rho} \in \calB_3$ and $\widetilde{\gamma} \in \calB_4$, define
$$
\calS (g)(u):=\breve{\psi_1}(u) \int g (\widetilde{\gamma}_t(u)) \breve{\psi_2} (\widetilde{\gamma}_t(u)) \widetilde{\rho}(t, u) \chi(t)dt.
$$
Then for any $r \in [1, \infty)$, there exists $s>r$, such that
\begin{equation} \label{190321eq01}
\sup_{ \chi \in \calB_1, \breve{\psi_1}, \breve{\psi_2} \in \calB_2, \widetilde{\rho} \in \calB_3, \widetilde{\gamma} \in \calB_4} \|\calS f \|_{L^s} \lesssim \|f\|_{L^r}
\end{equation}
where the implicit constant above only depends on $r, s$ and on all parameters used to define $\calB_1, \calB_2, \calB_3$ and $\calB_4$. 

Finally, we denote
$$
\Sigma:=\left\{ \left(\frac{1}{r}, \frac{1}{s} \right) \in \R^2: (r, s) \ \textrm{satisfies} \ \eqref{190321eq01} \right\}. 
$$
It is clear that by interpolation,  $\Sigma$ is a convex set in $\R^2$.

\begin{rem}
Note that the exact $L^p$ improving property is indeed the ``pullback" version of the early baby version. This allowed us to get some ``local $L^p$ improving estimate" with respect to each dyadic cube. 
\end{rem}

\bigskip

\section{Proof of the main result: Part I}

\bigskip

The next two sections are devoted to the proof of our main result Theroem \ref{MainTheorem1}. Indeed, we will prove a slightly different version of Theorem \ref{MainTheorem1}.

\begin{prop} \label{MainTheorem2}
Suppose the assumptions of Theorem \ref{MainTheorem1} hold. Then there exists $a>0$, such that  for any compactly supported bounded functions $f_1, f_2$ on $\R^n$, and any $0<\sigma<1$, there exists a $\sigma$-sparse collection $\calS$ of $\calG$, such that
$$
|\langle Tf_1, f_2 \rangle| \lesssim \Lambda_{\calS, r, s'}^{\kappa'} (f_1, f_2), 
$$
where 
$$
\Lambda_{\calS, r, s'}^{\kappa'} (f_1, f_2):= \sum_{S \in \calS} |S| \langle f_1 \rangle_{S, r} \langle f_2 \rangle_{\kappa' S, s'} 
$$
and $\kappa'>1$ is an abosulte constant which only depends on $\frakC_1$ and $\del$. 
\end{prop}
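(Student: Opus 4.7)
The plan is to prove Proposition \ref{MainTheorem2} by a recursive stopping-time construction adapted to the dyadic grid $\calG$ on $(V,\rho',|\cdot|)$, with the two principal analytic inputs being the quantitative $L^p$ improving estimate \eqref{190321eq01} and the modulus of continuity Lemma \ref{moduluscont}. First I would use the support conditions on $\psi_1,\psi_2$ together with the three-lattice theorem (Proposition \ref{TLT}) to reduce to the case that $f_1,f_2$ are supported in a single cube $Q_0\in\calG$. Then I would build $\calS$ recursively, starting from $Q_0\in\calS$: for each $Q\in\calS$, let $\calF(Q)$ be the collection of maximal dyadic subcubes $Q'\subsetneq Q$ for which either
$$\langle f_1\rangle_{Q',r} > A\,\langle f_1\rangle_{Q,r} \quad\text{or}\quad \langle f_2\rangle_{\kappa' Q',s'} > A\,\langle f_2\rangle_{\kappa' Q,s'},$$
with $A=A(\sigma)$ chosen large enough that the weak-$(1,1)$ bound for the dyadic maximal operator (coming out of Proposition \ref{CZdeco}) forces $\left|\bigcup_{Q'\in\calF(Q)} Q'\right|\le (1-\sigma)|Q|$. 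Setting $\calS:=\bigcup_{n\ge 0}\calF^n(\{Q_0\})$ produces the required $\sigma$-sparse family.

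The heart of the argument is the principal estimate: for each $Q\in\calS$ with $E(Q):=Q\setminus\bigcup_{Q'\in\calF(Q)}Q'$, one must show
$$\left|\int_{E(Q)} T(f_1\one_Q)(x)\,\overline{f_2(x)}\,dx\right|\lesssim |Q|\,\langle f_1\rangle_{Q,r}\,\langle f_2\rangle_{\kappa' Q,s'},$$
along with the parallel bounds controlling the "crossover" contributions arising where $T$ transfers mass between $Q$ and its complement. Using the scale decomposition $T=\sum_{j\ge 0}\calT_j^{(j)}$ from \eqref{1114eq01}, I would split at the scale $\del^{k(Q)}$ of $Q$. For the small-scale piece $\sum_{j\ge k(Q)}\calT_j^{(j)}$, pulling back via the scaling map $\Phi_{x_c(Q),j-k(Q)}$ from Theorem \ref{scalingmap} (applied to the appropriately rescaled list \eqref{1116eq05}) reduces matters to an operator of the type handled by \eqref{190321eq01}; the uniform construction of the bounded sets $\calB_1,\dots,\calB_4$ in Section 5.3 ensures that the resulting local $L^p$-improving constants are independent of $Q$ and $j$, delivering exactly the required bound by $\langle f_1\rangle_{Q,r}\langle f_2\rangle_{\kappa' Q,s'}$. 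For the large-scale piece $\sum_{j<k(Q)}\calT_j^{(j)}$, the operator does not resolve the fine structure of $Q$: Lemma \ref{moduluscont}, applied with $\theta$ taken as an appropriate near-identity transport within $Q$ and $b$ of size $\del^{k(Q)-j}$, yields $L^2$-continuity at rate $\del^{\eta(k(Q)-j)}$, whose geometric sum over $j<k(Q)$ converges and after pairing with $f_2$ is absorbed into $\langle f_2\rangle_{\kappa' Q,s'}$.

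Once the principal estimate is in hand, summing over $Q\in\calS$ and using the pairwise disjointness of the sets $E(Q)$ together with $\sigma$-sparsity yields the sparse bound. The main obstacle, for which the entire apparatus of Sections 3--5 has been prepared, is the principal estimate itself, and specifically the large-scale analysis: in the absence of a dilation or group structure the $L^2$-continuity must be obtained uniformly across all centers $x_c(Q)$ and all scales $\del^{k(Q)}$, which is precisely the content of the uniform construction in Section 5.1 together with Theorem \ref{uniformdecomp} and Lemma \ref{moduluscont}. The enlargement factor $\kappa'$ arises naturally from two sources: the sandwich property of the dyadic grid (Theorem \ref{dyadicSHT} (6)) that forces the Carnot--Carath\'eodory balls containing $Q$ to be dilated by $\frakC/\del$, and the containment relations in Observation II.2 (in particular \eqref{supportofK02}) which introduce a further factor of $\frakC_1/\del$ to absorb the $\gamma_t$-transport within $B^k(a'')$; taking $\kappa'$ at least of this order ensures that the large-scale comparison arguments stay inside the neighborhoods where all the uniform estimates are valid.
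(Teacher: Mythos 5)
Your sparse family construction (stopping cubes where the $r$- and $s'$-averages jump by a factor $A$) is fine and is morally equivalent to the paper's exceptional set $E=E_1\cup E_2$ cut out by $M_r f_1$ and $M_{s'}f_2$, but your ``principal estimate'' has two genuine gaps where the paper's actual machinery is being skipped. First, for the small-scale piece $\sum_{j\ge k(Q)}\calT_j^{(j)}(f_1\one_Q)$ you claim the uniform single-scale $L^p$-improving bound \eqref{190321eq01} ``delivers exactly the required bound''; it cannot, because the single-scale estimates carry no decay in $j$ and their sum over the infinitely many scales $j\ge k(Q)$ diverges. The boundedness of this truncated sum is exactly the CNSW theorem for the (truncated) singular Radon transform, and the paper invokes it as such (Theorem \ref{1116thmLpbdd} and Corollary \ref{1116corLpbdd}, used for $\langle Tg_1,f_2\rangle$, for $I_3$, and inside $I_{2,1}$ via the operators $T_Q$); the $L^p$-improving input is reserved for single-scale interactions only.

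Second, for the large-scale piece $\sum_{j<k(Q)}\calT_j^{(j)}$ you assert that Lemma \ref{moduluscont} with $|b|\sim\del^{k(Q)-j}$ gives decay $\del^{\eta(k(Q)-j)}$, but the modulus-of-continuity lemma only compares $\calL f$ at two nearby points; to convert that into decay of a pairing one must have a mean-zero factor against which to average the two points. In your setup the input is $f_1\one_Q$ paired against a general $f_2$ on $E(Q)$, and neither has any cancellation: taking $f_1,f_2$ locally constant shows each large scale contributes $\simeq|Q|\langle f_1\rangle_Q\langle f_2\rangle_{\kappa'Q}$ with no decay, so the sum over $j<k(Q)$ grows with the depth of $Q$ and the recursion does not close. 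This is precisely why the paper performs Calder\'on--Zygmund decompositions of \emph{both} $f_1$ and $f_2$ relative to the Whitney cubes of $E$: the term $J_2$ is written as a double average using $\int_{Q'}b_{2,Q'}=0$, the point $x'$ is replaced by $\theta_j(\widetilde t,x)$ via the local transition maps and Lemma \ref{lem190201}, and only then does Lemma \ref{moduluscont} (through Lemma \ref{0103lem100}) produce the factor $\del^{k_2\eta}$ that makes the scale sum converge. You also omit the intermediate regime handled in $J_1$ (scales between the sizes of two Whitney cubes), where the decay comes not from continuity but from the geometric counting argument tied to the careful choice of the Whitney constant $\frakc'$ (Lemmas \ref{1207lem01} and \ref{1207lem02}), showing only $O(1)$ scales and cubes interact. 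Without these cancellation and counting inputs, the approach as sketched does not yield a bound summable over scales, so the core of the proof is missing rather than merely reorganized.
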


\begin{proof} [Proof of Theorem \ref{MainTheorem1} given Theorem \ref{MainTheorem2}]
By Theorem \ref{TLT} (since $\del<\frac{1}{100}<\frac{1}{96}$), for each $S \in \calS$, we can take a dyadic cube $Q_S \in \calG^i$ for some $i \in \{1, 2, \dots, K_0\}$, such that
\begin{equation} \label{190322eq02}
S \subseteq \kappa'S \subset Q_S \quad \textrm{and} \quad \ell(Q_S) \le \widetilde{\frakC} \kappa' \ell(S).
\end{equation} 
For each $i \in \{1, 2, \dots, K_0\}$, denote $\calS_i:=\left\{Q_S \in \calG^i, S \in \calS\right\}$, and we claim that $\calS_i$ is a $\sigma$-sparse collection. Indeed, this follows easily if we apply Theorem \ref{MainTheorem2} to some $\sigma' \in (0, 1)$ with $\sigma' \ll \sigma$. We, therefore, leave such details to the interested reader. Finally, by \eqref{190322eq02}, we have
\begin{eqnarray*} 
 \Lambda_{\calS, r, s'}^{\kappa'} (f_1, f_2)%
& \lesssim& \sum_{i=1}^{K_0}  \Lambda_{\calS^i, r, s'} (f_1, f_2)\\
&\le& K_0 \max_{1 \le i \le K_0}  \Lambda_{\calS^i, r, s'} (f_1, f_2).
\end{eqnarray*}
Clearly, this implies Theorem \ref{MainTheorem1}. 
\end{proof}

\subsection{Some reductions}

To begin with, recall that from the previous subsection, under our assumptions, we can construct a space of homogeneous type $(V, \rho', |\cdot|)$ and an associated dyadic system $\calG$, with parameters $\frac{\frakC}{\del}, \del, \epsilon$ and a corresponding collection of centers $\{x_c(Q)\}_{Q \in \calG}$. 

Without the loss of generality, we may assume that $\textrm{supp}(f_1), \textrm{supp}(f_2) \subseteq \footnote{Such an inclusion makes sense since we require $\textrm{supp}\psi_1, \textrm{supp}\psi_2 \Subset K'''$.} Q_0 \in \calD, Q_0 \Subset K'''$, where $\ell(Q_0)=\del^{j_0}$ with some fixed $j_0<0$. Moreover, we may also assume that in the summation
$$
Tf_1=\sum_{j \ge 0}\calT_j^{(j)} f_1,
$$
there are only finite many nonzero terms as our estimate will be uniform over all finite series. 

For $1 \le p<\infty$, we denote\footnote{Recall $(W, d)$ is the list which induces the Carnot-Carath\'edory metric $\rho'$ on $V$.}
$$
A^t_p(f)(x):=\left( \frac{1}{|B_{(W, d)}(x, t)|} \int_{B_{(W, d)}(x, t)} |f(y)|^pdy \right)^{\frac{1}{p}},
$$
$$
M_p (f)(x):=\sup_{x \in B_{(W, d)}(\tilde{x}, t), t>0} A^t_p(f)(\tilde{x}).
$$
Denote
$$
E_1:=\left\{x \in V: M_r (f_1)(x)>D\langle f_1\rangle_{Q_0, r} \right\},
$$
$$
E_2:=\left\{x \in V: M_{s'}(f_2)(x)>D\langle f_2 \rangle_{\kappa' Q_0, s'} \right\}.
$$
Here $D$ is large enough (depending on the weak $L^r$ and $L^{s'}$  bounds for the operators $M_r$ and $M_{s'}$ respectively) so that 
$$
|E| \le (1-\sigma)|Q_0|,
$$
where $E:=E_1 \cup E_2$, and $E \subseteq 10Q_0=B_{(W, d)}\left(x_c(Q_0), \frac{10\frakC}{\del} \cdot \del^{j_0}\right)$. Note that since $Q_0 \Subset K'''$, in particular, we have $x_c(Q) \in K'''$ and therefore, by \eqref{1109eq01}, this implies that
$$
E \subsetneq K'. 
$$
Moreover, for the choice of $\kappa'$, we will come back to it momentarily. 

By the Whitney decomposition (see Theorem \ref{Whitney}), we can write $E$  as a union of disjoint dyadic cubes\footnote{Recall that $M_E \subset \calD$ denotes the Whiteney decomposition associated to $E$.} 
$$
E=\bigcup_{Q \in M_E} Q, 
$$
each of which satisfies
\begin{equation} \label{Maineq01}
\left( \frac{\frakc'\del}{2\frakC}-1 \right) \textrm{diam}(Q) \le \textrm{dist}(Q, E^c) \le \frac{3\frakc'}{\del} \textrm{diam}(Q),
\end{equation}
for some $\frakc'>\frac{2\frakC}{\del}$ (Recall that for the space of homogeneous type $(V, \rho', |\cdot|)$, $\kappa=1$, $A$ can be taken the value $3$ and the constant ``$\frakC$'' used in Theorem \ref{Whitney} under our current setting takes the value $\frac{\frakC}{\del}$). We make a remark here that the choice of $\frakc'$ is not arbitrary, and we will come back to the choice of $\frakc'$ momentarily. 

\begin{lem} \label{Maineq02}
For any cube $Q'$ with $Q \subseteq Q' \in \calG, Q \in M_E$ and $r>1$, 
\begin{equation} 
\langle f_1 \rangle_{Q', r} \lesssim \langle f_1 \rangle_{Q_0, r},
\end{equation}
where the implicit constant in the above inequality only depends on $\frakC$ and $\del$. 
\end{lem}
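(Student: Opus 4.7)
My plan is to split into two cases based on the size of $Q'$ relative to $Q_0$. The easy case is $\ell(Q') \ge \ell(Q_0)$: since $f_1$ is supported in $Q_0$ and (by dyadic nesting) $Q_0 \subseteq Q'$ whenever these two cubes are comparable along the chain through $Q$, one has
$$
\langle f_1 \rangle_{Q', r}^r = \frac{1}{|Q'|} \int_{Q_0} |f_1|^r \le \frac{|Q_0|}{|Q'|} \langle f_1 \rangle_{Q_0, r}^r \le \langle f_1 \rangle_{Q_0, r}^r.
$$
So the substantive case is $\ell(Q') < \ell(Q_0)$, where I will exploit the Whitney property of $Q$ together with the definition of $E_1$.

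For this case, the key observation is that $Q\in M_E$ provides a nearby point outside $E$. By \eqref{Maineq01} together with the sandwich property for $Q$, we can find $q\in Q$ and $y\in E^c$ with $\rho'(q,y) \lesssim \textrm{diam}(Q) \lesssim \ell(Q) \le \ell(Q')$ (implicit constants depending only on $\frakC,\del,\frakc'$, and $\frakc'$ itself was chosen as a function of $\frakC,\del$). Since $q\in Q\subseteq Q'$, the sandwich property for $Q'$ gives $\rho'(q, x_c(Q'))\le \frac{2\frakC}{\del}\ell(Q')$, so by the triangle inequality ($\kappa = 1$ here) we obtain $\rho'(y, x_c(Q')) \le C_1\ell(Q')$ for some $C_1=C_1(\frakC,\del)$. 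Consequently
$$
Q' \subseteq B_{(W,d)}\left(x_c(Q'),\tfrac{\frakC}{\del}\ell(Q')\right) \subseteq B_{(W,d)}(y, C_2\ell(Q'))
$$
for $C_2 = C_1 + \frakC/\del$.

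The last step is to convert a bound on the ball at $y$ into a bound on $Q'$. Since $y\notin E \supseteq E_1$, definitionally $M_r(f_1)(y) \le D\langle f_1\rangle_{Q_0,r}$, and the ball just constructed contains $y$, so $A_r^{C_2\ell(Q')}(f_1)(y)^r \le D^r\langle f_1\rangle_{Q_0,r}^r$. Therefore
$$
\langle f_1\rangle_{Q',r}^r \;\le\; \frac{\lvert B_{(W,d)}(y,C_2\ell(Q'))\rvert}{|Q'|}\; D^r \langle f_1\rangle_{Q_0,r}^r.
$$
Using $|Q'|\ge |B_{(W,d)}(x_c(Q'),\ell(Q'))|$ (sandwich) and iterating the doubling inequality \eqref{doublingineq} a number of times depending only on $C_1,C_2$ (hence on $\frakC,\del$), together with the inclusion $B_{(W,d)}(y,\ell(Q'))\subseteq B_{(W,d)}(x_c(Q'),(C_1+1)\ell(Q'))$, bounds the volume ratio by a constant depending only on $\frakC$ and $\del$. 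This gives the required inequality. The only subtle point is verifying that all invoked balls sit inside the region $V'$ where the doubling constant $C_{V'}$ was fixed; this is where the nested-sets setup of Section 5.1 (in particular $Q_0\Subset K''' $ and the large-constant inclusion \eqref{1109eq01}) is used, since $y$ lies in a bounded neighborhood of $10Q_0$ and $\ell(Q')\le \ell(Q_0)$ is small.
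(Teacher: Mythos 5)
Your proof is correct and is essentially the paper's argument: both rest on the Whitney estimate \eqref{Maineq01} supplying a point of $E^c$ within distance $\lesssim \ell(Q')$ of $Q'$, the defining bound $M_r(f_1)\le D\langle f_1\rangle_{Q_0,r}$ off $E$, and doubling to pass from the enlarged ball back to $Q'$ (the paper compresses this into ``$D_0Q'\cap E^c\neq\emptyset$, hence $\langle f_1\rangle_{Q',r}\lesssim\langle f_1\rangle_{D_0Q',r}\lesssim\langle f_1\rangle_{Q_0,r}$''). Your preliminary case $\ell(Q')\ge\ell(Q_0)$ is harmless but unnecessary, since the same maximal-function argument covers it uniformly.
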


\begin{proof}
Indeed, from the construction of Whitney decomposition, for any $Q \in M_E$, there exists some $D_0>1$, which is independent of the choice $Q$, such that $D_0Q \cap E^c \neq \emptyset$, and hence $D_0Q' \cap E^c \neq \emptyset$. Thus,  
\begin{equation} \label{modyeq01}
\langle f_1 \rangle_{Q', r} \lesssim \langle f_1 \rangle_{D_0Q', r} \lesssim \langle f_1 \rangle_{Q_0, r}. 
\end{equation}
\end{proof} 

Perform a Calder\'on-Zygmund decomposition of $f_1$ with respect to the collection of Whitney cubes
\begin{eqnarray} \label{Maineq04}
f_1%
&=&g_1+\sum_{Q \in M_E} \one_Q \left(f_1-\langle f_1 \rangle_Q\right) \nonumber\\
&=& g_1+\sum_{Q \in M_E} b_{1, Q}  \nonumber \\
&=& g_1+\sum_{Q \in M_E, Q \subseteq Q_0} b_{1, Q} \nonumber\\
&=& g_1+\sum_{k \ge j_0+1} b_{1, k},
\end{eqnarray}
where 
\begin{equation} \label{1114eq02}
b_{1, Q}:=  \one_Q \left(f_1-\langle f_1 \rangle_Q\right)  \quad \textrm{and} \quad  b_{1, k}:=\sum_{Q \in M_E, Q \subset Q_0, \ell(Q)=\del^k} b_{1, Q}. 
\end{equation}
Here in the second last equation in the above decomposition, we use the facts that $\textrm{supp}(f_1) \subset Q_0$ and $Q_0 \nsubseteq E$ (since $|E| \le (1-\sigma)|Q_0|$), which implies that if $Q \cap Q_0 \neq \emptyset$, then $Q \subseteq Q_0$; while in the last equation, we use the fact that $Q_0$ is not a Whitney cube and hence the index starts from $j_0+1$. Moreover, without the loss of generality, we may assume again that the second summation in \eqref{Maineq04} is finite with $k \le K_0$ for $K_0$ sufficiently large, again, which is because our estimate will be independent of the choice $K_0$ and we can pass the limit at the final stage in our estimation.

Thus, as a consequence of Lebesgue differentiation theorem (see Proposition \ref{LDT}), the ``good function" is bounded, namely
\begin{equation} \label{1114eq03}
\|g_1\|_{L^\infty} \lesssim \langle f_1 \rangle_{Q_0, r}. 
\end{equation}

For the ``bad function $b_{1, Q}$", we have the following easy observation. 

\begin{lem} \label{modyeq02}
For any cube $Q'$ and $r \ge 1$, 
\begin{equation} 
\left\| \sum_{Q \subseteq Q', Q \in M_E} b_{1, Q} \right\|_{L^r(Q')} \lesssim  \|f_1\|_{L^r(Q')}.
\end{equation}
\end{lem}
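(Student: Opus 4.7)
The plan is to exploit the essentially trivial fact that the bad function pieces $b_{1,Q}$ have pairwise disjoint supports, since by property (3) of Lemma~\ref{Whitney} the Whitney cubes $Q \in M_E$ are pairwise disjoint and each $b_{1,Q}$ is supported in $Q$. The $L^r$ norm of the sum therefore splits cleanly as
$$
\left\| \sum_{Q \subseteq Q', Q \in M_E} b_{1, Q} \right\|_{L^r(Q')}^r = \sum_{Q \subseteq Q', Q \in M_E} \|b_{1,Q}\|_{L^r(Q)}^r,
$$
and the problem reduces to a local bound on each $Q$ followed by reassembly.

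For the per-cube estimate, I would apply the triangle inequality together with Jensen's inequality. Writing $b_{1,Q} = \one_Q(f_1 - \langle f_1 \rangle_Q)$, the triangle inequality gives
$$
\|b_{1,Q}\|_{L^r(Q)} \le \|f_1\|_{L^r(Q)} + |Q|^{1/r}\left|\langle f_1 \rangle_Q \right|,
$$
and since $|\langle f_1 \rangle_Q|^r \le \langle |f_1|^r \rangle_Q$ (Jensen, using $r \ge 1$), the second term is bounded by $\|f_1\|_{L^r(Q)}$ as well. Hence $\|b_{1,Q}\|_{L^r(Q)} \le 2\|f_1\|_{L^r(Q)}$.

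Raising this to the $r$-th power, summing over the disjoint family $\{Q \in M_E : Q \subseteq Q'\}$, and using $\bigsqcup Q \subseteq Q'$, one obtains
$$
\sum_{Q \subseteq Q', Q \in M_E} \|b_{1,Q}\|_{L^r(Q)}^r \le 2^r \sum_{Q \subseteq Q', Q \in M_E} \int_Q |f_1|^r \le 2^r \|f_1\|_{L^r(Q')}^r,
$$
and extracting the $r$-th root finishes the proof. I do not anticipate any real obstacle: the statement is the standard Calderón--Zygmund bound for the bad part, and the only place where any care is needed is to note that $r \ge 1$ is exactly what makes Jensen's inequality run in the right direction.
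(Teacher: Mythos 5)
Your proposal is correct and is essentially the paper's own argument: both proofs use the pairwise disjointness of the Whitney cubes to split the $L^r$ norm, bound each $\|b_{1,Q}\|_{L^r(Q)}$ by the local average term plus $\|f_1\|_{L^r(Q)}$, and control $|Q|\,|\langle f_1\rangle_Q|^r \le \int_Q |f_1|^r$ (Jensen/H\"older) before resumming over the disjoint family. No gap.
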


\begin{proof}
Indeed, using the fact that Whitney cubes are disjoint, we have
\begin{eqnarray*}
\int_{Q'} \left| \sum_{Q \subset Q', Q \in M_E} b_{1, Q} \right|^rdx%
&=& \sum_{Q \subset Q', Q \in M_E} \int_{Q} |b_{1, Q}|^rdx\\ 
&=&  \sum_{Q \subset Q', Q \in M_E} \int_{Q} \left| f_1-\langle f_1 \rangle_Q \right|^rdx \\ 
&\lesssim& \sum_{Q \subset Q', Q \in M_E} \int_Q |f_1|^rdx+\sum_{Q \subset Q', Q \in M_E} |Q| \langle f_1 \rangle_Q^r \\
&\lesssim& \int_{Q'} |f_1|^r dx. 
\end{eqnarray*}
\end{proof}

By \eqref{Maineq04}, \eqref{1114eq01} and \eqref{1114eq02}, we decompose
\begin{eqnarray} \label{Maineq03}
|\langle Tf_1, f_2 \rangle|%
&\le&|\langle Tg_1, f_2 \rangle|+\left| \sum_{k \ge j_0+1} \langle Tb_{1, k}, f_2 \rangle \right| \nonumber \\
&=& |\langle Tg_1, f_2 \rangle|+\left| \sum_{k \ge j_0+1} \sum_{j \ge 0} \langle \calT_j^{(j)} b_{1, k}, f_2 \rangle \right| \nonumber\\
&=&  |\langle Tg_1, f_2 \rangle|+\left| \sum_{k \ge j_0+1} \sum_{j \ge 0} \sum_{Q \subset Q_0, Q \in M_E, \ell(Q)=\del^k}\langle \calT_j^{(j)} b_{1, Q}, f_2 \rangle \right|  \nonumber\\
&:=& |\langle Tg_1, f_2 \rangle|+I, 
\end{eqnarray}
where 
$$
I:=\left| \sum_{k \ge j_0+1} \sum_{j \ge 0} \sum_{Q \subset Q_0, Q \in M_E, \ell(Q)=\del^k}\langle \calT_j^{(j)} b_{1, Q}, f_2 \rangle \right|.
$$
We estimate $|\langle Tg_1, f_2 \rangle|$ first. Using \eqref{1114eq03} and the $L^s$ boundedness of $T$ (see Theorem \ref{1116thmLpbdd}), we have
\begin{eqnarray} \label{Maineq05}
|\langle Tg_1, f_2 \rangle|%
&\le& \|Tg_1\|_{L^s} \|f_2\|_{L^{s'}} \nonumber \\
&\lesssim& \|g_1\|_{L^s} \|f_2\|_{L^{s'}} \nonumber \\
&\lesssim& |Q_0| \langle |f_1| \rangle_{Q_0, r} \langle |f_2| \rangle_{Q_0, s'}.
\end{eqnarray}

Next we estimate $I$. For any $Q \in M_E$, $Q \subset Q_0$ and a bounded, compactly supported function $f$, we denote

\begin{equation} \label{1114eq03}
T_Q(f):=\begin{cases}
\sum\limits_{\del^j \le \ell(Q), j \ge 0} \calT_j^{(j)} (f \one_Q), & \log_\del(\ell(Q))>0; \\
 \\
T(f \one_Q), & \log_\del(\ell(Q)) \le 0.
\end{cases} 
\end{equation} 

Then using \eqref{1114eq01}, \eqref{1114eq02} and \eqref{1114eq03}, we can write
\begin{eqnarray*}
I%
&=&\Bigg| \sum_{j_0+1 \le k<0} \sum_{j \ge 0} \sum_{Q \subset Q_0, Q \in M_E, \ell(Q)=\del^k}\langle \calT_j^{(j)} b_{1, Q}, f_2 \rangle \\
&& \quad \quad \quad \quad \quad \quad+\sum_{k \ge 0} \sum_{j \ge 0} \sum_{Q \subset Q_0, Q \in M_E, \ell(Q)=\del^k}\langle \calT_j^{(j)} b_{1, Q}, f_2 \rangle \Bigg|\\
&=& \left| \sum_{Q \in M_E, Q \subseteq Q_0, j_0+1 \le \log_\del (\ell(Q))<0}\langle T b_{1, Q}, f_2 \rangle+ \sum_{k \ge 0} \sum_{j \ge 0} \langle \calT_j^{(j)} b_{1, k}, f_2 \rangle \right|\\
&=&  \Bigg| \sum_{Q \in M_E, Q \subseteq Q_0, j_0+1 \le \log_\del (\ell(Q))<0}\langle T_Q(f_1 \one_Q), f_2 \rangle\\
&&\quad \quad \quad \quad \quad \quad \quad   - \sum_{Q \in M_E, Q \subseteq Q_0, j_0+1 \le \log_\del (\ell(Q))<0}\langle f_1 \rangle_Q \langle T_Q(\one_Q), f_2 \rangle\\
&&\quad \quad \quad \quad \quad \quad \quad   + \sum_{k \ge 0}\sum_{0 \le j \le k} \langle \calT_j^{(j)} b_{1, k}, f_2 \rangle+\sum_{k \ge 0}\sum_{j>k} \langle \calT_j^{(j)} b_{1, k}, f_2 \rangle  \Bigg| \\
&=&  \Bigg| \sum_{Q \in M_E, Q \subseteq Q_0, j_0+1 \le \log_\del (\ell(Q))<0}\langle T_Q(f_1 \one_Q), f_2 \rangle\\
&&\quad \quad \quad \quad \quad \quad \quad   - \sum_{Q \in M_E, Q \subseteq Q_0, j_0+1 \le \log_\del (\ell(Q))<0}\langle f_1 \rangle_Q \langle T_Q(\one_Q), f_2 \rangle\\
&& \quad \quad \quad \quad \quad \quad \quad + \sum_{k \ge 0}\sum_{0 \le j \le k} \langle \calT_j^{(j)} b_{1, k}, f_2 \rangle \\
&& \quad \quad \quad \quad \quad \quad \quad  +\sum_{k \ge 0} \sum_{Q \subset Q_0, Q\in M_E, \ell(Q)=\del^k} \sum_{j>k} \langle \calT_j^{(j)} b_{1, Q}, f_2 \rangle  \Bigg| ,
\end{eqnarray*}
where in the second equation, we use the fact that 
$$
T_Q(b_{1, Q})=T(b_{1, Q}), \quad \textrm{for} \ \log_\del(\ell(Q)) \le 0,
$$ 
and in the last equation, we use our previous assumption that the sum $\sum\limits_{j \ge 0} \calT_j^{(j)}$ is a finite sum, also for a fixed $k$, the set $E$ contains finite many Whitney cubes of the sidelength $\del^k$ and the Whitney cubes are pairwisely disjoint, thus, we are able to switch the two summations
$$
\sum_{j>k} \quad \textrm{and} \quad \sum_{Q \subset Q_0, Q \in M_E, \ell(Q)=\del^k}.
$$
Hence, we have
\begin{eqnarray*}
I%
&=&  \Bigg| \sum_{Q \in M_E, Q \subseteq Q_0, j_0+1 \le \log_\del (\ell(Q))<0}\langle T_Q(f_1 \one_Q), f_2 \rangle\\
&&\quad \quad \quad \quad \quad \quad \quad   - \sum_{Q \in M_E, Q \subseteq Q_0, j_0+1 \le \log_\del (\ell(Q))<0}\langle f_1 \rangle_Q \langle T_Q(\one_Q), f_2 \rangle\\
&&\quad \quad \quad \quad \quad \quad \quad   + \sum_{k \ge 0}\sum_{0 \le j \le k} \langle \calT_j^{(j)} b_{1, k}, f_2 \rangle+\sum_{k \ge 0} \sum_{Q \subset Q_0, Q\in M_E, \ell(Q)=\del^k} \langle T_Q b_{1, Q}, f_2 \rangle  \Bigg|\\
&=&  \Bigg| \sum_{Q \in M_E, Q \subseteq Q_0, j_0+1 \le \log_\del (\ell(Q))<0}\langle T_Q(f_1 \one_Q), f_2 \rangle\\
&&\quad \quad \quad \quad \quad \quad \quad   - \sum_{Q \in M_E, Q \subseteq Q_0, j_0+1 \le \log_\del (\ell(Q))<0} \langle f_1 \rangle_Q \langle T_Q(\one_Q), f_2 \rangle\\
&&+ \sum_{k \ge 0}\sum_{0\le j \le k} \langle \calT_j^{(j)} b_{1, k}, f_2 \rangle\\
&& +\sum_{k \ge 0} \sum_{Q \subset Q_0, Q\in M_E, \ell(Q)=\del^k} \langle T_Q(f_1 \one_Q), f_2 \rangle\\
&&\quad \quad \quad \quad \quad \quad \quad -\sum_{k \ge 0} \sum_{Q \subset Q_0, Q\in M_E, \ell(Q)=\del^k} \langle f_1 \rangle_Q \langle T_Q(\one_Q), f_2 \rangle   \Bigg|\\
&&\le  \left |\sum_{Q \in M_E, Q \subset Q_0} \langle T_Q(f_1 \one_Q), f_2 \rangle \right|+  \left|\sum_{k \ge 0}\sum_{0 \le j \le k} \langle \calT_j^{(j)} b_{1, k}, f_2 \rangle\right|\\
&& \quad \quad \quad \quad \quad \quad \quad +\left| \sum_{Q \in M_E, Q \subset Q_0} \langle f_1 \rangle_Q \langle T_Q(\one_Q), f_2 \rangle \right|\\
&&=I_1+I_2+I_3,
\end{eqnarray*}
where
$$
I_1:= \left |\sum_{Q \in M_E, Q \subset Q_0} \langle T_Q(f_1 \one_Q), f_2 \rangle \right|,
$$
$$ 
 I_2:=  \left|\sum_{k \ge 0}\sum_{0 \le j \le k} \langle \calT_j^{(j)} b_{1, k}, f_2 \rangle\right|,
$$
and
$$
I_3:=\left| \sum_{Q \in M_E, Q \subset Q_0} \langle f_1 \rangle_Q \langle T_Q(\one_Q), f_2 \rangle \right|. 
$$

\medskip

For the term $I_1$, we note that it is exactly of the same form as the original expression $\langle T(f_1), f_2 \rangle$, and hence it will go to recursion. More precisely, by replacing $Q_0$ in the proof of the sparse bound of $\langle T(f_1), f_2 \rangle$ by some $Q \in M_E, Q \subset Q_0$, we have there exists a sparse family $\calS_Q$, such that
$$
\left| \langle T_Q(f_1), f_2 \rangle \right| \lesssim \Lambda^{\kappa'}_{\calS_Q, r, s'} (f_1, f_2).
$$
Here $\kappa'$ is an absolute constant such that
\begin{equation} \label{1116eq022}
\textrm{supp}(T_Q(f_1)) \subseteq B_{(W, d)}(x_c(Q), \kappa'  \ell(Q))=\kappa' Q,
\end{equation}
where $\kappa'$ only depends on the constants $a, \frakC, \del, \epsilon$, any $2$-admissible constants and how $\gamma$ is controlled by the list $(W, d)$ (more precisely, $\calQ_2$). In particular, $k'$ is independent of the choice of the center $x_c(Q)$ and the scale $\ell(Q)$. For example, by \eqref{supportofK02}, one can take 
$$
\kappa'=\frac{\frakC_1}{\del}. 
$$

From now on, we shall fix our choice of $\kappa'$. Then the desired sparse collection $\calS$ will be
$$
\calS:=\{Q_0\} \bigcup \left( \bigcup_{Q \in M_E, Q \subset Q_0} \calS_Q \right),
$$
which is due to the fact that $|E| \le (1-\sigma)|Q_0|$. 

\medskip

Next, we estimate $I_3$, which is straighforward. Indeed, by Corollary \ref{1116corLpbdd},  Lemma \ref{Maineq02} and  the disjointness of Whitney cube , we have
\begin{eqnarray} \label{Maineq07}
I_3%
&=& \left| \sum_{Q \in M_E, Q \subset Q_0} \langle f_1 \rangle_Q \langle T_Q(\one_Q), f_2 \rangle \right| \nonumber\\
&\lesssim& \sum_{Q \in M_E, Q \subset Q_0} \langle f_1 \rangle_{Q, r} |Q|^{1/s} \|f_2\|_{L^{s'}(\kappa' Q)} \nonumber \\
&\lesssim& |Q_0| \langle f_1 \rangle_{Q_0, r} \langle f_2 \rangle_{\kappa' Q_0, s'}.
\end{eqnarray}

\medskip

Finally, we estimate $I_2$, which is the main estimation in our proof. Recall that we are aiming to show that
\begin{equation} \label{Maineq06}
I_2= \left|\sum_{k \ge 0}\sum_{0 \le j \le k} \langle \calT_j^{(j)} b_{1, k}, f_2 \rangle\right| \lesssim |Q_0| \langle f_1 \rangle_{Q_0, r} \langle f_2 \rangle_{\kappa' Q_0, s'}. 
\end{equation}

Perform a Calder\'on-Zygmund decomposition of $f_2$ with respect to $M_E$, we have
\begin{eqnarray} \label{Maineq08}
f_2%
&=& g_2+ \sum_{Q \in M_E} \one_Q \left(f_2-\langle f_2 \rangle_Q \right) \nonumber\\
&=& g_2+\sum_{Q \in M_E} b_{2, Q} \nonumber \\
&=&g_2+\sum_{Q \in M_E, Q \subset Q_0} b_{2, Q} \nonumber \\
&=& g_2+\sum_{k' \ge j_0+1} b_{2, k'},
\end{eqnarray}
where 
\begin{equation} \label{1116eq001}
b_{2, Q}:=\one_Q \left(f_2-\langle f_2 \rangle_Q \right) \ \textrm{and} \ b_{2, k'}:= \sum_{Q \in M_E, Q \subset Q_0, \ell(Q)=\del^{k'}} b_{2, Q}.
\end{equation}
Here, again, in the second sum of \eqref{Maineq08}, we may assume that the summation is finite if needed. Clearly, the ``good" function is bounded by 
\begin{equation} \label{Maineq09}
\|g_2\|_{L^\infty} \lesssim  \langle f_2 \rangle_{\kappa' Q_0, s'}.
\end{equation}
Then, we can write $I_2$ as follows.
\begin{eqnarray*}
I_2%
&=&  \left|\sum_{k \ge 0}\sum_{0 \le j \le k} \left\langle \calT_j^{(j)} b_{1, k},  g_2+\sum_{k' \ge j_0+1} b_{2, k'} \right\rangle\right| \\
&\le& \left|\sum_{k \ge 0}\sum_{0 \le j \le k} \left\langle \calT_j^{(j)} b_{1, k},  g_2 \right\rangle \right|+\left| \sum_{k \ge 0}\sum_{0 \le j \le k} \left\langle \calT_j^{(j)} b_{1, k}, \sum_{k' \ge j_0+1} b_{2, k'} \right\rangle\right| \\
&=& I_{2, 1}+I_{2, 2},
\end{eqnarray*}
where 
$$
I_{2, 1}:= \left|\sum_{k \ge 0}\sum_{0 \le j \le k} \left\langle \calT_j^{(j)} b_{1, k},  g_2 \right\rangle \right|
$$
and
$$
I_{2, 2}:=\left| \sum_{k \ge 0}\sum_{0 \le j \le k} \left\langle \calT_j^{(j)} b_{1, k}, \sum_{k' \ge j_0+1} b_{2, k'} \right\rangle\right| .
$$

For an estimation of $I_{2, 1}$,  using the fact that $\textrm{supp} g_2 \subseteq 100Q_0$, the $L^r$ boundedness of $T$ and $T_Q$, \eqref{modyeq01}, \eqref{1114eq02},  \eqref{Maineq09}, Lemma \ref{modyeq02}, \eqref{1116eq022}  and the disjointness of Whitney cubes, we have
\begin{eqnarray*}
I_{2, 1}%
&=& \left| \sum_{k \ge 0} \sum_{0 \le j \le k} \sum_{Q \in M_E, Q \subset Q_0, \ell(Q)=\del^k} \langle \calT_j^{(j)} b_{1, Q}, g_2 \rangle \right| \\
&=& \left| \sum_{Q \in M_E, Q \subset Q_0, \ell(Q) \le 1} \left\langle (T-T_Q)(b_{1, Q}), g_2 \right\rangle \right| \\
&\le& \left|\left\langle T \left(  \sum_{Q \in M_E, Q \subset Q_0, \ell(Q) \le 1} b_{1, Q} \right), g_2 \right\rangle \right|\\
&& \quad \quad \quad \quad \quad \quad \quad \quad \quad \quad \quad \quad \quad \quad + \left| \sum_{Q \in M_E, Q \subset Q_0, \ell(Q) \le 1} \left\langle T_Q(b_{1, Q}), g_2 \right\rangle \right|  \\
&\lesssim& \left\| \sum_{Q \in M_E, Q \subset Q_0} b_{1, Q} \right\|_{L^r} \|g_2\|_{L^{r'}}\\
&& \quad \quad \quad \quad \quad \quad \quad \quad \quad \quad +\sum_{Q \in M_E, Q \subseteq Q_0} \|b_{1, Q}\|_{L^r} \left\|g_2 \one_{\kappa' Q}\right\|_{L^{r'}}\\
&\lesssim& |Q_0| \langle f_1 \rangle_{Q_0, r} \langle f_2 \rangle_{\kappa' Q_0, s'}
\end{eqnarray*}

For an estimation of $I_{2, 2}$, we need to make a further decomposition. 
\begin{eqnarray*}
I_{2, 2}%
&=& \left| \sum_{k \ge 0}\sum_{0 \le j \le k} \left\langle \calT_j^{(j)} b_{1, k}, \sum_{k' \ge j_0+1} b_{2, k'} \right\rangle\right| \\
&\le& \left| \sum_{k \ge 0}\sum_{0 \le j \le k} \left\langle \calT_j^{(j)} b_{1, k}, \sum_{j_0+1 \le k'<j} b_{2, k'} \right\rangle\right|+\left| \sum_{k \ge 0}\sum_{0 \le j \le k} \left\langle \calT_j^{(j)} b_{1, k}, \sum_{k' \ge j} b_{2, k'} \right\rangle\right| \\
&=& \left| \sum_{Q, Q' \in M_E, Q, Q' \subset Q_0} \sum_{j \ge 0, \ell(Q) \le \del^j \le \ell(Q')} \left\langle \calT_j^{(j)}(b_{1, Q}), b_{2, Q'} \right\rangle \right| \\
&& \quad \quad \quad \quad \quad \quad \quad \quad \quad \quad \quad \quad \quad \quad+ \left| \sum_{k \ge 0}\sum_{0 \le j \le k} \sum_{k' \ge j} \left\langle \calT_j^{(j)} b_{1, k},  b_{2, k'} \right\rangle\right| \\
& \le &  \sum_{Q, Q' \in M_E, Q, Q' \subset Q_0} \sum_{j \ge 0, \ell(Q) \le \del^j \le \ell(Q')}  \left|\left\langle \calT_j^{(j)}(b_{1, Q}), b_{2, Q'} \right\rangle \right| \\
&& \quad \quad \quad \quad \quad \quad \quad \quad \quad \quad \quad \quad \quad \quad+ \left| \sum_{j \ge 0}\sum_{k \ge j} \sum_{k' \ge j} \left\langle \calT_j^{(j)} b_{1, k},  b_{2, k'} \right\rangle\right| \\
&=& J_1+J_2,
\end{eqnarray*}
where 
$$
J_1:= \sum_{Q, Q' \in M_E, Q, Q' \subset Q_0} \sum_{j \ge 0, \ell(Q) \le \del^j \le \ell(Q')}  \left|\left\langle \calT_j^{(j)}(b_{1, Q}), b_{2, Q'} \right\rangle \right| 
$$
and 
$$
J_2:= \left| \sum_{j \ge 0}\sum_{k \ge j} \sum_{k' \ge j} \left\langle \calT_j^{(j)} b_{1, k},  b_{2, k'} \right\rangle\right|.
$$
The rest of this section is devoted to estimate $J_1$ and $J_2$. 

\subsection{Estimation of $J_1$}

We divide the estimation of $J_1$ into two steps.

\medskip

\underline{Step I: Fix choice of $\frakc'$ in the Whitney decomposition in \eqref{Maineq01}.}

\medskip

We start with analysing a single term in $J_1$. Namely, for any $Q, Q' \in M_E, Q, Q' \subset Q_0$ and $j \ge 0$ satisfying $\ell(Q)=\del^k \le \del^j \le \del^{k'}=\ell(Q')$, if the expression
\begin{eqnarray} \label{1124eq10}
\langle \calT_j^{(j)} (b_{1, Q}), b_{2, Q'} \rangle%
&=& \int_{\R^n}  \int_{\R^k} \left(\psi_1  b_{2, Q'} \right)(x)\left(b_{1, Q}  \psi_2 \right) \left(\gamma_{\del^jt}(x)\right) \varrho(\del^j t, x) \chi_j(t) dtdx \nonumber\\
&\neq& 0,
\end{eqnarray}
then there exists some $x \in Q'$ such that $x':=\gamma_{\del^j t}(x) \in Q$ for some $t \in B^k(a)$.

Recall from Section 5.1 that we are using the Carnot-Carath\'edory metric $\rho'$, which is induced by the list $(W, d)$. 

\begin{lem}
For any $t \in B^k(a)$ and $x \in K'''$,  
\begin{equation} \label{1124eq01}
\rho'(x, x')=\rho'\left(x, \gamma_{\del^jt} (x)\right) \le \frakC^* \del^j,  
\end{equation}
where $\frakC^*$ is some $2$-admissible constant. Moreover, $\frakC^* \le \frac{2\frakC_1}{\del}$. 
\end{lem}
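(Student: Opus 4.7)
The plan is to construct an explicit Carnot--Carath\'eodory curve connecting $x$ to $x' = \gamma_{\delta^j t}(x)$ and to bound its $\rho'$-length using the control of $\gamma$ by the list $(X, d)$. Define $\varphi: [0, 1] \to V$ by $\varphi(\varepsilon) := \gamma_{\varepsilon \delta^j t}(x)$, so that $\varphi(0) = x$ and $\varphi(1) = x'$. Using the definition $W(u, y) = \tfrac{d}{d\lambda}\big|_{\lambda = 1} \gamma_{\lambda u}(\gamma_u^{-1}(y))$, a direct chain-rule computation yields
$$
\varphi'(\varepsilon) = \frac{1}{\varepsilon}\, W\bigl(\varepsilon \delta^j t,\, \varphi(\varepsilon)\bigr).
$$
The apparent singularity at $\varepsilon = 0$ is harmless because the coefficients $c_l^{x_1, \sigma}$ in Definition~\ref{controlVC} may be chosen to vanish at $t = 0$, so $W$ vanishes to first order there.

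Next, I would expand $W$ via Definition~\ref{controlVC} at the scale $\sigma = \delta^j \in (0,1]$: this furnishes smooth coefficients $c_l(s, y)$, with $c_l(0, \cdot) \equiv 0$ and with $C^m$-norms bounded uniformly in $j$, satisfying
$$
W(\delta^j s, y) = \sum_{l = 1}^q c_l(s, y)\, \delta^{j d_l}\, X_l(y)
$$
on $B^k(a_1) \times B_{(X, d)}(x_1, \tau \delta^j)$. Substituting $s = \varepsilon t$ (with $|t| < a \le a_1$) and writing $X_l = (\delta^{\widetilde{N}} \widetilde{\zeta})^{-d_l} W_l$, one obtains
$$
\varphi'(\varepsilon) = \sum_{l = 1}^q b_l(\varepsilon)\, \Bigl( \frac{\delta^j}{\delta^{\widetilde{N}} \widetilde{\zeta}} \Bigr)^{d_l} W_l(\varphi(\varepsilon)),\qquad b_l(\varepsilon) := \frac{c_l(\varepsilon t, \varphi(\varepsilon))}{\varepsilon}.
$$
A first-order Taylor expansion of $c_l$ in its first slot, using $c_l(0, \cdot) \equiv 0$, gives $|b_l(\varepsilon)| \lesssim |t| \le a$; by shrinking $a$ (the required smallness depending only on admissible data) we may arrange $|b_l(\varepsilon)| < 1$. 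Hence $\varphi$ lies in the Carnot--Carath\'eodory class $C(\frakC^* \delta^j)$ for the list $(W, d)$, yielding $\rho'(x, x') \le \frakC^* \delta^j$.

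For the explicit bound $\frakC^* \le 2 \frakC_1/\delta$, I would invoke a scaled version of \eqref{1125eq05}. Because $(X, d)$ controls $\gamma$ at every scale $\sigma \in (0, 1]$, the derivation leading to \eqref{1125eq05} works uniformly in $j$ and produces
$$
\gamma\Bigl( \overline{B^k(a'')} \times \overline{B_{(W, d)}\bigl( x_0, \tfrac{\frakC}{\delta} \delta^j \bigr)} \Bigr) \subseteq B_{(W, d)}\Bigl( x_0, \tfrac{\frakC_1}{\delta} \delta^j \Bigr),
$$
for every $j \ge 0$. Applying this with $x_0 = x$ and using the triangle inequality in $\rho'$ gives $\rho'(x, \gamma_{\delta^j t}(x)) \le 2 \frakC_1 \delta^j / \delta$, which provides both the $2$-admissibility of $\frakC^*$ and the asserted numerical estimate.

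The main technical point is confirming that $\varphi(\varepsilon)$ remains inside the ball $B_{(X, d)}(x_1, \tau \delta^j)$ on which the expansion of $W$ is valid. This is handled by a standard bootstrap: continuity of $\varphi$ at $\varepsilon = 0$ ensures the containment on some initial subinterval $[0, \varepsilon_0]$; the length bound derived above extends the containment, and after finitely many such extensions (possibly shrinking $a$ one final time) one reaches $\varepsilon = 1$. Since each shrinking of $a$ depends only on $\tau$ and admissible data, the hypothesis that $a$ is a $2$-admissible constant in the sense of Section~5 is preserved.
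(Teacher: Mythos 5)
Your argument is correct, and for the ``moreover'' clause it is essentially the paper's: the bound $\frakC^*\le \frac{2\frakC_1}{\del}$ is read off from \eqref{1125eq05} (equivalently \eqref{supportofK02}) rescaled to scale $\del^j$, which is legitimate because the parameters in $\calQ_1$/$\calQ_2$ are uniform in $\sigma\in[0,1]$ and in the center $x\in K'''$; note that this scaled containment by itself already proves the whole lemma, giving $\rho'(x,\gamma_{\del^j t}(x))\le \frac{\frakC_1}{\del}\,\del^j$ directly with no triangle inequality. Where you genuinely deviate is in the qualitative bound: the paper either quotes the first condition of $(\calQ_2)$ or changes variables by the scaling map $\Phi$ of Theorem \ref{scalingmap} attached to $x$ and the list $\left((\del^j)^d W, d\right)$, using only that the pullback $\widehat{\gamma}_{\del^j t}=\Phi^{-1}\circ\gamma_{\del^j t}\circ\Phi$ is uniformly $C^\infty$, whereas you construct the explicit curve $\varphi(\eps)=\gamma_{\eps\del^j t}(x)$, expand $\varphi'(\eps)=\frac{1}{\eps}W(\eps\del^j t,\varphi(\eps))$ via the $\calQ_1$ control, and estimate the coefficients; this is more elementary (it re-proves by hand the relevant half of Proposition \ref{controlQ1Q2} in this special case) at the cost of the bookkeeping you acknowledge. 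Two caveats, neither fatal: first, the bootstrap closes only after $|b_l|\lesssim a$ is converted into a Carnot--Carath\'eodory bound strictly below $\tau\del^j$ (one gets a radius of order $(Ca)^{1/\max_l d_l}\del^j$, so $a$ must be shrunk depending on $\tau$ and the degrees), which is what your ``shrinking $a$'' must accomplish; second, because you expand against $(X,d)$ and then rewrite $X_l=(\del^{\widetilde{N}}\widetilde{\zeta})^{-d_l}W_l$, the constant your curve yields is of size $(\del^{\widetilde{N}}\widetilde{\zeta})^{-1}$, far larger than $\frac{2\frakC_1}{\del}$, so it cannot supply the numerical clause on its own --- you correctly do not ask it to, and the loss would disappear if you ran the same computation with the control of $\gamma$ by the rescaled list $(W,d)$ itself, whose $\calQ_1$ parameters differ from those of $(X,d)$ only by constants fixed once and for all in Section 5.
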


\begin{proof}
Indeed, the first conclusion in the above claim follows from the first condition in $(\calQ_2)$ directly. Here, we would like to give a proof of the above claim without using that condition. First, note that it suffices to show
$$
\rho'_{(\del^j)^d W}(x, \gamma_{\del^j t}(x)) \le \frakC^*,
$$
where $\rho'_{(\del^j)^d W}$ is the Carnot-Carath\'edory metric induced by the list $\left( (\del^j)^d W, d\right)$ and $\frakC^*$ is some admissible constant. Let $\Phi$ be the scaling map defined in Theorem \ref{scalingmap} applied to the point $x$ and the list\footnote{Note that this list shares the same defining constants in Definition \ref{admiconstdefn} with the list \eqref{1116eq05}, and so are all the admissible constants.} $\left( (\del^j)^d W, d\right)$. Then we have
\begin{eqnarray*}
\rho'_{(\del^j)^d W} (x, \gamma_{\del^j t}(x))%
&=& \rho'_{(\del^j)^d W} \left( \Phi(0), \gamma_{\del^j t}\left( \Phi (0) \right) \right)\\
&=& \rho'_{(\del^j)^d W} \left( \Phi(0), \Phi \left( \hat{\gamma}_{\del^j t}(0) \right) \right) \\
&=& \rho'_{ \Phi^* \left( (\del^j)^d W\right)} \left(0, \hat{\gamma}_{\del^j t}(0) \right) \\
&& \quad  (\textrm{by \eqref{1207eq01}}.) \\
&\le& \frakC^*,
\end{eqnarray*}
where in the second equation, the mapping $\hat{\gamma}_{\del^j t}:=\Phi^{-1} \circ \gamma_{\del^j t} \circ \Phi$, namely, the pullback of the mapping $\gamma_{\del^j t}$ via $\Phi$; in the last equation, $\Phi^*  \left( (\del^j)^d W \right)$ denotes the pullback of the list $ \left( (\del^j)^d W\right)$ via $\Phi$; and in the last estimate, we use the fact that $\hat{\gamma}_{\del^j t}$ is a $C^\infty$ function. Finally, the estimate
$$
\frakC^* \le \frac{2\frakC_1}{\del}
$$
follows directly from \eqref{1125eq05}. 
\end{proof}

By \eqref{Maineq01}, we can take a point $y \in E^c$ and $x'' \in Q$, such that
\begin{equation} \label{Maineq10}
\rho(x'', y) \le \frac{6\frakc'}{\del} \textrm{diam}(Q)
\end{equation}
(see, Figure 3).

\begin{figure}[ht]
\begin{tikzpicture}[scale=5]

\draw (0,0) .. controls (0,.8) and (.8,0) .. (.8,.8);
\fill (.015, .01) node[right]{$E$}; 
\draw (.4, .17) circle (0.18);
\fill (0.3, 0.6) circle [radius=.25pt] node[above]{\footnotesize{$y$}};
\fill (0.34, 0.13) circle [radius=.25pt] node[below]{\footnotesize{$x''$}};
\fill (0.43, 0.08) circle [radius=.25pt] node[right]{\footnotesize{$x'$}};
\draw (.18, .2) node[below]{\footnotesize{$Q$}}; 
\draw (1.04, .3) circle (0.25);
\fill (0.9, 0.33) circle [radius=.25pt] node[right]{\footnotesize{$x$}};
\draw (.99, .64) node[below]{\footnotesize{$Q'$}}; 
\draw (.9, .33) -- (.43, .08); 
\draw (.9, .33)--(.3, .6);
\draw (.3, .6)-- (.34, .13);
\draw (.34, .13)--(.43, .08);
\draw [dashed] (.43, .08) -- (.3, .6);
\end{tikzpicture}
\caption{}
\end{figure}

Then using triangle inequality and \eqref{1124eq01}, we have
\begin{eqnarray} \label{1124eq02}
\rho'(x, y)%
&\le&  \rho(x, x')+\rho'(x', y) = \rho'(x, \gamma_{\del^j t}(x))+\rho'(x', y) \nonumber \\
&\le& \frakC^* \del^j+\rho'(x', y).
\end{eqnarray}
By \eqref{Whitney01} and \eqref{Maineq10}, we have 
\begin{equation} \label{1124eq03}
\rho'(x', y) \le \rho'(x', x'')+\rho'(x'', y) \le  \frac{2\frakC}{\del} \cdot \del^k+\frac{2\frakC}{\del} \cdot \del^k \cdot \frac{6\frakc'}{\del}.
\end{equation}
Also note that by \eqref{Whitney01}, we have
\begin{equation} \label{1124eq04}
\rho'(x, y) \ge \textrm{dist}(Q', E^c) \ge \left(\frac{\frakc'\del}{2\frakC}-1 \right) \cdot \frac{\del^{k'}}{3}.
\end{equation} 
Therefore, combining \eqref{1124eq02}, \eqref{1124eq03} and \eqref{1124eq04}, we have
\begin{equation} \label{Maineq11}
\left(\frac{\frakc'\del}{2\frakC}-1 \right) \cdot \frac{\del^{k'}}{3} \le \frakC^* \del^j+\frac{2\frakC}{\del} \cdot \del^k+\frac{2\frakC}{\del} \cdot \del^k \cdot \frac{6\frakc'}{\del}. 
\end{equation}
Recall that $k' \le j \le k$. We then have the following observations. 

\begin{lem} \label{1207lem01}
In \eqref{Maineq11}, when $k'$ is fixed and $c'$ is large enough, then the choice of $j$ is limited, and the number of such a $j$ depends onl on $\frakC$ and $\del$.
\end{lem}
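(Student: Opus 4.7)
The plan is to exploit the asymmetry between the scaling of the left-hand side of \eqref{Maineq11} (in $\del^{k'}$) and the right-hand side (in $\del^j$ and $\del^k$), once $\frakc'$ is large enough to dominate the admissible constant $\frakC^*$. The first step is to use $k \ge j$, so that $\del^k \le \del^j$, and therefore
\[
\frac{2\frakC}{\del}\del^k + \frac{12\frakC\frakc'}{\del^2}\del^k \le \left(\frac{2\frakC}{\del} + \frac{12\frakC\frakc'}{\del^2}\right)\del^j.
\]
Combined with the $\frakC^*\del^j$ term, this reduces \eqref{Maineq11} to an inequality of the form
\[
\left(\frac{\frakc'\del}{2\frakC} - 1\right)\frac{\del^{k'}}{3} \le \left(\frakC^* + \frac{2\frakC}{\del} + \frac{12\frakC\frakc'}{\del^2}\right)\del^j,
\]
which now couples only the two scales $\del^{k'}$ and $\del^j$.

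Next, I would choose $\frakc'$ large enough (depending on $\frakC$, $\del$, and $\frakC^*$) so that, on the one hand, the $-1$ on the left is absorbed (it suffices that $\frakc' \ge 4\frakC/\del$, giving $\frac{\frakc'\del}{2\frakC} - 1 \ge \frac{\frakc'\del}{4\frakC}$), and, on the other hand, the $\frakC^* + 2\frakC/\del$ term on the right is dominated by the $\frakc'$-term (it suffices that $\frakc' \ge \frac{\del^2}{12\frakC}(\frakC^* + 2\frakC/\del)$). In this regime the displayed inequality simplifies to
\[
\frac{\frakc'\del}{12\frakC}\,\del^{k'} \le \frac{24\frakC\frakc'}{\del^2}\,\del^j,
\]
and after cancelling $\frakc'$ and rearranging,
\[
\del^{j-k'} \ge \frac{\del^3}{288\,\frakC^2},
\]
a strictly positive quantity depending only on $\frakC$ and $\del$.

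Because $j \ge k'$, this yields the upper bound
\[
j - k' \le \log_{\del}\!\left(\frac{\del^3}{288\,\frakC^2}\right) = 3 - \log_{\del}(288\,\frakC^2),
\]
and since $0 < \del < 1$, the right-hand side is a finite constant depending only on $\frakC$ and $\del$. The number of admissible $j$ is therefore bounded by this constant plus $1$. The one point requiring care is the bookkeeping in the second step: although the threshold that ``$\frakc'$ is large enough'' involves the admissible constant $\frakC^*$ (and hence depends on how $\gamma$ is controlled by $(W,d)$), the resulting bound on $j-k'$ only uses $\frakC$ and $\del$, because once $\frakc'$ is past the threshold the $\frakC^*$-term is absorbed into the dominant $\frakc'$-term on the right and cancels with the $\frakc'$ on the left. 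This is the only real content of the lemma; no deeper obstruction arises.
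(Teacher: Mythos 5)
Your proof is correct and follows essentially the same route as the paper: starting from \eqref{Maineq11}, using $\del^k \le \del^j$ to collapse the right-hand side to a multiple of $\del^j$, and then observing that for $\frakc'$ large the resulting bound on $\del^{k'-j}$ (equivalently your lower bound on $\del^{j-k'}$) depends only on $\frakC$ and $\del$, with the $\frakC^*$-dependence confined to the threshold for $\frakc'$. The only difference is cosmetic: you absorb the $-1$ and the $\frakC^*+2\frakC/\del$ terms via explicit absorption inequalities, whereas the paper imposes the sandwich condition \eqref{Maineq12} on the full quotient; both yield a bound on $j-k'$ of the same nature.
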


\begin{proof}
Using the fact that $\del^k \le \del^j$, we have
\begin{eqnarray*}
\left(\frac{\frakc'\del}{2\frakC}-1 \right) \cdot \frac{\del^{k'}}{3}%
& \le& \left( \frakC^*+\frac{2\frakC}{\del}+\frac{2\frakC}{\del} \cdot \frac{6\frakc'}{\del} \right) \del^j \\
&=& \left( \frakC^*+\frac{2\frakC}{\del}+\frac{12\frakC\frakc'}{\del^2} \right) \del^j 
\end{eqnarray*}
and hence
$$
\del^{k'-j} \le \frac{\frakC^*+\frac{2\frakC}{\del}+\frac{12\frakC\frakc'}{\del^2}}{\left( \frac{\frakc'\del}{2\frakC}-1\right) \cdot \frac{1}{3}}.
$$
Note that the right hand side in the above inequality converges to $\frac{72\frakC^2}{\del}$ as $\frakc' \to \infty$. Now we require our $\frakc'$ is large enough, such that
\begin{equation} \label{Maineq12}
\frac{36\frakC^2}{\del} \le \frac{\frakC^*+\frac{2\frakC}{\del}+\frac{12\frakC\frakc'}{\del^2}}{\left( \frac{\frakc'\del}{2\frakC}-1\right) \cdot \frac{1}{3}} \le \frac{144\frakC^2}{\del}.
\end{equation}
This is the first condition we impose on the selction of $\frakc'$. Hence, we have
$$
\del^{k'-j} \le \frac{144\frakC^2}{\del},
$$
which implies the desired claim. 
\end{proof}

\begin{lem} \label{1207lem02}
In \eqref{Maineq11}, when $k'$ is fixed and $c'$ is large enough, then the choice of $j$ is limited, and the number of such a $k$ depends onl on $\frakC$ and $\del$.
\end{lem}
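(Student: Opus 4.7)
The plan is to mirror the proof of Lemma \ref{1207lem01}, but now absorb the $\frakC^*\del^j$ term on the right-hand side of \eqref{Maineq11} into the left-hand side, so that the surviving inequality forces $\del^{k-k'}$ to be bounded \emph{below} by a constant depending only on $\frakC$ and $\del$. Since $\del<1$, this will yield an upper bound on $k-k'$, hence on the number of admissible $k$ once $k'$ is fixed.

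Concretely, I would first use $j\ge k'$ (so $\del^j\le \del^{k'}$) to estimate
\[
\frakC^*\del^j \;\le\; \frakC^*\del^{k'}.
\]
Then I would impose a second size condition on $\frakc'$, namely that
\[
\frakC^* \;\le\; \tfrac{1}{6}\!\left(\tfrac{\frakc'\del}{2\frakC}-1\right),
\]
in addition to \eqref{Maineq12}. With this choice the term $\frakC^*\del^j$ is at most $\tfrac{1}{2}\bigl(\tfrac{\frakc'\del}{2\frakC}-1\bigr)\tfrac{\del^{k'}}{3}$, so \eqref{Maineq11} may be rearranged as
\[
\tfrac{1}{2}\!\left(\tfrac{\frakc'\del}{2\frakC}-1\right)\tfrac{\del^{k'}}{3}
\;\le\; \tfrac{2\frakC}{\del}\!\left(1+\tfrac{6\frakc'}{\del}\right)\del^k,
\]
which gives
\[
\del^{k-k'} \;\ge\; \frac{\del\bigl(\tfrac{\frakc'\del}{2\frakC}-1\bigr)}{12\frakC\bigl(1+\tfrac{6\frakc'}{\del}\bigr)}.
\]
The right-hand side tends to $\tfrac{\del^3}{288\frakC^2}$ as $\frakc'\to\infty$, so by enlarging $\frakc'$ if necessary (still depending only on $\frakC$ and $\del$) we can force this lower bound to lie in a fixed interval, say between $\tfrac{\del^3}{576\frakC^2}$ and $\tfrac{\del^3}{144\frakC^2}$, in the same spirit as \eqref{Maineq12}. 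This pins $\del^{k-k'}$ below by an absolute quantity and therefore bounds $k-k'$ above by $\log_\del\!\bigl(\tfrac{\del^3}{576\frakC^2}\bigr)$, a constant depending only on $\frakC$ and $\del$, which yields the lemma.

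The argument is essentially bookkeeping; the only genuine choice is the second smallness/largeness requirement on $\frakc'$, which must be compatible with \eqref{Maineq12} from Lemma \ref{1207lem01}. I expect no real obstacle here, since both conditions are lower bounds on $\frakc'$ and so can be satisfied simultaneously. The only thing to be careful about is to record that the final choice of $\frakc'$ used throughout the Whitney decomposition is this (still admissible) constant, so that the constants controlling both $j-k'$ and $k-k'$ are harmonised; this joint control is what will ultimately let us sum a geometric series in the estimation of $J_1$ that follows.
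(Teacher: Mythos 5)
Your proposal is correct and is essentially the paper's own argument: both use $k'\le j$ together with a largeness condition on $\frakc'$ to absorb the $\frakC^*$ term (you absorb $\frakC^*\del^j\le\frakC^*\del^{k'}$ into the left side before dividing; the paper divides by $\del^{k'}$ first and subtracts $\frakC^*$ afterwards), then extract a lower bound on $\del^{k-k'}$ that stabilises as $\frakc'\to\infty$, giving a bound on $k-k'$ depending only on $\frakC$ and $\del$. The only blemish is a harmless factor-of-two slip in your limiting constant (the ratio tends to $\del^3/(144\frakC^2)$, not $\del^3/(288\frakC^2)$), which does not affect the argument.
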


\begin{proof}
Again, we begin with \eqref{Maineq11}. Divide $\del^{k'}$ on both sides and use the relation $k' \le j \le k$ again, we have
\begin{eqnarray*}
\left( \frac{\frakc'}{2\frakC}-1 \right) \cdot \frac{1}{3}%
&\le&  \frakC^*\del^{j-k'}+ \frac{2\frakC}{\del} \cdot \del^{k-k'}+\frac{12\frakC\frakc'}{\del^2} \cdot \del^{k-k'} \\
&\le& \frakC^*+\left(\frac{2\frakC}{\del}+\frac{12\frakC\frakc'}{\del^2} \right) \del^{k-k'}.
\end{eqnarray*}
First, we take $c'$ large enough, such that
\begin{equation} \label{Maineq13}
\left( \frac{\frakc'}{2\frakC}-1 \right) \cdot \frac{1}{3} \ge 100\frakC^*.
\end{equation}
Second, note that we have 
$$
\del^{k-k'} \ge \frac{\left( \frac{\frakc'}{2\frakC}-1 \right) \cdot \frac{1}{3}-\frakC^*}{\frac{2\frakC}{\del}+\frac{12\frakC\frakc'}{\del^2}}.
$$
Again, we let $c'$ converges to $\infty$ and find the limit of the right hand side in the above inequality is $\frac{\del}{72\frakC^2}$. Hence, we can make a choice of a $\frakc'$, such that 
\begin{equation} \label{1124eq09}
\frac{\del}{144\frakC^2} \le  \frac{\left( \frac{\frakc'}{2\frakC}-1 \right) \cdot \frac{1}{3}-\frakC^*}{\frac{2\frakC}{\del}+\frac{12\frakC\frakc'}{\del^2}}\le \frac{\del}{36\frakC^2}.
\end{equation}
Thus,
\begin{equation} \label{1207eq02}
\del^{k-k'} \ge \frac{\del}{144\frakC^2}, 
\end{equation}
for such a choice of $\frakc'$, which implies the desired result.
\end{proof}

Finally, we take a $\frakc'$ such that the conclusion in Lemma \ref{1207lem01} and Lemma \ref{1207lem02} hold, more precisely, we take a $\frakc'$ such that \eqref{Maineq12}, \eqref{Maineq13} and \eqref{1124eq09} hold. 

\medskip

\underline{Step II: Complete the estimation of $J_1$.}

\medskip

From the above argument, it is easy to see that there exists some $D_1>1$, which only depends on $\frakC$, $\del$, $a$ and how $\gamma$ is controled by the list $(W, d)$ (namely, $(\calQ_1)$), such that $Q \subset D_1 Q'$, with $Q, Q'$ satisfying \eqref{1124eq10}. Moreover, we have the trivial upper bound 
\begin{equation} \label{1125eq06}
D_1 \le \frac{6\frakC_1}{\del}. 
\end{equation} 
Indeed, this is an easy consequence of triangle inequality, \eqref{1124eq01}, the fact that $k' \le j \le k$ and the fact that $\frakC>1$ and $0<\del<1$. Furthermore, we have the following observation.

\begin{lem}
Let $Q$ and $Q'$ be two dyadic cubes satisfy the assumption in Step I. If we fix the choice of $Q'$, then the choice of $Q$ is finite, which is bounded by some number only depends on $\frakC$ and $\del$.
\end{lem}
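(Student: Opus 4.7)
The plan is to combine the two structural constraints already extracted in Step I with a simple volume-counting argument based on the doubling property of $(V,\rho',|\cdot|)$. Fix $Q' \in M_E$ with $\ell(Q') = \del^{k'}$, and let $Q \in M_E$, $\ell(Q) = \del^k$, be any Whitney cube producing a nonvanishing pairing $\langle \calT_j^{(j)}(b_{1,Q}), b_{2,Q'}\rangle$ for some admissible $j$. Then Step I gives two pieces of information: first, $Q \subset D_1 Q'$ with $D_1 \le 6\frakC_1/\del$ by \eqref{1125eq06}; second, $\del^{k-k'} \ge \del/(144\frakC^2)$ by \eqref{1207eq02}, so the scale ratio $k-k'$ is bounded above by an integer $N_0 = N_0(\frakC,\del)$.

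Next I would promote the scale comparability into a volume comparability. Since $Q \subset D_1 Q'$ we have $x_c(Q) \in D_1 Q'$, which by the sandwich property of Theorem \ref{dyadicSHT} yields $\rho'(x_c(Q), x_c(Q')) \lesssim \ell(Q')$ with an absolute constant depending only on $\frakC/\del$. Combined with $\del^k \ge \del^{N_0+1}\, \del^{k'}$, the doubling property \eqref{doublingineq} of the measure on $(V,\rho',|\cdot|)$ gives
$$
|Q| \;\ge\; c\, |B_{(W,d)}(x_c(Q),\del^k)| \;\gtrsim\; |B_{(W,d)}(x_c(Q'),\del^{k'})| \;\gtrsim\; |Q'|,
$$
where all implicit constants depend only on $\frakC$, $\del$, and the (uniform) doubling constant on $V'$.

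Finally, because the Whitney cubes in $M_E$ are pairwise disjoint and each candidate $Q$ is contained in $D_1 Q'$, summing Lebesgue measure and applying doubling once more yields
$$
\#\{Q \in M_E : Q \text{ is admissible for } Q'\} \;\le\; \frac{|D_1 Q'|}{\min_Q |Q|} \;\lesssim\; \frac{|D_1 Q'|}{|Q'|} \;\lesssim\; 1,
$$
with the final constant depending only on $\frakC$ and $\del$ (through the doubling exponent and the constants $D_1$, $N_0$ above). This is the desired conclusion.

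The only point that requires any care is tracking the doubling constant when one transfers between balls centered at $x_c(Q)$ and at $x_c(Q')$; once the uniform doubling bound on the relatively compact piece of $V'$ containing all these cubes is invoked, the argument is essentially volume counting and presents no substantial obstacle.
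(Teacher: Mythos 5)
Your proposal is correct and follows essentially the same route as the paper: both arguments use the bounded scale ratio and the containment $Q \subset D_1 Q'$ from Step I to show, via the doubling property (with the enlarged balls kept inside the region where the uniform doubling constant applies, as in \eqref{1109eq01}), that each admissible $Q$ has volume comparable to that of $D_1 Q'$, and then conclude by disjointness of the Whitney cubes. The only cosmetic difference is that you compare $|Q|$ to $|Q'|$ and then $|D_1Q'|$ to $|Q'|$, whereas the paper compares $|Q|$ directly to $|D_1Q'|$ in the chain \eqref{1125eq10}; the counting step is identical.
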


\begin{proof}
 Indeed, by Step I, we know when $Q'$ is fixed, that is, when $k'$ is fixed, the choice of $k$ is limited, say $k'+N$ is the largest choice of $k$, where $N$ is some absolute number only depends on $\frakC$ and $\del$, in particular, by \eqref{1207eq02}, we have
\begin{equation} \label{1125eq01}
\del^{-N} \le \frac{144\frakC^2}{\del}. 
\end{equation}

By \eqref{1125eq06}, \eqref{1125eq01} and the fact that $Q \subset D_1 Q'$, we have
\begin{eqnarray}  \label{1125eq10}
\textrm{Vol} (B_{(W, d)}(x_c(Q), \del^k)%
&\le&  \textrm{Vol} (Q) \le \textrm{Vol}(D_1Q') \nonumber\\
&=& \textrm{Vol}\left(B_{(W, d)}\left(x_c(Q'), \frac{D_1 \frakC}{\del} \cdot \del^{k'} \right) \right)  \nonumber\\
&\le& \textrm{Vol} \left( B_{(W, d)} \left(x_c(Q),  \frac{2D_1 \frakC}{\del} \cdot \del^{k'} \right) \right)  \nonumber \\
&=& \textrm{Vol} \left( B_{(W, d)} \left(x_c(Q),  \frac{2D_1 \frakC}{\del} \cdot \del^{k'-k} \cdot \del^k \right) \right)  \nonumber\\
&\le& \textrm{Vol} \left( B_{(W, d)} \left(x_c(Q),  \frac{2D_1 \frakC}{\del} \cdot \del^{-N} \cdot \del^k \right) \right)  \nonumber\\
&\le& \textrm{Vol} \left( B_{(W, d)} \left(x_c(Q),  \frac{288D_1 \frakC^3}{\del^2} \cdot \del^k \right) \right)  \nonumber\\
&\le&  \textrm{Vol} \left( B_{(W, d)} \left(x_c(Q),  \frac{2000 \frakC_1 \frakC^3}{\del^3} \cdot \del^k \right) \right) \\
&\lesssim& \textrm{Vol} (B_{(W, d)}(x_c(Q), \del^k) \le \textrm{Vol}(Q) \nonumber,
\end{eqnarray} 
where in the last inequality, we use \eqref{1109eq01}, moreover, the implicit constant in the last inequality only depends on $\frakC_1, \frakC, \del$ and the doubling constant $C_{V'}$ for $\rho'$ (see, Figure 4).

\begin{figure}[ht]
\begin{tikzpicture}[scale=5]
\draw (0,0) circle (.65);
\draw (.65, -0.5)  node [left]{\footnotesize{$D_1Q'$}};  
\draw (0,0) circle (.25);
\fill (0, 0) circle [radius=.25pt];
\fill (-0.32, -0.32) circle [radius=.25pt];
\draw (0, 0) node[below] {\footnotesize{$x_c(Q')$}}; 
\draw (.24, -0.23)  node [left]{\footnotesize{$Q'$}};  
\draw (-0.32, -0.32) circle (.13);
\draw (-0.32, -0.32) node[below] {\footnotesize{$x_c(Q)$}}; 
\draw (-0.15, -0.45)  node [left]{\footnotesize{$Q$}};  
\end{tikzpicture}
\caption{}
\end{figure}

As a consequence, we have
\begin{equation} \label{1125eq13}
\textrm{the total choices of}  \ Q \lesssim 1,
\end{equation}
where in the above estimation, we use the fact that Whiteney cubes are disjoint and the implict constant can be taken to be the same as the one in \eqref{1125eq10}. 
\end{proof}

Next, by using the $L^p$ improving property, we have the following result, which can be understood as a lemma of change of variable. 

\begin{lem} \label{1222lem01}
 Let $Q, Q'$ be two Whitney cubes such that
$$
\del^k=\ell(Q) \le \del^j \le \ell(Q')=\del^{k'},
$$
where $k \ge j \ge k' \ge 0$ are some integers as before. Let further
$$
\textrm{supp} (f) \subseteq Q.
$$
Then 
\begin{equation} \label{1130eq03}
\|\calT_j^{(j)} (f)\|_{L^s(Q')} \lesssim \textrm{Vol}(Q')^{\frac{1}{s}-\frac{1}{r}} \|f\|_{L^r(Q)},
\end{equation}
where the implict constant only depends on $\frakC$, $\del$, on the implicit constant in the inequality \eqref{190321eq01} and on any $2$-admissible constants. 
\end{lem}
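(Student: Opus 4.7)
The plan is to pull back $\calT_j^{(j)}$ to unit scale via an appropriate scaling map and invoke the quantitative $L^p$ improving property \eqref{190321eq01}. First I would reduce to the case where $Q$ and $Q'$ are geometrically close: since $\textrm{supp}(f)\subseteq Q$, the essential support of $\calT_j^{(j)}(f)$ is contained in the $\rho'$-neighbourhood of $Q$ of radius $\simeq \del^j$, so the estimate is trivial unless $\textrm{dist}_{\rho'}(Q,Q')\lesssim \del^j$. In that case, combined with the relation $\del^k\le \del^j\le \del^{k'}$, the quotient $\del^{k'-j}$ is bounded by a constant depending only on $\frakC$ and $\del$ (as in Lemma \ref{1207lem02}), so by the doubling property \eqref{doublingineq} the volumes $|Q'|$, $\textrm{Vol}(B_{(W,d)}(x_c(Q),\del^j))$ and $\textrm{Vol}(B_{(W,d)}(x_c(Q'),\del^j))$ are all comparable.

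Next, I would apply the scaling map $\Phi$ of Theorem \ref{scalingmap} at $x_c(Q)$ with respect to the list $\left(\left(\tfrac{8\frakC_1}{\del\widetilde{\zeta}}\del^j\right)^d W, d\right)$, obtaining a $C^\infty$ diffeomorphism from $B_{(Y,d)}(0,\widetilde{\zeta_0})$ onto $B_{(W,d)}(x_c(Q),\tfrac{8\frakC_1\widetilde{\zeta_0}}{\del}\del^j)$, which (after the reduction above and choosing the geometric constants as in Section 5.1) contains $Q$, contains the essential support of $\calT_j^{(j)}(f)$, and meets $Q'$ where it matters. Writing $\hat f=f\circ\Phi$ and $\widehat\gamma_t=\Phi^{-1}\circ\gamma_{\del^j t}\circ\Phi$ (so the $\del^j$-dilation is absorbed into the pullback and becomes unit scale), and denoting by $\widehat{\psi_i}$, $\widehat\varrho$ the analogous pullbacks, the operator becomes
\begin{equation*}
\widehat\calT(\hat f)(u)=\widehat{\psi_1}(u)\int \hat f(\widehat\gamma_t(u))\widehat{\psi_2}(\widehat\gamma_t(u))\widehat\varrho(t,u)\chi_j(t)\,dt.
\end{equation*}
Multiplying $\widehat{\psi_i}$ by the bump $h_1$ from the construction leading to \eqref{1128eq01}, the quadruple $(\chi_j,\widehat{\psi_i}h_1,\widehat\varrho,\widehat\gamma)$ lies -- uniformly in $Q,Q',j$ -- in the bounded sets $\calB_1,\calB_2,\calB_3,\calB_4$ of Section 5.3.

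Then I would invoke \eqref{190321eq01} to conclude $\|\widehat\calT\hat f\|_{L^s(\R^n)}\lesssim \|\hat f\|_{L^r(\R^n)}$. Restricting the left-hand side to $\Phi^{-1}(Q')$ and using $\textrm{supp}\,\hat f\subseteq \Phi^{-1}(Q)$ on the right, the change-of-variables formula \eqref{changevab01} combined with the Jacobian estimate $|\det d\Phi(u)|\simeq \textrm{Vol}(B_{(W,d)}(x_c(Q),\del^j))$ from Corollary \ref{scalingmap01} yields
\begin{equation*}
\|\calT_j^{(j)}f\|_{L^s(Q')}\lesssim \textrm{Vol}(B_{(W,d)}(x_c(Q),\del^j))^{\frac1s-\frac1r}\|f\|_{L^r(Q)},
\end{equation*}
and the volume comparison from the first paragraph replaces the prefactor by $\textrm{Vol}(Q')^{1/s-1/r}$, giving \eqref{1130eq03}.

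The main obstacle is the uniformity required in the middle step: verifying that for \emph{every} admissible Whitney pair $(Q,Q')$ and every $j\ge 0$, the pulled-back data $(\widehat{\psi_i}h_1,\widehat\varrho,\widehat\gamma)$ lie in the same bounded sets $\calB_1,\ldots,\calB_4$ with constants depending only on $\gamma$, $\psi_1,\psi_2,\varrho$ and the structure constants fixed in Section 5.1. This is precisely what is purchased by the careful set-up in Section 5.1 (admissible constants chosen uniformly in $x_c(Q)\in K'''$ and in the scale $\del^j$), together with the descent of the curvature condition $(\calC_J)$ to $\widehat\gamma$ via Theorem \ref{apdthm002} and a sufficiently small once-and-for-all choice of the support parameter $a$. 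Once this bookkeeping is in place, the remaining steps are a direct change of variables together with the doubling-based volume comparison.
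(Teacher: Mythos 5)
Your proposal is correct, but it takes a genuinely different route at the key scaling step. The paper applies the scaling map at the center of the \emph{larger} cube $Q'$ with respect to the list at scale $\del^{k'}=\ell(Q')$; the Jacobian estimate \eqref{changevab233} then produces the factor $\textrm{Vol}(Q')^{\frac1s-\frac1r}$ directly, with no geometric reduction, and the price is the verification that the dilated curve $\gamma_{\del^j t}$, pulled back at the coarser scale $\del^{k'}$, still lies in $\calB_4$ (this is where the paper invokes ``since $k'\le j$''). You instead scale at $x_c(Q)$ at the exact scale $\del^j$, so the pulled-back data are the same unit-scale objects used to build $\calB_1,\dots,\calB_4$ in Section 5.3 (exactly as in Lemma \ref{0103lem100}, which is the $k'=j$ case), which makes the uniformity of the quadruple essentially automatic; but then your Jacobian gives $\textrm{Vol}(B_{(W,d)}(x_c(Q),\del^j))^{\frac1s-\frac1r}$, and since $\frac1s-\frac1r<0$ this is weaker than the stated constant unless $\del^j\simeq\del^{k'}$. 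Your dichotomy repairs this correctly: if $\textrm{dist}_{\rho'}(Q,Q')\gtrsim\del^j$ the left side vanishes (by \eqref{1125eq05}-type control of $\gamma_{\del^j t}$), and otherwise the Whitney properties of $Q$ and $Q'$ (with the $\frakc'$ fixed in Section 6) force $\del^{k'-j}\lesssim 1$ — note this bound is the content of Lemma \ref{1207lem01}, not Lemma \ref{1207lem02} as you cite — after which doubling gives $\textrm{Vol}(B_{(W,d)}(x_c(Q),\del^j))\simeq\textrm{Vol}(Q')$. So your argument proves the lemma for Whitney cubes by leaning on the Whitney geometry, whereas the paper's proof is independent of that geometry; conversely, your route keeps the curve at genuine unit scale and thereby sidesteps the question of how the $(\calC_J)$ constants of the $\del^{j-k'}$-shrunk curve behave at scale $\del^{k'}$, a uniformity that in the paper's argument is only innocuous because $j-k'$ is effectively bounded in all applications. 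Both proofs are acceptable; yours is slightly longer but arguably more self-justifying on the $\calB_4$ membership, and the remaining bookkeeping (coverage of the essential support by the image of $\Phi$, insertion of the bump $h_1$, and the change of variables via \eqref{changevab233}) matches the paper's.
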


\begin{proof}
Let $\Phi$ be the scaling map defined in Theorem \ref{scalingmap} with respect to the center $x_c(Q')$ and the list
\begin{equation} \label{1130eq02}
\left( \left( \frac{8\frakC_1}{\del \widetilde{\zeta}} \cdot \del^{k'} \right)^d W, d \right),
\end{equation} 
which by \eqref{1116eq02}, is the list
$$
\left( \left( \frac{8\frakC_1}{\del} \cdot \del^{k'+\widetilde{N}} \right)^d X, d \right).
$$
This, by \eqref{1127eq01}, implies 
\begin{eqnarray*}
\Phi: B_{(Y, d)}(0, \widetilde{\zeta_0}) \longrightarrow %
&& B_{\left( \left( \frac{8\frakC_1}{\del} \cdot \del^{k'+\widetilde{N}} \right)^d X, d \right)} \left(x_c(Q'), \widetilde{\zeta_0} \right) \\
&&= B_{ \left( \left( \frac{8\frakC_1}{\del \widetilde{\zeta}} \cdot \del^{k'} \right)^d W, d \right) } \left(x_c(Q'), \widetilde{\zeta_0} \right) \\
&&=B_{(W, d)} \left(x_c(Q'), \frac{8\frakC_1}{\del} \cdot \del^{k'} \cdot \frac{\widetilde{\zeta_0}}{\widetilde{\zeta}} \right) \subset K'
\end{eqnarray*}
is a $C^\infty$ diffeomorphism. Therefore, for any $x \in Q' \subset B_{(W, d)} \left(x_c(Q'), \frac{\frakC}{\del} \cdot \del^{k'} \right)$, we can find a $u \in B_{(Y, d)} \left(0, \frac{\frakC \widetilde{\zeta}}{8C_1} \right)$, such that $x=\Phi(u)$, and therefore we can write
\begin{eqnarray} \label{1129eq03}
&&\calT_j^{(j)}(f)(x)=  \calT_j^{(j)}(f) (\Phi(u)) \nonumber \\
&&= \psi_1 (\phi (u))\int \left(f \circ \Phi \right) \circ \left( \Phi^{-1} \circ \gamma_{\del^j t} \circ \Phi \right)(u)\left(\psi_2 \circ \Phi \right) \circ \left( \Phi^{-1} \circ \gamma_{\del^j t} \circ \Phi \right)(u) \nonumber \\
&&  \quad \quad \quad \quad \quad  \quad \quad \quad \quad \quad  \rho(\del^j t, \Phi(u)) \chi_j(t)dt \\
&&= \widehat{\psi}_1(u) \int \widehat{f} \left(\widehat{\gamma_{\del^j t}}(u) \right)  \widehat{\psi_2} \left( \widehat{\gamma_{\del^j t}}(u) \right) \widehat{\rho}(t, u) \chi_j(t)dt  \nonumber
\end{eqnarray}
where 
$$
\widehat{\psi_1}:=\psi_1 \circ \Phi, \ \widehat{\psi_2}:=\psi_2 \circ \Phi,
$$
$$
\widehat{f}:=f \circ \Phi,
$$
and 
$$
\widehat{\rho}(t, u):=\rho(\del^j t, \Phi(u)). 
$$
Therefore, we have
\begin{eqnarray} \label{1130eq01}
&&\left\| \calT_j^{(j)}(f) \right\|_{L^s(Q')}^s= \int_{Q'} \left| \calT_j^{(j)}(f)(x) \right|^s dx \nonumber \\
&&= \int_{Q'} \left| \calT_j^{(j)} \left(f \one_{B_{(W, d)} \left( x_c(Q'), \frac{\frakC_1}{\del} \cdot \del^{k'} \right)} \right)(x) \right|^s dx \nonumber \\
&& \le  \int_{B_{(W, d)} \left(x_c(Q'), \frac{\frakC}{\del} \cdot \del^{k'} \right)}  \left| \calT_j^{(j)}\left(f  \one_{B_{(W, d)} \left( x_c(Q'), \frac{\frakC_1}{\del} \cdot \del^{k'} \right)} \right)(x) \right|^s dx \nonumber \\
&&= \int_{B_{(Y, d)} \left(0, \frac{\frakC \widetilde{\zeta}}{8C_1} \right)} \left| \calT_j^{(j)} \left(f  \one_{B_{(W, d)} \left( x_c(Q'), \frac{\frakC_1}{\del} \cdot \del^{k'} \right)}  \right)( \Phi (u)) \right|^s |\det d\Phi(u)| du \nonumber  \\
&& \quad \quad (\textrm{change variables with} \ x=\Phi(u) \ \textrm{with applying} \  \eqref{changevab01}.) \nonumber \\
&& \simeq \textrm{Vol}(Q)  \int_{B_{(Y, d)} \left(0, \frac{\frakC \widetilde{\zeta}}{8C_1} \right)} \left| \calT_j^{(j)}\left(f  \one_{B_{(W, d)} \left( x_c(Q'), \frac{\frakC_1}{\del} \cdot \del^{k'} \right)}  \right)( \Phi (u)) \right|^s  du \nonumber \\
&& \quad \quad (\textrm{by} \ \eqref{20180902} \ \textrm{and} \ \eqref{changevab}.) \nonumber \\ 
&&= \textrm{Vol}(Q) \int_{B_{(Y, d)} \left(0, \frac{\frakC \widetilde{\zeta}}{8\frakC_1} \right)}  \left| \widehat{\psi}_1(u) \right|^s \bigg| \int \widehat{f} \left(\widehat{\gamma_{\del^j t}}(u) \right) \widehat{\psi_2} \left( \widehat{\gamma_{\del^j t}}(u) \right)  \nonumber \\
&&   \quad \quad \quad \quad \quad  \quad \quad \quad  \quad \one_{B_{(Y, d)} \left(0, \frac{\widetilde{\zeta}}{8} \right)}  \left( \widehat{\gamma_{\del^j t}}(u) \right)  \widehat{\rho}(t, u) \chi_j(t)dt \bigg|^s du, 
\end{eqnarray}
where in the second equality above, we are using the fact that only those values of $f$ on $B_{(W, d)} \left(x_c(Q'), \frac{\frakC_1}{\del} \cdot \del^{k'} \right)$ makes a non-trivial contribution to the term
$$
\int_{Q'} \left| \calT_j^{(j)}(f)(x) \right|^s dx.
$$
This is a consequence of \eqref{1125eq05}. Moreover, in the last equality, we are using \eqref{1129eq03}.

Recall the bump function $h_1$ is smooth satisfying $0 \le h_1 \le 1$ and $h_1 \equiv 1$ on $B^n(\widetilde{\zeta_0''})$, compactly supported in $B^n(\widetilde{\zeta_0'})$, where\footnote{Although here we are using the list $\left( \left( \frac{8\frakC_1}{\del \widetilde{\zeta}} \cdot \del^{k'} \right)^d W, d \right)$, instead of $\left( \left( \frac{8\frakC_1}{\del \widetilde{\zeta}} \right)^d W, d \right)$, which is used to defined the quantitave $L^p$ improving property, it is clear that they are sharing the same admissible constants, which implies we can take the same $\widetilde{\zeta_0'}$ and $\widetilde{\zeta_0''}$ as those we picked early.}
\begin{eqnarray*}
B_{(Y, d)} \left(0, \frac{\frakC \widetilde{\zeta}}{8\frakC_1} \right) %
&\subset&  B_{(Y, d)} \left(0, \frac{\widetilde{\zeta}}{8} \right) \subset B_{(Y, d)} \left(0, \frac{ \widetilde{\zeta_0}}{8} \right)  \\
&\subset& B^n(\widetilde{\zeta_0''}) \subset B^n(\widetilde{\zeta_0'}) \subset B_{(Y, d)} \left(0, \frac{\widetilde{\zeta_0}}{2} \right). 
\end{eqnarray*}

Using the bump function $h_1$, we have
\begin{eqnarray} \label{1221eq01}
\eqref{1130eq01}%
&\le& \textrm{Vol}(Q) \int_{\R^n} \left| \left(\widehat{\psi_1} h_1 \right)(u) \right|^s \bigg|  \int \left(\widehat{f}  \one_{B_{(Y, d)} \left(0, \frac{\widetilde{\zeta}}{8} \right)} \right)  \left( \widehat{\gamma_{\del^j t}}(u) \right)  \nonumber \\
&& \quad \quad \quad \quad \quad \quad \quad  \left(\widehat{\psi_2}  h_1 \right)  \left( \widehat{\gamma_{\del^j t}}(u) \right) \widehat{\rho}(t, u) \chi_j(t)dt \bigg|^s du  \nonumber  \\
&\le& \textrm{Vol}(Q) \left( \int_{\R^n} \left|\widehat{f} \one_{B_{(Y, d)} \left(0, \frac{\widetilde{\zeta}}{8} \right)}(u) \right|^r du \right)^{\frac{s}{r}}  \nonumber  \\
&=& \textrm{Vol}(Q) \left( \int_{B_{(Y, d)} \left(0, \frac{\widetilde{\zeta}}{8} \right)} \left|\widehat{f}(u)\right|^r du \right)^{\frac{s}{r}}  \nonumber  \\
&=& \textrm{Vol}(Q)^{1-\frac{s}{r}} \left( \int_{B_{(Y, d)} \left(0, \frac{\widetilde{\zeta}}{8} \right)} \left|f(\Phi(u))\right|^r |\det d\Phi(u)| du \right)^{\frac{s}{r}} \nonumber   \\
&& \quad \quad (\textrm{by} \ \eqref{changevab}.) \nonumber \\
&=& \textrm{Vol}(Q)^{1-\frac{s}{r}} \left( \int_{B_{(W, d)} \left( x_c(Q'), \frac{\frakC_1}{\del} \cdot \del^{k'} \right)} |f(x)|^rdx \right)^{\frac{s}{r}} \\
&\le&  \textrm{Vol}(Q)^{1-\frac{s}{r}} \left( \int_{\R^n} |f(x)|^rdx \right)^{\frac{s}{r}}=\textrm{Vol}(Q)^{1-\frac{s}{r}} \|f\|_{L^r(Q)}^s,  \nonumber 
\end{eqnarray}
where, in the second inequality above, we are using the quantitative $L^p$ improving property. To do this, we need to verify
\begin{enumerate}
\item [(i).] $\chi_j \in \calB_1$;
\item [(ii).] $\widehat{\psi_1}h_1, \widehat{\psi_2}h_2 \in \calB_2$;
\item [(iii).] $\widehat{\rho} \in \calB_3$;
\item [(iv).] $\widehat{\gamma_{\del^j t}} \in \calB_4$. 
\end{enumerate}
The first one is obvious, and the last one follows from the fact that the smooth mapping $\gamma_{\del^j t}$ is controlled at the unit scale by the list \eqref{1130eq02}, with the same paremeters in $(\calQ_1)$ (or equivalently, $(\calQ_2)$) as those for the list 
\eqref{1116eq05} (since $k' \le j$). While for the second and the third one, we can easily see these by chain rule and the fact that 
$$
\Phi(u):=e^{u \cdot \left( \left( \frac{8\frakC_1}{\del \widetilde{\zeta}} \cdot \del^{k'} \right)^d W \right)_{J_0}} \left(x_c(Q')\right), 
$$
where 
$$
\left( \left( \frac{8\frakC_1}{\del \widetilde{\zeta}} \cdot \del^{k'} \right)^d W \right)_{J_0}:=\left\{ \left( \frac{8\frakC_1}{\del \widetilde{\zeta}} \cdot \del^{k'} \right)^{d_{j_1}} W_{j_1}, \dots ,\left( \frac{8\frakC_1}{\del \widetilde{\zeta}} \cdot \del^{k'} \right)^{d_{j_n}} W_{j_n} \right\} 
$$
such that 
$$
\left| \det \left( \left( \frac{8\frakC_1}{\del \widetilde{\zeta}} \cdot \del^{k'} \right)^d W \right)_{J_0}(x_c(Q')) \right|=\left| \det_{n \times n} \left( \left( \frac{8\frakC_1}{\del \widetilde{\zeta}} \cdot \del^{k'} \right)^d W \right)(x_c(Q')) \right|_\infty. 
$$
The proof of the claim is complete. 
\end{proof}

Therefore, for a pair of Whitney cubes $Q$ and $Q'$ with satisfying $\ell(Q) \le \del^j \le \ell(Q')$ (namely, the pair of cubes$(Q, Q')$ appears in a single term of $J_1$, see \eqref{1124eq10}), we have
\begin{eqnarray*}
&&|\langle \calT_j^{(j)}(b_{1, Q}), b_{2, Q'} \rangle| \le \|\calT_j^{(j)}(b_{1, Q}) \|_{L^s(Q')} \|b_{2, Q'}\|_{L^{s'}(Q')}\\
&&\lesssim  \textrm{Vol}(Q')^{\frac{1}{s}-\frac{1}{r}} \|b_{1, Q}\|_{L^r (Q')} \|b_{2, Q}\|_{L^{s'}(Q')}\\
&& \quad (\textrm{by \eqref{1130eq03}.}) \\
&& \lesssim \textrm{Vol}(Q') \langle f_1 \rangle_{D_1Q', r}\langle f_2 \rangle_{Q', s'}.\\
&& \quad (\textrm{by the fact that} \ Q \subseteq D_1Q' \ \textrm{and Lemma \ref{modyeq02}}.)
\end{eqnarray*}

Recall from above that when $Q'$ is fixed, in particular, $k'$ is fixed, we have the choice of both $j$ and $Q$ are finite, which is known bounded by some absolute constant depending on $\frakC$ and $\del$. Combining this fact with the above estimate, we have
\begin{eqnarray*}
J_1%
&=&   \sum_{Q, Q' \in M_E, Q, Q' \subset Q_0} \sum_{j \ge 0, \ell(Q) \le \del^j \le \ell(Q')}  \left|\left\langle \calT_j^{(j)}(b_{1, Q}), b_{2, Q'} \right\rangle \right| \\
&\lesssim& \sum_{Q' \in M_E, Q' \subset Q_0} \textrm{Vol}(Q') \langle f_1 \rangle_{D_1Q', r} \langle f_2 \rangle_{Q', s'} \\
&\lesssim& \left(\sum_{Q' \in M_E, Q' \subset Q_0} \textrm{Vol}(Q') \right) \cdot \langle f_1 \rangle_{Q_0, r}  \langle f_2 \rangle_{\kappa'Q_0, s'} \\
&& \quad (\textrm{by Lemma \ref{Maineq02}}.) \\
&\le& \textrm{Vol}(Q_0) \cdot \langle f_1 \rangle_{Q_0, r}  \langle f_2 \rangle_{\kappa' Q_0, s'}. \\
&& \quad (\textrm{by the disjointness of Whitney cube.}) 
\end{eqnarray*}

The estimate for $J_1$ is complete. 

\medskip 

\section{Proof of the main result: Part II}

In this section, we estimate the term $J_2$. Recall that 
$$
J_2=\left| \sum_{j \ge 0} \sum_{k \ge j} \sum_{k' \ge j} \langle \calT_j^{(j)} b_{1, k}, b_{2, k'}\rangle \right|=\left| \sum_{j \ge 0} \sum_{k_1 \ge 0} \sum_{k_2 \ge 0} \langle \calT_j^{(j)} b_{1, k_1+j}, b_{2, k_2+j}\rangle \right|，
$$
where
$$
b_{1, k_1+j}=\sum_{Q \in M_E, Q \subset Q_0, \ell(Q)=\del^{k_1+j}} b_{1, Q}
$$
and 
$$
b_{2, k_2+j}=\sum_{Q' \in M_E, Q \subset Q_0, \ell(Q)=\del^{k_2+j}} b_{2, Q'}.
$$
We start with writing $J_2$ into two parts as
\begin{eqnarray*}
J_2%
&\le& \left| \sum_{j \ge 0} \sum_{k_2 \ge k_1 \ge 0} \langle \calT_j^{(j)}b_{1, k_1+j}, b_{2, k_2+j} \rangle \right|+ \left| \sum_{j \ge 0} \sum_{k_1 \ge k_2 \ge 0} \langle \calT_j^{(j)}b_{1, k_1+j}, b_{2, k_2+j} \rangle \right| \\
&=& J_{2, 1}+J_{2, 2},
\end{eqnarray*}
where
$$
J_{2, 1}:= \left| \sum_{j \ge 0} \sum_{k_2 \ge k_1 \ge 0} \langle \calT_j^{(j)}b_{1, k_1+j}, b_{2, k_2+j} \rangle \right|
$$
and
$$
J_{2, 2}:=\left| \sum_{j \ge 0} \sum_{k_1 \ge k_2 \ge 0} \langle \calT_j^{(j)}b_{1, k_1+j}, b_{2, k_2+j} \rangle \right| .
$$

It suffices for us to estimate the term $J_{2, 1}$, since the estimation of $J_{2, 2}$ is similar by replacing term $\langle \calT_j^{(j)} b_{1, k_1+j}, b_{2, k_2+j} \rangle$ by its conjugate $\langle b_{1, k_1+j}, \left(\calT_j^{(j)}\right)^*b_{2, k_2+j} \rangle$.

Note that for the term $J_{2, 1}$, it is different from $J_1$, in the sense that it is no longer true that when $k_1$ ($k_2$, respectively) is fixed, then the total choices of $j$ and $k_2$ ($k_1$, respectively) is bounded by a fixed number, which is independent of $j, k_1$ and $k_2$.

\medskip 

\subsection{Local transition of a dyadic cube}

The idea to estimate $J_{2, 1}$ is to make use of the fact that
\begin{equation} \label{0102eq001}
\int_{Q'} b_{2, Q'}(x)dx=0. 
\end{equation}
Recall from the classical Calder\'on-Zygmund theory that we can write \eqref{0102eq001} as 
$$
\frac{1}{|Q'|} \int_{Q'} \int_{Q'} \left( b_{2, Q'}(x)- b_{2, Q'}(x') \right) dxdx'=0. 
$$
To proceed, we define the following concept of ``local transition of a dyadic cube", which is an analog of the Euclidean case. The setting is as follows. We take
\begin{enumerate}
\item [(1).] $j \ge 0, j \in \N$;
\item [(2).] $k \ge K_1, k  \in \N$, where $K_1 \in \N$ is the smallest constant such that $\del^{K_1}<\frac{1}{2}$; 
\item [(3).] $Q$ be a dyadic cube with $\ell(Q)=\del^j$;
\item [(4).] $Q' \subset Q$ be a dyadic cube with $\ell(Q')=\del^{j+k}$;
\item [(5).] $x \in Q'$. 
\end{enumerate}
Let $J_1=J_1(x, j+k) \in \calJ(n, q)$ be the list such that
$$
\left| \det \left( \left( \frac{16\frakC_1}{\del \widetilde{\zeta}} \cdot \del^{j+k} \right)^d W \right)_{J_1} (x) \right|= \left| \det_{n \times n} \left( \left( \frac{16\frakC_1}{\del \widetilde{\zeta}} \cdot \del^{j+k} \right)^d W \right) (x) \right|_\infty.
$$
Note that since $\del^k \le \del^{K_1}<\frac{1}{2}$, the list
\begin{equation} \label{190201eq01}
 \left( \left( \frac{16\frakC_1}{\del \widetilde{\zeta}} \cdot \del^{j+k} \right)^d W, d \right)
\end{equation}
shares the same defining constants with the list \eqref{1116eq05}, and so are all admissible constants. 

By Theorem \ref{scalingmap}, we have $\Phi_{x, j+k}: B^n(\eta_1) \longrightarrow  B_{ \left(\left( \frac{16\frakC_1}{\del \widetilde{\zeta}} \cdot \del^{j+k} \right)^d W, d \right)}(x, 1) $ is a scaling map given by the formula
$$
\Phi_{x, j+k} (u)= e^{u \cdot \left(\left( \frac{16\frakC_1}{\del \widetilde{\zeta}} \cdot \del^{j+k} \right)^d W \right)_{J_1}}(x), \  u \in B^n(\eta_1).
$$
Here $\eta_1>0$ is the some $2$-admissible constant defined in Theorem \ref{scalingmap}. Following the argument in \eqref{1127eq01} with $\widetilde{\zeta_0}$ replaced by $\widetilde{\zeta}$, we see that
\begin{eqnarray}\label{190201eq02}
\Phi_{x, j+k}: B_{(Y, d)}(0, \widetilde{\zeta})%
&\longrightarrow& B_{ \left(\left( \frac{16\frakC_1}{\del \widetilde{\zeta}} \cdot \del^{j+k} \right)^d W, d \right)}(x, \widetilde{\zeta}) \\
&&= B_{ \left(\left( \frac{16\frakC_1 }{\del} \cdot \del^{j+k} \right)^d W, d \right)}(x, 1) \nonumber \\
&&= B_{\left( \left( \frac{8\frakC_1 }{\del} \right)^d W, d \right)} \left(x, 2\del^{j+k} \right) \nonumber
\end{eqnarray}
is a $C^\infty$ diffeomorphism, where the list $(Y, d)$ is the pullback of the list  \eqref{190201eq01} via the scaling map $\Phi_{x, j+k}$. Using \eqref{190102eq022} and \eqref{190201eq01}, we see \eqref{190201eq02} implies 
$$
\Phi_{x, j+k}: B^n(\widetilde{\eta_1}) \longrightarrow B_{\left( \left( \frac{8\frakC_1 }{\del} \right)^d W, d \right)} \left(x, 2\del^{j+k} \right)
$$
is surjective, where $\widetilde{\eta_1}$ is the admissible constant defined in \eqref{190102eq022}. In particular, this implies
\begin{equation} \label{190202eq010}
\Phi_{x, j+k}: B^n(\widetilde{\eta_1}) \longrightarrow B_{(W, d)} \left(x, 2\frakC_1\del^{j+k} \right)
\end{equation}
is surjective. 

\begin{defn}
We call the scaling map $\Phi_{x, j+k}$ defined above the \emph{local transition associated to $Q'$}. 
\end{defn}

Note that by Lemma \ref{lem190201} and \eqref{190201eq01}, we have:
\begin{equation} \label{190201eq03}
\int_{B^n(\widetilde{\eta_1})} f \circ \Phi_{x, j+k}(u)du \lesssim_2 \int_{B^q(\eta_3)} f \left (e^{t \cdot \left( \frac{16\frakC_1}{\del \widetilde{\zeta}} \cdot \del^{j+k} \right)^d W} (x) \right) dt.
\end{equation}
This allows us to pass our estimation from a ``localized" one to a ``global'' one, more precisely, the vector fields involved in the right hand side of \eqref{190201eq03} depends on the choice of $x$, while the left hand side does not. It turns out this observation plays an important role in the sequel. 

Finally, motived by \eqref{190201eq03}, we construct the function ``$\theta$" involved in the statement in Lemma \ref{moduluscont} at scale $\del^j$.

\begin{defn} \label{190202defn01}
For each $j \ge 0$ and $k \ge K_1$, we define $\theta_j: B^q(\vardbtilde{a}) \times K''' \rightarrow K'$ by 
$$
\theta_j(t, x):=e^{\sum\limits_{i=1}^q t_i \cdot \left(\frac{16\frakC_1}{\del \widetilde{\zeta}} \del^j \right)^{d_i} W_i} (x),
$$
where $\vardbtilde{a}>0$ is sufficient small and the upper bound of $\vardbtilde{a}$ only depends on $\frakC_1, \del, \widetilde{\zeta}$ and on any adimissible constants.  
\end{defn}

In our later application, we would like to restrict the behavior of $\theta_j$ on each dyadic cube $Q$ with $\ell(Q)=\del^j$. We summarize these properties below. 

\begin{lem} \label{0102lem02}
Let $\theta_j$ and $Q$ be defined as above. Then 
\begin{enumerate}
\item [(a).] $\theta_j(0, x) \equiv x, \forall x \in B_{(W, d)}\left(x_c(Q), \frac{5\frakC_1}{\del} \cdot \del^j \right)$;
\item [(b).] There exists some $0<\vardbtilde{a}_1 \le \vardbtilde{a}$, independent of $j$, such that 
$$
\theta_j \in C^\infty \left( B^q \left(\vardbtilde{a}_1 \right) \times B_{(W, d)}\left(x_c(Q), \frac{5\frakC_1}{\del} \cdot \del^j \right) \right);
$$
and
$$
\theta_j: B^q(\vardbtilde{a}_1) \times B_{(W, d)}\left(x_c(Q), \frac{5\frakC_1}{\del} \cdot \del^j \right)  \longrightarrow B_{(W, d)}\left(x_c(Q), \frac{6\frakC_1}{\del} \cdot \del^j \right);
$$
\item [(c).] There exists a $0< \vardbtilde{a}_2 \le \vardbtilde{a}_1$, such that for any $b \in B^q(\vardbtilde{a}_2)$, the map $\theta_b(\cdot):=\theta(b, \cdot)$ has an inverse, which maps $\theta_b \left( B_{(W, d)}\left(x_c(Q), \frac{5\frakC_1}{\del} \cdot \del^j \right)  \right)$ back to $B_{(W, d)}\left(x_c(Q), \frac{5\frakC_1}{\del} \cdot \del^j \right) $;
\item [(d).] $\theta_j$ is controlled by the list $ \left(\left( \frac{8\frakC_1}{\del \widetilde{\zeta}} \cdot \del^j \right)^d W, d \right)$. 
\end{enumerate}
\end{lem}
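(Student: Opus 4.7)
\medskip

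\textbf{Proof plan for Lemma \ref{0102lem02}.} Property (a) is immediate from the definition, since $\theta_j(0,x)=e^{0}(x)=x$; the ball $B_{(W,d)}\left(x_c(Q),\frac{5\frakC_1}{\del}\cdot\del^j\right)$ is contained in $K'$ by the observations in Section 5.1 (in particular \eqref{1109eq01}), so the expression makes sense.

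For property (b), smoothness is the standard smooth dependence of ODE flows on parameters, applied to the time-independent vector field $V_t:=\sum_{i=1}^q t_i\left(\tfrac{16\frakC_1}{\del\widetilde{\zeta}}\del^j\right)^{d_i}W_i$; the flow $x\mapsto e^{V_t}(x)$ is jointly $C^\infty$ in $(t,x)$ on its domain of definition, which is an open neighborhood of $\{0\}\times K'$. The substantive content is the mapping property, for which I would argue as follows. Set $c:=\frac{16\frakC_1}{\del\widetilde{\zeta}}\del^j$ and consider the path $\varphi(s):=\theta_j(st,x)$; it satisfies $\varphi'(s)=\sum_i t_ic^{d_i}W_i(\varphi(s))$. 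For any $\vardbtilde{a}_1\le\vardbtilde{a}$ and $|t|\le\vardbtilde{a}_1$, rewrite the coefficients as $t_ic^{d_i}=a_i\,\del_1^{d_i}$ with $\del_1:=\vardbtilde{a}_1^{1/d_{\max}}\cdot c$ and $a_i:=t_i c^{d_i}/\del_1^{d_i}$; a short check gives $|a_i|\le 1$, so by Definition \ref{defn01} we obtain
\[
\rho'(\theta_j(t,x),x)\le \del_1 = \vardbtilde{a}_1^{1/d_{\max}}\cdot\tfrac{16\frakC_1}{\del\widetilde{\zeta}}\del^j.
\]
Choosing $\vardbtilde{a}_1$ small enough so that $\vardbtilde{a}_1^{1/d_{\max}}\cdot\tfrac{16}{\widetilde{\zeta}}\le 1$ (which depends only on $\widetilde{\zeta}$ and $d_{\max}$, hence is admissible) yields $\rho'(\theta_j(t,x),x)\le \frac{\frakC_1}{\del}\del^j$, and then the triangle inequality for $\rho'$ gives the desired inclusion into $B_{(W,d)}\left(x_c(Q),\frac{6\frakC_1}{\del}\del^j\right)$. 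The $C^\infty$ regularity is then automatic on this slightly shrunken domain. I expect this uniform-in-$j$ rescaling argument to be the main (if modest) obstacle; its point is to harness the same reparametrization that made the dyadic decompositions uniform in Theorem \ref{uniformdecomp}.

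Property (c) follows from the group property of the flow of a time-independent vector field: for a fixed $b$, $\theta_b$ is the time-$1$ map of $V_b$, so its inverse is the time-$1$ map of $-V_b$, namely $\theta_{-b}$. Repeating the estimate from part (b) with $t$ replaced by $-b$ shows that $\theta_{-b}$ maps $B_{(W,d)}\left(x_c(Q),\frac{6\frakC_1}{\del}\del^j\right)$ back into $B_{(W,d)}\left(x_c(Q),\frac{5\frakC_1}{\del}\del^j\right)$ provided $|b|$ is taken small enough, yielding the admissible constant $\vardbtilde{a}_2\le\vardbtilde{a}_1$.

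For property (d) I would verify condition $\calQ_1$ directly. Because $V_t$ is independent of the ``time'' parameter, the identity $\theta_j(\epsilon t,\cdot)\circ\theta_j(t,\cdot)^{-1}=e^{(\epsilon-1)V_t}$ gives the generating field
\[
W_{\theta_j}(t,x)=\frac{\partial}{\partial\epsilon}\bigg|_{\epsilon=1}\theta_j(\epsilon t,\cdot)\circ\theta_j(t,\cdot)^{-1}(x)=V_t(x)=\sum_{i=1}^q t_i\left(\tfrac{16\frakC_1}{\del\widetilde{\zeta}}\del^j\right)^{d_i}W_i(x).
\]
Setting $Z_i:=\left(\tfrac{8\frakC_1}{\del\widetilde{\zeta}}\del^j\right)^{d_i}W_i$, we may rewrite this as $W_{\theta_j}(t,x)=\sum_l c_l(t,x)Z_l(x)$ with $c_l(t,x):=2^{d_l}t_l$. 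These coefficients are smooth, linear in $t$, independent of $x$, and their $Z^\alpha\partial_t^\beta$ norms on $B^q(\vardbtilde{a}_2)\times B_{(Z,d)}(x_c(Q),\tau)$ (for any admissible $\tau$) are bounded by constants depending only on $d_{\max}$ and $\vardbtilde{a}_2$, hence admissible. This is exactly $\calQ_1$ for $\theta_j$ with respect to the list $\left(\left(\tfrac{8\frakC_1}{\del\widetilde{\zeta}}\del^j\right)^dW,d\right)$, with parameters uniform in $j$, completing the proof.
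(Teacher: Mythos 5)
Your proposal is correct and is essentially an expanded version of the paper's own (one-line) proof: the paper disposes of (a) as clear, (b) via $\calQ_2$, (c) via the exponential map, and (d) via $\calQ_1$, and your direct sub-unit-path estimate for (b) and explicit computation of $c_l(t,x)=2^{d_l}t_l$ for (d) are just concrete verifications of those citations. One small caveat in (c): your claim that $\theta_{-b}$ maps $B_{(W,d)}\left(x_c(Q),\frac{6\frakC_1}{\del}\del^j\right)$ into $B_{(W,d)}\left(x_c(Q),\frac{5\frakC_1}{\del}\del^j\right)$ does not follow from the part-(b) estimate (points can move outward), but it is also not needed, since $\theta_{-b}\circ\theta_b=\mathrm{id}$ on $B_{(W,d)}\left(x_c(Q),\frac{5\frakC_1}{\del}\del^j\right)$ already gives exactly the stated inverse mapping property.
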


\begin{proof}
The first assertion is clear. The second one follows from the condtion $(\calQ_2)$  and the third one follows from the definition of the exponential map. Finally, the last assertion follows from the condition $(\calQ_1)$. 
\end{proof}

\begin{rem}
Note that all the assertions in Lemma \ref{0102lem02} corresponds exactly those assumptions on the mapping ``$\theta$'' in Lemma \ref{moduluscont}. Moreover, the choice of ``$a$" in the statement of Theorem \ref{MainTheorem2} will be also smaller than $ \vardbtilde{a}$.
\end{rem}

\medskip 

\subsection{Estimation of $J_2$} We begin with the following result, which is an easy consequence of Lemma \ref{1222lem01}. 

\begin{lem} \label{0103lem100}
Let $Q$ be a Whitney dyadic cube with $\ell(Q)=\del^j, j \ge 0$. Then for any measurable function $f$, we have
\begin{enumerate}
\item [(1).]
$$
\|\calT_j^{(j)}(f)\|_{L^s(Q)} \lesssim \textrm{Vol}(Q)^{\frac{1}{s}-\frac{1}{r}} \|f\|_{L^r\left(\frac{\frakC_1Q}{\del} \right)},
$$
\item [(2).] there exists a $2$-admissible constant $\widetilde{\eta_3}>0$, such that for $b \in \R^q$ with $|b| \le \widetilde{\eta_3}$, 
$$
\|\calT_j^{(j)}(f(\theta_j (b, \cdot))\|_{L^s(Q)} \lesssim \textrm{Vol}(Q)^{\frac{1}{s}-\frac{1}{r}} \|f\|_{L^r\left(\frac{\frakC_1Q}{\del} \right)}.
$$
\end{enumerate} 
Finally, there exists $\eta, \widetilde{\eta_4}>0$, such that when $|b|<\widetilde{\eta_4}$, then 
\begin{equation} \label{0102eq100}
\|(\calT_j^{(j)} f)(\cdot)-(\calT_j^{(j)}f)(\theta_j(b, \cdot))\|_{L^s(Q)} \lesssim |b|^{\eta}\textrm{Vol}(Q)^{\frac{1}{s}-\frac{1}{r}} \|f\|_{L^r\left(\frac{\frakC_1Q}{\del} \right)}.
\end{equation}
Here the implict constants in the above inequalities only depends on $\frakC$, $\del$, on the implicit constant in the inequality \eqref{190321eq01} and on any $2$-admissible constants.
\end{lem}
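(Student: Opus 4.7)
The plan is to reduce each of the three assertions to a statement at the unit scale via the scaling map $\Phi=\Phi_{x_c(Q),j}$ associated to the center $x_c(Q)$ and the list $\left(\left(\tfrac{8\frakC_1}{\del\widetilde\zeta}\cdot\del^j\right)^d W,d\right)$, exactly in the style of the proof of Lemma~\ref{1222lem01}. Since $\ell(Q)=\del^j$ and $\chi_j$ is supported in $B^k(a)$, the control hypothesis $(\calQ_2)$ implies that $\calT_j^{(j)}f(x)$ for $x\in Q$ depends only on the values of $f$ on $\frac{\frakC_1 Q}{\del}$, so throughout we may assume $\textrm{supp}(f)\subseteq \frac{\frakC_1 Q}{\del}$.

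For part (1), pulling back via $\Phi$, truncating by the cut-off $h_1$ as in \eqref{1221eq01} and invoking the quantitative $L^p$-improving property \eqref{190321eq01} yield the unit-scale $L^r\to L^s$ estimate. Changing variables back using $|\det d\Phi(u)|\simeq \textrm{Vol}(Q)$ (Corollary~\ref{scalingmap01}) produces the prefactor $\textrm{Vol}(Q)^{1/s-1/r}$. For part (2), set $\widetilde f(y):=f(\theta_j(b,y))$ and apply part (1) to $\widetilde f$. By Lemma~\ref{0102lem02}, $\theta_j(b,\cdot)$ is a $C^\infty$ diffeomorphism on a neighborhood of $\frac{\frakC_1 Q}{\del}$ whose Jacobian is uniformly comparable to $1$; choosing $\widetilde\eta_3$ small enough that $\theta_j(b,\cdot)$ maps $\frac{\frakC_1 Q}{\del}$ into a controlled enlargement, a change of variables gives $\|\widetilde f\|_{L^r(\frac{\frakC_1 Q}{\del})}\lesssim \|f\|_{L^r(\frac{\frakC_1 Q}{\del})}$, after harmlessly adjusting $\frakC_1$.

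The main step is \eqref{0102eq100}. Pulling back via $\Phi$ identifies the operator $f\mapsto\calT_j^{(j)}f-(\calT_j^{(j)}f)\circ\theta_j(b,\cdot)$ with one of the form $\widehat f\mapsto \calL\widehat f-(\calL\widehat f)\circ\widehat\theta_b$ covered by Lemma~\ref{moduluscont}, where $\widehat\theta_b:=\Phi^{-1}\circ\theta_j(b,\cdot)\circ\Phi$. Hypotheses (1)--(4) on $\theta$ in Lemma~\ref{moduluscont} are verified by Lemma~\ref{0102lem02}, and the admissibility of all bounds is uniform in $j$ and $x_c(Q)$ since the list defining $\theta_j$ shares defining constants with \eqref{1116eq05}. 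Lemma~\ref{moduluscont} therefore yields an $L^2\to L^2$ bound with gain $|b|^{\eta_0}$ for some $\eta_0>0$, which on the original scale reads $\lesssim |b|^{\eta_0}\|f\|_{L^2(\frac{\frakC_1 Q}{\del})}$.

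To promote this to the claimed $L^r\to L^s$ estimate for $(1/r,1/s)$ in the interior of $\Sigma$, I will invoke Riesz--Thorin interpolation at the unit scale: combine the $L^2\to L^2$ bound with gain $|b|^{\eta_0}$ with the no-gain $L^{r'}\to L^{s'}$ bound (from parts (1) and (2)) at an auxiliary pair $(1/r',1/s')\in\Sigma$ chosen so that $(1/r,1/s)$ is a strict convex combination of $(1/2,1/2)$ and $(1/r',1/s')$. Openness of the interior of $\Sigma$ guarantees such $(1/r',1/s')$ exists, and interpolation gives a positive power $|b|^\eta$ with $\eta=\theta\eta_0$, $\theta\in(0,1)$ the weight on the $L^2\to L^2$ endpoint. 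Rescaling back via $|\det d\Phi|\simeq\textrm{Vol}(Q)$ inserts the factor $\textrm{Vol}(Q)^{1/s-1/r}$. The hard part is controlling this interpolation together with the scaling identity in a way that produces uniform prefactors across all endpoints; this rests on the fact that $|\det d\Phi|$ is essentially constant in $x\in Q$ up to admissible constants, so that under pullback the $L^p$ norms on $Q$ rescale to $L^p$ norms on the unit ball with multiplicative constants that remain uniform in $p\in[1,\infty]$.
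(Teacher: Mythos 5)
Your proposal is correct and follows essentially the same route as the paper: part (1) via the proof of Lemma \ref{1222lem01} with $k'=j$, part (2) by a change of variables using $|\det d\theta_j(b,\cdot)|\simeq 1$, and \eqref{0102eq100} by combining the no-gain $L^r\to L^s$ bounds with the $L^2$ modulus-of-continuity estimate of Lemma \ref{moduluscont} and interpolating. The only difference is that you spell out the interpolation (choice of auxiliary point in the interior of $\Sigma$ and the uniform rescaling of norms via $|\det d\Phi|\simeq\textrm{Vol}(Q)$), which the paper leaves implicit.
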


\begin{proof}
The first inequality follows from the proof of Lemma \ref{1222lem01} with $k'=j$ and \eqref{1221eq01}. While for the second one, we can pick $\widetilde{\eta_3}>0$ small enough, such that 
$$
|\det d\theta_j(b, x)| \simeq 1, \ \forall x \in B_{(W, d)}\left(x_c(Q), \frac{5\frakC_1}{\del} \cdot \del^j \right),
$$ 
where we note that $\widetilde{\eta_3}$ can be choosen independently of $j$ and $Q$. The second inequality then follows from the first one and changing of variable. 

Finally, the inequality \eqref{0102eq100} follows from the previous two inequalities, Lemma \ref{moduluscont} and interpolation. 
\end{proof}

Now we turn back to the estimate of $J_2$ (more precisely, $J_{2, 1}$). Recall that
$$
b_{2, k_2}=\sum_{Q' \in M_E, Q' \subset Q_0, \ell(Q')=\del^{j+k_2}} b_{2, Q'}. 
$$
First, we take $K_V \in \N$, sufficient large, such that
\begin{enumerate}
\item [(1).] $K_V \ge K_1$, where $K_1$ is the integer defined in Definition \ref{190202defn01}; 
\item [(2).] $\del^{K_V}<\frac{\widetilde{\eta_4}}{100q}$, where $\widetilde{\eta_4}$ is the admissible constant defined in Lemma \ref{0103lem100}.
\end{enumerate}

Next, we group all the Whitney cubes $Q'$ with $\ell(Q')=\del^{j+k_2}$ with respect to their dyadic parents, namely, for each Whitney cube $Q'$, there exists a unique dyadic parent $Q$ containing $Q'$ with $\ell(Q)=\del^j$. Therefore, we can write
$$
b_{2, k_2}=\sum_{Q \in \calG_j, Q \cap \textrm{supp}(b_{2, j+k_2}) \neq \emptyset} \sum_{Q' \in M_E, Q' \subset Q, \ell(Q')=\del^{j+k_2}} b_{2, Q'}.
$$

We first deal with the case when $k_2> K_V$.  For each $Q'$, we have
\begin{eqnarray*}
&&\left| \langle \calT_j^{(j)}b_{1, j+k_1}, b_{2, Q'} \rangle \right|=\left |\int_{Q'} \calT_j^{(j)} \left(b_{1, j+k_1} \right) (x) b_{2, Q'}(x) dx \right |  \\
&& \le \frac{1}{|Q'|}  \int_{Q'} \int_{Q'} \left| \calT_j^{(j)}  \left(b_{1, j+k_1}  \right)(x)-\calT_j^{(j)}  \left(b_{1, j+k_1} \right)(x') b_{2, Q'}(x) \right| dxdx' \\
&& \lesssim_2   \int_{Q'} \int_{B^n(\widetilde{\eta_1})} \bigg|  \calT_j^{(j)}  \left(b_{1, j+k_1}  \right)(x)-\calT_j^{(j)}(b_{1, j+k_1})(\Phi_{x, j+k_2}(u)) \bigg| |b_{2, Q'}(x)| dudx \\
&&  \quad \quad  (\textrm{by} \ \eqref{190202eq010} \  \textrm{and change of variables with} \ x'=\Phi_{x, j+k_2}(u) .) \\
&& \lesssim_2  \int_{Q'} \int_{B^q(\eta_3)} \bigg|  \calT_j^{(j)}  \left(b_{1, j+k_1}  \right)(x) \\
&& \quad \quad \quad \quad \quad \quad -\calT_j^{(j)}(b_{1, j+k_1})\left( e^{t \cdot \left( \frac{16\frakC_1}{\del \widetilde{\zeta}} \cdot \del^{j+k_2} \right)^d W}(x) \right) \bigg| |b_{2, Q'}(x)| dtdx. \\
&& \quad \quad  (\textrm{by} \ \eqref{190201eq03}.) 
\end{eqnarray*}

Hence, for $k_2> K_V$ and each dyadic cube\footnote{Note that this $Q$ need not to be a Whitney cube.} $Q$ with $\ell(Q)=\del^j$, we see that 
\begin{eqnarray*}
&&\left| \sum_{Q' \in M_E, Q' \subseteq Q, \ell(Q')=\del^{k_2+j}} \langle \calT_j^{(j)}b_{1, j+k_1}, b_{2, Q'} \rangle \right| \nonumber\\
&& \lesssim_2  \sum_{Q' \in M_E, Q' \subseteq Q, \ell(Q')=\del^{k_2+j}}\int_{Q'} \int_{B^q(\eta_3)} \bigg|  \calT_j^{(j)}  \left(b_{1, j+k_1}  \right)(x) \\
&& \quad \quad \quad \quad \quad \quad -\calT_j^{(j)}(b_{1, j+k_1})\left( e^{t \cdot \left( \frac{16\frakC_1}{\del \widetilde{\zeta}} \cdot \del^{j+k_2} \right)^d W}(x) \right) \bigg| |b_{2, Q'}(x)| dtdx \\
&& \le \int_{B^q(\eta_3)} \int_Q  \left|  \calT_j^{(j)}  \left(b_{1, j+k_1}  \right)(x) -\calT_j^{(j)}(b_{1, j+k_1})\left( e^{t \cdot \left( \frac{16\frakC_1}{\del \widetilde{\zeta}} \cdot \del^{j+k_2} \right)^d W}(x) \right) \right|  \\
&&  \quad \quad \quad \quad \quad \quad  \cdot \left|b_{2, k_2}(x)\right| dxdt \\
&& \le  \int_{B^q(\eta_3)} \left\| \calT_j^{(j)}  \left(b_{1, j+k_1}  \right)( \cdot) -\calT_j^{(j)}(b_{1, j+k_1})\left( e^{t \cdot \left( \frac{16\frakC_1}{\del \widetilde{\zeta}} \cdot \del^{j+k_2} \right)^d W}( \cdot) \right) \right\|_{L^s(Q)} \\
&&  \quad \quad \quad \quad \quad \quad  \cdot  \|b_{2, k_2}\|_{L^{s'}(Q)} dt \\ 
&& \lesssim  \sup_{t \in B^q(\eta_3)} \left\| \calT_j^{(j)}  \left(b_{1, j+k_1}  \right)( \cdot) -\calT_j^{(j)}(b_{1, j+k_1})\left( e^{t \cdot \left( \frac{16\frakC_1}{\del \widetilde{\zeta}} \cdot \del^{j+k_2} \right)^d W}( \cdot) \right) \right\|_{L^s(Q)}\\
&&  \quad \quad \quad \quad \quad \quad  \cdot  \|b_{2, k_2}\|_{L^{s'}(Q)}. 
\end{eqnarray*}
Now for each $t=(t_1, \dots, t_q) \in B^q(\eta_3)$, we wish to estimate the term 
$$
 \mathbb A:=\left\| \calT_j^{(j)}  \left(b_{1, j+k_1}  \right)( \cdot) -\calT_j^{(j)}(b_{1, j+k_1})\left( e^{t \cdot \left( \frac{16C_1}{\del \widetilde{\zeta}} \cdot \del^{j+k_2} \right)^d W}( \cdot) \right) \right\|_{L^s(Q)}.
$$
Write
$$
t \cdot \left( \frac{16C_1}{\del \widetilde{\zeta}} \cdot \del^{j+k_2} \right)^d W=\widetilde{t} \cdot \left( \frac{16C_1}{\del \widetilde{\zeta}} \cdot \del^j \right)^d W, 
$$
where $\widetilde{t}= \left( t_1\del^{k_2}, \dots,   t_q \del^{k_2}\right) \in \R^q$.  It is easy to see that $\left|\widetilde{t}\right|<\del^{k_2}<\widetilde{\eta_4}$.  Therefore, by \eqref{0102eq100}, we have
\begin{eqnarray*}
\mathbb A%
&=& \left\| \calT_j^{(j)}  \left(b_{1, j+k_1}  \right)( \cdot) -\calT_j^{(j)}(b_{1, j+k_1})\left( e^{\widetilde{t} \cdot \left( \frac{16\frakC_1}{\del \widetilde{\zeta}} \cdot \del^j \right)^d W}( \cdot) \right) \right\|_{L^s(Q)} \\
&=& \left\| \calT_j^{(j)} (b_{1, j+k_1}(\cdot)-\calT_j^{(j)} (b_{1, j+k_1}) \left(\theta_j( \widetilde{t}, \cdot)\right) \right\|_{L^s(Q)} \\
& \lesssim_2& \del^{k_2 \eta} \textrm{Vol}(Q)^{\frac{1}{s}-\frac{1}{r}} \|b_{1, j+k_1}\|_{L^r \left(\frac{\frakC_1 Q}{\del} \right)}, 
\end{eqnarray*}
which implies
\begin{eqnarray*}
\left| \sum_{Q' \in M_E, Q' \subseteq Q, \ell(Q')=\del^{k_2+j}} \langle \calT_j^{(j)}b_{1, j+k_1}, b_{2, Q'} \rangle \right|%
& \lesssim_2&  \del^{k_2 \eta} \textrm{Vol}(Q)^{\frac{1}{s}-\frac{1}{r}} \\
&& \quad \quad \quad  \cdot \|b_{1, j+k_1}\|_{L^r \left(\frac{\frakC_1 Q}{\del} \right)} \|b_{2, k_2}\|_{L^{s'}(Q)}.
\end{eqnarray*}

While for the case when $0 \le k_2 \le K_V$, we can bound it trivially, namely
\begin{eqnarray*}
\left| \sum_{Q' \in M_E, Q' \subseteq Q, \ell(Q')=\del^{k_2+j}} \langle \calT_j^{(j)}b_{1, j+k_1}, b_{2, Q'} \rangle \right|%
&=& \left| \langle \calT_j^{(j)} b_{1, j+k_1} b_{2, j+k_2} \one_Q \rangle \right| \\
&\le& \|\calT_j^{(j)}(b_{1, j+k_1}) \|_{L^s(Q)} \|b_{2, k_2}\|_{L^{s'}(Q)} \\
&\lesssim&  \textrm{Vol}(Q)^{\frac{1}{s}-\frac{1}{r}} \|b_{1, j+k_1}\|_{L^r \left(\frac{\frakC_1 Q}{\del} \right)} \|b_{2, k_2}\|_{L^{s'}(Q)},
\end{eqnarray*}
where in the last inequality, we use Lemma \ref{0103lem100}. 

Combining the above two estimates, we have
\begin{eqnarray*}
J_{2, 1}%
&=& \left| \sum_{j \ge 0} \sum_{k_2 \ge k_1 \ge 0} \langle \calT_j^{(j)} b_{1, k_1+j}, b_{2, k_2+j} \rangle \right| \\
& \le &  \left| \sum_{j \ge 0} \sum_{k_2 \ge k_1 \ge 0, 0 \le k_2 \le K_V} \langle \calT_j^{(j)} b_{1, k_1+j}, b_{2, k_2+j} \rangle \right|\\
&& \quad \quad \quad  \quad \quad \quad \quad \quad +\left| \sum_{j \ge 0} \sum_{k_2 \ge k_1 \ge 0,  k_2> K_V} \langle \calT_j^{(j)} b_{1, k_1+j}, b_{2, k_2+j} \rangle \right| \\
& \le &  \sum_{j \ge 0} \sum_{k_2 \ge k_1 \ge 0, 0 \le k_2 \le K_V} \sum_{Q \in \calG_j, Q \cap \textrm{supp}(b_{2, j+k_2}) \neq \emptyset}  \\
&& \quad \quad \quad \quad \quad  \quad \quad \quad \quad \quad  \left| \sum_{Q' \in M_E, Q' \subset Q, \ell(Q')=\del^{j+k_2}} \langle \calT_j^{(j)} b_{1, k_1+j}, b_{2, Q'} \rangle \right |\\
&& +\sum_{j \ge 0} \sum_{k_2 \ge k_1 \ge 0,  k_2> K_V} \sum_{Q \in \calG_j, Q \cap \textrm{supp}(b_{2, j+k_2}) \neq \emptyset}  \\
&& \quad \quad \quad \quad \quad  \quad \quad \quad \quad \quad  \left| \sum_{Q' \in M_E, Q' \subset Q, \ell(Q')=\del^{j+k_2}} \langle \calT_j^{(j)} b_{1, k_1+j}, b_{2, Q'} \rangle \right |\\
& \lesssim & \sum_{j \ge 0} \sum_{k_2 \ge k_1 \ge 0, 0 \le k_2 \le K_V} \sum_{Q \in \calG_j, Q \cap \textrm{supp}(b_{2, j+k_2}) \neq \emptyset}  \\
&& \quad \quad \quad \quad \quad  \quad \quad \quad \quad \quad   \textrm{Vol}(Q)^{\frac{1}{s}-\frac{1}{r}} \|b_{1, j+k_1}\|_{L^r \left(\frac{\frakC_1 Q}{\del} \right)} \|b_{2, k_2}\|_{L^{s'}(Q)} \\
&&  +\sum_{j \ge 0} \sum_{k_2 \ge k_1 \ge 0,  k_2> K_V} \sum_{Q \in \calG_j, Q \cap \textrm{supp}(b_{2, j+k_2}) \neq \emptyset}  \\
&& \quad \quad \quad \quad \quad  \quad \quad \quad \quad \quad  \del^{k_2 \eta} \textrm{Vol}(Q)^{\frac{1}{s}-\frac{1}{r}} \|b_{1, j+k_1}\|_{L^r \left(\frac{\frakC_1 Q}{\del} \right)} \|b_{2, k_2}\|_{L^{s'}(Q)}. 
\end{eqnarray*}

The proof will be complete if we can show for each $k_1, k_2 \ge 0$, 
\begin{eqnarray} \label{neweq04}
&& \sum_{j \ge 0}  \sum_{Q \in \calG_j, Q \cap \textrm{supp}(b_{2, j+k_2}) \neq \emptyset}   \textrm{Vol}(Q)^{\frac{1}{s}-\frac{1}{r} } \left\|b_{1, j+k_1} \right\|_{L^r \left(\frac{\frakC_1 Q}{\del} \right)}  \|b_{2, k_2+j}\|_{L^{s'}(Q)} \nonumber \\
&&  \quad \quad \quad \quad \quad  \quad \quad \quad \quad  \quad \quad \quad \lesssim \textrm{Vol}(Q_0) \langle f_1 \rangle_{Q_0, r}\langle f_2 \rangle_{\kappa' Q_0, s'} . 
\end{eqnarray}
We assume that at this moment \eqref{neweq04} holds. Then we have
\begin{eqnarray*}
J_{2, 1}%
&\lesssim&  \textrm{Vol}(Q_0) \langle f_1 \rangle_{Q_0, r}\langle f_2 \rangle_{\kappa' Q_0, s'} \left( \sum_{k_2 \ge k_1 \ge 0, k_2>K_V} \del^{k_2 \eta}+  \sum_{k_2 \ge k_1 \ge 0, 0 \le k_2 \le K_V} 1 \right) \\
&=&   \textrm{Vol}(Q_0)  \langle f_1 \rangle_{Q_0, r}\langle f_2 \rangle_{\kappa' Q_0, s'} \left(\sum_{k_2 \ge K_V} k_2 \del^{k_2 \eta}+\sum_{0 \le k_2 \le K_V} k_2 \right)  \\
&\lesssim &   \textrm{Vol}(Q_0)  \langle f_1 \rangle_{Q_0, r}\langle f_2 \rangle_{\kappa' Q_0, s'}.  
\end{eqnarray*}
We are left to proof \eqref{neweq04}, which follows from an interpolation argument. More precisely, we will show it holds true when $r=s'=1$ and $\frac{1}{r}+\frac{1}{s'}=1$, then apply the complex interpolation. In the sequel, we may assume $\langle f_1 \rangle_{Q_0, r}=\langle f_2 \rangle_{\kappa'Q_0, s'}=1$ by homogeneity. 

\medskip

\textit{Case I: $r=s'=1$.}

\medskip

In this case, we need to show that
$$
\sum_{j \ge 0}  \sum_{Q \in \calG_j, Q \cap \textrm{supp}(b_{2, j+k_2}) \neq \emptyset} \frac{1}{ \textrm{Vol}(Q) } \|b_{1, j+k_1} \|_{L^1\left(\frac{\frakC_1Q}{\del} \right)} \|b_{2, k_2+j} \|_{L^1(Q)} \lesssim |Q_0|.
$$
First we note that, for fixed $Q \in \calG_j, Q \cap \textrm{supp}(b_{2, j+k_2}) \neq \emptyset$ for some $j \ge 0$, we have
$$
\frac{1}{ \textrm{Vol}(Q) } \|b_{1, k_1+j} \|_{L^1\left(\frac{\frakC_1Q}{\del}\right)} \lesssim 1. 
$$
Indeed, we have
\begin{eqnarray*}
\frac{1}{\textrm{Vol}(Q) } \|b_{1, k_1+j} \|_{L^1\left(\frac{\frakC_1Q}{\del}\right) }%
&=& \frac{1}{\textrm{Vol}(Q) } \int_{\frac{\frakC_1Q}{\del}} \sum_{Q' \in M_E, \ell(Q')=\del^{k_1+j}, Q' \cap \frac{\frakC_1Q}{\del} \neq \emptyset} |b_{1, Q'}(x)| dx \\
&\lesssim& \frac{1}{\textrm{Vol} \left( \frac{2\frakC_1Q}{\del}  \right) }  \int_{\frac{2\frakC_1Q}{\del}} \sum_{Q' \in M_E, \ell(Q')=\del^{k_1+j}, Q' \cap \frac{\frakC_1Q}{\del} \neq \emptyset} |b_{1, Q'}(x)| dx \\
&\lesssim& 1,
\end{eqnarray*}
where in the last inequality, we use the fact that
$$
\bigcup_{Q' \in M_E, \ell(Q')=\del^{k_1+j}, Q' \cap \frac{\frakC_1Q}{\del} \neq \emptyset}  Q' \subseteq \frac{2\frakC_1Q}{\del}, 
$$
Lemma \ref{Maineq02}, Lemma \ref{modyeq02} and the assumption $\langle f_2 \rangle_{\kappa'Q_0, 1}=1$. Thus,
\begin{eqnarray*}
&&\sum_{j \ge 0}  \sum_{Q \in \calG_j, Q \cap \textrm{supp}(b_{2, j+k_2}) \neq \emptyset} \frac{1}{\textrm{Vol}(Q) } \|b_{1, j+k_1} \|_{L^1\left(\frac{\frakC_1Q}{\del} \right)} \|b_{2, k_2+j} \|_{L^1(Q)}\\ 
&&\lesssim\sum_{j \ge 0}  \sum_{Q \in \calG_j, Q \cap \textrm{supp}(b_{2, j+k_2}) \neq \emptyset}  \|b_{2, j+k_2}\|_{L^1(Q)}\\
&& \lesssim \textrm{Vol}(Q_0),  
\end{eqnarray*}
where in the last step, we use the disjointness of $\{b_{2, j+k_2} \one_Q\}$ for fixed $k_2$. 

\medskip

\textit{Case II: $\frac{1}{r}+\frac{1}{s'}=1$, that is $r=s$.}

\medskip

In this case, we need to show that
$$
\sum_{j \ge 0}  \sum_{Q \in \calG_j, Q \cap \textrm{supp}(b_{2, j+k_2}) \neq \emptyset} \left\|b_{1, j+k_1} \right\|_{L^r\left(\frac{\frakC_1Q}{\del}\right)}  \|b_{2, k_2+j}\|_{L^{r'}(Q)} \lesssim \textrm{Vol}(Q_0).  
$$
To see this, we apply H\"older's inequality to the left hand side of the above inequality to obtain 
\begin{eqnarray} \label{neweq100}
&&\sum_{j \ge 0}  \sum_{Q \in \calG_j, Q \cap \textrm{supp}(b_{2, j+k_2}) \neq \emptyset} \left\|b_{1, j+k_1} \right\|_{L^r\left(\frac{\frakC_1Q}{\del}\right)}   \|b_{2, k_2+j}\|_{L^{r'}(Q)} \nonumber \\
&& \le \left( \int_{\R^n} \sum_{j \ge 0}  \sum_{Q \in \calG_j, Q \cap \textrm{supp}(b_{2, j+k_2}) \neq \emptyset} |b_{1, j+k_1}(x)|^r  \one_{\frac{\frakC_1Q}{\del}} (x) dx\right)^{\frac{1}{r}} \nonumber \\
&& \quad \quad \quad \quad\quad \quad  \quad \quad \quad \quad   \cdot \left( \int_{\R^n} \sum_{j \ge 0} \sum_{Q \in \calG_j, Q \cap \textrm{supp}(b_{2, j+k_2}) \neq \emptyset} |b_{2, k_2+j}(x)|^{r'} \one_{Q}(x) dx\right)^{\frac{1}{r'}},
\end{eqnarray}
where for the second term in above estimation, using the fact that $\{|b_{2, k_2+j}|^{r'} \one_Q\}$ has disjoint supports, we can bound it as
$$
\left( \int_{\R^n} \sum_{j \ge 0} |b_{2, k_2+j}(x)|^{r'} dx \right)^{\frac{1}{r'}} \lesssim \textrm{Vol}(Q_0)^{\frac{1}{r'}},
$$ 
where in the above estimate, we use again the disjointness of Whitney cubes and Lemma \ref{modyeq02}. 

While for the first term, we note that for fixed $k_1$, the set of functions $\{|b_{1, j+k_1}|^r\}_{j \ge 0} $ has disjoint supports. However, for a fixed $j \ge 0$, the set 
$$
\left\{ |b_{1, j+k_1}|^r \one_{\frac{\frakC_1Q}{\del}} \right\}_{Q \in \calG_j, Q \cap \textrm{supp}(b_{2, j+k_2}) \neq \emptyset}
$$ 
may not, since $\frac{\frakC_1}{\del}>1$, the cubes in the set 
\begin{equation} \label{0104eq11}
\left\{\frac{\frakC_1Q}{\del}\right\}_{Q \in \calG_j, Q \cap \textrm{supp}(b_{2, j+k_2}) \neq \emptyset}
\end{equation}
may have some overlaps. 

\begin{lem} \label{0104lem11}
 Let $j \ge 0$. Then for any $x \in  \textrm{supp}(b_{1, j+k_1})$, there are only finitely many cubes in the set \eqref{0104eq11} containing $x$. Moreover, the number of such cubes are bounded by an absolute constant $\frakC_4>0$, which only depends 
on $\frakC_1, \del$ and any $2$-admissible constants. 
\end{lem}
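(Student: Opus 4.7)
The plan is to apply a disjointness plus volume-doubling argument in the space of homogeneous type $(V, \rho', |\cdot|)$. First I would observe that if $x \in \tfrac{\frakC_1}{\del} Q$ for some $Q \in \calG_j$, then by the definition of the dilation (recall $\calG$ has parameter $\frakC/\del$), one has
\[
\rho'(x, x_c(Q)) < \frac{\frakC_1 \frakC}{\del^2} \cdot \del^j =: R \del^j,
\]
where the constant $R$ depends only on $\frakC_1$, $\frakC$, and $\del$. So all relevant centers $x_c(Q)$ lie in the single ball $B_{(W,d)}(x, R\del^j)$.

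Next I would combine two ingredients from the dyadic decomposition of Theorem \ref{dyadicSHT}: the sandwich property gives the inclusion $B_{(W,d)}(x_c(Q), \del^j) \subseteq Q$, while property (2) makes cubes $Q \in \calG_j$ pairwise disjoint. Hence the inner balls $B_{(W,d)}(x_c(Q), \del^j)$ are pairwise disjoint as well. By the triangle inequality (with $\kappa = 1$ for the Carnot--Carath\'eodory metric $\rho'$), each of these inner balls is contained in $B_{(W,d)}(x, (R+1)\del^j)$. Summing volumes yields
\[
N \cdot \min_{Q} \bigl|B_{(W,d)}(x_c(Q), \del^j)\bigr| \le \bigl|B_{(W,d)}(x, (R+1)\del^j)\bigr|,
\]
where $N$ denotes the number of cubes to be bounded.

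The last step is to apply the doubling property \eqref{doublingineq} a bounded number of times: first to pass from radius $(R+1)\del^j$ down to $\del^j$ about the basepoint $x$, and then once more (using $\rho'(x, x_c(Q)) \le R \del^j$) to compare $|B_{(W,d)}(x, \del^j)|$ with $|B_{(W,d)}(x_c(Q), \del^j)|$. Since $x \in \textrm{supp}(b_{1, j+k_1}) \subseteq E \subseteq 10 Q_0 \subset K'$, and all the auxiliary balls involved also stay inside $K'$ by \eqref{1109eq01}, the doubling constant may be taken uniformly, and $N$ is controlled by $C_{K'}^{O(\log R)}$, an absolute constant depending only on $\frakC_1$, $\frakC$, $\del$, and the uniform doubling constant on $K'$ (and hence on any $2$-admissible constants).

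The main obstacle is less mathematical than bookkeeping: one must verify that $R$ and the number of doubling iterations are truly independent of $j$, $x$, $k_1$ and $k_2$, and that every intermediate ball stays inside the compact set where doubling applies with a uniform constant. This is precisely what the quantitative framework of Section~5.1 was engineered to guarantee, so the verification is routine given the setup.
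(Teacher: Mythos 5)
Your argument is correct and is essentially the paper's own: both proofs are packing arguments that place all admissible cubes inside a fixed dilate (the paper uses $Q \subseteq \frakC_4' Q_x$ for the unique $Q_x \in \calG_j$ containing $x$, you use the ball $B_{(W,d)}(x,(R+1)\del^j)$), and then count using pairwise disjointness of the level-$j$ cubes together with the uniform doubling property on $K'$ guaranteed by the Section~5.1 setup. Your version just makes explicit, via the sandwich property and an explicit chain of doubling steps, the volume comparison that the paper leaves implicit, so no changes are needed.
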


\begin{proof}
By Theorem \ref{dyadicSHT}, there exists a unique dyadic cube $Q_x \in \calG_j$, such that $x \in Q_x$. Suppose $x \in \frac{\frakC_1Q}{\del}$ for some other $Q \in \calG_j$, then clearly this implies
\begin{equation} \label{neweq10}
\frac{\frakC_1Q}{\del} \bigcap \frac{\frakC_1Q_x}{\del} \neq \emptyset. 
\end{equation}

Thus, the number of choices of the cubes in the set \eqref{0104eq11} that contains $x$ can be bounded by the total number of the choices of $Q$ satisfying \eqref{neweq10}. More precisely, due to \eqref{neweq10} and the fact that $Q$ and $Q_x$ has the same sidelength $\del^j$, we can find an absolute constant $\frakC'_4>0$, which  depends 
on $\frakC_1, \del$ and any $2$-admissible constants,  such that
$$
Q \subseteq \frakC'_4 Q_x.
$$
The desired claim then follows from the fact that $Q$'s are disjoint and both $Q$ and $\frakC'_4 Q_x$ have the same sidelength $\del^j$. 
\end{proof}

Using Lemma \ref{0104lem11}, the first expression in  \eqref{neweq100} can be bounded by 
$$
\frakC_4\left( \int_{\R^n} \sum_{j \ge 0} |b_{1, j+k_1}(x)|^rdx \right)^{\frac{1}{r}}
$$
which clearly is bounded by $\textrm{Vol}(Q_0)^{\frac{1}{r}}$, up to some absolute constant. Combing the two estimations in \eqref{neweq100} above, we prove the claim in the second case. 

The proof is complete.


\begin{appendix}


\section{$(\calC_J)$ revisited}

In this appendix, we recall a ``uniform" version of the curvature condition $(\calC_J)$ from \cite{BS}. This will help us to keep track the dependence of the constants in the Theorem \ref{apdthm01} below, and therefore the constants in Lemma \ref{moduluscont}. 

Fix $\rho, \eta>0$. Let $\gamma: B^k(\rho) \times B^n(\eta) \rightarrow \R^n$ be a $C^\infty$ mapping satisfying $\gamma_0(x) \equiv x$. Recall from Section 3 that, we say $\gamma$ satisfies the condition $(\calC_J)$ at $0$ if there exists some multi-index $\beta$, such that
$$
\left| \left( \frac{\partial}{\partial \tau} \right)^\beta \det_{n \times n} \frac{\partial \Gamma}{\partial \tau} (0, \tau) \bigg |_{\tau=0} \right| \neq 0, 
$$
where $\tau=(t^1, \dots, t^n) \in \R^{kn}$ and 
$$
\Gamma(x, \tau)=\gamma_{t^1} \circ \gamma_{t^2} \circ \dots \circ \gamma_{t^n}(x).
$$
Now we wish to define the ``uniform" version of the condition $(\calC_J)$. Here, ``uniform" refers to the condition $(\calC_J)$ holds uniformly for a collection of $\gamma$'s. More precisely, we let $\calS$ be a set of $C^\infty$ functions $\gamma:  B^k(\rho) \times B^n(\eta) \rightarrow \R^n$ satisfying $\gamma_0(x) \equiv x$. We define the condition $(\calC_J^u)$ at $0$ as follows:

\begin{enumerate}

\item[$\bullet$] $(\calC_J^u)$: For $\gamma \in \calS$, there exists an $M \in \N$ and a $c>0$, both independent of $\gamma \in \calS$ such that for every $\gamma \in \calS$, there exists $\beta$, with $|\beta| \le M$, and
$$
\left | \left(\frac{\partial}{\partial \tau} \right)^\beta \det_{n \times n} \frac{\partial \Gamma}{\partial \tau} (0, \tau) \bigg|_{\tau=0} \right| \ge c. 
$$
\end{enumerate}

We need another ``uniform" version of curvature condition. Let $\gamma \in \calS$. Recall that the $C^\infty$ vector fields $\{X_\alpha\}$ is defined via the $C^\infty$ vector fields
$$
W(t, x)=\frac{\partial}{\partial \epsilon} \bigg|_{\epsilon=1} \gamma_{\epsilon t} \circ \gamma_{t}^{-1}(x),
$$
that is, if we write $W$ as a Taylor series in the $t$ variable, then 
$$
W(t) \sim \sum_\alpha t^\alpha X_\alpha. 
$$
The curvature condition $(\calC_Z^u)$ at $0$ is defined as

\begin{enumerate}
\item [$\bullet$] $(\calC_Z^u)$: For $\gamma \in \calS$, there exists $M \in \N$, indepedent of $\gamma \in \calS$, such that $\{X_\alpha: |\alpha| \le M\}$ satisfies the H\"ormander's condition at $0$, uniformly for $\gamma \in \calS$; more precisely, that there exists $M' \in \N, c>0$, independent of $\gamma \in \calS$ such that if we let $V_1, \dots, V_L$ denote the list of vector fields containing $\{X_\alpha: |\alpha| \le M\}$, along with all commutators of the vector fields in $\{X_\alpha: |\alpha| \le M\}$ up to order $M'$, then we have $|\det\limits_{n \times n} V(0) | \ge c$, where we have written $V$ to denote the matrix whose columns are $V_1, \dots, V_L$. 
\end{enumerate}

We have the following result.

\begin{thm}{\cite[Theorem 9.4]{BS}} \label{apdthm000}
Let $\calS$ be as above, and suppose $\calS \subset C^\infty(B^k(\rho) \times B^n(\eta); \R^n)$ is a bounded set. Then, $(\calC_J^u) \Leftrightarrow (\calC_Z^u)$.
\end{thm}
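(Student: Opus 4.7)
The non-uniform equivalence $(\calC_J) \Leftrightarrow (\calC_Z)$ at a single point is already available in \cite[Theorem 8.8]{CNSW} and \cite[Theorem 9.1]{BS}. My plan is to promote this to a uniform statement via a compactness-and-contradiction argument, using the observation that all the quantities appearing in both $(\calC_J^u)$ and $(\calC_Z^u)$ are \emph{finite-order} polynomial expressions in finitely many Taylor coefficients of $\gamma$ at $(0,0)$, together with the Arzel\`a-Ascoli theorem applied to the bounded family $\calS \subset C^\infty$.

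For $(\calC_J^u) \Rightarrow (\calC_Z^u)$, I would argue by contradiction. Suppose $(\calC_J^u)$ holds with parameters $M, c$ but no choice of constants $M_0, M', c' > 0$ makes $(\calC_Z^u)$ hold for the commutators of $\{X_\alpha : |\alpha| \le M_0\}$ up to order $M'$. Then one can extract a sequence $\gamma_n \in \calS$ along which the determinant lower bound $|\det_{n\times n} V_{\gamma_n}(0)|$ tends to $0$ while $M_0, M' \to \infty$. Since $\calS$ is bounded in every $C^k$-norm, Arzel\`a-Ascoli together with a diagonal argument yields a subsequence converging in $C^\infty$ to some $\gamma_\infty$, which, being the $C^\infty$-limit, automatically lies in the closure of $\calS$ and satisfies $(\calC_J)$ at $0$ with the same constants $M, c$ (the $\beta$-th derivative of $\det \partial\Gamma/\partial\tau$ at $\tau = 0$ depends continuously on a fixed finite jet of $\gamma$). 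By the pointwise equivalence, $\gamma_\infty$ satisfies $(\calC_Z)$, so there exist finite $M_0^\infty, M^\infty$ and $c^\infty > 0$ witnessing Hörmander's condition for the $X_\alpha$ of $\gamma_\infty$. Because the entries of $V^{\gamma}(0)$ are fixed polynomials in finitely many Taylor coefficients of $\gamma$, continuity in $C^\infty$ forces $|\det_{n\times n} V^{\gamma_n}(0)| \ge c^\infty / 2$ along the tail of the subsequence, contradicting the choice of $\gamma_n$.

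The reverse implication $(\calC_Z^u) \Rightarrow (\calC_J^u)$ runs along the same lines, using that each partial derivative $(\partial/\partial\tau)^\beta \det_{n\times n} (\partial\Gamma/\partial\tau)(0,\tau)|_{\tau=0}$ is a polynomial in a \emph{fixed} finite set of Taylor coefficients of $\gamma$ once $|\beta|$ is bounded; again compactness plus the pointwise theorem converts a failure of $(\calC_J^u)$ along a sequence into a violation of the pointwise $(\calC_J)$ for the subsequential limit, contradicting its pointwise $(\calC_Z)$ property.

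The main obstacle, and the only nontrivial bookkeeping, is verifying that the constants $M, M'$ produced by the limiting step can be chosen uniformly in a way compatible with the compactness argument: one has to ensure that only a bounded order of derivatives of $\gamma$ at $(0,0)$ enters any given bound, so that $C^k$-boundedness of $\calS$ for $k$ large enough (depending only on $M, M'$ and $n, k$) already suffices for the continuity of the relevant determinants. This is where the precise algebraic relation from the CNSW/Street proof, expressing $(\partial_\tau)^\beta \det (\partial\Gamma/\partial\tau)(0,0)$ as a polynomial in the commutators of the $X_\alpha$ up to a controlled order, must be invoked to pin down the quantitative dependence $M' = M'(M, c, \calS)$ and $c' = c'(M, c, \calS)$.
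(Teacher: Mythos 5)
Your proposal is correct in substance, but it is a genuinely different route from the one the paper takes: the paper does not prove Theorem \ref{apdthm000} at all, it quotes it from \cite[Theorem 9.4]{BS}, where the uniform equivalence is obtained by rerunning the pointwise proof of $(\calC_J)\Leftrightarrow(\calC_Z)$ with all constants tracked explicitly; that effective bookkeeping is exactly what makes the quantitative restatement, Theorem \ref{apdthm002}, immediate (the paper mentions ``or by a compactness argument'' only as an aside). Your soft argument needs only the pointwise equivalence of \cite[Theorem 8.8]{CNSW} / \cite[Theorem 9.1]{BS}, Arzel\`a--Ascoli on the bounded set $\calS$, and the observation that for any \emph{fixed} truncation order the quantities entering $(\calC_J^u)$ and $(\calC_Z^u)$ depend continuously on a finite jet of $\gamma$ near the origin; in exchange it is non-effective, producing constants that exist but are not computable from the data, whereas Street's direct proof gives the explicit dependence $M'=M'(M,c,\calS)$, $c'=c'(M,c,\calS)$ that the rest of the paper actually uses when it needs admissible constants. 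Three small points to make your argument airtight: (i) in passing to the limit of $(\calC_J^u)$ you must first fix the multi-index, using that there are only finitely many $\beta$ with $|\beta|\le M$, so a further subsequence has a common $\beta$ and the limit inherits the lower bound $\ge c$ by continuity; (ii) in the forward direction the contradiction requires the elementary monotonicity that enlarging the list of vector fields can only increase $\left|\det_{n\times n} V(0)\right|_\infty$, so the lower bound obtained for the limit's finite witnessing list (orders $M_0^\infty, M'^\infty$) transfers to the longer lists of order $m_j\ge\max\{M_0^\infty,M'^\infty\}$ used for $\gamma_{m_j}$, which is what clashes with $\left|\det_{n\times n}V_{m_j}^{\gamma_{m_j}}(0)\right|<1/m_j$; (iii) the $C^\infty_{\mathrm{loc}}$ limit $\gamma_\infty$ need not belong to $\calS$, but it satisfies $\gamma_\infty(0,x)\equiv x$ and the hypotheses of the pointwise theorem, which is all you use. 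Finally, if you want your method to also yield the quantitative form of Theorem \ref{apdthm002}, apply the compactness argument not to a given $\calS$ but to the maximal set of all $\gamma$ obeying the bounds $\{\sigma_1^m\}$ and the condition $(\calC_J)_{M_1,c_1,\rho_1,\eta_1,\{\sigma_1^m\}}$; this set is closed under $C^\infty_{\mathrm{loc}}$ limits, so the resulting constants depend only on those parameters, though still without explicit values.
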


Indeed, we can restate the above result in a more quantitative way, which is easy to see by checking the proof carefully (or by a compactness argument). More precisely, we may write $(\calC_J^u)$ as
$$
(\calC_J)_{M_1, c_1, \rho_1, \eta_1, \{\sigma_1^m\}_{m \in \N}},
$$
and $(\calC_Z^u)$ as
$$
(\calC_Z)_{M_2, M'_2, c_2, \rho_2, \eta_2, \{\sigma_2^m\}_{m \in \N}}.
$$
Here $M_1 \in \N$ and $c_1>0$ are the parameters in the definition of $(\calC_J^u)$. Moreover, for those $\gamma$ satisfying $(\calC_J^u)_{M_1, c_1, \rho_1, \eta_1, \{\sigma_1^m\}_{m \in \N}}$, we have
$$
\gamma \in C^\infty( B^k(\rho_1) \times B^n(\eta_1); \R^n)
$$
and for each $m \ge 0$, 
$$
\|\gamma\|_{C^m(B^k(\rho_1) \times B^n(\eta_1); \R^n)} \le \sigma_1^m.
$$
The second condition is defined similarily. 

Then one can state Theorem \ref{apdthm000} as follows.

\begin{thm} \label{apdthm002}

$(\calC_J^u) \Leftrightarrow (\calC_Z^u)$ in the following sense: 

\begin{enumerate}
\item [$\bullet$] $(\calC_J)_{M_1, c_1, \rho_1, \eta_, \{\sigma_1^m\}_{m \in \N}} \Rightarrow$ there exists some postive integers
$$
M_2=M_2\left(M_1, c_1, \rho_1, \eta_, \{\sigma_1^m\}_{m \in \N}\right),
$$ 
$$
M_2'=M_2'\left(M_1, c_1, \rho_1, \eta_, \{\sigma_1^m\}_{m \in \N}\right)
$$
and a positive number 
$$
c_2=c_2\left(M_1, c_1, \rho_1, \eta_, \{\sigma_1^m\}_{m \in \N}\right), 
$$
such that $(\calC_Z)_{M_2, M_2', c_2, \rho_1, \eta_1,  \{\sigma_1^m\}_{m \in \N}}$ holds;

\item [$\bullet$] $(\calC_Z)_{M_2, M_2', c_2, \rho_2, \eta_2, \{\sigma_2^m\}_{m \in \N}} \Rightarrow$ there exists some positive integer
$$
M_1=M_1\left(M_2, M_2', c_2, \rho_2, \eta_2, \{\sigma_2^m\}_{m \in \N}\right)
$$
and a positive number
$$
c_1=c_1\left(M_2, M_2', c_2, \rho_2, \eta_2, \{\sigma_2^m\}_{m \in \N} \right),
$$
such that $(\calC_J)_{M_1, c_1, \rho_2, \eta_2, \{\sigma_2^m\}_{m \in \N}}$ holds. 
\end{enumerate}
\end{thm}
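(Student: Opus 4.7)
The plan is a compactness-and-contradiction argument that reduces the quantitative statement to the qualitative equivalence given in Theorem \ref{apdthm000}. Consider the first implication. Fix the parameters $(M_1, c_1, \rho_1, \eta_1, \{\sigma_1^m\})$ and let
$$
\calF := \left\{ \gamma \in C^\infty(B^k(\rho_1) \times B^n(\eta_1); \R^n) : \gamma_0(x) \equiv x,\ \gamma \text{ satisfies } (\calC_J)_{M_1, c_1, \rho_1, \eta_1, \{\sigma_1^m\}} \right\}.
$$
The uniform bounds $\|\gamma\|_{C^m} \le \sigma_1^m$ together with the Arzel\`a--Ascoli theorem and a diagonal argument show that $\calF$ is precompact in the $C^\infty$ topology; the Jacobian inequality $|\partial_\tau^\beta \det_{n \times n} \partial_\tau \Gamma(0,0)| \ge c_1$ is a closed condition (only finitely many derivatives of $\gamma$ are involved, since $|\beta| \le M_1$), so $\calF$ is in fact compact.

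Suppose toward contradiction that no triple $(M_2, M_2', c_2)$ with the required dependence works. Then for every integer $N$ we can choose $\gamma_N \in \calF$ for which $(\calC_Z)_{N, N, 1/N, \rho_1, \eta_1, \{\sigma_1^m\}}$ fails. Extract a $C^\infty$-convergent subsequence $\gamma_{N_k} \to \gamma_\infty \in \calF$ and apply the qualitative Theorem \ref{apdthm000} to $\gamma_\infty$ to obtain integers $M^*, L^*$ and a positive number $c^*$ such that $(\calC_Z)_{M^*, L^*, c^*, \rho_1, \eta_1, \{\sigma_1^m\}}$ holds for $\gamma_\infty$. The vector fields $\{X_\alpha\}_{|\alpha| \le M^*}$ are read off from the Taylor coefficients of $W(t,x) = \partial_\epsilon|_{\epsilon=1}(\gamma_{\epsilon t} \circ \gamma_t^{-1})(x)$, and the map $\gamma \mapsto X_\alpha$, along with all iterated Lie brackets of length $\le L^*$, is continuous in a sufficiently high $C^m$ norm; consequently $|\det_{n \times n} V(0)|$ varies continuously in $C^m$, so for $k$ large $\gamma_{N_k}$ must satisfy $(\calC_Z)_{M^*, L^*, c^*/2, \rho_1, \eta_1, \{\sigma_1^m\}}$. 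Taking $N_k \ge \max(M^*, L^*, 2/c^*)$ yields a contradiction.

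The reverse implication is symmetric: given the parameters $(M_2, M_2', c_2, \rho_2, \eta_2, \{\sigma_2^m\})$, the analogous class is compact, the functional $\gamma \mapsto \partial_\tau^\beta \det_{n \times n} \partial_\tau \Gamma(0,0)$ is continuous in $C^{|\beta|+1}$ for each fixed $\beta$, and we invoke Theorem \ref{apdthm000} on any subsequential $C^\infty$ limit to derive a contradiction.

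The main obstacle is the continuous dependence of $\gamma \mapsto X_\alpha$ and of iterated brackets $[X_{\alpha_1}, [X_{\alpha_2}, \dots]]$ on $\gamma$ in the $C^m$ topology: this requires the implicit function theorem to control $\gamma_t^{-1}$ (with quantitative $C^m$ estimates on its inverse) and a careful count of how many derivatives of $\gamma$ enter a bracket of length $L^*$, so that the required $m$ can be pinned down as a function of $L^*$ alone. Once this routine but technical bookkeeping is carried out, the two contradictions go through and deliver the stated quantitative dependence of $(M_2, M_2', c_2)$ on $(M_1, c_1, \rho_1, \eta_1, \{\sigma_1^m\})$ and vice versa.
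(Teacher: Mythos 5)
Your proof is correct and follows essentially the same route as the paper: the paper gives no detailed argument for this theorem, stating only that the quantitative version is ``easy to see by checking the proof carefully (or by a compactness argument)'', and your proposal is precisely that compactness argument carried out, with the continuity of $\gamma \mapsto X_\alpha$ and of the iterated brackets being the only (routine) bookkeeping needed. As a minor simplification, one can avoid the contradiction and the continuity step altogether by applying Theorem \ref{apdthm000} directly to the maximal bounded class $\calS$ of all $\gamma$ with $\gamma_0(x) \equiv x$, $\|\gamma\|_{C^m} \le \sigma_1^m$ for all $m$, and satisfying $(\calC_J)_{M_1, c_1, \rho_1, \eta_1, \{\sigma_1^m\}_{m \in \N}}$: since this class is determined by the parameters alone, the constants $M_2, M_2', c_2$ furnished by Theorem \ref{apdthm000} automatically depend only on those parameters.
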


\bigskip

\section{A general $L^2$ theorem}

In this appendix, we first state a special case of a general $L^2$ theorem in \cite{BS},  then prove a slightly different version of it, to complete the proof of Lemma \ref{moduluscont}. Here, we say ``a special case", which refers to a ``single scale" and ``full rank" case, while in \cite{BS}, Street stated this result for an ``all scales" and ``small rank" case. We make a remark that the setting we present here is exactly the pullback version of Street's setting via the scaling map $\Phi$ (see, Theorem \ref{scalingmap}). 

The setting is as follows. Let $ K_0 \Subset \Omega'' \Subset \Omega' \Subset \Omega \subseteq \R^n$, with $\Omega$ open, $K_0$ compact, $\Omega''$, $\Omega'$ open and relatively compact in $\Omega$ and $a>0$ sufficiently small. Let
$$
Z_1, \dots, Z_r
$$
be a collection of $C^\infty$ vector fields defined on $\Omega$, with formal degree $\widetilde{d}_1, \dots \widetilde{d}_r$, and satisfying H\"ormander's condition of type $M$ on $\Omega$. Therefore, we can take a $M$-generated set (see Definition \ref{Mgenerate}),
$$
(Z, \tilde{d}):=\{(Z_1, \tilde{d}_1), \dots (Z_q, \tilde{d}_q) \},
$$
such that $n=\dim \ \textrm{span} (Z_1(x_0), \dots, Z_q(x_0))$ for any $x_0 \in \Omega$. 

Next, we turn to define the operator $S_j, 1 \le j \le L$ for some $L \in \N$. We assume, for each $j$, we are given a $C^\infty$ function $\widehat{\gamma}_j: B^{k_j}(\rho) \times \Omega'' \rightarrow \Omega'$ satisfying $\widehat{\gamma}_j(0, x) \equiv x$. We assume that for each $j$, there exists a $0<\rho_j \le \rho$, such that for $m \ge 0, m \in \Z$, there exists a $C_j(m)>0$, such that
$$
\left\| \widehat{\gamma}_j \right\|_{C^m\left(B^{k_j}(\rho_j) \times K_0 \right)}<C_j(m). 
$$
As usual, we restrict our attention to $\rho>0$ small, so that $\widehat{\gamma}_{j, t}^{-1}$ makes sense whenever we use it. We suppose we are given
$\varrho_j \in C^\infty(\overline{B^{k_j}(a)} \times \overline{\Omega''})$ and $\psi_{j, 1}, \psi_{j, 2} \in C^\infty_0(\R^n)$ supported on the interior of $K_0$. Finally, we suppose we are given $\chi_j \in C_0^\infty(B^{k_j}(a))$ for some $a>0$, $a$ sufficiently small. Now, we define
$$
S_jf(x):= \psi_{j, 1}(x) \int f(\widehat{\gamma}_{j, t}(x)) \psi_{j, 2}(\widehat{\gamma}_{j, t}(x)) \varrho_j(t, x) \chi_j(t)dt.
$$
Note that, under the above assumptions
$$
\|S_j\|_{L^\infty \mapsto L^\infty}, \|S_j\|_{L^1 \mapsto L^1} \lesssim 1,
$$
where the implict constants in the above operators norms depend only on the $L^\infty$-norms of $\psi_{j, 1}$ and $\psi_{j, 2}$, $L^\infty$-norm of $\varrho_j$, $L^1$-norm of $\chi_j$, $C^1$-norm of $\widehat{\gamma}$ (that is, the $C^1$-norm of each component of $\widehat{\gamma}$). Furthermore $S_j^*$ is of the same form as $S_j$ with $\widehat{\gamma}_{j, t}$ replaced by $\widehat{\gamma}_{j, t}^{-1}$.

We assume, further, that for each $l$, $1 \le l \le r$, there is a $j$ $(1 \le j \le L)$, and a multi-index $\alpha$ (with $|\alpha| \le B$, where $B \in \N$ is some fixed constant which our results are allowed to depend on), such that
\begin{equation} \label{generalnothold01}
Z_l(x)=\frac{1}{\alpha !} \frac{\partial}{\partial t}^\alpha \bigg|_{t=0} \frac{d}{d\epsilon} \bigg |_{\epsilon=1} \widehat{\gamma}_{j, \epsilon t} \circ \widehat{\gamma}_{j, t}^{-1}(x).
\end{equation}
This concludes our assumptions on $S_1, \dots, S_L$. 

Next, we turn to the operators $R_1$ and $R_2$. We assume we are given a $C^\infty$ function $\tilde{\gamma}_{t, s}$ (with $\tilde{\gamma}_{0, 0}(x) \equiv x$), with satisfying $\widetilde{\gamma}_{0, 0}(x) \equiv x$ and
$$
\|\widetilde{\gamma}\|_{C^m\left( B^{\tilde{k}}(\rho') \times [-1, 1] \times \Omega''\right)} \le C(m)
$$
for some $0<\rho' \le \rho$ and some $C(m)>0$, $m \ge 0$, where the choice of $\rho'$ is independent of $m$. 

We suppose we are given $\widetilde{\varrho}(t, s, x) \in C^\infty \left( \overline{B^{\tilde{k}}(a)} \times [-1, 1] \times \Omega'' \right), \widetilde{\chi}(t) \in L^1(B^{\tilde k}(a))$, and $\widetilde{\psi}_1, \widetilde{\psi}_2 \in C^\infty_0$ supported on the interior of $K_0$. We define, for $\zeta \in [-1, 1]$, 
$$
R^\zeta f(x):=\widetilde{\psi}_1(x) \int f \left( \widetilde{\gamma}_{t, \zeta}(x) \right) \widetilde{\psi}_2 \left(\widetilde{\gamma}_{t, \zeta}(x) \right) \widetilde{\varrho}(t, \zeta, x) \widetilde{\chi}(t)dt.
$$
Note that we have
$$
\|R^\zeta\|_{L^1 \mapsto L^1}, \|R^\zeta\|_{L^\infty \mapsto L^\infty} \lesssim 1,
$$
where the implict constants in the above operators norms depend only on the $L^\infty$-norms of $\widetilde{\psi}_1$ and $\widetilde{\psi}_2$, $L^\infty$-norm of $\widetilde{\varrho}$, $L^1$-norm of $\widetilde{\chi}$, $C^1$-norm of $\widetilde{\gamma}$. We set $R_1=R^\zeta$ and $R_2=R^0$. 

\begin{thm} \label{apdthm01}
In the above setup, if $a>0$ is chosen sufficiently small, we have $\|S_1 \dots S_L (R_1-R_2)\|_{L^2 \mapsto L^2} \le C\zeta^\eta$, for some $\eta>0$.
\end{thm}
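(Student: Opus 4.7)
The plan is to follow the strategy of \cite[Theorem 14.5]{BS}, specialized to this single-scale, full-rank setting. I would write $T := S_1\cdots S_L$ and set up a two-sided estimate: exhibit a subelliptic smoothing gain for $T$ on one side, and a H\"older-in-$\zeta$ loss of smoothness for $R_1-R_2$ on the other, then combine via non-isotropic Sobolev interpolation adapted to $(Z,\widetilde d)$.

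\emph{Step 1: subelliptic smoothing of $T$.} Hypothesis \eqref{generalnothold01} says that for each $1\le l\le r$, the vector field $Z_l$ is a prescribed partial derivative in $t$ of the deformation $\widehat\gamma_{j,\epsilon t}\circ\widehat\gamma_{j,t}^{-1}$. An integration by parts in $t$ against $\chi_j(t)$ lets me transfer a $Z_l$-derivative acting on the left of $S_j$ onto $\chi_j$; the result is an operator of the same structural form as $S_j$, with $\chi_j$ replaced by a smooth cutoff built from its $t$-derivatives, and thus uniformly $L^2$-bounded. Since $Z_1,\dots,Z_r$ $M$-generate the spanning list $Z_1,\dots,Z_q$, every $Z_l$ in the larger list can be written as an iterated commutator of the $Z_l$'s with $1\le l\le r$, and one can propagate derivatives of all degrees through $T$. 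A quantitative subelliptic (Kohn-type) estimate for H\"ormander systems then gives
\[
\|Tf\|_{H^\epsilon_{(Z,\widetilde d)}}\lesssim \|f\|_{L^2}
\]
for some $\epsilon>0$ depending only on $M$, $B$, and admissible constants, where $H^\epsilon_{(Z,\widetilde d)}$ is the non-isotropic Sobolev space adapted to $(Z,\widetilde d)$.

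\emph{Step 2: H\"older gain for $R_1-R_2$.} The $C^\infty$ dependence of $\widetilde\gamma_{t,\zeta}$, $\widetilde\varrho(t,\zeta,x)$ in $\zeta$, together with $R^0=R_2$, gives via Taylor expansion $R_1-R_2=\zeta\int_0^1\partial_\zeta R^{s\zeta}\,ds$, where each $\partial_\zeta R^{s\zeta}$ is a uniformly $L^2$-bounded operator. Testing against functions with controlled $Z$-derivatives and integrating by parts in $t$ (picking up derivatives of $\widetilde\chi$ and $\widetilde\gamma$), I would show that $R_1-R_2$ maps $H^N_{(Z,\widetilde d)}$ to $L^2$ with norm $O(\zeta)$ for some large integer $N$ depending only on admissible constants. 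Interpolating with the trivial $L^2\to L^2$ bound of size $O(1)$ yields, by duality,
\[
\|R_1-R_2\|_{L^2\to H^{-\epsilon}_{(Z,\widetilde d)}}\lesssim |\zeta|^{\eta_0},\qquad \eta_0=\frac{\epsilon}{\epsilon+N}>0.
\]

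\emph{Step 3: combining.} The dual of the Step 1 estimate is $\|T\|_{H^{-\epsilon}_{(Z,\widetilde d)}\to L^2}\lesssim 1$, so
\[
\|T(R_1-R_2)\|_{L^2\to L^2}\le \|T\|_{H^{-\epsilon}_{(Z,\widetilde d)}\to L^2}\cdot \|R_1-R_2\|_{L^2\to H^{-\epsilon}_{(Z,\widetilde d)}}\lesssim |\zeta|^{\eta_0},
\]
which is the desired conclusion with $\eta=\eta_0$.

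The hard part is Step 1. Turning \eqref{generalnothold01} into a clean operator-level identity requires a careful bookkeeping: each integration by parts produces a sum of operators $S_j^{(\alpha)}$ with modified cutoffs plus lower-order error terms, and iterating through the $M$-generation tree to reach $Z_{r+1},\dots,Z_q$ can produce a combinatorial explosion that must be controlled by the admissible constants. The quantitative scaling-map apparatus of Section 4 (in particular, Theorem \ref{scalingmap} and Proposition \ref{controlQ1Q2}) is precisely designed to make such estimates uniform over base points and scales, and it is the machinery I would invoke to keep all implicit constants admissible. Steps 2 and 3 are standard once the smoothing estimate of Step 1 is available.
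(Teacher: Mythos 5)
Your overall architecture (smoothing for $T=S_1\cdots S_L$, Hölder-in-$\zeta$ loss for $R_1-R_2$, then interpolation/duality) is a genuinely different route from the paper, but Step 1 as you argue it contains a real gap. The identity \eqref{generalnothold01} only expresses $Z_l$ as a $t$-Taylor coefficient at $t=0$ of the deformation field $W_j(t,x)=\frac{d}{d\epsilon}\big|_{\epsilon=1}\widehat\gamma_{j,\epsilon t}\circ\widehat\gamma_{j,t}^{-1}(x)$; the only derivative that genuinely integrates by parts against $\chi_j(t)$ is the ``radial'' one along $W_j(t,\cdot)$, and extracting an individual $Z_l$ produces higher-order-in-$t$ error terms that, at unit scale, involve derivatives of $f$ in arbitrary directions and are not $L^2$-bounded. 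In general $Z_lS_j$ is \emph{not} bounded on $L^2$ uniformly (think of the parabola average: a transverse derivative cannot be moved onto the cutoff), and indeed a single $S_j$ need not be smoothing at all — the hypotheses only guarantee that the $Z_l$, $1\le l\le r$, arise collectively across $j=1,\dots,L$. Moreover, Kohn-type subelliptic estimates concern hypoelliptic operators such as $\sum Z_j^2$ and do not transfer to averaging operators; the fractional gain $\|Tf\|_{H^\epsilon_{(Z,\tilde d)}}\lesssim\|f\|_{L^2}$ is a curvature phenomenon whose proof (in CNSW and here) already requires the kind of composition-plus-Jacobian argument you are trying to avoid. So the estimate you need in Step 1 is essentially as hard as the theorem itself, and your proposed mechanism for it would fail. (A smaller point: the dual of your Step 1 bound controls $T^*$ on $H^{-\epsilon}$, not $T$; this is repairable since $T^*$ has the same form, but it should be said.)

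For comparison, the paper never proves an $L^2$-Sobolev smoothing estimate. It reduces, via the $T^*T$ trick and interpolation with the trivial $L^1$ bound, to a pointwise $L^\infty$ estimate for $(S^*S)^nR$ at a fixed $x_0$; the $n$-fold composition $\Gamma_{\tilde\tau}=\widehat\gamma_{t_1}\circ\cdots\circ\widehat\gamma_{t_n}$ then satisfies the curvature condition $(\calC_J)$ uniformly by Theorem \ref{apdthm000} (this is where \eqref{generalnothold01} and the $M$-generation hypothesis actually enter), Proposition \ref{prop2.7} gives that the pushforward measure $\tilde\tau\mapsto\Gamma_{\tilde\tau}(x_0)$ has a density $h\in L^1_\eta$, and the difference $R_1-R_2$ is handled by noting $\bigl|\tilde\gamma_{t,\zeta}^{-1}(v)-\tilde\gamma_{t,0}^{-1}(v)\bigr|\lesssim\zeta$ and applying Proposition \ref{prop2.8} to get $\int\bigl|h(\tilde\gamma_{t,\zeta}^{-1}(v))-h(\tilde\gamma_{t,0}^{-1}(v))\bigr|\,dv\lesssim\zeta^{\eta}$. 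If you want to salvage your outline, the honest fix is to prove the smoothing estimate for the composed operator by exactly this kind of curvature/composition argument — at which point you have reproduced the paper's proof with extra Sobolev-space bookkeeping rather than shortcut it.
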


\begin{rem}
As in Section 4, $a, C$ and $\eta$ can be chosen to depend on certain parameters, which are independent of other relevalent parameters; more precisely, they only depend on
\begin{enumerate}
 \item [(1).] the norm of the various functions used to define $S_j, R_1$ and $R_2$, that is, the $C^m$-norms of $\psi_{j, 1}, \psi_{j, 2}, \widetilde{\psi}_1, \widetilde{\psi}_2, \varrho_j, \widetilde{\varrho}, \widehat{\gamma}, \widetilde{\gamma}, \chi_j$ and $\widetilde{\chi}$;
\item [(2).] the parameter $B$;
\item [(3).] the parameter $M$;
\item [(4).] the parameter $L$;
\item [(5).] the various dimensions;
\item [(6).] the parameters $\rho_j$ and $\rho'$;
\item [(7).] all the parameters in the definition of $(\calC_Z)$ (see Theorem \ref{apdthm002}) \footnote{When we apply Theorem \ref{apdthm01} to prove Lemma \ref{moduluscont}, and hence to prove the main result in Section 5, there are indeed lots of ``$\Gamma$"s since we apply Lemma \ref{moduluscont} to lots of centers $x_c(Q)$ and all scales $\del^j, j \ge 0$. We make a remark that all these ``$\Gamma$"s satisfy the curvature condition
$$
(\calC_Z)_{\check{M}, \check{M'}, \check{c}, \check{\rho}, \check{\eta}, \{{\check{\sigma}}^m\}_{m \in \N}}
$$
uniformly (See, Remark \ref{apdthm002}), where $\check{M}, \check{M'} \in \N$ and $\{\check{\sigma}^m\}_{m \in \N}$ are some absolute constants only depending on the setting (See Section 5.1), and $\check{\rho}, \check{\eta}>0$ are some $2$-admissible constants. Therefore, when we apply Theorem \ref{apdthm01} to all these ``$\Gamma$", the implicit constant there can be choosen uniformly, indpedent of a particular ``$\Gamma$". }, when we apply it to the $C^\infty$ mapping $\Gamma$, uniformly for all $x \in \Omega$  (see \eqref{defnGAMMA01} for the definition of $\Gamma$, and hence $\Gamma$ satisfies $(\calC_J)$ with respect to these parameters). In particular, it depends on the lower bound of the term
$$
\left| \det_{n \times n} \left(Z_1(x), \dots, Z_q(x) \right) \right|,
$$
for all $x \in \Omega$.
\end{enumerate}
\end{rem}

The rest of this appendix is devoted to the proof of Theorem \ref{apdthm01}. We need some preparations first. 

\begin{defn}
For $0<\eta \le 1$, $L_\eta^1(\R^n)$ is the Banach space consisting of all funcitons $h \in L^1(\R^n)$ that satisfy
$$
\int_{\R^n} |h(y-z)-h(y)|dy \le A|z|^\eta, \quad \textrm{for all} \  z \in \R^n.
$$
The norm on $L_\eta^1$ is defined to be $\|h\|_{L^1}$ plus the smallest constant $A$ for which the above inequality holds. 
\end{defn}

\begin{prop}[{\cite[Proposition 7.2]{CNSW}}] \label{prop2.7}
Let $\Psi: \bar{B} \subset \R^d \mapsto \R^n$, where $d \ge n$ and $\Psi$ is $C^\infty$, $w(\varsigma)d\varsigma$ be a measure in $\R^d$ and $d\mu=\Psi_*(w dt)$ be the transported measure in $\R^n$; that is, $\mu$ is deifned by the integration formula
$$
\int_{\R^n} f(y)d\mu(y)=\int_{\bar{B}} f(\Psi(t)) w(t)dt. 
$$

Let further, $J$ be the determinant of some $n \times n$ sub-matrix of the Jacobian matrix $\partial \Psi/ \partial t$ of $\Psi$. Assume that for some $\alpha$, a multi-index of $d$ many entries, we have
$$
\partial_t^\alpha J(t) \neq 0 \quad \textrm{for every} \quad t \in \bar{B}.
$$
Then the transported measure $d\mu=\Psi_*(w dt)$ satisfies the following:
\begin{enumerate}
\item [1.] It is absolutely continuous with respect to the Lebesgue measure on $\R^n$; 
\item [2.] Its Radon-Nikodym derivative $h$ belongs to $L_\del^1$ for all $\del<(2|\alpha|)^{-1}$; 
\item [3.] The $L_\del^1$ norm of $h$ can be controlled in terms of the $C^{k+2}(\bar{B})$ norm of $\Psi$, a lower bounded for $\partial_t^\alpha J(t)$ in $\bar{B}$, the $C^1$ norm of $w$, and the numbers $\del$ and $|\alpha|$.
\end{enumerate}
\end{prop}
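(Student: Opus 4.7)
The plan is to reduce to the classical sublevel-set / change-of-variables scheme of Christ--Nagel--Stein--Wainger. First I would observe that absolute continuity is essentially automatic: by relabeling the coordinates of $\R^d$ I may assume $J(t)=\det(\partial\Psi/\partial t')$ where $t=(t',t'')\in \R^n\times\R^{d-n}$, and the non-vanishing assumption $\partial_t^\alpha J\not\equiv 0$ forces $\{J=0\}$ to have Lebesgue measure zero in $\bar B$ (an iterated sublevel estimate, Step~3 below, with $\lambda\to 0$). On the complement, the co-area / implicit function theorem gives a density representation
$$
h(y)=\int_{\Psi^{-1}(y)\cap\{J\ne 0\}}\frac{w(t)}{|J(t)|}\,d\sigma(t''),
$$
so $d\mu=h\,dy$ with $h\in L^1_{\mathrm{loc}}$. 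Item (1) is then immediate.

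The main content is the modulus of continuity in (2)--(3). By duality,
$$
\int_{\R^n}|h(y-z)-h(y)|\,dy=\sup_{\|\phi\|_\infty\le 1}\left|\int_{\bar B}\bigl(\phi(\Psi(t)+z)-\phi(\Psi(t))\bigr)w(t)\,dt\right|.
$$
For a parameter $\lambda>0$ to be optimized I would split $\bar B=G_\lambda\cup B_\lambda$ with $G_\lambda=\{|J(t)|\ge\lambda\}$. On the \emph{good} set $G_\lambda$, $\Psi$ is a submersion with a quantitative lower bound on $|J|$, and after a partition of unity localizing to coordinate charts where the implicit function theorem applies, a standard integration-by-parts (using $\phi(\Psi(t)+z)-\phi(\Psi(t))=\int_0^1 z\cdot\nabla\phi(\Psi(t)+sz)\,ds$ combined with the change of variables $y=\Psi(t',t'')$ at fixed $t''$) yields a bound of the form $C|z|/\lambda$, where $C$ depends on $\|\Psi\|_{C^{|\alpha|+2}}$, the lower bound for $\partial_t^\alpha J$, and $\|w\|_{C^1}$. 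On the \emph{bad} set $B_\lambda$ one bounds trivially by $2\|w\|_\infty|B_\lambda|$.

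The crux is the sublevel-set estimate $|B_\lambda|=|\{t\in\bar B:|J(t)|\le\lambda\}|\lesssim\lambda^{1/|\alpha|}$. I would obtain this by slicing: for each $t\in\bar B$ at least one pure derivative $\partial_{t_j}^{k_j}J$ in the iterated expression $\partial_t^\alpha$ is bounded below along a line; fixing the other variables and applying the one-dimensional lemma ``if $|f^{(k)}|\ge c$ on an interval, then $|\{|f|\le\lambda\}|\lesssim(\lambda/c)^{1/k}$'' and iterating once for each factor in $\partial_t^\alpha$, followed by Fubini over the transverse directions, gives the desired bound with exponent $1/|\alpha|$.

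Combining the two pieces yields $\int|h(\cdot-z)-h(\cdot)|\lesssim |z|/\lambda+\lambda^{1/|\alpha|}$; optimizing with $\lambda\sim|z|^{|\alpha|/(|\alpha|+1)}$ gives $O(|z|^{1/(|\alpha|+1)})$. Since $\tfrac{1}{|\alpha|+1}>\tfrac{1}{2|\alpha|}$ for $|\alpha|\ge 1$, this yields $h\in L^1_\delta$ for all $\delta<(2|\alpha|)^{-1}$, with the norm controlled by exactly the quantities listed in (3) (the $C^1$-norm of $w$, the $C^{|\alpha|+2}$-norm of $\Psi$, and the lower bound on $\partial_t^\alpha J$). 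The main obstacle I expect is the bookkeeping on $G_\lambda$: the level sets of $\Psi$ are not globally parametrizable, so one needs a careful partition of unity subordinate to a covering on which the implicit function theorem gives uniform quantitative control of the fibers and of the density $w/|J|$, and this is where the loss of roughly $|\alpha|+2$ derivatives of $\Psi$ enters.
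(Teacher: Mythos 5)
Your overall architecture (good/bad splitting in the size of $J$, a sublevel-set estimate $|\{|J|\le\lambda\}|\lesssim\lambda^{1/|\alpha|}$ from the lower bound on $\partial^\alpha J$, and a quantitative change of variables where $|J|$ is large) is the right family of ideas — note the paper itself does not prove this proposition but imports it from \cite{CNSW}, so the comparison is with that argument. The genuine gap is your good-set bound $C|z|/\lambda$. Whatever route you take on $G_\lambda=\{|J|\ge\lambda\}$ — composing $w/|J|$ with the local inverse and differentiating in $y$, or writing $\nabla\phi=(\partial\Psi/\partial t')^{-T}\nabla_{t'}[\phi\circ\Psi]$ and integrating by parts in $t'$ — the derivative inevitably lands on $1/J$ (producing $|\nabla J|/J^2\lesssim\lambda^{-2}$) and is multiplied by inverse-Jacobian factors of size $\lambda^{-1}$; in addition, the rough cutoff $\mathbf{1}_{\{|J|\ge\lambda\}}$ (or the $\lambda$-scale partition of unity you need for quantitative injectivity of $t'\mapsto\Psi(t',t'')$) contributes boundary/cutoff terms of the same order. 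So the honest per-threshold bound is of the form $|z|\lambda^{-2}$, not $|z|\lambda^{-1}$. Your $|z|/\lambda$ does hold in the one-dimensional case, because there $\int_{|\Psi'|\ge\lambda}|\Psi''|/\Psi'^2\,dt\lesssim N/\lambda$ with $N$ = number of monotonicity intervals of $\Psi'$, which is controlled by the hypothesis on $\partial^\alpha J$; but this count has no analogue in general $d\ge n>1$, since $\alpha$ need not involve the chart directions, and nothing in the hypotheses bounds $\int_{G_\lambda}|\nabla J|/J^2$ by a single power of $\lambda^{-1}$. The fact that your optimization would yield the exponent $1/(|\alpha|+1)$, strictly better than the stated $(2|\alpha|)^{-1}$ for $|\alpha|\ge 2$, is itself a warning sign.

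Moreover, simply replacing $|z|/\lambda$ by the correct $|z|/\lambda^2$ does not repair the proof: a single-threshold optimization of $|z|\lambda^{-2}+\lambda^{1/|\alpha|}$ gives only the exponent $1/(2|\alpha|+1)$, which misses the range $\delta\in[1/(2|\alpha|+1),\,1/(2|\alpha|))$ asserted in the proposition. To reach all $\delta<(2|\alpha|)^{-1}$ you need a finer decomposition, e.g.\ smooth dyadic shells $\{|J|\sim\mu\}$, on each of which you use \emph{both} ingredients simultaneously: the translation difference on a shell is bounded by $\min\bigl(\|w\|_\infty|\{|J|\le\mu\}|,\ C|z|\mu^{-2}|\{|J|\le\mu\}|\bigr)\lesssim\min\bigl(\mu^{1/|\alpha|},|z|\mu^{1/|\alpha|-2}\bigr)$, and summing over dyadic $\mu$ with crossover $\mu\sim|z|^{1/2}$ gives $|z|^{1/(2|\alpha|)}$ up to an endpoint loss, hence $h\in L^1_\delta$ for every $\delta<(2|\alpha|)^{-1}$, with constants depending exactly on the quantities in item (3). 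Your sublevel-set lemma and your reduction of absolute continuity to $|\{J=0\}|=0$ plus slice-wise change of variables are fine; it is the quantitative estimate on the large-Jacobian region, and the consequent need for the shell-by-shell summation, that your sketch is missing.
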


Fix constants $\tilde{C_1}, \tilde{C_2}<\infty$ and let $\tilde{\zeta} \in (0, 1]$. Consider a non-negative measure $\Xi$ on $\R^{2n}$ with the following properties:
\begin{enumerate}
\item [$\bullet$] $\textrm{supp} \Xi \subseteq \left\{ (y, z): |y|, |z| \le \tilde{C_1}, |y-z| \le \tilde{C_1} \tilde{\zeta} \right\}$;
\item [$\bullet$] There exists bounded, nonnegative, measurable functions $m_1, m_2$ such that for every $f \in C^0(\R^n)$,
$$
\int \int f(y)d\Xi(y, z)=\int f(y) m_1(y)dy \ \textrm{and} \ \int \int f(z) d\Xi(y, z)= \int f(z)m_2(z)dz,
$$
with $m_1(y), m_2(z) \le C_2$. 
\end{enumerate}

\begin{prop} [{\cite[Proposition 13.3]{BS}}] \label{prop2.8}
Suppose $h \in L^1_\eta(\R^n)$ and $\Xi$ is a measure as desired above. Then, there exists $\eta', A \in (0, \infty)$ such that 
$$
\int |h(y)-h(z)| d\Xi (y, z) \le A \tilde{\zeta}^{\eta'} \|h\|_{L^1_\eta},
$$
where $\eta'$ depends only on $\eta$ and $n$, and $A$ depends only on $\eta, n$, and upper bounds for the constant $\tilde{C_1}$, $\tilde{C_2}$. 
\end{prop}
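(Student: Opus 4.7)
The plan is to reduce the estimate to the $L^1$ modulus of continuity of $h$ via a standard mollification argument, using the marginal hypotheses on $\Xi$ to dominate by Lebesgue integrals. Pick a non-negative $\phi \in C^\infty_0(B^n(1))$ with $\int \phi = 1$, and set $\phi_\epsilon(x) = \epsilon^{-n} \phi(x/\epsilon)$ and $h_\epsilon = h * \phi_\epsilon$ for a parameter $\epsilon > 0$ to be optimized later. The triangle inequality then yields
$$
|h(y) - h(z)| \le |h(y) - h_\epsilon(y)| + |h_\epsilon(y) - h_\epsilon(z)| + |h_\epsilon(z) - h(z)|,
$$
so it suffices to control the integral of each of the three terms against $d\Xi(y,z)$.

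For the first and third terms, the hypothesis on the marginals of $\Xi$ gives
$$
\int |h(y) - h_\epsilon(y)|\, d\Xi(y,z) = \int |h(y) - h_\epsilon(y)|\, m_1(y)\, dy \le \tilde{C}_2 \|h - h_\epsilon\|_{L^1},
$$
and the symmetric estimate holds for the third term via $m_2$. By Minkowski's integral inequality and the defining property of $L^1_\eta$, one has $\|h - h_\epsilon\|_{L^1} \le \int |w|^\eta \phi_\epsilon(w)\, dw \cdot \|h\|_{L^1_\eta} \lesssim \epsilon^\eta \|h\|_{L^1_\eta}$, giving the clean bound $\lesssim \epsilon^\eta \|h\|_{L^1_\eta}$ for these two pieces.

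For the middle term, I will use the pointwise smoothness of $h_\epsilon$. Since $\|\nabla h_\epsilon\|_{L^\infty} \le \|h\|_{L^1} \|\nabla \phi_\epsilon\|_{L^\infty} \lesssim \epsilon^{-n-1} \|h\|_{L^1}$, the support condition $|y-z| \le \tilde{C}_1 \tilde{\zeta}$ gives $|h_\epsilon(y) - h_\epsilon(z)| \lesssim \tilde{C}_1 \tilde{\zeta} \epsilon^{-n-1} \|h\|_{L^1}$. Combining with the total mass estimate $\Xi(\R^{2n}) = \int m_1 \le \tilde{C}_2 \cdot |\{y: |y| \le \tilde{C}_1\}| \lesssim \tilde{C}_1^n \tilde{C}_2$ coming from the support and marginal bounds yields
$$
\int |h_\epsilon(y) - h_\epsilon(z)|\, d\Xi(y,z) \lesssim \tilde{\zeta}\, \epsilon^{-n-1} \|h\|_{L^1_\eta},
$$
where I have absorbed $\|h\|_{L^1} \le \|h\|_{L^1_\eta}$.

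Balancing the two competing bounds $\epsilon^\eta$ and $\tilde{\zeta}\epsilon^{-n-1}$ by the choice $\epsilon = \tilde{\zeta}^{1/(\eta+n+1)}$ produces the common value $\tilde{\zeta}^{\eta/(\eta+n+1)}$, so the proposition holds with $\eta' := \eta/(\eta+n+1)$ and $A$ depending only on $\eta$, $n$, $\tilde{C}_1$, $\tilde{C}_2$ (through the implicit constants from $\phi$ and the volume of the support). There is no real obstacle here—the estimate is a routine mollification/interpolation argument and the only subtlety is keeping track that the constant $A$ depends on nothing beyond what is claimed, which is automatic from the explicit form of the choices above.
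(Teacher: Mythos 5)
Your argument is correct: the marginal hypotheses convert the $\Xi$-integrals of the two mollification errors into $L^1$ norms bounded by $\tilde{C_2}\,\|h-h_\epsilon\|_{L^1}\lesssim \tilde{C_2}\,\epsilon^\eta\|h\|_{L^1_\eta}$, the gradient bound $\|\nabla h_\epsilon\|_{L^\infty}\lesssim \epsilon^{-n-1}\|h\|_{L^1}$ together with the support condition $|y-z|\le \tilde{C_1}\tilde{\zeta}$ and the total mass bound $\Xi(\R^{2n})\lesssim \tilde{C_1}^n\tilde{C_2}$ controls the middle term, and the choice $\epsilon=\tilde{\zeta}^{1/(\eta+n+1)}$ yields $\eta'=\eta/(\eta+n+1)$ with exactly the claimed dependence of $A$ and $\eta'$. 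Note that the paper itself gives no proof (it quotes the result from \cite[Proposition 13.3]{BS}), and your mollification argument is essentially the standard proof of that statement; the only point worth making explicit is that the marginal identities, stated for $f\in C^0$, extend to the nonnegative measurable integrands you use because they identify the marginals of $\Xi$ as the absolutely continuous measures $m_1\,dy$ and $m_2\,dz$, which is routine.
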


\begin{proof}[Proof of Theorme \ref{apdthm01}]
Denote $S=S_1 \dots S_L$ and $R=R_1-R_2$. In what follows, $\eta>0$ will be a positive number that may change from line to line. We wish to show $\|SR\|_{L^2 \mapsto L^2} \lesssim \zeta^\eta$ for some $\eta>0$, and it suffices to show that
$$
\|R^*S^*SR\|_{L^2 \mapsto L^2} \lesssim \zeta^\eta.
$$
Since $\|R^*\|_{L^2 \mapsto L^2} \lesssim 1$, it suffices to show 
$$
\|S^*SR\|_{L^2 \mapsto L^2} \lesssim \zeta^\eta.
$$
Continuing in this manner, it suffices to show that
\begin{equation} \label{selfconeq01}
\|(S^*S)^{2^l} R\|_{L^2 \mapsto L^2} \lesssim \zeta^\eta, 
\end{equation}
for some $l>0$. Since $\|S\|_{L^2 \mapsto L^2}, \|S^*\|_{L^2 \mapsto L^2} \lesssim 1$ trivially, it suffices to show 
\begin{equation} \label{selfconeq02}
\|(S^*S)^n R\|_{L^2 \mapsto L^2} \lesssim \zeta^\eta,
\end{equation}
where we have just taken $l$ so large $2^l \ge n$ and applied \eqref{selfconeq01}. 

It is also easy to see that $\|R^*S^*SR\|_{L^1 \mapsto L^1} \lesssim 1$, and so interpolation shows that to prove \eqref{selfconeq02}, we need only show 
\begin{equation} \label{selfconeq03}
\|(S^*S)^nR\|_{L^\infty \mapsto L^\infty} \lesssim \zeta^\eta.
\end{equation}
Let $f$ be a bounded measurable function. Rephrasing \eqref{selfconeq03}, we wish to show
\begin{equation} \label{selfconeq04}
|(S^*S)^n Rf(x_0)| \lesssim \zeta^\eta \|f\|_{L^\infty}
\end{equation}
for every $x_0 \in K_0$. We now fix $x_0$ and prove \eqref{selfconeq04}. All implicit constants in what follows can be chosen to be independent of $x_0 \in K_0$. 

Define 
$$
\widehat{\gamma}_t(x)=\widehat{\gamma}_{(t_1, \dots, t_L, s_1, \dots, s_L)}(x):=\widehat{\gamma}_{L, t_L} \circ \widehat{\gamma}_{L-1, t_{L-1}} \circ \dots \circ \widehat{\gamma}_{1, t_1} \circ \widehat{\gamma}_{1, s_1}^{-1} \circ \widehat{\gamma}_{2, s_2}^{-1} \circ \dots \circ \widehat{\gamma}_{L, s_L}^{-1}(x). 
$$
Thus $\widehat{\gamma}$ is smooth, and $S^*S$ is given by 
$$
S^*Sf(x)=\psi_1(x) \int f \left (\widehat{\gamma}_t(x)\right) \psi_2\left (\widehat{\gamma}_t(x)\right) \varrho(t, x) \varsigma(t)dt,
$$
where $\varsigma \in C_0^1(B^N(a')), a'>0$ is a small number depending on $a$. (From here on out, $a'>0$ will be a small number (depending $a>0$) that may change from line to line), $N=\sum\limits_{j=1}^L 2k_j, \varrho \in C^\infty$, and $\psi_1, \psi_2 \in C^\infty_0$ are supported in the interior of $K_0$. Define, for $\tilde{\tau}=(t_1, \dots, t_n) \  (t_j \in B^N(a'))$, 
\begin{equation} \label{defnGAMMA01}
\Gamma_{\tilde{\tau}}(x)=\widehat{\gamma}_{t_1} \circ \widehat{\gamma}_{t_2} \circ \dots \widehat{\gamma}_{t_n}(x)
\end{equation}
so that 
$$
(S^*S)^n f(x)=\psi_1(x) \int f(\Gamma_{\tilde{t}}(x)) \psi_2(\Gamma_{\tilde{\tau}}(x)) \varrho(\tilde{\tau}, x) \varsigma(\tilde{\tau}) d\tilde{\tau},
$$
where the various funcitons have changed but are of the same basic form as before. Thus
\begin{eqnarray*}
(S^*S)^n Rf(x)%
&=& \psi_1(x) \int f\left(\tilde{\gamma}_{t, \zeta} \circ \Gamma_{\tilde{\tau}}(x) \right) \widetilde{\psi_2} \left(\tilde{\gamma}_{t, \zeta} \circ \Gamma_{\tilde{\tau}}(x) \right) \varrho(t, \zeta, \tilde{\tau}, x) \varsigma(\tilde{\tau}) \widetilde{\varsigma}(t) d\tilde{\tau}dt\\
&& -\psi_1(x) \int f\left(\tilde{\gamma}_{t, 0} \circ \Gamma_{\tilde{\tau}}(x) \right) \widetilde{\psi_2} \left(\gamma_{t, 0} \circ \Gamma_{\tilde{\tau}}(x) \right) \varrho(t, 0, \tilde{\tau}, x) \varsigma(\tilde{\tau}) \widetilde{\varsigma}(t) d\tilde{\tau}dt.
\end{eqnarray*}
Here, $\varrho$ is $C^\infty$ and $\psi_1$, $\widetilde{\psi_2}$ belongs to $C^\infty$. Now think of $x_0 \in K_0$ as fixed. We wish to establish \eqref{selfconeq04}. The dependence of $\rho$ on $x_0$ is unimportant, so we suppress it. Given a bounded measurable function $f$, we wish to study the integral
\begin{eqnarray} \label{extra01}
&&\calT(f):=\int f\left(\tilde{\gamma}_{t, \zeta} \circ \Gamma_{\tilde{\tau}}(x_0) \right)  \varrho(t, \zeta, \tilde{\tau}) \varsigma(\tilde{\tau}) \widetilde{\varsigma}(t) d\tilde{\tau}dt \nonumber \\
&& \quad \quad \quad \quad \quad \quad -\int f\left(\gamma_{t, 0} \circ \Gamma_{\tilde{\tau}}(x_0) \right)\varrho(t, 0, \tilde{\tau}) \varsigma(\tilde{\tau}) \widetilde{\varsigma}(t) d\tilde{\tau}dt.
\end{eqnarray}
Note that $\psi_1(x_0)\calT(g \widetilde{\psi_2})=(S^*S)^nRf(x_0)$. 

\bigskip

\textit{Claim: For $a>0$ sufficiently small, there exists $\eta>0$ and $M$ such that 
$$
|\calT(f)| \le M \zeta^\eta \sup_{x \in B_{(Z, \tilde{d})}(x_0, 1)} |f(x)|,
$$
where $a, \eta$ and $C$ may only depend on the parameters the constants of the same names were allowed to depend on in our early assumption.
}

Indeed, it suffices to consider only $\rho_0$ of the form $\rho(t, \zeta, \tilde{\tau})=\rho_1(\tilde{\tau}) \rho_{2}(t, \tilde{\zeta})$, since every $\rho$ may be written as a rapidly converging sum of such terms. 

Next, we note that $\tilde{\gamma}_{t, s} \circ \Gamma_{\tilde{\tau}}$ is $C^\infty$ in a small neighborhood of $x_0$, if $a>0$ is sufficiently small (and therefore $t, s$ and $\tilde{\tau}$ are sufficiently small), then $\tilde{\gamma}_{t, s} \circ \Gamma_{\tilde{\tau}} \in B_{(Z, \tilde{d})}(x_0, \zeta_1)$, for some $\zeta_1>0$ small.  Moreover, without the loss of generality, we may assume $x_0=0$ and $B_{(Z, \tilde{d})}(x_0, \zeta_1) \subseteq B^n(\eta_1)$, where $\eta_1>0$ only depends on $1$-admissible constants. Hence, $\calT$ only depends on the values of $f$ on $B^n(\eta_1)$.

Now we turn back to the proof of the claim. Note that, since $\rho_2$ is $C^\infty$, we have $\rho_2(t, \zeta)=\rho_2(t, 0)+O(\zeta)$ uniformly in $t$. Combining this with \eqref{extra01}, it is easy to see
\begin{eqnarray} \label{extra02}
&& \calT(f)= \int \left[ g\left(\tilde{\gamma}_{t, \zeta} \circ \Gamma_{\tilde{\tau}}(0) \right)- g\left(\tilde{\gamma}_{t, 0} \circ \Gamma_{\tilde{\tau}}(0) \right)\right] \rho_1(\tilde{\tau}) \rho_{2}(t, 0) \varsigma(\tilde{\tau}) \widetilde{\varsigma}(t) d\tilde{\tau}dt \nonumber \\
&& \quad \quad  \quad \quad \quad +O\left(|b| \sup_{x \in B^n(\eta_1)} |f(x)|\right):=\widehat{\calT}(f)+ O\left(|b| \sup_{x \in B^n(\eta_1)}  |f(x)| \right). 
\end{eqnarray}
Note that the error term in \eqref{extra02} is of the desired form. Thus, it suffces to bound $|\widehat{\calT}(f)|$. By Theorem \ref{apdthm000}, $\Gamma$ satisfies $(\calC_J)$ uniformly in any relevant parameters, i.e., there exists a multi-index $\beta$ (with $|\beta| \lesssim 1$) such that
\begin{equation} \label{apdmod001}
\left | \left( \frac{\partial}{\partial \tilde{\tau}} \right)^\beta \det_{n \times n} \frac{\partial \Gamma}{\partial \tilde{\tau}} (\tilde{\tau}, 0) \bigg |_{\tilde{\tau}=0} \right | \gtrsim 1,
\end{equation}
where the implicit constant in the above inequality depends only on $r, M, L, \rho_j, K_0$ and $C_j(m)$. Since $\|\Gamma\|_{C^{|\beta|+1}} \lesssim 1$, which follows from the fact that $\Gamma$ is $C^\infty$ (see, e.g., \cite[Proposition 12.3]{BS}), we see that if we take $a>0$ sufficiently small, 
$$
\left | \left( \frac{\partial}{\partial \tilde{\tau}} \right)^\beta \det_{n \times n} \frac{\partial \Gamma}{\partial \tilde{\tau}} (\tilde{\tau}, 0) \right | \gtrsim 1.
$$
for all $\tilde{\tau}$ in the support of $\varsigma$. 

Applying Proposition \ref{prop2.7}, with $\Psi(\tilde{\tau})=\Gamma_{\tilde{\tau}}(0)$ and $\psi(\tilde{\tau})=\rho_1(\tilde{\tau}) \varsigma(\tilde{\tau})$, we see that there exists $\eta \gtrsim 1$ and $h \in L^1_\eta(\R^n)$ (with $\|h\|_{L^1_\eta} \lesssim 1$) such that
$$
\widehat{\calT}(f)=\int \left[  f\left(\tilde{\gamma}_{t, \zeta}(u)\right)- f\left(\tilde{\gamma}_{t, 0}(u) \right)\right] h(u) \rho_2(t, 0) \widetilde{\varsigma}(t)dudt.
$$
Applying two changes of variables, we have
\begin{eqnarray*}
&&\widehat{\calT}(f)=\int f(v) \left( \det \frac{\partial \tilde{\gamma}_{t, \zeta}^{-1}}{\partial v}(v) \right) h\left( \tilde{\gamma}_{t, \zeta}^{-1}(v) \right) \rho_2(t, 0) \widetilde{\varsigma}(t)dvdt\\
&& \quad \quad  \quad  \quad  \quad -\int g(v) \left( \det \frac{\partial \tilde{\gamma}_{t, 0}^{-1}}{\partial v}(v) \right) h\left( \tilde{\gamma}_{t, 0}^{-1}(v) \right) \rho_2(t, 0) \widetilde{\varsigma}(t)dvdt.
\end{eqnarray*}
Using that $\tilde{\gamma}(t, s, u)$ is $C^\infty$, we have
$$
 \left( \det \frac{\partial \tilde{\gamma}_{t, \zeta}^{-1}}{\partial v}(v) \right)= \left( \det \frac{\partial \tilde{\gamma}_{t, 0}^{-1}}{\partial v}(v) \right)+O(\zeta),
$$
and therefore
\begin{eqnarray*}
&& \widehat{\calT}(f)=\int f(v) \left( \det \frac{\partial \tilde{\gamma}_{t, 0}^{-1}}{\partial v}(v) \right) \left( h\left( \tilde{\gamma}_{t, \zeta}^{-1}(v) \right) - h\left( \tilde{\gamma}_{t, 0}^{-1}(v) \right) \right) \rho_2(t, 0) \widetilde{\varsigma}(t)dvdt\\
&& \quad \quad \quad  \quad+ O(\zeta) \sup_{v \in B^n(\eta')} |f(v)|,
\end{eqnarray*}
where $\eta'>0$ is some number depending only on $C(m), C_j(m), \rho_j, \rho'$ and any $2$-admissible constants. Using $\int |\rho_2(t, 0) \widetilde{\varsigma}(t)dt| \lesssim 1$. we have
$$
|\widehat{\calT}(f)| \lesssim \left\{ \sup_{|t| \le a} \left[ \int_{B^n(\eta')} \left |h\left( \tilde{\gamma}_{t, \zeta}^{-1}(v) \right) - h\left( \tilde{\gamma}_{t, 0}^{-1}(v) \right) \right| dv\right]+ \zeta \right\}\sup_{v \in B^n(\eta')} |f(v)|.
$$
The proof will now be completed by showing, for every $|t| \le a$, 
\begin{equation} \label{extra03}
\int \left |h\left( \tilde{\gamma}_{t, \zeta}^{-1}(v) \right) - h\left( \tilde{\gamma}_{t, 0}^{-1}(v) \right) \right| dv \lesssim \zeta^\eta
\end{equation} 
for some $\eta>0$. Define a measure $\Xi$ by
$$
\int k(y, z) d\Xi(y, z)= \int_{B^n(\eta')} k\left(\tilde{\gamma}_{t, \zeta}^{-1}(v), \tilde{\gamma}_{t, 0}^{-1}(v)\right)dv,
$$
so that the left hand side of \eqref{extra03} becomes $\int |h(y)-h(z)| d\Xi(y, z)$. Since $\tilde{\gamma}_{t, s}^{-1}$ depends smoothly on $s$, $\left|\tilde{\gamma}_{t, \zeta}^{-1}(v)-\tilde{\gamma}_{t, 0}^{-1}(v) \right| \lesssim \zeta$. Hence $\Xi$ is supported on those $(y, z)$ such that $|y-z| \lesssim \zeta$. Applying Proposition \ref{prop2.8}, we see $\int |h(y)-h(z)|d\Xi(y, z) \lesssim \zeta^\eta$, for some $\eta \gtrsim 1$. This establishes \eqref{extra03} and completes the proof.
\end{proof}

Next, we state a modified version of Theorem \ref{apdthm01}, in which, the assumption \eqref{generalnothold01} does not hold. 

To state this result, we need some further assumptions based on the setting of Theorem \ref{apdthm01}. More precisely, we let $\Omega'''$ to be an open set satisfying 
$$
K_0 \Subset \Omega''' \Subset \Omega'', 
$$
with $\Omega'''$ being relatively compact in $\Omega''$. Morever, we assume, we are given a $C^\infty$ function $\theta: B^{k_0}(1) \times \Omega''' \mapsto \Omega''$ satisfying $\theta(0, x) \equiv x$. We also assume that there exists a $\rho''>0$, such that for any $m \ge 0, m \in \Z$, there exists some constant $D(m)>0$, such that
$$
\|\theta\|_{C^m\left( B^{k_0}(\rho'') \times K_0 \right)} \le D(m).
$$
Finally, we assume that for any $b \in B^{k_0}(1)$, the map $\theta_b(\cdot):=\theta(b, \cdot)$ has an inverse, which maps $\theta_b(\Omega''')$ back to $\Omega'''$. 

Take some $b \in B^{k_0}(1)$.  Now for each $j$, we define the operator
$$
\widetilde{S}_jf(x):=\psi_{j, 1}(x) \int f(\theta_b^{-1} \circ \widehat{\gamma}_{j, t} \circ \theta_b(x)) \psi_{j, 2} (\theta_b^{-1} \circ \widehat{\gamma}_{j, t} \circ \theta_b(x)) \varrho_j(t, x)\chi_j(t)dt.
$$
Note that, under the above assumptions, again, it holds that
$$
\|\widetilde{S}_j\|_{L^\infty \mapsto L^\infty}, \|\widetilde{S}_j\|_{L^1 \mapsto L^1} \lesssim 1,
$$
and furthermore $\widetilde{S}_j^*$ is of the same form as $\widetilde{S}_j$, with $\theta_b^{-1} \circ \widehat{\gamma}_{j, t} \circ \theta_b$ replaced by $\theta_b^{-1} \circ \widehat{\gamma}^{-1}_{j, t} \circ \theta_b$. 

\begin{cor} \label{apdcor01}
In the above setup, if $a, |b|>0$ are choosen sufficiently small, we have $\|\widetilde{S}_j \dots \widetilde{S}_L (R_1-R_2) \|_{L^2 \mapsto L^2} \le \widetilde{C} \zeta^\eta$, for some $\eta>0$. 
\end{cor}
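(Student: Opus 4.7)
The plan is to follow the proof of Theorem \ref{apdthm01} essentially verbatim, the only substantive change being that the iterated composition that must satisfy $(\calC_J)$ is no longer $\Gamma_{\tilde\tau}$ but its conjugate by $\theta_b$. First I would observe that $\widetilde S_j$ has exactly the same form as $S_j$ with $\widehat\gamma_{j,t}$ replaced by $\widetilde\gamma_{j,t}:=\theta_b^{-1}\circ\widehat\gamma_{j,t}\circ\theta_b$. Because $\widetilde\gamma_{j,t}^{-1}=\theta_b^{-1}\circ\widehat\gamma_{j,t}^{-1}\circ\theta_b$, every intermediate $\theta_b\circ\theta_b^{-1}$ in a composition cancels, so after running through the same reductions that gave \eqref{selfconeq04} for $(S^*S)^n R$, the analog of $\Gamma_{\tilde\tau}$ appearing in the analog of \eqref{extra01} is
\[
\widetilde\Gamma_{\tilde\tau}(x)=\theta_b^{-1}\circ\Gamma_{\tilde\tau}\circ\theta_b(x).
\]
All the cutoffs $\psi_{j,i}$, amplitudes $\varrho_j$, and the $R_1-R_2$ factor are unaffected in their general shape, so the only place where the argument interacts nontrivially with $\theta_b$ is the use of the curvature condition \eqref{apdmod001} in the Claim.

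The heart of the proof is therefore to establish a uniform $(\calC_J)$ condition for $\widetilde\Gamma$ at $x_0\in K_0$, uniform in $|b|$ small. Since $\Gamma_0(y)\equiv y$, differentiating $\widetilde\Gamma_{\tilde\tau}(x_0)=\theta_b^{-1}(\Gamma_{\tilde\tau}(\theta_b(x_0)))$ in $\tilde\tau$ at $\tilde\tau=0$ gives
\[
\frac{\partial\widetilde\Gamma}{\partial\tilde\tau_i}(0,x_0)
= d\theta_b^{-1}\big|_{\theta_b(x_0)}\cdot\frac{\partial\Gamma}{\partial\tilde\tau_i}(0,\theta_b(x_0)),
\]
and an iterated Fa\`a di Bruno computation shows that every $\tilde\tau$-derivative of $\det_{n\times n}(\partial\widetilde\Gamma/\partial\tilde\tau)$ at $\tilde\tau=0$ is a polynomial, with coefficients smooth in finitely many derivatives of $\theta_b$, in the derivatives of $\det_{n\times n}(\partial\Gamma/\partial\tilde\tau)$ at $\tilde\tau=0$ evaluated at $\theta_b(x_0)$. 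Because $\theta(0,y)\equiv y$, we have $\|\theta_b-\mathrm{id}\|_{C^{|\beta|+1}}\to 0$ as $|b|\to 0$, uniformly on $K_0$, so the linear map $d\theta_b^{-1}$ is close to the identity and $|\det d\theta_b^{-1}|\simeq 1$; consequently the multi-index $\beta$ from Theorem \ref{apdthm002} applied to $\Gamma$ (at the perturbed basepoint $\theta_b(x_0)\in\Omega'$) witnesses an analogous lower bound
\[
\left|\left(\frac{\partial}{\partial\tilde\tau}\right)^\beta\det_{n\times n}\frac{\partial\widetilde\Gamma}{\partial\tilde\tau}(\tilde\tau,x_0)\bigg|_{\tilde\tau=0}\right|\gtrsim 1
\]
for all $x_0\in K_0$, provided $a$ and $|b|$ are small enough that the correction terms do not swallow the leading contribution.

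With this uniform $(\calC_J)$ for $\widetilde\Gamma$ in place, the rest of the proof of Theorem \ref{apdthm01} applies line by line: Proposition \ref{prop2.7} is invoked with $\Psi(\tilde\tau)=\widetilde\Gamma_{\tilde\tau}(x_0)$ to produce a density $\tilde h\in L^1_\eta$ with controlled norm, the $\zeta$-smoothness of $\widetilde\varrho$ and $\tilde\gamma_{t,s}$ handles the $O(\zeta)$ error as in \eqref{extra02}, and Proposition \ref{prop2.8} converts the resulting $L^1_\eta$ modulus of continuity into the desired $\zeta^\eta$ bound. All implicit constants remain uniform in $b$ because they only depend on $C^m$ norms of $\widehat\gamma$, $\theta$, $\tilde\gamma$, the cutoffs, and on the admissible constants in $(\calC_Z^u)$ for $\Gamma$.

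The main obstacle is precisely the quantitative transfer step in the second paragraph: one must keep careful track of how small $|b|$ must be so that the Fa\`a di Bruno expansion of $\det_{n\times n}(\partial\widetilde\Gamma/\partial\tilde\tau)$ preserves the nonvanishing $\beta$-derivative that $\Gamma$ enjoys at nearby basepoints, and one must verify that the smallness threshold depends only on the parameters enumerated after Theorem \ref{apdthm01} (so that the final constant $\widetilde C$ has the same flavor of uniform dependence as in the original theorem). Everything else is bookkeeping.
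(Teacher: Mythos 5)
Your proposal is correct and follows the same route as the paper: the telescoping of the $\theta_b$ conjugations reduces everything to verifying that $\theta_b^{-1}\circ\Gamma_{\tilde\tau}\circ\theta_b$ still satisfies the uniform lower bound \eqref{apdmod001}, which the paper likewise establishes by a chain-rule expansion of $\det_{n\times n}\partial(\theta_b^{-1}\circ\Gamma\circ\theta_b)/\partial\tilde\tau$ and the fact that $\theta_b^{-1}$ is $C^m$-close to the identity as $a,|b|\to 0$, after which the argument of Theorem \ref{apdthm01} applies verbatim. The quantitative transfer step you flag as the main obstacle is exactly the computation carried out in the paper's proof, so no genuinely new idea is missing.
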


\begin{proof}
The proof of this corollary follows closely from the one of Theorem \ref{apdthm01}, and here we will only mention those necessary modifications. 

\medskip

1. Recall that curve $\widehat{\gamma}_t=\widehat{\gamma}{(t_1, \dots, t_L, s_1, \dots, s_L)}, t \in \R^N$ defined in the proof of Theorem \ref{apdthm01}. 

\medskip

\textit{Claim: For $a$ and $|b|$ sufficiently small, there exists a $\tilde{\rho}>0$, such that for each $m \ge 0, m \in \Z$, there exists a $\widetilde{D}(m)>0$, such that
$$
\| \theta_b^{-1} \circ \widehat{\gamma}_t \circ \theta_b \|_{C^m\left(B^N(a) \times K_0 \right)}<\widetilde{D}(m).
$$
}

\medskip

This is clear, since all $\widehat{\gamma}$, $\theta$ and $\theta^{-1}$ are $C^\infty$. 

\medskip

2. First, we note that the assumption \eqref{generalnothold01} gives: for each $l$, $1 \le l \le r$, there is a $j$ $(1 \le j \le L)$, and a multi-index $\alpha$, such that
$$
\widetilde{Z_l}(x)=\frac{1}{\alpha !} \frac{\partial}{\partial t}^\alpha \bigg|_{t=0} \frac{d}{d\epsilon} \bigg|_{\epsilon=1} \theta_b^{-1}  \circ \widehat{\gamma}_{j, \epsilon t} \circ \widehat{\gamma}^{-1}_{j, t}  \circ \theta_b(x),
$$
where $\widetilde{Z_l}$ is the pullback of the vector field $Z_l$ via the mapping $\theta_b$ for $1 \le l \le r$.  Therefore, we cannot apply Theorem \ref{apdthm01} directly and a modification is expected. 

\medskip

\textit{Claim: Under the setting of Corollary \ref{apdcor01}, the proof of Theorem \ref{apdthm01} still applies.}

\medskip

Indeed, by checking the proof of Theorem \ref{apdthm01} carefully, one can see that the whole point to make the assumption \eqref{generalnothold01} is to guarantee the mapping 
$$
\Gamma_{\widetilde{\tau}}=\widehat{\gamma}_{t_1} \circ \widehat{\gamma}_{t_2} \circ \dots \circ \widehat{\gamma}_{t_n}
$$ 
satisfies \eqref{apdmod001}, that is, the condition $(\calC_J)$. Therefore, our goal is to show that the estimation \eqref{apdmod001} still holds for the mapping 
\begin{equation} \label{pushbackcurve}
\theta_b^{-1} \circ \Gamma_{\widetilde{\tau}} \circ \theta_b. 
\end{equation}

Note that by \eqref{apdmod001}, there exists a multi-index $\beta$, such that 
$$
\left| \left(\frac{\partial}{\partial \widetilde{\tau}} \right)^\beta \det_{n \times n} \frac{\partial \Gamma}{\partial \widetilde{\tau}}(\widetilde{\tau}, 0) \Bigg|_{\widetilde{\tau}=0} \right| \gtrsim 1.
$$
Fix such a multi-index $\beta$. We wish to show that for $|b|$ sufficiently small, one has
$$
\left| \left(\frac{\partial}{\partial \widetilde{\tau}} \right)^\beta \det_{n \times n} \frac{\partial \left( \theta_b^{-1} \circ \Gamma \circ \theta_b \right)}{\partial \widetilde{\tau}}(\widetilde{\tau}, 0) \Bigg|_{\widetilde{\tau}=0} \right| \gtrsim 1.
$$
Indeed, by chain rule, we have
$$
 \frac{\partial \left( \theta_b^{-1} \circ \Gamma \circ \theta_b \right)}{\partial \widetilde{\tau}}(\widetilde{\tau}, 0)= \frac{\partial{\theta_b^{-1}}}{\partial w}  \left(\Gamma_{\widetilde{\tau}} \circ \theta_b(0) \right) \cdot \frac{\partial \Gamma}{\partial \widetilde{\tau}} \left(\widetilde{\tau}, \theta_b(0)\right),
$$
which is an $n \times Nn$ matrix and whose $(i, j)$-th entry is of the form 
$$
\sum_{l=1}^{\tilde{n}} \frac{\partial \left(\theta_b^{-1} \right)^i }{\partial w_l} \left(\Gamma_{\widetilde{\tau}} \circ \theta_b(0) \right) \cdot \frac{\partial \Gamma^l}{\partial \widetilde{\tau}_j} \left(\widetilde{\tau}, \theta_b(0)\right),
$$
for $1 \le i \le n$ and $1 \le j \le Nn$. Here, we write $w=(w_1, \dots, w_n) \in \R^n$, $\theta^{-1}_b=\left( \left(\theta_b^{-1} \right)^1, \dots, \left(\theta_b^{-1} \right)^n \right)$ and $\Gamma=\left( \Gamma^1, \dots, \Gamma^n \right)$. Note that when $b=0$, we get the $(i, j)$-th entry of the matrix $\frac{\partial \Gamma}{\partial \widetilde{\tau}}(\widetilde{\tau}, 0)$. Therefore, when the operator 
$$
\left( \frac{\partial}{\partial \widetilde{\tau}} \right)^\beta
$$
acts on each of these $(i, j)$-th entries, we see that the derivative is a linear combination of the following two expressions:
\begin{equation} \label{Q2eq002}
\frac{\partial \left(\theta_b^{-1} \right)^i }{\partial w_l} \left(\Gamma_{\widetilde{\tau}} \circ \theta_b(0) \right) \cdot \left( \frac{\partial}{\partial \widetilde{\tau}} \right)^\beta \left(\frac{\partial \Gamma^l}{\partial \widetilde{\tau}_j} \left(\widetilde{\tau}, \theta_b(0)\right) \right)
\end{equation}
and 
\begin{equation} \label{Q2eq003}
\left( \frac{\partial}{\partial \widetilde{\tau}} \right)^{\beta_1}\left(\frac{\partial \left(\theta_b^{-1} \right)^i }{\partial w_l} \left(\Gamma_{\widetilde{\tau}} \circ \theta_b(0) \right) \right) \cdot \left( \frac{\partial}{\partial \widetilde{\tau}} \right)^{\beta_2} \left(\frac{\partial \Gamma^l}{\partial \widetilde{\tau}_j} \left(\widetilde{\tau}, \theta_b(0)\right) \right),
\end{equation}
where $\beta_1+\beta_2=\beta$ and $|\beta_1|>0$. Since $\theta_t^{-1}$ is smooth, $\theta^{-1}_0(w)=w$ and $\Gamma(0, w)=w$, we see that for each $1 \le i, l \le n$,  

$$
\frac{\partial \left(\theta_b^{-1} \right)^i }{\partial w_l} \left(\Gamma_{\widetilde{\tau}} \circ \theta_b(0) \right) \longrightarrow
\begin{cases}
1, \quad   \hfill i=l; \\
0, \quad   \hfill i \neq l, 
\end{cases}
\quad \textrm{as} \quad a, |b| \to 0,
$$
and for any multi-index $\alpha' \in \N^n$ with $|\alpha'| \ge 2$, 
$$
\left(\left(\frac{\partial}{\partial w}\right)^{\alpha'}\left(\theta_b^{-1} \right)^i  \right)  \left(\Gamma_{\widetilde{\tau}} \circ \theta_b(0) \right) \longrightarrow 0, \quad \textrm{as} \quad a, |b| \to 0.
$$
 These, together with another application of chain rule,  and the fact that both $\Gamma_{\widetilde{\tau}}$ and $\theta_t$ are $C^\infty$ in any relevant parameters, implies 

$$
\eqref{Q2eq002} \to
\begin{cases}
\left(\left( \frac{\partial}{\partial \widetilde{\tau}} \right)^{\beta} \frac{\partial \Gamma^l}{\partial \widetilde{\tau}_j} \right) (\widetilde{\tau}, 0) \bigg|_{\widetilde{\tau}=0}, \quad   \hfill i=l; \\\\
0, \quad   \hfill i \neq l, 
\end{cases}
\quad \textrm{as} \quad a, |b| \to 0
$$

and

$$
\eqref{Q2eq003} \to 0, \quad \textrm{as} \quad a, |b| \to 0，
$$
Therefore, by \eqref{apdmod001}, we see that for $a$ and $|b|$ sufficiently small, 
$$
\left| \left(\frac{\partial}{\partial \widetilde{\tau}} \right)^\beta \det_{n \times n} \frac{\partial \left( \theta_b^{-1} \circ \Gamma \circ \theta_b \right)}{\partial \widetilde{\tau}}(\widetilde{\tau}, 0) \Bigg|_{\widetilde{\tau}=0} \right| \gtrsim \left| \left(\frac{\partial}{\partial \widetilde{\tau}} \right)^\beta \det_{n \times n} \frac{\partial \Gamma}{\partial \widetilde{\tau}}(\widetilde{\tau}, 0) \bigg |_{\widetilde{\tau}=0}  \right| \gtrsim 1.
$$

\end{proof}

\section{A quantitative $L^p$ improving theorem}

In this appendix, we recall a quantitative $L^p$ improving theorem for the singular Radon transform, which was a special case\footnote{See Appendix B for a similar comment.} of the one proved in \cite{BS3}. Again, ``a special case" refers to a ``single scale" and ``full rank" case, while in \cite{BS3}, Street studied this result for an ``all scales" and "small rank" case. Similarily, this ``special case" we are interested here comes from a pullback of Street's setting via the scaling map $\Phi$ (see, Theorem \ref{scalingmap}). 

The setting is as follows. Let $a>0$ be a sufficiently small number, $n, k \in \N, n, k \ge 1$, and 
$$
0 \in \Omega_1 \subsetneq B^n(\eta) \subset \R^n,
$$
where $\eta>0$ and $\Omega_1$ is some open set in $\R^n$. We start by defining four bounded sets of smoothing mappings. 

\begin{enumerate}
\item [(a).] $\calB_1 \subseteq C_0^\infty(\R^k)$ is a bounded set, with all $\chi \in \calB_1$ being supported in $\overline{B^k(a)}$. More precisely, there exists a sequence of positive numbers $\{C_{1, m}\}_{m \in \N}$, such that
$$
\calB_1:=\left\{ \chi \in C_0^\infty (\R^k)): \textrm{supp}(\chi) \subseteq \overline{B^k(a)},  \| \chi\|_{C^m_0(\R^k)} \le C_{1, m}, m \ge 0\right\}. 
$$

\bigskip

\item [(b).] $\calB_2 \subseteq C_0^\infty (\R^n)$ is a bounded set, with all $\psi \in \calB_2$ being supported in $\overline{\Omega_1}$. More precisely, there exists a sequence of positive numbers $\{C_{2, m}\}_{m \in \N}$, such that
$$
\calB_2:=\left\{ \psi \in C^\infty_0(\R^n): \textrm{supp}(\psi) \subseteq \overline{\Omega_1}, \|\psi\|_{C^m_0(\R^n)} \le C_{2, m}, m \ge 0 \right\}. 
$$

\bigskip

\item [(c).]  $\calB_3$ is a bounded set of $C^\infty\left( \overline{B^k(a)} \times \overline{B^n(\eta)} \right)$. More precisely, there exists a sequence of positive numbers $\{ C_{3, m} \}_{m \in \N}$, such that 
\begin{eqnarray*}
\calB_3%
&:=& \bigg\{ \rho \in C^\infty\left( \overline{B^k(a)} \times \overline{\B^k(\eta)} \right): \\
&&  \quad \quad \quad \quad \quad \quad \quad \|\rho\|_{C^m\left( \overline{B^k(a)} \times \overline{B^n(\eta)} \right)} \le C_{3, m}, m \ge 0 \bigg\}.
\end{eqnarray*}

\bigskip

\item [(d).] Fix $a' \ge a, \eta' \ge \eta$ and $C_{4, m}$ is a sequense of positive numbers. Let
\begin{eqnarray*}
\calB_4%
&=& \bigg \{ \gamma \in C^\infty(\overline{B^k(a)} \times \overline{B^n(\eta)}; \R^n): \gamma(0, x) \equiv x \ \textrm{and} \\
&& \quad \quad \quad \quad \quad \quad \quad \gamma \ \textrm{satisfies} \ (\calC_J)_{M, c, a', \eta', \{C_{4, m}\}_{m \in \N}} \bigg \}.
\end{eqnarray*}
Note that this implies $\calB_4 \subseteq C^\infty(\overline{B^k(a)} \times \overline{B^n(\eta)}; \R^n)$ is a bounded set. 
\end{enumerate}

Define the Radon transform 
$$
\calT f(x):= \psi_1(x) \int f(\gamma_t(x)) \psi_2(\gamma_t(x)) \rho(t, x) \chi(t)dt, 
$$
where $\chi \in \calB_1$, $\psi_1, \psi_2 \in \calB_2$, $\rho \in \calB_3$ and $\gamma \in \calB_4$. 

\begin{thm} \label{apdthm003}
Given $r \in [1, \infty)$, there exists $s>r$, such that 
$$
\sup_{\chi \in \calB_1, \psi_1, \psi_2 \in \calB_2, \rho \in \calB_3, \gamma \in \calB_4}\| \calT f \|_{L^s} \le C_{r, s} \|f\|_{L^r},
$$
where the constant $C_{r, s}$ only depends on $r$, $s$ and the sets $\calB_1, \dots, \calB_4$. 
\end{thm}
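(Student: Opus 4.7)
The plan is to establish this as a uniform version of the classical $L^p$-improving property for singular Radon transforms, by combining an iteration scheme that exploits the uniform curvature condition built into $\calB_4$ with Riesz--Thorin interpolation against trivial endpoint bounds. The essential quantitative content is to track that every constant depends only on the data defining $\calB_1, \calB_2, \calB_3, \calB_4$, not on the particular members chosen.

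First I would record the trivial $L^p \to L^p$ bounds. From the $C^0$ control and support conditions built into $\calB_1, \calB_2, \calB_3$, and the fact that $\gamma \in \calB_4$ satisfies $\gamma_0(x) \equiv x$ with $\|\gamma\|_{C^1}$ uniformly bounded (so that, for $a$ small, $\gamma_t$ is a diffeomorphism onto its image with uniform Jacobian bounds), one immediately gets $\|\calT\|_{L^p \to L^p} \le C_0$ for $p \in \{1, \infty\}$ with $C_0$ uniform over all data; interpolation extends this to all $p \in [1,\infty]$. The same is true for $\calT^*$, which has the same form.

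Second, and this is the heart of the argument, I would form the $N$-fold composition $\calT_N := (\calT^* \calT)^N$. Its Schwartz kernel is a pushforward of Lebesgue measure on the parameter space by the iterated map
$$
\Gamma_N(x, \tau) := \gamma_{s_N}^{-1} \circ \gamma_{t_N} \circ \cdots \circ \gamma_{s_1}^{-1} \circ \gamma_{t_1}(x), \qquad \tau = (t_1, s_1, \dots, t_N, s_N),
$$
weighted by a smooth amplitude $A_N(x, \tau)$ of compact support whose $C^m$-norms are uniformly bounded over $\calB_1, \calB_2, \calB_3, \calB_4$ for every $m$. By Theorem \ref{apdthm002}, the uniform $(\calC_J)_{M, c, a', \eta', \{C_{4,m}\}}$ hypothesis defining $\calB_4$ is equivalent to the uniform $(\calC_Z)$ condition on the associated vector fields; together with the stability of control under composition and inversion (Proposition \ref{controlcor}), this implies that $\Gamma_N$ satisfies $(\calC_J)$ at the origin with parameters depending only on the data of $\calB_4$ and on $N$, once $N$ is chosen large enough in terms of $n, k, M$.

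Third, I would invoke Proposition \ref{prop2.7} applied to the map $\tau \mapsto \Gamma_N(x,\tau)$ with weight $A_N(x,\cdot)$: the pushforward measure is then absolutely continuous with density $h(x,\cdot) \in L^1_\delta(\R^n)$ for some $\delta > 0$, and the $L^1_\delta$-norm is bounded uniformly in $x$ and in the data. For $N$ sufficiently large the repeated regularity gain from iterating Proposition \ref{prop2.7} (or equivalently from composing enough pushforwards) places the kernel uniformly in $L^\infty$, giving $\|\calT_N\|_{L^1 \to L^\infty} \le C_1$. Riesz--Thorin interpolation between this endpoint and the trivial $L^p \to L^p$ bounds for $\calT_N$ produces $\|\calT_N\|_{L^p \to L^{p'}} \le C_2$ on an open interval of exponents around some $p_0 \in (1,2)$. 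Peeling off single factors of $\calT$ or $\calT^*$ from $\calT_N$, each of which is bounded on every $L^p$ with uniform constant, yields an $L^r \to L^s$ estimate for $\calT$ itself with $s > r$; covering $[1, \infty)$ by such ranges (adjusting $N$) finishes the proof.

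The main obstacle is not the structural outline but the uniform quantitative bookkeeping. Specifically, one must verify that (i) the regularity exponent $\delta$ and the constant produced by Proposition \ref{prop2.7} can be chosen depending only on the parameters defining $\calB_1, \ldots, \calB_4$, and (ii) the amplitude $A_N$ and the iterate $\Gamma_N$ remain uniformly controlled in every $C^m$-norm after $N$ compositions. Both reduce to careful tracking of constants through the quantitative equivalence Theorem \ref{apdthm002} and through Proposition \ref{controlcor}. Once these uniform parameter dependencies are pinned down, the interpolation step is standard, and the resulting constant $C_{r,s}$ depends only on $r, s$ and the four bounded sets.
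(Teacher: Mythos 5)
The paper itself does not prove Theorem \ref{apdthm003}: it is quoted as the single-scale, full-rank special case of the quantitative $L^p$-improving theorem of \cite{BS3}, whose proof runs through uniform $L^2$-Sobolev smoothing of the single-scale operator (the same route indicated in Section 5 via \cite[Theorem 8.11]{CNSW}, Sobolev embedding, and interpolation). Your attempt is therefore a genuinely different, self-contained route, but it has a real gap at its central step. Proposition \ref{prop2.7} gives only that, for each fixed base point, the pushforward density $h(x,\cdot)$ lies in $L^1_{\delta}$ with uniform norm; this is an $L^1$-modulus-of-continuity statement and nothing more. There is no mechanism by which this ``iterates'' to the asserted bound $\|(\calT^*\calT)^N\|_{L^1\to L^\infty}\lesssim 1$: the operators are not convolutions, so the Fourier-decay argument that upgrades convolution powers of a curved measure to bounded kernels is unavailable; $L^1_{\delta}$ regularity of the factors' kernels does not compose to boundedness of the composite kernel in the variable-coefficient setting; and applying Proposition \ref{prop2.7} to the longer iterate $\Gamma_N$ again returns only an $L^1_{\delta}$ density, since $(\calC_J)$ only guarantees finite-order vanishing of a Jacobian minor, which yields sublevel-set/weak-type bounds for a single pushforward, never uniform boundedness.

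Even granting that kernel bound, the final ``peeling off'' step is not valid as stated. Knowing $(\calT^*\calT)^N:L^p\to L^{p'}$ together with boundedness of each factor on every $L^q$ does not by itself yield $\calT:L^r\to L^s$ with $s>r$; removing factors only returns non-improving information, and transferring improvement from the composition to a single factor requires the self-adjointness identities $\langle(\calT^*\calT)^{2m}f,f\rangle=\|(\calT^*\calT)^m f\|_{L^2}^2$ (or an analytic-family argument for fractional powers), none of which appears in your sketch. The spherical average makes the failure concrete: it fits the hypotheses (translation-invariant $\gamma$, curvature condition satisfied), $(\calT^*\calT)^N$ is convolution with a bounded kernel once $N$ is large, yet $\calT$ maps $L^1$ into no $L^s$ with $s>1$; so your scheme cannot ``cover $[1,\infty)$'' by adjusting $N$, and the $r=1$ endpoint must in any case be treated with care (the main theorem only ever uses the interior of $\Sigma$). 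A workable version of a proof in the spirit of this paper is the cited one: establish uniform $L^2\to L^2_{\epsilon}$ smoothing for $\calT$ under $(\calC_J)_{M,c,a',\eta',\{C_{4,m}\}_{m\in\N}}$, with constants depending only on the data of $\calB_1,\dots,\calB_4$ (this is the quantitative content of \cite{BS3}), and then conclude by Sobolev embedding, duality, and interpolation with the trivial $L^p\to L^p$ bounds you correctly recorded.
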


\end{appendix}

\bibliographystyle{plain}
\bibliography{bibfile}

\end{document}